\theoremstyle{plain}
\newtheorem{thm}{Theorem}
\newtheorem{cor}{Corollary}
\newtheorem{lem}[cor]{Lemma}
\newtheorem{prop}[cor]{Proposition}
\theoremstyle{definition}
\newtheorem{definition}[cor]{Definition}
\newtheorem{remark}[cor]{Remark}
\newtheorem{example}[cor]{Example}
\numberwithin{cor}{section}
\numberwithin{equation}{section}
\newcommand{\R}{\mathbb{R}}
\newcommand{\Q}{\mathbb{Q}}
\newcommand{\Z}{\mathbb{Z}}
\newcommand{\N}{\mathbb{N}}
\newcommand{\B}{\mathcal{B}}
\renewcommand{\d}{d}
\newcommand{\Rd}{\mathbb{R}^\d}
\newcommand{\Zd}{\mathbb{Z}^\d}
\newcommand{\ep}{\varepsilon}
\newcommand{\E}{\mathbb{E}}
\newcommand{\Prob}{\mathbb{P}}
\newcommand{\indc}{\mathds{1}}
\newcommand{\Lip}{{\mathcal{L}}}
\renewcommand{\bar}{\overline}
\renewcommand{\hat}{\widehat}
\renewcommand{\Re}{\mathcal{R}}
\DeclareMathOperator*{\osc}{osc}
\DeclareMathOperator{\intr}{int}
\DeclareMathOperator{\USC}{USC}
\DeclareMathOperator{\LSC}{LSC}
\DeclareMathOperator{\BUC}{BUC}
\DeclareMathOperator{\Var}{var}
\DeclareMathOperator{\diam}{diam}
\DeclareMathOperator{\dist}{dist}
\DeclareMathOperator{\conv}{conv}
\DeclareMathOperator*{\esssup}{ess\,sup}
\DeclareMathOperator*{\essinf}{ess\,inf}
\begin{document}

\title[Error estimates and convergence rates for stochastic homogenization]{Error estimates and convergence rates for the\\ stochastic homogenization of Hamilton-Jacobi equations}

\author[S. N. Armstrong]{Scott N. Armstrong}
\address{Department of Mathematics\\ University of Wisconsin, Madison\\ 480 Lincoln Drive\\
Madison, Wisconsin 53706.}
\email{armstron@math.wisc.edu}
\author[P. Cardaliaguet]{Pierre Cardaliaguet}
\address{Ceremade (UMR CNRS 7534) \\ Universit\'e Paris-Dauphine \\ Place du Mar\'echal De Lattre De Tassigny \\ 75775 Paris CEDEX 16, France}
\email{cardaliaguet@ceremade.dauphine.fr}
\author[P. E. Souganidis]{Panagiotis E. Souganidis}
\address{Department of Mathematics\\ The University of Chicago\\ 5734 S. University Avenue
Chicago, Illinois 60637.}
\email{souganidis@math.uchicago.edu}
\date{\today}
\keywords{stochastic homogenization, error estimate, convergence rate, Hamilton-Jacobi equation, first-passage percolation}
\subjclass[2010]{35B27, 35F21, 60K35}

\begin{abstract}
We present exponential error estimates and demonstrate an algebraic convergence rate for the homogenization of level-set convex Hamilton-Jacobi equations in i.i.d.~random environments, the first quantitative homogenization results for these equations in the stochastic setting. By taking advantage of a connection between the metric approach to homogenization and the theory of first-passage percolation, we obtain estimates on the fluctuations of the solutions to the approximate cell problem in the ballistic regime (away from the flat spot of the effective Hamiltonian). In the sub-ballistic regime (on the flat spot), we show that the fluctuations are governed by an entirely different mechanism and the homogenization may proceed, without further assumptions, at an arbitrarily slow rate. We identify a necessary and sufficient condition on the law of the Hamiltonian for an algebraic rate of convergence to hold in the sub-ballistic regime and show, under this hypothesis, that the two rates may be merged to yield comprehensive error estimates and an algebraic rate of convergence for homogenization.

Our methods are novel and quite different from the techniques employed in the periodic setting, although we benefit from previous works in both first-passage percolation and homogenization. The link between the rate of homogenization and the flat spot of the effective Hamiltonian, which is related to the nonexistence of correctors, is a purely random phenomenon observed here for the first time. 
\end{abstract}

\maketitle

\section{Introduction} \label{I}

We consider the Hamilton-Jacobi equation
\begin{equation} \label{HJq}
u^\ep_t + H\left(Du^\ep,\frac x\ep, \omega\right) = 0 \quad \mbox{in} \ \Rd \times (0,\infty),
\end{equation}
where the Hamiltonian $H=H(p,y,\omega)$ is level-set convex and coercive in $p$ and depends on an element $\omega$ of an underlying probability space $(\Omega,\mathcal F, \Prob)$. If the action of translation on $\Rd$ is stationary and ergodic with respect to the law of $H$, then, as $\ep \to 0$, the solutions $u^\ep=u^\ep(x,t,\omega)$ of~\eqref{HJq}, subject to appropriate initial conditions, converge $\Prob$-almost surely to the solution $u$ of the deterministic equation 
\begin{equation}\label{HJh}
u_t + \overline H(Du) = 0 \quad \mbox{in} \ \Rd \times (0,\infty)
\end{equation}
with the same initial conditions, where the effective Hamiltonian $\overline H$ is level-set convex, continuous and coercive. This fundamental theorem concerning the qualitative theory of stochastic homogenization of Hamilton-Jacobi equations was proved for convex Hamiltonians by one of the authors~\cite{S} (see also Rezakhanlou and Tarver~\cite{RT}) and, more recently, by two of the authors~\cite{ASo3} in the generality discussed here. 

In this paper, we present the first quantitative homogenization results for Hamilton-Jacobi equations in the stochastic setting. Throughout the paper we assume that the Hamiltonian $H=H(p,y,\omega)$ satisfies a finite range dependence hypothesis (a continuum analogue of ``i.i.d.") in its spatial dependence. This essentially means that, for some fixed distance $D> 0$, the values of $H(p,y,\cdot)$ for $y\in E$ are independent of those for $y\in F$ provided that $\dist(E,F)>D$. (Obviously we lose no generality by taking $D=1$.)

By a novel integration of probabilistic and pde techniques, we (i) obtain explicit estimates showing the probability of $|u^\ep(x,t,\omega) - u(x,t)| > \lambda$ decays exponentially in $\lambda^2$, and (ii) identify a necessary and sufficient condition for the almost sure, local uniform convergence $u^\ep \to u$ to proceed at an algebraic rate~$O(\ep^\alpha)$. The main results, including the precise assumptions, are stated in the next section. They essentially give the error estimates 
\begin{equation}\label{}
-O\!\left( \ep^{\frac18-\delta} \right) \leq u^\ep - u \leq O\!\left(\ep^{\frac15-\delta}\right) \quad \mbox{for all $\delta > 0$}
\end{equation}
for the homogenization of~\eqref{HJq}, where the first inequality depends on a supplemental assumption on the law of $H$. 

The difficulty in obtaining estimates on the fluctuations of $u^\ep(x,t,\cdot)-u(x,t)$ is due in part to the fact that the dependence of $u^\ep$ on $H$ is \emph{highly singular}. Understanding how the solutions depend on the random environment is very challenging. Difficulties of a similar nature occur, for example, in the theory of first passage percolation (see Kesten~\cite{K2} and Alexander~\cite{A}) and in the study of the fluctuations of the Lyapunov exponents for Brownian motion in Poissonian potentials (see Sznitman~\cite{SzEE} and W{\"u}thrich~\cite{W}). As far as we know, the only previous result on the oscillations of solutions of Hamilton-Jacobi equations in random media is found in the work of Rezakhanlou~\cite{R}, who gave structural conditions on $H$ in dimension $\d=1$ in which a central limit theorem holds. Such phenomena are not expected to appear in any dimension $\d \geq 2$ (see Remark~\ref{notsharp}).

Our arguments rely crucially on adaptations of some of the probabilistic techniques of~\cite{K2,A}, which are based on Azuma's inequality and the martingale method of bounded differences. This connection between first-passage percolation and the stochastic homogenization of~\eqref{HJq}, made explicit for the first time in this paper (as far as we know), arises naturally from an analogy between the passage time in percolation and solutions of the \emph{metric problem} (see~Remark~\ref{contform} below). Using arguments inspired from~\cite{K2,A}, we prove exponential error estimate and obtain rates of convergence for the homogenization of the metric problem. Then, by quantifying the new proof of homogenization recently introduced by two of the authors~\cite{ASo3}, we transform the estimates for the metric problem into error estimates for the approximate cell problem (see~\eqref{acp} below).

The rate of convergence of \emph{periodic} homogenization of Hamilton-Jacobi has been understood for some time and goes back to the work of Capuzzo-Dolcetta and Ishii~\cite{ICD}, who proved that $u^\ep$ and $u$ differ by at most $O(\ep^{\frac13})$. The periodic setting is much simpler to understand due to the fact that the cell problem has periodic solutions; that is, exact correctors exist. A quantitative version of the classical perturbed test function proof of homogenization due to Evans~\cite{E,E2} then yields the convergence rate. Our main results stated in Section~\ref{MR} do not encompass the periodic or almost periodic settings, since obviously an almost periodic function cannot be embedded into the random setting in such a way that it satisfies a finite range of dependence condition. However, as we show, our arguments yield a uniform rate of convergence in the almost periodic setting (see Section~\ref{APs}). 

In the stochastic environment, the situation is not only much more complicated but also qualitatively different from the periodic setting. It is, therefore, necessary to devise a new strategy, since the usual proof of periodic homogenization, which is based on exact correctors and can be quantified to yield a rate, does not generalize to random enviroments. Indeed, as Lions and Souganidis~\cite{LS1} demonstrated with an explicit example, exact correctors do not exist, in general, for stochastic Hamiltonians. The only known proofs of the qualitative homogenization of Hamilton-Jacobi equations in the stationary ergodic setting are based on an application of the subadditive ergodic theorem to certain subadditive quantities (e.g., the $m_\mu$'s below) and then showing that these quantities control, in an appropriate way, the solutions of~\eqref{HJq}. In order to obtain a convergence rate for the homogenization, one is therefore left with the twofold task of quantifying both the limits given by the subadditive ergodic theorem as well as the precise way in which the subadditive quantities control the solutions of~\eqref{HJq}. The former must necessarily be handled by probabilistic methods and the latter by pde methods. 

These are not mere technical difficulties. It turns out that, in the stochastic setting, the magnitude of the fluctuations of the solutions~$u^\ep$ about their limit~$u$ must be separated into two distinct regimes, which we refer to as \emph{ballistic} and \emph{sub-ballistic}, respectively (this terminology is borrowed from the probability literature, see, for example, Sznitman~\cite{Szb}). Intuitively, in the \emph{ballistic} regime, the solutions are able to ``feel" the random environment sufficiently quickly as $\ep \to 0$. Then the mixing of the medium dominates, which results in an algebraic convergence rate. In the \emph{sub-ballistic} regime, the dependence of the solutions on $H$ is highly localized in the vicinity of points $y$ in which $H(p,y,\omega)$ is close to its essential supremum. It is therefore the law of $H(p,0,\cdot)$ near its essential supremum that principally governs the rate at which homogenization occurs, and this rate may be arbitrarily slow without a further assumption on the law. 

To give a more detailed overview of the approach, we start from the \emph{approximate cell problem}
\begin{equation}\label{acp}
\delta v^\delta + H(p+Dv^\delta,y,\omega) = 0 \quad \mbox{in} \ \Rd,
\end{equation}
which, for each fixed $p\in \Rd$ and $\delta > 0$, admits a unique bounded, uniformly continuous solution $v^\delta=v^\delta(y,\omega\,;p)$. The introduction of~\eqref{acp} in the context of the homogenization of Hamilton-Jacobi equations goes back to the original proof of periodic homogenization due to Lions, Papanicolaou and Varadhan~\cite{LPV}. It is well-known by now (see, e.g.,~\cite{ASo1}) that the homogenization of~\eqref{HJq} to~\eqref{HJh} is equivalent to (and the effective Hamiltonian $\overline H$ can be identified by) the limit
\begin{equation}\label{dvdlimit}
\lim_{\delta \to 0} - \delta v^\delta(0,\omega\,;p) = \overline H(p) \qquad \Prob\mbox{-a.s.}
\end{equation}
Moreover, this equivalence is easy to quantify in the sense that an error estimate or convergence rate for~\eqref{dvdlimit} can be transformed into one for homogenization. We are therefore left with the task of quantifying the limit in~\eqref{dvdlimit}.

The intuitive reason for the difficulty of arguing directly for the limit~\eqref{dvdlimit} in the random case is the complicated dependence of the $v^\delta$'s on $H$, which is both \emph{singular} (information propagates only along characteristics and does not spread out) and \emph{global} (information may travel far away in space, which is compounded by the lack of compactness). This problem is overcome in the stochastic setting by (i) imposing some kind of convexity assumption on $H$ and (ii) using the subadditive structure of the metric problem (or its time-dependent analogue) to obtain an almost sure limit via the subadditive ergodic theorem. A comparison argument (introduced in~\cite{ASo1,ASo3}) then yields that, in the ballistic regime ($p$'s satisfying $\overline H(p) > \min \overline H$), the metric problem controls the limiting behavior of the $\delta v^\delta(0,\omega\,;p)$ as $\delta \to 0$. In the sub-ballistic regime, i.e., for $p$'s belonging to the ``flat spot" $\{\overline H(\cdot) = \min \overline H\}$, the limiting behavior of the $\delta v^\delta$ is driven primarily by the law of $H(p,0,\cdot)$ near its essential supremum, as mentioned above, and it turns out to be the thickness of the tail of this distribution which governs the rate of homogenization.

We continue by introducing the \emph{metric problem}: for each fixed $\mu$ larger than a certain constant, which turns out to be $\min \overline H$, $x\in \Rd$ and $\omega\in \Omega$, there exists a unique nonnegative continuous solution $m_\mu = m_\mu(\cdot,x,\omega) : \Rd \to \R$ of
\begin{equation}\label{Imp}
H(Dm_\mu,y,\omega) = \mu \quad \mbox{in} \ \Rd \setminus \{ x \} \quad \mbox{and} \quad m_\mu(x,x,\omega) = 0.
\end{equation}
In terms of control theory, the quantity $m_\mu(y,x,\omega)$ corresponds to a ``cost" of transporting a particle from $x$ to $y$ in the medium $\omega$. It thus has the properties of a metric and is analogous to the time constant in first-passage percolation (see Remark~\ref{contform}).

The $m_\mu(\cdot,x,\omega)$'s are the maximal subsolutions of $H(Dw,y,\omega) \leq \mu$ in $\Rd$ subject to $w(x) = 0$, and this implies a subadditivity property. An easy application of the subadditive ergodic theorem (see~\cite{ASo3}) then yields the existence of $\overline m_\mu \in C(\Rd)$ such that
\begin{equation}\label{Implim}
\lim_{t\to\infty} t^{-1} m_\mu(ty,0,\omega) = \overline m_\mu(y) \qquad \Prob\mbox{-a.s.} 
\end{equation}
The deterministic $\overline m_\mu$ can be identified as the  unique nonnegative solution of
\begin{equation*}\label{}
\overline H(D\overline m_\mu) = \mu \quad \mbox{in} \ \Rd \setminus \{ 0 \} \quad \mbox{and} \quad \overline m_\mu(0) = 0,
\end{equation*}
and this provides another way of identifying $\overline H$, one level set at a time. (See Subsection~\ref{Hbarheur} for more.)

The existence and  some basic properties of the $m_\mu(\cdot,x,\omega)$'s have been known for some time (see, for example, Lions~\cite{Li}). More recently a simple comparison argument was introduced in~\cite{ASo3} which demonstrated that the $m_\mu(\cdot,x,\omega)$'s control the $\delta v^\delta(\cdot,\omega\,;p)$'s from below for every $p\in \Rd$, and from above for $p$'s in the ballistic regime. It follows from this analysis that the limit~\eqref{Implim} implies the homogenization of~\eqref{HJq}. As we show here, this argument is constructive in the sense that a quantitative rate for the convergence of~\eqref{Implim} implies a rate for~\eqref{dvdlimit}. 

The main advantage of the metric problem is that it is \emph{localized}. Indeed, while changes in the medium may influence the value of $\delta v^\delta$ at far away points, the quantity $m_\mu(y,x,\omega)$ depends only on the values of $H(p,z,\omega)$ for $z$'s satisfying $m_\mu(z,x,\omega) \leq m_\mu(y,x,\omega)$, which is a bounded set with diameter proportional to $|y-x|$. This localization (see Lemma~\ref{localize} and~\eqref{e.gotbunk} below) permits us to use the independence of the medium by way of the martingale method of bounded differences and an application of Azuma's concentration inequality in a similar manner as in first-passage percolation~\cite{K2}. 

In addition to quantifying the limit in~\eqref{Implim} and hence in~\eqref{dvdlimit} for $p$'s in the ballistic regime (and obtaining an almost sure, algebraic rate of convergence), we identify a necessary and sufficient condition (see~\eqref{plushyp} below) for an algebraic rate of convergence to hold for $p$'s in the sub-ballistic regime. This follows from a direct analysis of the $v^\delta$'s using explicit comparison arguments. Merging the results for the ballistic and sub-ballistic regimes then yields, under assumption~\eqref{plushyp}, an algebraic rate for~\eqref{dvdlimit} for all $p$'s.

We remark that we do not expect our arguments to yield sharp error estimates or convergence rates for homogenization. Indeed, as we explain in Remark~\ref{notsharp}, this is related to outstanding conjectures on the fluctuations of the time constant in first passage percolation. It is likely that the exponent in the rate of convergence convergence improves in higher dimensions, as is expected in first-passage percolation, although proving a rigorous statement to this effect seems out of reach. 

As mentioned above, the periodic homogenization of Hamilton-Jacobi equations was proved in~\cite{LPV}. This was simplified in~\cite{E} and subsequently extended to almost periodic media by Ishii~\cite{I}. The stochastic homogenization of convex first-order Hamilton-Jacobi equations was first proved in~\cite{S,RT} and, for viscous convex Hamilton-Jacobi equations, by Lions and Souganidis \cite{LS2} and Kosygina, Rezakhanlou, and Varadhan \cite{KRV}. Lions and Souganidis~\cite{LS1} obtained results on the existence and nonexistence of correctors in the random setting and introduced in~\cite{LS3} a more direct proof of homogenization in probability. Later, more direct proofs of almost sure homogenization, based on the metric problem, were given in~\cite{ASo1,ASo3}. 
 
The metric problem has been used by Davini and Siconolfi~\cite{DS1,DS2} to study some connections between the stochastic homogenization of Hamilton-Jacobi equations and weak KAM theory and, for periodic $H$'s with special structure, by Oberman, Takei and Vladimirsky~\cite{OTV} and Luo, Yu and Zhao~\cite{LYZ} in order to implement efficient numerical schemes for computing $\overline H$.

\subsection*{Outline of the paper}
In the next section we give the precise assumptions and the statement of the main results. In Section~\ref{Pre} we review some preliminary results needed in our arguments. Controlling the fluctuations of the metric problem is the topic of Section~\ref{Met1} and in Section~\ref{Met2} we control its statistical bias. These estimates are combined with comparison arguments in Section~\ref{EEdvd} to obtain corresponding bounds for the approximate cell problem in the ballistic regime. The sub-ballistic regime is studied in the second part of Section~\ref{EEdvd}, where we produce error estimates under an auxiliary hypothesis on the law of $H$ as well as examples demonstrating that, without such a hypothesis, the rate may be arbitrarily slow. We complete the proof of the error estimates in Section~\ref{EEue} and give convergence rate for the homogenization of the time-dependent problem~\eqref{HJq}. Finally, in Section~\ref{APs} we discuss the convergence rates of the homogenization of~\eqref{acp} and~\eqref{HJq} in almost periodic media. In the appendices we summarize the fundamentals of the metric and approximate cell problems.

\subsection*{Notation and conventions}

The symbols $C$ and $c$ denote positive constants which may vary from line to line and, unless otherwise indicated, depend only on the assumptions for $H$ and other appropriate parameters (often an upper bound for $|p|$ or $\mu$). For $s,t\in \R$, we write $s\wedge t : = \min\{ s,t\}$ and $s \vee t := \max\{s,t\}$.  We denote the $\d$-dimensional Euclidean space by $\Rd$,  $\Q^\d$ is the set of elements of $\Rd$ with rational coordinates, $\N$ is the set of natural numbers and $\N^* := \N \setminus \{ 0 \}$. For each $y \in \Rd$, $|y|$ denotes the Euclidean length of $y$. If $E \subseteq\Rd$, then $|E|$ is the Lebesgue measure of $E$, $\intr E$  the interior of $E$, $\overline E$  the closure of $E$ and $\conv E$ the closure of the convex hull of $E$. For $r>0$, we set $B(y,r): = \{ x\in \Rd : |x-y| < r\}$ and $B_r : = B(0,r)$. The distance between two subsets $U,V\subseteq \Rd$ is $\dist(U,V) = \inf\{ |x-y|: x\in U, \, y\in V\}$. If $f:E \to \R$ then we denote $\osc_E f : = \sup_E f - \inf_E f$. If~$K$ is a finite set, then $|K|$ is the number of elements of~$K$. The set of Lipschitz functions on a set~$U\subseteq\Rd$ is written~$\mathrm{Lip}(U) = C^{0,1}(U)$ and we set~$\Lip := \mathrm{Lip}(\Rd)$. The set of bounded and uniformly continuous real-valued functions on a metric space~$Y$ is denoted $\BUC(Y)$, and $\USC(Y)$ and $\LSC(Y)$ are respectively the sets of real-valued upper and lower semicontinuous functions on~$Y$. The Borel~$\sigma$-field on~$\Rd$ is~$\mathcal{B}$. If~$\mathcal G_1$ and $\mathcal G_2$ are~$\sigma$-fields on sets~$X_1$ and~$X_2$, respectively, then~$\mathcal G_1 \otimes \mathcal G_2$ denotes the $\sigma$-field on $X_1\times X_2$ generated by $\mathcal G_1 \times \mathcal G_2$. For a probability space $(\Omega,\mathcal F, \Prob)$, we say that an event $A\in \mathcal F$ is of \emph{full probability} if $\Prob[A]=1$. We denote the indicator random variable of $A\in \mathcal F$ by $\indc_A$. If $X$ is a random variable and $\mathcal G\subseteq \mathcal F$ is a $\sigma$-field, then $\E\left[ X  \vert  \mathcal G\right]$ denotes the conditional expectation of $X$ with respect to $\mathcal G$.

Throughout the paper, all differential inequalities are taken to hold in the viscosity sense. Readers not familiar with the fundamentals of the theory of viscosity solutions may consult standard references such as~\cite{CIL,Ba}.

\section{The assumptions and the statement of the main results} \label{MR}

We introduce our hypotheses and state the main results of the paper. 

\subsection{The hypotheses}
Let $(\Omega, \mathcal F, \mathds P)$ be a probability space endowed with a group $(\tau_y)_{y\in \Rd}$ of $\mathcal F$-measurable, measure-preserving transformations $\tau_y:\Omega\to \Omega$. That is, we assume that, for every $x,y\in\Rd$ and $A\in \mathcal F$,
\begin{equation}\label{pres}
\Prob[\tau_y(A)] = \Prob[A] \quad \mbox{and} \quad \tau_{x+y} = \tau_{x} \circ \tau_y.
\end{equation}
The Hamiltonian $H:\Rd\times\Rd\times \Omega\to \R$ is assumed to be measurable with respect to~$\mathcal B \otimes \mathcal B \otimes \mathcal F$. We write $H = H(p,y,\omega)$ and require that $H$ be \emph{stationary} in its dependence on $(y,\omega)$ with respect to the translation group $( \tau_y )_{y\in\Rd}$, that is, we assume that, for every 
$p,y,z\in\Rd$ and $\omega \in \Omega$,
\begin{equation} \label{stnary}
H(p,y,\tau_z \omega) = H(p,y+z,\omega).
\end{equation}
In order to state the finite range of dependence assumption, which means roughly that $H$ is ``i.i.d." in its spatial dependence, we define, for each $V\in \B$, the following $\sigma$--algebra on $\Omega$:
\begin{equation*} \label{}
\mathcal G(V):= \mbox{$\sigma$--algebra generated by the random variables $\omega \mapsto H(p,x,\omega)$, with $p\in \Rd$, $x\in V$.}
\end{equation*}
We may also suppose without loss of generality that $\mathcal F = \mathcal G(\Rd)$. The finite range dependence hypothesis is then the requirement that, for every $V,W\in \mathcal B$,
\begin{equation}\label{indy}
\dist(V,W) \geq 1 \quad \mbox{implies that} \quad \mathcal{G}(V) \ \mbox{and} \ \mathcal{G}(W) \ \mbox{are independent.}
\end{equation}
Of course, this implies that the group $\left( \tau_y \right)_{y\in \Rd}$ is ergodic, but is much stronger.

We continue with other structural hypotheses on $H$. We assume, for each $R> 0$, that the family
\begin{equation}\label{regpx}
\{ H(\cdot,\cdot,\omega) : \omega\in\Omega \} \quad \mbox{is  precompact in} \ C( B_R\times \Rd)
\end{equation}
and
\begin{equation}\label{reg}
\{ H(\cdot,x,\omega) : \omega\in\Omega, \ x\in \Rd \} \quad \mbox{is bounded in} \ C^{0,1}( B_R).
\end{equation}
We also require that $H$ is \emph{uniformly coercive} in $p$, that is,\begin{equation}\label{coer}
\lim_{|p|\to\infty} \essinf_{\omega \in \Omega} H(p,0,\omega) = +\infty.
\end{equation}
We assume that $H$ is slightly more than \emph{level-set convex} in $p$. Precisely, we assume that there exists $\Lambda:\R\times\R \to \R$, which is nondecreasing in each variable, such that, for all $\mu,\nu\in \R$,
\begin{equation}\label{Lambda}
\Lambda(\mu,\nu) \leq \mu \vee \nu \quad \mbox{and} \quad \Lambda(\mu,\nu) < \mu \vee \nu \quad \mbox{if} \ \nu\neq \mu,
\end{equation}
and that $H$ satisfies, for all $p,q,y \in\Rd$ and $\omega\in \Omega$,
\begin{equation}\label{sqc}
H\left(\tfrac12 (p + q),y,\omega\right) \leq \Lambda\big( H(p,y,\omega), H(q,y,\omega)\big).
\end{equation}
Of course, $H$ is convex if and only if \eqref{sqc} holds with $\Lambda (\mu,\nu) = \frac12( \mu + \nu)$.

We also make the following assumptions regarding the shape of the level sets of $H$: for every $p,y\in\Rd$ and $\omega\in \Omega$,
\begin{equation}\label{cntl}
H(p,y,\omega) \geq H(0,y,\omega) \qquad \mbox{and} \qquad  \esssup_{\omega\in\Omega} H(0,0,\omega) = 0. 
\end{equation}
From the point of view of optimal control theory, the fact that there is a common $p_0$ for all $\omega$ at which $H(\cdot,0,\omega)$ attains its minimum provides some ``controllability", i.e., upper and lower bounds on the length of optimal paths. We loose no generality by assuming $p_0=0$ and $\esssup_{\omega\in\Omega} H(0,0,\omega) = 0$. From our point of view, ~\eqref{cntl} controls the growth of the $m_\mu$'s (see~\eqref{control2} below). 

With the exception of Section~\ref{APs}, the hypotheses~\eqref{pres}--\eqref{cntl} described above are in force throughout the paper. For ease of reference, we write 
\begin{equation}\label{assum}
\mbox{{\eqref{pres}, \eqref{stnary}, \eqref{indy}, \eqref{regpx}, \eqref{reg}, \eqref{coer}, \eqref{Lambda}, \eqref{sqc} and \eqref{cntl} hold.}}
\end{equation}

Some of our results are proved under an extra assumption on the distribution of $H(0,0,\cdot)$ near its maximum. Precisely, this extra hypothesis is that there exist $\theta \geq 0$ and $c >0$ such that, for every $0 < \lambda \leq c$,
\begin{equation}\label{plushyp}
\Prob\left[ H(0,0,\cdot) > - \lambda \right] \geq c \lambda^\theta.
\end{equation}
In light of~\eqref{cntl}, we see that, roughly speaking,~\eqref{plushyp} is a requirement that the event that $H(0,0,\cdot)$ is near its maximum is not \emph{too unlikely}. For example, if $H(0,0,\cdot)$ attains its maximum on a set of positive probability, then of course~\eqref{plushyp} holds for $\theta = 0$.

Throughout the paper, the quantification ``for every $\omega\in \Omega$" is used exclusively for deterministic statements. For assertions which holds $\Prob$-almost surely (abbreviated as $\Prob$-a.s.~) we may write, for example, ``for every $\omega \in \Omega_1$" where $\Omega_1 \in \mathcal F$ is a specified event of full probability, i.e., $\Prob [\Omega_1] =1$. 

\subsection{The main results}

Our first main result consists of error estimates for the limit~\eqref{Implim}, which measure the likelihood that the quantity $|m_\mu(y,0,\omega) - \overline m_\mu(y)|$ is large relative to $|y|$. The definition and basic properties of the metric problem~\eqref{Imp} and its solutions $m_\mu$ and $\overline m_\mu$ are reviewed in the next section. The proof of Theorem~\ref{mpEE} is completed in Section~\ref{Met2}.

\begin{thm}[Error estimates for the metric problem] \label{mpEE}
Assume~\eqref{assum} and fix $K>0$. Then there exists $C > 0$, depending only on $K$ and~$H$, such that, for every $0< \mu \leq K$, $\lambda > 0$ and $|y| > 1$,
\begin{equation}\label{easy}
\Prob\Big[\, m_\mu(y,0,\cdot) - \overline m_\mu(y) \leq - \lambda \Big] \leq \exp\left(-\frac{\mu \lambda^2}{C |y|} \right)\;,
\end{equation}
and, if 
\begin{equation}\label{lamb-cond}
\lambda \geq C\left( \frac{|y|^{\frac12}}{\mu^{\frac32}} + \frac{|y|^\frac23}{\mu} \right) \left( \log\left(1+\frac{|y|}{\mu}\right)\right)^\frac12,
\end{equation}
then
\begin{equation}\label{hard}
\Prob\Big[\, m_\mu(y,0,\cdot) - \overline m_\mu(y) \geq \lambda \Big] \leq \exp\left(-\frac{\mu \lambda^2}{ C |y|} \right).
\end{equation}
\end{thm}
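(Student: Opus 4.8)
The plan is to follow the strategy of Kesten and Alexander from first-passage percolation, adapted to the continuum metric problem. The two bounds have genuinely different natures: \eqref{easy} is a one-sided concentration estimate that requires only the martingale method of bounded differences, while \eqref{hard} additionally requires controlling the statistical bias $\E[m_\mu(y,0,\cdot)] - \overline m_\mu(y)$, which is why it carries the restriction \eqref{lamb-cond} on how small $\lambda$ may be.

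For the lower tail \eqref{easy}, first I would set up a martingale: fix $y$ with $|y|>1$, and let $R \sim C|y|/\mu$ be a radius such that the optimal path from $0$ to $y$ (realizing $m_\mu(y,0,\omega)$) stays inside $B_R$ with certainty — this uses the controllability from \eqref{cntl} and the bound \eqref{control2} alluded to after it, which forces $m_\mu(z,0,\omega)\leq m_\mu(y,0,\omega)$ to confine to a ball of diameter proportional to $|y|$, the localization of Lemma~\ref{localize}. Partition $B_R$ into $N \sim R^\d$ unit cubes $Q_1,\dots,Q_N$ (ordered arbitrarily), let $\mathcal F_k = \sigma\big(\mathcal G(Q_1),\dots,\mathcal G(Q_k)\big)$, and form the Doob martingale $X_k = \E[m_\mu(y,0,\cdot)\mid \mathcal F_k]$. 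The key point is a bounded-difference estimate: changing the environment in a single cube $Q_j$ cannot change $m_\mu(y,0,\cdot)$ by more than a constant $b$ depending only on $H$ and $\mu$ — indeed, because $m_\mu$ is the maximal subsolution, resampling the medium in one cube changes the passage time by at most the cost of a detour around that cube, which is $O(1)$ uniformly by the Lipschitz/coercivity bounds \eqref{reg},\eqref{coer}. Hence $|X_k - X_{k-1}| \leq b$ (more precisely one uses that the increment is controlled by $b$ times whether the optimal path meets $Q_k$, but the crude bound $b$ suffices here), and Azuma's inequality gives
\begin{equation*}
\Prob\big[ m_\mu(y,0,\cdot) - \E[m_\mu(y,0,\cdot)] \leq -\lambda \big] \leq \exp\left(-\frac{\lambda^2}{2 N b^2}\right).
\end{equation*}
Since $N b^2 \sim C|y|/\mu$ (the factor $\mu^{-1}$ coming from the scaling $R\sim |y|/\mu$ in the controllability bound), this is the bound \eqref{easy} except with $\E[m_\mu]$ in place of $\overline m_\mu$. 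The passage from $\E[m_\mu(y,0,\cdot)]$ to $\overline m_\mu(y)$ in the \emph{lower} tail is free, because subadditivity gives $\overline m_\mu(y) \leq \E[m_\mu(y,0,\cdot)]$ (Fekete), so the event $\{m_\mu \leq \overline m_\mu(y) - \lambda\}$ is contained in $\{m_\mu \leq \E[m_\mu] - \lambda\}$.

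For the upper tail \eqref{hard}, Azuma again controls $\Prob[m_\mu(y,0,\cdot) \geq \E[m_\mu(y,0,\cdot)] + \lambda]$ by the same Gaussian bound, so the remaining issue — and this is the main obstacle — is to show that the bias is small:
\begin{equation*}
0 \leq \E[m_\mu(y,0,\cdot)] - \overline m_\mu(y) \leq C\left( \frac{|y|^{1/2}}{\mu^{3/2}} + \frac{|y|^{2/3}}{\mu}\right)\left(\log\left(1+\frac{|y|}{\mu}\right)\right)^{1/2},
\end{equation*}
so that on the range \eqref{lamb-cond} of $\lambda$ we may absorb the bias into $\lambda/2$ and reduce to the already-established concentration around the mean. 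This is precisely the content of Section~\ref{Met2} ("control its statistical bias"), and it is handled by Alexander's method: one uses subadditivity together with the concentration estimate \eqref{easy} to bootstrap, via a dyadic/chaining argument over scales, an a priori $O(\sqrt{|y|\log|y|})$-type fluctuation bound into the sharper bias estimate above. Concretely, write $y = 2^J y_0$ and compare $m_\mu(y,0,\cdot)$ with a concatenation of $m_\mu$ across roughly $2^\d$ sub-paths at each of the $J\sim\log|y|$ scales; stationarity makes each piece have the same law, the lower-tail estimate \eqref{easy} controls how much worse a typical piece can be than its mean, and summing the geometric series of errors across scales produces the stated bound. I would expect the bookkeeping of the $\mu$-dependence through all the scaling relations (the radius $R\sim|y|/\mu$, the step size in the chaining, the Lipschitz constant of $\overline m_\mu$ which is $\sim \mu$ by coercivity of $\overline H$) to be the delicate part, and the exponents $1/2$ and $2/3$ in \eqref{lamb-cond} to emerge from optimizing the trade-off between the number of scales and the per-scale error, exactly as the near-optimal exponents do in Alexander's theorem for first-passage percolation.
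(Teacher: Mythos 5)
Your two-part decomposition (fluctuations about the mean, then bias of the mean) matches the paper's structure exactly: Proposition~4.1 controls $m_\mu(y,0,\cdot) - M_\mu(y)$, Proposition~5.1 controls $M_\mu(y) - \overline m_\mu(y)$, and the easy inequality $\overline m_\mu(y)\leq M_\mu(y)$ from Fekete gives \eqref{easy} for free. The observation that the restriction \eqref{lamb-cond} appears only in \eqref{hard} because it absorbs the bias is also right.

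However, the martingale setup you propose for the lower tail has a genuine gap that would break the exponent. You partition $B_R$ (with $R\sim |y|/\mu$) into $N\sim R^\d$ unit cubes, reveal cube by cube, bound each martingale increment by a constant $b$, and conclude from Azuma a bound $\exp(-\lambda^2/(2Nb^2))$. You then claim $Nb^2\sim C|y|/\mu$, but with your own choice $N\sim R^\d\sim(|y|/\mu)^\d$, the crude bound gives $\exp\bigl(-c\mu^\d\lambda^2/|y|^\d\bigr)$, which for $\d\geq 2$ is far weaker than \eqref{easy} and is in fact useless downstream: plugging in the threshold value $\lambda\sim |y|^{1/2}\mu^{-3/2}\log^{1/2}$ from \eqref{lamb-cond} yields an exponent $\sim \mu^{\d-3}|y|^{1-\d}\log(\cdot)\to 0$. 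The parenthetical you dismiss (``the increment is controlled by $b$ times whether the optimal path meets $Q_k$'') is not a refinement one can skip; it is the entire content of the estimate in $\d\geq 2$, and exploiting it requires a concentration inequality that uses the sum of conditional variances (whose expectation is the mean geodesic length $\sim|y|$) rather than plain Azuma, which only sees the sum of the squared uniform bounds $Nb^2$.

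The paper sidesteps this entirely by using a \emph{time-indexed} filtration $\mathcal F_{\mu,t}$ generated by the values of $H$ inside the reachable set $\Re_{\mu,t}$, and the continuous martingale $X_t=\E[m_\mu(y,0,\cdot)\mid\mathcal F_{\mu,t}]-M_\mu(y)$. The increments $|X_s-X_t|$ are bounded by $L_\mu + (L_\mu/l_\mu+1)|s-t|$ (this is the bounded-differences estimate, obtained via the dynamic programming principle, a localization lemma, and a discretization of the space of compact sets together with the finite-range-dependence hypothesis). After discretizing in time with step $h=l_\mu L_\mu/(l_\mu+L_\mu)$, one has a discrete martingale with increments $\leq 2L_\mu$ and only $N\sim T/h\sim|y|/\mu$ steps, so plain Azuma gives exactly $\exp(-c\mu\lambda^2/|y|)$. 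The choice to reveal the environment in growing fronts, rather than cube by cube, is what makes $N$ scale linearly in $|y|$ and is the structural idea missing from your construction.

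For the upper tail, your sketch points at the right strategy (Alexander's bias estimate via approximate superadditivity), but it is too vague to assess as a proof. The paper's implementation is quite specific: it introduces passage times to hyperplanes $H_t$, the smoothed quantity $g_{\mu,\sigma}(t)=-\sigma^{-1}\log\sum_{y\in\hat H_t}\E[e^{-\sigma m_\mu(y,0,\cdot)}]$, proves $g_{\mu,\sigma}$ is almost superadditive via a splitting argument using the dynamic programming principle and independence across parallel slabs, applies the Hammersley–Fekete lemma, converts this to an estimate for $M_\mu(H_t)-\overline m_\mu(H_t)$, and finally passes from hyperplane quantities to point quantities $M_\mu(y)$ by a Carath\'eodory-type convex hull argument. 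Your dyadic-concatenation picture captures the spirit but none of these concrete mechanisms.
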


The error estimates for the metric problem and a careful quantification of the comparison arguments introduced in~\cite{ASo3}, together with an analysis of the convergence on the flat spot under the additional assumption~\eqref{plushyp}, yield the following error estimates for the limit~\eqref{dvdlimit}. The basic properties of the solutions $v^\delta$ of the approximate cell problem~\eqref{acp} are outlined in the next section.

\begin{thm}[Error estimates for the approximate cell problem] \label{acpEE}
Assume~\eqref{assum} and fix $K> 0$. There exists $C > 0$, depending only on $K$ and $H$, such that, for every $|p| \leq K$, we have:
\begin{enumerate}

\item[(i)] For every $0 < \delta \leq \lambda \leq 1$,
\begin{equation}\label{dvdEEabove}
\Prob\left[ -\delta v^\delta(0,\cdot\,;p) \geq \overline H(p) + \lambda \right] \leq C \delta^{-3\d} \exp\left( -\frac{\lambda^3}{C\delta} \right).
\end{equation}

\item[(ii)] If $\overline H(p)> 0$ and $0 < \delta \leq \lambda \leq 1$ satisfy	
\begin{equation}\label{lamb-condout2}
\lambda \geq C \left(  \overline H(p)^{-\frac32}\delta^{\frac12}  + \overline H(p)^{-1}\delta^{\frac13}\right) \left( 1 + | \log \delta| + |\log \overline H(p)| \right)^\frac12,
\end{equation}
then
\begin{equation}\label{dvdEEbelow}
\Prob\left[ -\delta v^\delta(0,\cdot\,;p) \leq \overline H(p) - \lambda \right] \leq C \delta^{-3\d} \exp\left( -\frac{ \overline H(p)\lambda^2}{C\delta} \right).
\end{equation}

\item[(iii)] Assume also~\eqref{plushyp}. There exists $c > 0$, depending on $K$ and $H$, such that if $0 < \delta \leq \lambda \leq c$ satisfy
\begin{equation} \label{lambang}
\lambda \geq C \delta^{\frac16} | \log\delta|^{\frac1{4}},
\end{equation}
then
\begin{equation}\label{EEFq}
\Prob\left[ -\delta v^\delta (0,\cdot\, ; p) \leq \overline H(p) - \lambda \right] \leq C \delta^{-3\d} \exp\left( - \frac{1}{C} \left(  \frac{\lambda^3}{\delta} \wedge \frac{\lambda^{\d+\theta}}{\delta^\d}\right) \right).
\end{equation}
\end{enumerate}
\end{thm}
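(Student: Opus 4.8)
The plan is to derive Theorem~\ref{acpEE} from the error estimates for the metric problem (Theorem~\ref{mpEE}) together with the comparison arguments relating $v^\delta$ to the $m_\mu$'s. The key bridge is the pair of inequalities from~\cite{ASo3}: for every $p\in\Rd$ and $\mu > \min\overline H$, the solution $v^\delta$ satisfies a sandwich of the form $-\delta v^\delta(0,\cdot\,;p)$ lies between quantities controlled by $\sup_{|y|=r} (m_\mu(y,0,\cdot) - \mu_*|y|)$-type expressions and $\delta$-exponential tails (coming from the fact that $\delta v^\delta(y) \sim e^{-\delta \cdot(\text{distance})}$ along optimal trajectories). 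Concretely, one picks a radius $r = r(\delta)$ of order $\delta^{-1}|\log\delta|$ so that the running cost accrued over distance $r$ is comparable to the effective value, uses the comparison principle to bound $-\delta v^\delta(0)$ above and below by $\sup$/$\inf$ of $m_\mu$ over the sphere $\partial B_r$ divided by $r$ (for appropriate $\mu$ near $\overline H(p)$), and then inserts the probabilistic estimates~\eqref{easy} and~\eqref{hard}.

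For part~(i), the upper bound for $-\delta v^\delta(0)$: using the lower bound of the metric problem on $-\delta v^\delta$ from~\cite{ASo3}, an excursion of $m_\mu$ above its mean by $\lambda r$ on the sphere is needed to push $-\delta v^\delta(0)$ above $\overline H(p) + \lambda$; applying~\eqref{easy} at each of $O(\delta^{-\d})$ points of a net on $\partial B_r$ (this is the source of the $\delta^{-3\d}$ prefactor after optimizing $r\sim\delta^{-1}$ and using Lipschitz bounds~\eqref{reg} to pass from the net to the full sphere) and a union bound gives $C\delta^{-3\d}\exp(-c\mu(\lambda r)^2/r)$. Choosing $\mu = \overline H(p) + \lambda \asymp 1$ and $r\asymp\lambda/\delta$ (balancing the $\delta$-cost $e^{-\delta r}$ against $e^{-c\lambda}$ forces $\delta r \asymp \lambda$, hence $r \asymp \lambda/\delta$) converts $\mu(\lambda r)^2/r = \mu\lambda^2 r \asymp \lambda^3/\delta$, which is exactly the exponent in~\eqref{dvdEEabove}. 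Part~(ii) is dual: in the ballistic regime $\overline H(p) > 0$ we have $\mu = \overline H(p) > 0$ bounded below, so the condition~\eqref{lamb-cond} of Theorem~\ref{mpEE} translates, after substituting $|y| \asymp r \asymp \delta^{-1}$ and $\lambda_{\mathrm{metric}} \asymp \lambda r$, precisely into~\eqref{lamb-condout2}; then~\eqref{hard} plus the union bound over the net yields $C\delta^{-3\d}\exp(-c\,\overline H(p)(\lambda r)^2/r) = C\delta^{-3\d}\exp(-c\,\overline H(p)\lambda^2/\delta)$, which is~\eqref{dvdEEbelow}.

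Part~(iii) is the genuinely new piece, handling the sub-ballistic regime $\overline H(p) = 0$ where Theorem~\ref{mpEE} degenerates (its exponents carry factors of $\mu$ that vanish). Here I would not use the metric problem at all but argue directly with $v^\delta$ via explicit comparison. The lower bound $-\delta v^\delta(0) \geq \overline H(p) - \lambda = -\lambda$ can fail only if there is a region near the origin — of linear size $\ell$, so of volume $\asymp\ell^\d$ — where $H(p,\cdot,\omega)$ stays close to its essential supremum $0$; building a subsolution supported on such a region and solving the eikonal-type inequality there shows $-\delta v^\delta(0) \geq -C(\delta\ell + \ell^{-1})$ on the event that $H(p,\cdot,\omega) \geq -c\ell^{-2}$ or so throughout the region (the precise threshold comes from solving $\delta v + H(p+Dv) = 0$ with $H$ near $0$). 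Optimizing $\ell \asymp \delta^{-1/2}$ in the $\delta$-ballistic term gives the $\lambda^3/\delta$ branch (when $H$ near its max costs nothing extra), while using finite range of dependence and~\eqref{plushyp} to estimate the probability that \emph{no} cube of side $\ell$ in a box of size $\delta^{-1}$ has $H$ uniformly near $0$ produces, by independence over $\asymp(\delta^{-1}/\ell)^\d$ disjoint cubes each with success probability $\gtrsim \lambda^\theta$ (by~\eqref{plushyp} with scale $\lambda$), a failure probability $\leq (1 - c\lambda^\theta)^{(\delta\ell)^{-\d}\cdots}$, giving the competing branch $\exp(-c\lambda^{\d+\theta}/\delta^\d)$; taking the minimum of the two mechanisms yields~\eqref{EEFq}, and~\eqref{lambang} is precisely the range of $\lambda$ in which both the comparison construction and the probabilistic estimate are valid (it ensures $\ell \gg 1$ relative to the dependence range and that the discretization error from the net is absorbed). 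The main obstacle throughout is bookkeeping the passage from the sphere/net to the continuum — controlling the $\delta^{-3\d}$ prefactor honestly via the Lipschitz estimate~\eqref{reg} and the a priori bounds on $v^\delta$ — and, in part~(iii), correctly identifying the length scale $\ell$ and the threshold on $H$ so that the two failure mechanisms (too much $\delta$-decay versus an unlucky environment) are balanced; the sub-ballistic estimate also requires care because the relevant comparison function must be an honest viscosity subsolution across the boundary of the good region, which is where assumption~\eqref{cntl} (the common minimum of $H(\cdot,y,\omega)$ at $p=0$) is used to guarantee controllability.
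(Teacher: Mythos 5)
Your description of parts~(i) and~(ii) is correct in outline and matches the paper's strategy: perturb $v^\delta$ slightly, apply the comparison principle against a suitable translate of $m_\mu$ (with $\mu=\overline H(p)+\lambda/4$, and for~(ii) using the special direction $e$ in~\eqref{suppe}), conclude that if $-\delta v^\delta(0,\omega;p)$ deviates by $\lambda$ then some $m_\mu(z,y,\omega)-\overline m_\mu(z-y)$ deviates by $\asymp\lambda/\delta$ for some $|y-z|\lesssim\delta^{-1}$, then snap $z$ to a grid of $\asymp\lambda^{-2\d}\le\delta^{-2\d}$ points and apply the covering version of Theorem~\ref{mpEE} (which itself contributes $R^\d\le C\delta^{-\d}$), giving the $\delta^{-3\d}$ prefactor. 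The details are a bit off (the paper does not take $r\asymp\lambda/\delta$ or $\mu=\overline H(p)+\lambda$), but the mechanism is right.

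Part~(iii) has a genuine gap, in two places. First, your argument sets $\overline H(p)-\lambda=-\lambda$, i.e.\ implicitly assumes $p$ lies on the flat spot, but~\eqref{EEFq} is asserted for \emph{all} $|p|\le K$. The paper covers general $p$ by a dichotomy: when $\lambda\ge 2\overline H(p)$ it applies the purely sub-ballistic estimate $\Prob[-\delta v^\delta(0,\cdot\,;p)\le -\lambda]\le\exp(-\lambda^{\d+\theta}/C\delta^\d)$ (Proposition~\ref{dvdEEF}, valid for every $p$); when $\lambda<2\overline H(p)$ it invokes the \emph{ballistic} estimate~\eqref{dvdEEbelow} from part~(ii); and the arithmetic fact~\eqref{gumpa} shows that under~\eqref{lambang} one of the two branches always applies. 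The $\lambda^3/\delta$ term in~\eqref{EEFq} comes entirely from the ballistic branch — it is not produced by any direct sub-ballistic construction — so ``not using the metric problem at all'' cannot yield~\eqref{EEFq} for general $p$. Second, even on the flat spot your probabilistic mechanism is not supported by the hypotheses: you want ``a cube of side $\ell$ in which $H(p,\cdot,\omega)$ stays uniformly close to its essential supremum'' to have probability $\gtrsim\lambda^\theta$, but~\eqref{plushyp} is a \emph{one-point} bound $\Prob[H(0,0,\cdot)>-\lambda]\ge c\lambda^\theta$; it implies nothing of the sort for the infimum of $H(0,\cdot,\omega)$ over an entire cube (generically that probability is much smaller, and in fact a union bound gives an \emph{upper} bound by $\lambda^\theta$ up to a volume factor, not a lower bound). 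The paper sidesteps this entirely via Lemma~\ref{dumb}: thanks to~\eqref{cntl} and the Lipschitz bound~\eqref{vdlip}, a \emph{single} point $y\in B_R$ with $H(0,y,\omega)>-\lambda/2$ and $R=\lambda/(2K_p\delta)$ already forces $-\delta v^\delta(0,\omega;p)>-\lambda$. One then picks $\asymp R^\d\asymp(\lambda/\delta)^\d$ points at mutual distance $>1$ and uses independence and~\eqref{plushyp} to get a failure probability $\le(1-c\lambda^\theta)^{c(\lambda/\delta)^\d}\le\exp(-c\lambda^{\d+\theta}/\delta^\d)$. There is no $\ell$-cube and no optimization over $\ell$; note also that your choice $\ell\asymp\delta^{-1/2}$ gives only $\asymp\delta^{-\d/2}$ cubes, so even granting the cube estimate the exponent count would give $\exp(-c\lambda^\theta\delta^{-\d/2})$, which matches neither branch of~\eqref{EEFq}.
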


By a covering argument and an application of the Borel-Cantelli lemma, the error estimates contained in Theorem~\ref{acpEE} yield $\Prob$-almost sure, local uniform rates of convergence for the limit~\eqref{dvdlimit}.

\begin{thm}[A convergence rate for the approximate cell problem]  \label{acpCR}
Assume~\eqref{assum} and fix $K> 0$. Then there exists an event $\Cl[O]{O-acpCR}\in \mathcal F$ of full probability and a constant $C>0$, depending on~$K$ and~$H$, such that, for every $|p| \leq K$ and $\omega\in \Cr{O-acpCR}$, the following hold:
\begin{enumerate}
\item[(i)] For every $R> 0$,
\begin{equation}\label{aboveRas}
\limsup_{\delta\to 0} \sup_{y\in B_{R/\delta}} \frac{-\delta v^\delta(y,\omega\,;p) - \overline H(p)}{C\delta^{\frac13}|\log \delta|^{\frac13} } \leq  1.
\end{equation}
\item[(ii)] If $\overline H(p) > 0$, then, for every $R> 0$,
\begin{equation}\label{belowRasF}
\liminf_{\delta\to 0} \inf_{y\in B_{R/\delta}} \frac{-\delta v^\delta(y,\omega\,;p) - \overline H(p)}{C \overline H(p)^{-1} \delta^{\frac13} |\log \delta|^{\frac12}} \geq  -1.
\end{equation}

\item[(iii)] If~\eqref{plushyp} holds and we set
\begin{equation}\label{alphbeta}
\alpha:= \frac16 \wedge \frac{\d}{\d+\theta}\qquad \mbox{and} \qquad \beta:= \frac14 \;,
\end{equation}
then, for every $R> 0$, 
\begin{equation}\label{belowRasF2}
\liminf_{\delta\to 0} \inf_{y\in B_{R/\delta}} \frac{-\delta v^\delta(y,\omega\,;p) - \overline H(p)}{C \delta^{\alpha} |\log \delta|^{\beta} } \geq  -1.
\end{equation}
\end{enumerate}
\end{thm}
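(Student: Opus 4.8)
The plan is to deduce all three statements from the quantitative estimates of Theorem~\ref{acpEE} by the standard discretize--union bound--Borel--Cantelli scheme, the only non-routine points being the choice of the discrete scale and the bookkeeping needed to produce a \emph{single} full-probability event valid for all $|p|\le K$ and all $R>0$. I would take the scale $\delta_k:=1/k$, $k\in\N^*$; the point of this choice is that $(\delta_k-\delta_{k+1})/\delta_{k+1}=1/k\to0$, which is exactly what is needed to keep the errors incurred in passing from $\delta_{k+1}$ to an arbitrary $\delta\in[\delta_{k+1},\delta_k]$ of order $\delta_k$ --- negligible against the rates $\delta_k^{\alpha}|\log\delta_k|^{\beta}$ with $\alpha\le\frac16$ --- while the Borel--Cantelli series still converge.

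First I would collect the deterministic ingredients, all of which come from the basic properties of $v^\delta$ reviewed in Section~\ref{Pre} together with the Lipschitz continuity of $\overline H$: for $|p|,|q|\le K+1$ one has $\norm{Dv^\delta(\cdot\,;p)}_\infty\le C$, $\norm{v^\delta(\cdot\,;p)}_\infty\le C/\delta$, the Lipschitz-in-$p$ estimate $\norm{\delta v^\delta(\cdot\,;p)-\delta v^\delta(\cdot\,;q)}_\infty\le C|p-q|$, and, by applying the comparison principle to the discount terms, $\norm{\delta v^\delta-\delta'v^{\delta'}}_\infty\le C|\delta-\delta'|/(\delta\wedge\delta')$; moreover, by stationarity~\eqref{stnary} and~\eqref{pres}, $-\delta v^\delta(y,\cdot\,;p)$ has the same law as $-\delta v^\delta(0,\cdot\,;p)$ for every $y\in\Rd$. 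Now fix $R\in\N^*$ and, for each large $k$, a $\delta_k$-net $\mathcal P_k$ of $\{|p|\le K+1\}$ and a $1$-net $\mathcal Y_{k,R}$ of $B_{R/\delta_k}$, each of cardinality at most $C\delta_k^{-\d}$ (with $C$ depending also on $R$). For part~(i), put $\lambda_k:=C_0\delta_k^{1/3}|\log\delta_k|^{1/3}$ and let $A_k$ be the event that $-\delta_k v^{\delta_k}(y,\cdot\,;p)-\overline H(p)>\lambda_k$ for some $p\in\mathcal P_k$ and $y\in\mathcal Y_{k,R}$. By~\eqref{dvdEEabove} (whose admissibility requirement $\delta_k\le\lambda_k\le1$ holds for $k$ large) and a union bound, $\Prob[A_k]\le C\delta_k^{-5\d}\exp(-\lambda_k^3/(C\delta_k))=Ck^{5\d-C_0^3/C}$, which is summable once $C_0$ is large enough; Borel--Cantelli then gives a full-probability event on which $A_k$ fails for $k\ge k_0(\omega,R)$. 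For any $|p|\le K$, $\omega$ in this event, and $\delta\le\delta_{k_0}$, choose $k$ with $\delta\in[\delta_{k+1},\delta_k]$ and net points $p'\in\mathcal P_{k+1}$, $y'\in\mathcal Y_{k+1,R}$ with $|p-p'|\le\delta_{k+1}$, $|y-y'|\le1$; telescoping and applying the estimates above bounds $-\delta v^\delta(y,\omega\,;p)-\overline H(p)$ by $\lambda_{k+1}+C\delta_k$, which is $\le C'\delta^{1/3}|\log\delta|^{1/3}$ since $\delta_{k+1}\le\delta\le2\delta_{k+1}$. Intersecting over $R\in\N^*$ and using monotonicity in $R$ yields~\eqref{aboveRas}.

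Parts~(ii) and~(iii) follow the same template, with~\eqref{dvdEEbelow} and $\lambda_k:=C_0\,\overline H(p)^{-1}\delta_k^{1/3}|\log\delta_k|^{1/2}$, respectively~\eqref{EEFq} and $\lambda_k:=C_0\delta_k^{\alpha}|\log\delta_k|^{\beta}$ with $\alpha,\beta$ as in~\eqref{alphbeta}. In part~(ii) the estimate degenerates as $\overline H(p)\downarrow0$, so one runs the argument on the compact sets $\{|p|\le K+1:\overline H(p)\ge\eta\}$ for $\eta$ ranging over a countable set; this loses nothing because~\eqref{belowRasF} is a pointwise statement in $p$, and for fixed $p$ with $\overline H(p)>0$ a net point $p'$ with $|p-p'|$ small has $\overline H(p')\ge\frac12\overline H(p)$, so the rate $\overline H(p')^{-1}\delta^{1/3}|\log\delta|^{1/2}$ is comparable to $\overline H(p)^{-1}\delta^{1/3}|\log\delta|^{1/2}$. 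In each part one checks that the proposed $\lambda_k$ satisfies the relevant side condition of Theorem~\ref{acpEE} ---~\eqref{lamb-condout2} for~(ii),~\eqref{lambang} for~(iii) --- and that it makes the Borel--Cantelli series converge (for~(iii), $\exp(-\frac1C(\lambda_k^3/\delta_k\wedge\lambda_k^{\d+\theta}/\delta_k^\d))$ decays like a fixed large negative power of $k$, which beats the polynomial prefactor $\delta_k^{-5\d}$). The event $\Cr{O-acpCR}$ is the intersection of the three full-probability events thus produced, and the constant $C$ is the largest of the three constants.

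The only genuinely substantive step --- everything else being essentially mechanical once Theorem~\ref{acpEE} is in hand --- is the exponent arithmetic underlying part~(iii): a single monomial $\delta_k^{\alpha}|\log\delta_k|^{\beta}$ must simultaneously dominate $\delta_k^{1/3}|\log\delta_k|^{1/3}$ (imposed by the term $\lambda^3/\delta$ in the exponent of~\eqref{EEFq}), $\delta_k^{\d/(\d+\theta)}|\log\delta_k|^{1/(\d+\theta)}$ (imposed by $\lambda^{\d+\theta}/\delta^\d$), and $\delta_k^{1/6}|\log\delta_k|^{1/4}$ (imposed by the admissibility condition~\eqref{lambang}). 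The first two inequalities force $\alpha\le\frac16\wedge\frac{\d}{\d+\theta}$; when $\alpha=\frac16$ the third forces $\beta\ge\frac14$; and in the remaining case $\alpha=\frac{\d}{\d+\theta}<\frac16$ one verifies $\beta=\frac14\ge\frac1{\d+\theta}$ using $\d+\theta>5\d\ge5$. This is precisely the choice~\eqref{alphbeta}, and it is the one place where the detailed form of the exponents in Theorem~\ref{acpEE} is actually used.
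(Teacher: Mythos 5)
Your proof is correct and follows essentially the same scheme as the paper's: discretize at $\delta_k=1/k$, apply Theorem~\ref{acpEE} with a covering and union bound (the paper packages the $y$-covering into Lemma~\ref{dvdEE2} and runs the Borel--Cantelli step for fixed $p$, deferring the $p$-uniformity to the argument sketched for Proposition~\ref{acpCRU}; your simultaneous $p$-discretization is a mild, self-contained variant), then apply Borel--Cantelli and interpolate between consecutive scales via~\eqref{dvddepd}. The only slip is a small arithmetic one in part~(iii): the case $\alpha=\d/(\d+\theta)<\tfrac16$ forces $\d+\theta>6\d$, not $5\d$, which only strengthens the needed inequality $\beta=\tfrac14\ge\tfrac1{\d+\theta}$, so the conclusion stands.
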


The previous two results are proved in Section~\ref{EEdvd}, where we also give a converse to Theorem~\ref{acpCR}(iii), which states that the extra assumption~\eqref{plushyp} is actually necessary for an algebraic rate of convergence to hold at $p=0$. Indeed, keeping in mind that our assumptions imply that $\overline H(0) = 0$, we prove in Proposition~\ref{plushyp-roi} roughly that, if~\eqref{plushyp} is false, then for every exponent $\eta > 0$,
\begin{equation}\label{noalgrate}
\liminf_{\delta\to 0} \frac{-\delta v^\delta(0,\omega\,;0)}{\delta^{\eta} } = -\infty \quad \Prob-\mbox{a.s.}
\end{equation}
Furthermore, for any modulus function $\rho$, we construct examples of $H$'s satisfying \eqref{assum} for which
\begin{equation}\label{norate}
\liminf_{\delta\to 0} \frac{-\delta v^\delta(0,\omega\,;0)}{ \rho(\delta) } \leq -1   \quad \Prob-\mbox{a.s.}
\end{equation}
It is therefore necessary to impose, in addition to~\eqref{assum},  some assumption on the distribution of $H(0,0,\cdot)$ near its essential supremum in order to obtain a rate for the limit~\eqref{dvdlimit} at $p=0$. 

We next present our main quantitative results for the homogenization of~\eqref{HJq}. Here~$u^\ep$ and~$u$ denote, respectively, the unique solutions of~\eqref{HJq} and~\eqref{HJh} subject to the initial condition $u^\ep(\cdot,0) = u(\cdot,0) = u_0\in C^{0,1}(\Rd)$, which are bounded and Lipschitz continuous on $\Rd \times [0,T]$ for each $T> 0$. We begin with exponential estimates for the probability that $|u^\ep(x,t) - u(x,t)|$ is large.

\begin{thm}[Error estimates for homogenization] \label{EEH}

Assume~\eqref{assum} and fix $K>0$. Then there exists a constant $C > 0$, depending on $K$ and $H$ such that, for every $u_0 \in C^{0,1}(\Rd)$ satisfying $\| u_0 \|_{C^{0,1}(\Rd)} \leq K$ and~$T\geq 1$, the following hold:

\begin{enumerate}

\item[(i)] For every $0 < \ep \leq 1$ and $\lambda \geq C \ep^{\frac13}$, 
\begin{equation}\label{EEa}
 \Prob\left[ \inf_{x\in B_T} \inf_{0\leq t \leq T} \left( u^\ep(x,t,\cdot) - u(x,t) \right)  \leq - \lambda T \right] \leq C T^{6\d} \lambda^{9\d} \ep^{-6\d} \exp\left( -\frac{T\lambda^5 }{C\ep} \right). 
\end{equation}

\item[(ii)] If~\eqref{plushyp} holds, then, for every $0 < \ep \leq 1$ and 
\begin{equation}\label{alpbangp}
\lambda \geq C \ep^{\frac18} |\log \ep|^{\frac3{16}},
\end{equation}
\begin{multline}\label{EEb}
\quad \Prob\left[ \sup_{x\in B_T} \sup_{0\leq t \leq T} \left( u^\ep(x,t,\cdot) - u(x,t) \right)  \geq \lambda T \right]  \\ \leq C T^{6\d} \lambda^{9\d} \ep^{-6\d} \exp\left( - \frac1{C} \left(  \frac{T\lambda^5}{\ep} \wedge \frac{T^\d\lambda^{3\d+\theta}}{\ep^\d}\right) \right).
\end{multline}

\end{enumerate}
\end{thm}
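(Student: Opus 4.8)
The plan is to transfer the error estimates for the approximate cell problem (Theorem~\ref{acpEE}) to the time-dependent equation~\eqref{HJq} via the standard metric/control formula for Hamilton-Jacobi equations, combined with a covering argument and a union bound. The starting point is the representation of $u^\ep$ in terms of the rescaled metric problem: since $H$ is level-set convex and coercive, the solution $u^\ep(x,t,\omega)$ with initial data $u_0$ can be written (via the Lax-Oleinik/optimal control formula) as an infimum over paths, and for the purposes of comparison it suffices to sandwich $u^\ep$ between quantities built from $m_\mu(\tfrac{x}{\ep},\tfrac{y}{\ep},\omega)$ for an appropriate range of $\mu$. Equivalently, one quantifies the relation~\eqref{dvdlimit}: the comparison arguments of~\cite{ASo3}, already used to prove Theorem~\ref{acpEE}, show that a bound on $-\delta v^\delta(0,\cdot\,;p) - \overline H(p)$ with high probability translates, after integrating in $p$ (i.e.\ using that $\overline H$ is the Legendre-type dual controlling the Hopf-Lax formula for~\eqref{HJh}), into a bound on $u^\ep - u$. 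The Lipschitz bounds on $u^\ep$ and $u$ on $\Rd\times[0,T]$ (uniform in $\ep$) let us reduce the supremum over $B_T\times[0,T]$ to a supremum over a finite net of mesh $\sim \ep^{a}$ for a suitable $a>0$; this is where the polynomial prefactors $T^{6\d}\lambda^{9\d}\ep^{-6\d}$ come from, after choosing the net fine enough that the discretization error is absorbed into $\lambda T$.

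More concretely, I would proceed as follows. \emph{Step 1:} Fix $(x,t)\in B_T\times[0,T]$ and use the control representation to bound $u^\ep(x,t,\omega)-u(x,t)$ above and below by expressions of the form $\sup$ / $\inf$ over $|p|\le K'$ (with $K'$ controlled by $K$ through the Lipschitz bound and coercivity) of $t\big(-\delta v^\delta(0,\cdot\,;p)-\overline H(p)\big)$ with $\delta \sim \ep/t$, plus lower-order errors. \emph{Step 2:} For the lower bound in (i), apply Theorem~\ref{acpEE}(i) with $\delta = \ep/t \le \ep$, $\lambda \rightsquigarrow c\lambda$, and note $-\lambda^3/(C\delta) = -t\lambda^3/(C\ep)$; since we need the $\lambda^5$ exponent and the constraint $\lambda \ge C\ep^{1/3}$, the extra powers come from first passing through the metric estimate~\eqref{hard}--\eqref{easy} (which is genuinely the source of the $\lambda^2/|y|$ scaling) and then from the quantitative comparison between $v^\delta$ and $m_\mu$, which costs a power of $\lambda$ at each of the two reductions (metric $\to$ cell problem, cell problem $\to$ time-dependent), degrading $\lambda^3$ to $\lambda^5$. \emph{Step 3:} For (ii), do the same using Theorem~\ref{acpEE}(iii): the bound $\lambda \ge C\ep^{1/8}|\log\ep|^{3/16}$ is exactly~\eqref{lambang} with $\delta\sim\ep$ pushed through one more reduction (raising $\delta^{1/6}$ to $\delta^{1/8}$ and $|\log\delta|^{1/4}$ to $|\log\ep|^{3/16}$), and the exponent $\tfrac{T\lambda^5}{\ep}\wedge\tfrac{T^\d\lambda^{3\d+\theta}}{\ep^\d}$ is the image of $\tfrac{\lambda^3}{\delta}\wedge\tfrac{\lambda^{\d+\theta}}{\delta^\d}$ under the same substitution and $\lambda$-degradation. \emph{Step 4:} A union bound over the $O\big((T/\ep^{a})^{\d+1}\big)$ net points, together with the $\delta^{-3\d}$ prefactor from Theorem~\ref{acpEE} (with $\delta\sim\ep/T$), produces the stated polynomial prefactor after bookkeeping; the exponential term dominates so the final probability has the claimed form.

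The main obstacle will be \emph{Step 1 and the bookkeeping of exponents in Steps 2--3}: one has to make the passage from $v^\delta$ to $u^\ep$ fully quantitative and uniform in $p$ over a compact set, tracking how the hypotheses $\lambda \ge C\ep^{1/3}$ (resp.\ \eqref{alpbangp}) and the final exponents $\lambda^5$, $\lambda^{3\d+\theta}$ arise from the cell-problem exponents $\lambda^3$, $\lambda^{\d+\theta}$. In particular, the appearance of the factor $T$ in the exponent (rather than $T^5$ or similar) and the precise powers in the prefactor require care: one must choose the mesh of the net and the truncation in $p$ so that all discretization and truncation errors are $\le \tfrac12\lambda T$, which constrains $a$ and forces the $\ep^{-6\d}$, $\lambda^{9\d}$, $T^{6\d}$ dependence. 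The rest — the covering, the union bound, and invoking Theorem~\ref{acpEE} — is routine once the deterministic comparison estimate relating $u^\ep-u$ to $\sup_{|p|\le K'}\big(-\tfrac{\ep}{t}v^{\ep/t}(0,\cdot\,;p)-\overline H(p)\big)$ is in hand.
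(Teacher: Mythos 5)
The central step of the paper's proof is a deterministic comparison estimate (Lemma~\ref{incluHH}) relating $u^\ep-u$ to $-\delta v^\delta-\overline H$, and you have not supplied it; the mechanism you propose for it does not match what is needed. The paper proves Lemma~\ref{incluHH} by a quantitative version of Evans's perturbed test function method in the style of Capuzzo-Dolcetta--Ishii~\cite{ICD}: one forms an auxiliary function combining $u^\ep$, $u$, $\ep\,v^\delta(\cdot/\ep,\omega;\,\cdot)$ and penalization terms, maximizes it, and runs the doubling-of-variables comparison argument. The Lax--Oleinik/control representation you invoke is explicitly \emph{not} used anywhere in the paper (see Remark~\ref{contform}); moreover $H$ and $\overline H$ are only level-set convex, so the Hopf--Lax formula and the ``integrate in $p$ against the Legendre transform'' step you gesture at are not directly available. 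Saying that ``the comparison arguments of~\cite{ASo3}, already used to prove Theorem~\ref{acpEE}, show that a bound on $-\delta v^\delta$ translates into a bound on $u^\ep-u$'' is not correct: those arguments link $m_\mu$ to $v^\delta$, not $v^\delta$ to $u^\ep$; the passage $v^\delta\to u^\ep$ requires the separate perturbed-test-function estimate, which is where most of the technical work of this theorem lives.

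Your exponent bookkeeping is also off in a way that hides the actual mechanism. You take $\delta\sim\ep/t$, but the correct choice in Lemma~\ref{incluHH} is $\delta=C\ep/(T\lambda^2)$; the extra factor $\lambda^{-2}$ is forced by the comparison argument (one power of $\lambda$ from the penalization scale $\alpha=\lambda T$, one from requiring the error $\ep/(\delta\alpha)\le\lambda$). This single substitution produces everything at once: $\exp(-\lambda^3/(C\delta))$ becomes $\exp(-T\lambda^5/(C\ep))$, the constraint $\delta\le c\lambda$ becomes $\lambda\ge C\ep^{1/3}$, the constraint $\lambda\ge C\delta^{1/6}|\log\delta|^{1/4}$ becomes $\lambda\ge C\ep^{1/8}|\log\ep|^{3/16}$, and the prefactor $R^\d\delta^{-5\d}$ (with $R\le C\delta\lambda T^2/\ep^2$) collapses to $T^{6\d}\lambda^{9\d}\ep^{-6\d}$. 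There is no separate net in $(x,t)$ and no ``two reductions each costing a power of $\lambda$''; the net is already inside Proposition~\ref{ballaz}, and the $\lambda^3\to\lambda^5$ degradation is just the substitution $\delta\mapsto\ep/(T\lambda^2)$. As written, your proposal would not produce the stated inequalities without first establishing Lemma~\ref{incluHH} (or an equivalent), and your route to it is not viable under the paper's hypotheses.
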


Our final main result is an almost sure, locally uniform, algebraic rate of convergence for the homogenization of~\eqref{HJq}.

\begin{thm}[Convergence rate for homogenization] \label{CRH}
Assume~\eqref{assum} and fix $K> 0$. Then there exists an event $\Cl[O]{O-CRH}\in \mathcal F$ of full probability and a constant $C>0$, depending on $K$ and $H$, such that, for every $\omega\in \Cr{O-CRH}$ and $u_0\in C^{0,1}(\Rd)$ with $\| u_0 \|_{C^{0,1}(\Rd)} \leq K$, the following hold:

\begin{enumerate}

\item[(i)] For every $T \geq 1$,
\begin{equation}\label{CRHup}
\liminf_{\ep \to 0} \inf_{x\in B_T} \inf_{0 < t \leq T} \frac{u^\ep(x,t,\omega) - u(x,t)}{\ep^{\frac15}|\log\ep|^{\frac15}} \geq - CT.
\end{equation}

\item[(ii)] If~\eqref{plushyp} holds, $\alpha$ and $\beta$ are as in~\eqref{alphbeta} and we set
\begin{equation}\label{alphbeta2}
\bar a:= \frac{\alpha}{1+2\alpha} = \frac18 \wedge \frac{\d}{3\d+\theta} \qquad \mbox{and} \qquad \bar b:= \frac{\beta}{1+2\alpha} = \frac3{16} \vee \frac{\d+\theta}{4(3\d+\theta)},
\end{equation}
then, for every $T \geq 1$,
\begin{equation}\label{CRHdn}
\limsup_{\ep \to 0} \sup_{x\in B_T} \sup_{0<t\leq T} \frac{u^\ep(x,t,\omega) - u(x,t)}{\ep^{\bar a}|\log\ep|^{\bar b}} \leq CT.
\end{equation}
\end{enumerate}
\end{thm}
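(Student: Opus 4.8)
The plan is to deduce Theorem~\ref{CRH} from Theorem~\ref{acpCR} by quantifying the standard passage from the approximate cell problem to the time-dependent homogenization problem. The key principle is that an error estimate of order $\delta^\gamma$ (up to logarithms) for the limit $-\delta v^\delta(0,\omega\,;p) \to \overline H(p)$ translates, after optimizing over the free parameter $\delta$ as a function of $\ep$, into an error estimate of order $\ep^{\gamma/(1+2\gamma)}$ for $u^\ep - u$. This explains the exponent bookkeeping in~\eqref{alphbeta2}: the bound $\bar a = \alpha/(1+2\alpha)$ comes from balancing the two sources of error (the $\delta v^\delta$-error and the discretization/scaling error $\ep/\delta$) when one chooses $\delta \sim \ep^{1/(1+2\alpha)}$, and similarly $\bar b$ tracks the logarithmic correction. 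Since~\eqref{CRHup} uses the one-sided estimate~\eqref{aboveRas} valid with no extra hypothesis and with exponent $\tfrac13$, the resulting exponent is $\tfrac13/(1+\tfrac23) = \tfrac15$, matching~\eqref{CRHup}; the lower bound in~(ii) uses~\eqref{belowRasF2} with exponent $\alpha$.

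The core of the argument is a comparison/perturbed-test-function estimate. First I would fix $\omega \in \Cr{O-acpCR}$ and, for a given small $\ep$, choose $\delta = \delta(\ep)$ (to be optimized at the end). For the upper bound direction I would build a supersolution of~\eqref{HJq} of the form roughly $u(x,t) + \ep\,v^\delta(x/\ep,\omega\,;Du(x,t)) + (\text{error terms})$, exploiting that $u$ solves~\eqref{HJh} and that $-\delta v^\delta(y,\omega\,;p)$ is uniformly close (by~\eqref{aboveRas}, on the relevant ball $B_{R/\delta}$) to $\overline H(p)$. Here one must handle the fact that $Du$ is not constant: one localizes in space-time on cubes of side $\sim\ep/\delta$ where $Du$ is nearly frozen, uses the Lipschitz bound on $u$ and the gradient bounds on $v^\delta$ coming from~\eqref{reg}/\eqref{coer}, and accumulates the errors. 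The bookkeeping produces a term of size $O(\ep/\delta)$ from freezing $Du$ and rescaling, and a term of size $O(\delta^{1/3}|\log\delta|^{1/3})$ from~\eqref{aboveRas}; the sup-convolution/inf-convolution regularization of $u$ and of the comparison contributes further lower-order terms. Applying the comparison principle for~\eqref{HJq} and then sending $\ep \to 0$ along the chosen $\delta(\ep)$, and finally optimizing $\delta \sim \ep^{1/(1+2\alpha)}$ (resp. $\ep^{3/5}$ in case (i)), yields~\eqref{CRHup} and~\eqref{CRHdn} with the stated exponents. The lower bound~\eqref{CRHup} is handled symmetrically using a subsolution built from $v^\delta$ and~\eqref{belowRasF2} — note that~\eqref{aboveRas} holds for all $p$, so the one-sided direction giving~\eqref{CRHup} needs only the unconditional estimate, while the other direction (part (ii)) needs~\eqref{plushyp}.

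The step I expect to be the main obstacle is the careful quantitative localization in $(x,t)$: one cannot simply plug a corrector for a single $p$ into the test function since $Du$ varies, and near points where $u$ is not differentiable (which can certainly occur for solutions of~\eqref{HJh}) one must work with the semicontinuous envelopes and sup-/inf-convolutions, tracking how the regularization parameter interacts with $\ep$, $\delta$, and $R/\delta$ (the radius on which~\eqref{aboveRas} is available). Making all these error terms genuinely of the claimed polynomial order — in particular ensuring the $|\log\ep|$ powers come out to exactly $\bar b$ — requires a delicate choice of all auxiliary scales as powers of $\ep$. A secondary technical point is that~\eqref{aboveRas}–\eqref{belowRasF2} are stated as $\limsup/\liminf$ inequalities with an unspecified constant $C$; to feed them into a comparison argument for each fixed small $\ep$ one first converts them (using the definition of full-probability event $\Cr{O-acpCR}$) into the statement that for $\omega \in \Cr{O-acpCR}$ there is $\delta_0(\omega)$ with $-\delta v^\delta(y,\omega\,;p) \leq \overline H(p) + 2C\delta^{1/3}|\log\delta|^{1/3}$ for all $\delta \leq \delta_0(\omega)$, $|p|\leq K$, $y \in B_{R/\delta}$, and then proceeds deterministically. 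Everything downstream is then a (somewhat lengthy but routine) viscosity-solution comparison computation, and the exponent arithmetic in~\eqref{alphbeta2} is exactly what the optimization $\min_\delta(\delta^\gamma + \ep/\delta)$ forces.
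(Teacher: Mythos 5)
Your strategy is exactly the paper's: quantify the perturbed--test--function comparison between $u^\ep$ and $u + \ep v^\delta(\cdot/\ep,\omega\,;Du)$, feed in the rate for $-\delta v^\delta \to \overline H$, then optimize the free parameter $\delta = \delta(\ep)$; and your exponent arithmetic $\bar a = \alpha/(1+2\alpha)$, $\bar b = \beta/(1+2\alpha)$ (giving $\tfrac15$ in case~(i)) is exactly the balancing $\min_\delta\bigl(\delta^\alpha + \ep/(T\lambda^2\delta)\bigr)$ that drives the proof. The paper packages the deterministic comparison step as Lemma~\ref{incluHH}, which is precisely the argument you sketch.

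There is one concrete quantitative point you have wrong, though, and it would break the write-up as stated. You propose to feed \emph{Theorem~\ref{acpCR}} into the comparison, using control of $-\delta v^\delta(y,\omega\,;p)-\overline H(p)$ for $y\in B_{R/\delta}$. But the localization in the doubled-variables auxiliary function requires a penalization of size $\gamma$ that contributes an error $O(\gamma)$ to the equation, forcing $\gamma\lesssim\lambda$, and it only pins the touching point to $|x_0|\le CT/\gamma$; after rescaling, the argument of $v^\delta$ ranges over $|y|\le CT/(\gamma\ep)$. Taking $\delta\sim\ep/(T\lambda^2)$ minimal (the optimal choice) gives $\gamma\sim\lambda$ and hence a required radius $\sim T/(\lambda\ep)\sim T\delta^{-2}$, not $\delta^{-1}$. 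Relatedly, Theorem~\ref{acpCR} is stated pointwise in $p$, whereas the comparison needs the bound uniformly over $|p|\le K$ simultaneously. Both issues are what Proposition~\ref{acpCRU} (the uniform-in-$p$, radius-$(R/\delta)^N$ version, used with $N=2$) is for; with that substitution in place of Theorem~\ref{acpCR}, your argument goes through and matches the paper.
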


\begin{remark} We discuss later the sharpness of the exponent $\bar a$ and $\bar b$. Let us point out for the moment that, 
in the special case that $H$ is positively homogeneous of order one in $p$, i.e., for every $t\geq 0$, $p,y\in \Rd$ and $\omega\in \Omega$,
\begin{equation}\label{poshom}
H(tp,y,\omega) = tH(p,y,\omega),
\end{equation}
condition~\eqref{plushyp} is clearly satisfied for $\theta =0$, and thus Theorem~\ref{CRH} gives a rate of $O\big(\ep^{\frac18}| \log \ep|^{\frac3{16}}\big)$ for homogenization. Moreover, this rate can be improved since~\eqref{poshom} implies that $\overline H$ is also positively homogeneous of order one, or equivalently that $\mu \mapsto \overline m_\mu(y)$ is positively homogeneous of order one, that is, for all $\mu > 0$, $x,y\in\Rd$ and $\omega\in \Omega$,
\begin{equation*}\label{}
m_\mu(y,x,\omega) = \mu m_1(y,x,\omega).
\end{equation*}
Thus the fluctuations of $m_\mu-\overline m_\mu$ are proportional to $\mu$, and this prevents~\eqref{hard} from degenerating as $\mu \to 0$. Indeed, we find that~\eqref{hard} holds for every $\lambda > 0$ satisfying
\begin{equation*}\label{}
\lambda \geq C_2 \mu |y|^{\frac23} \left( \log(1+|y|) \right)^{\frac12}
\end{equation*}
instead of the more restrictive~\eqref{lamb-cond}. This improvement may be propagated through the rest of the paper to find that~\eqref{belowRasF2} holds for $\alpha=\frac13$ and $\beta = \frac12$, and~\eqref{CRHdn} for $\bar a = \frac15$ and $\bar b=\frac3{10}$. A similar observation holds for Hamiltonians which are positively homogeneous of any positive order and we expect that other such improvements are possible for $H$'s with special structure.
\end{remark}

\subsection{Explicit examples}
We illustrate the assumptions with two simple but typical classes of Hamilton-Jacobi equations:
$$
H_1(p,y,\omega)= \frac12 |p|^2 -V(y,\omega) \qquad \mbox{and} \qquad H_2(p,y,\omega)= a(y,\omega)|p|.
$$
The former arises in problems in the calculus of variations and geometric optics, for example, and the latter in front propagation. To ensure that~\eqref{assum} is satisfied, we require $a,V:\R^d\times \Omega\to \R$ to be measurable, stationary with respect to the action of the translation group, satisfy a finite range of dependence hypothesis, be uniformly continuous and bounded in the first variable (uniformly in the second variable) and nonnegative. We also require  
\begin{equation}\label{CondSurV1}
\essinf_{\omega \in \Omega} V(0,\omega)=0
\end{equation}
and that $a(\cdot,\omega)$ is Lipschitz uniformly in $\omega$ and bounded below by a positive constant. Observe that the more restrictive condition \eqref{plushyp} is satisfied by  $H_2$ and for $H_1$ is equivalent to the existence of constants $\theta>0$ and $c>0$ such that, for every $0 < \lambda \leq c$,  
\begin{equation}\label{CondSurV2}
\Prob\left[ V(0,\cdot) < \lambda \right] \geq c \lambda^\theta.
\end{equation}
It is relatively easy to construct random potentials which do \emph{not} satisfy~\eqref{CondSurV2}: see Subsection~\ref{doucement}.

We remark that the following Hamiltonian is \emph{not} covered by our assumptions:
\begin{equation*} \label{}
H_1'(p,y,\omega)= \frac12 |p|^2 - p\cdot b(y,\omega).
\end{equation*}
Here $b$ is a random vector field satisfying appropriate conditions, and the assumption not satisfied is~\eqref{cntl}. We believe it would be very interesting to develop an error analysis for stochastic homogenization for Hamiltonians like $H_1'$ not satisfying~\eqref{cntl}. The difficulty from the point of view of our approach is that we lose control on the rate of growth of the sublevel sets of $m_\mu$.

There are many ways of constructing of random functions like $a$, $V$ and $b$, above. For example, one may consider a Poissonian point cloud, attach a deterministic bump function to every point and sum. Such a random function satisfies a finite range of dependence if the bump function has compact support, and is called a \emph{Poissonian potential}. There are other possibilities such as ``random checkerboards" and so on, but we do not discuss these here.

\section{Preliminaries} \label{Pre}

In this section we recall the basic properties of the metric problem and the approximate cell problem and their connections to the effective Hamiltonian. We conclude by giving the statement of some results needed in the sequel.

\subsection{The metric problem: basic properties}

We summarize some elementary facts concerning the functions $m_\mu$, which play a central role in the rest of the paper. They are defined for each $x,y\in \Rd$, $\mu \geq 0$, and $\omega\in\Omega$ by 
\begin{equation}\label{defmmu}
m_\mu(z,x,\omega) : = \sup \big\{ w(z) -w(x) \, : \, w\in \Lip \ \mbox{and} \ H(Dw,y,\omega) \leq \mu \ \mbox{in} \ \Rd \big\}.
\end{equation}
Note that, due to~\eqref{cntl}, the zero function belongs to the admissible class, which is therefore nonempty. Lemma~\ref{convtrick} and~\eqref{coer} yield that $m_\mu(y,x,\omega)$ is finite and, in fact, nonnegative and bounded from above by $C|y-x|$, for some $C> 0$ depending on an upper bound for $\mu$.

It is immediate from~\eqref{defmmu} that $m_\mu$ is measurable with respect to $\mathcal B \otimes\mathcal B\otimes \mathcal F$, since the expression on the right of \eqref{defmmu} is. Moreover, from~\eqref{defmmu} and~\eqref{stnary} we see that $m_\mu$ is jointly stationary in its first two variables, i.e., for every $x,y,z\in\Rd$ and $\omega\in\Omega$,
\begin{equation}\label{mmustat}
m_\mu(y,x,\tau_z\omega) = m_\mu(y+z,x+z,\omega).
\end{equation}

Also immediate from~\eqref{defmmu} (and the fact that a supremum of a family of viscosity subsolutions is a viscosity subsolution, see~\cite{CIL,Ba}) that the $m_\mu(\cdot,x,\omega)$'s are global subsolutions of~\eqref{eikp}, i.e., for every $\mu \geq 0$, $x\in \Rd$ and $\omega\in \Omega$, 
\begin{equation}\label{globsub}
H(Dm_\mu(\cdot,x,\omega),y,\omega) \leq \mu \quad \mbox{in} \ \Rd.
\end{equation}
Further properties of the $m_\mu$'s are recorded in the next proposition. Detailed proofs of most of these facts can be found in~\cite{ASo3}. In Appendix~\ref{appmappp} we present sketches of the arguments.

\begin{prop}\label{existMP}
For every $\mu \geq 0$ and $\omega\in \Omega$, the following hold:
\begin{enumerate}
\item[(i)] For each fixed $x\in \R$, the function $m_\mu(\cdot,x,\omega)$ is a solution of 
\begin{equation}\label{mpagan}
H(Dm_\mu(\cdot,x,\omega),y,\omega) = \mu \quad \mbox{in} \ \Rd\setminus \{ x \} \quad \mbox{and} \quad m_\mu(x,x,\omega) = 0. 
\end{equation}
Moreover, if $\mu > 0$, then $m_\mu$ is the unique nonnegative solution of~\eqref{mpagan}. 
\item[(ii)] If $U \subseteq \Rd$ is open, $x\in \Rd\setminus U$ and $u\in \Lip$ is a subsolution of~\eqref{eikp} in $U$, then 
\begin{equation}\label{maxm}
u - m_\mu(\cdot,x,\omega) \leq \max_{y\in \partial U} \left( u(y) -  m_\mu(y,x,\omega) \right) \quad \mbox{in} \ U. 
\end{equation}

\item[(iii)] For $x,y,z\in \Rd$,
\begin{equation}\label{subadd}
m_\mu(y,x,\omega) \leq m_\mu(y,z,\omega) + m_\mu(z,x,\omega).
\end{equation}

\item[(iv)] There exist $l_\mu, L_\mu \geq 0$ satisfying, for some $C,c>0$ depending only on an upper bound for~$\mu$, 
\begin{equation}\label{lmuLmu}
 c\mu \leq l_\mu \leq L_\mu \leq C,
\end{equation}
such that
\begin{equation}\label{control2}
l_\mu |y-x| \leq m_\mu(y,x,\omega) \leq L_\mu|y-x|.
\end{equation}

\item[(v)] For every $x,y\in \Rd$,
\begin{equation}\label{lips}
|m_\mu(y,x,\omega)  - m_\mu(z,x,\omega)| \leq L_\mu|y-z|.
\end{equation}

\item[(vi)] For every open set $U \subseteq \Rd$, $x\in U$ and $y\in \Rd \setminus U$,
\begin{equation}\label{dynprog2}
m_\mu(y,x,\omega) = \min_{z\in \partial U} \big( m_\mu(y,z,\omega) + m_\mu(z,x,\omega) \big).
\end{equation}

\item[(vii)] For every $x\in \Rd$, 
\begin{equation}\label{flipx2}
H(-Dm_\mu(x,\cdot,\omega) , \cdot, \omega) = \mu \quad \mbox{in} \ \Rd \setminus \{ x \}
\end{equation}
and
\begin{equation}\label{flipx}
H(-Dm_\mu(x,\cdot,\omega) , \cdot, \omega) \leq \mu \quad \mbox{in} \ \Rd.
\end{equation}

\item[(viii)] There exists a constant $c>0$, depending on an upper bound for $\mu$, such that, for every $0\leq \widehat \mu \leq \mu$ and $x,y\in \Rd$,
\begin{equation}\label{strinc}
m_{\widehat \mu} (y,x,\omega) + c( \mu-\widehat \mu) |x-y| \leq m_\mu(y,x,\omega).
\end{equation}
\end{enumerate}
\end{prop}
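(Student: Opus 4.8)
The plan is to establish each item by combining the variational characterization~\eqref{defmmu} with standard tools from viscosity solution theory (Perron's method, comparison principles) and with the structural hypotheses~\eqref{coer}, \eqref{sqc}, and~\eqref{cntl}. The key preliminary observation is that, by~\eqref{cntl}, the zero function lies in the admissible class $\{w \in \Lip : H(Dw,\cdot,\omega) \le \mu\}$, so $m_\mu \ge 0$; combined with coercivity~\eqref{coer}, any admissible $w$ has $|Dw| \le C(\mu)$ a.e., hence the class is equi-Lipschitz and the supremum in~\eqref{defmmu} is finite with $0 \le m_\mu(y,x,\omega) \le C(\mu)|y-x|$. This already gives the upper bound in~(iv)--(v) with $L_\mu \le C$ and proves~(v) is Lipschitz regularity inherited from the admissible class.

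First I would treat~(i): the subsolution property~\eqref{globsub} is automatic since a sup of subsolutions is a subsolution; that $m_\mu(\cdot,x,\omega)$ is in fact a \emph{solution} of $H(Du,y,\omega) = \mu$ in $\Rd \setminus \{x\}$ follows by Perron's method — if $m_\mu$ failed to be a supersolution at some $y_0 \ne x$, one could push it up slightly near $y_0$ staying below $\mu$ and staying admissible, contradicting maximality. Uniqueness for $\mu > 0$ is a comparison argument: two nonnegative solutions vanishing at $x$ must agree, using that $\mu > 0$ forces the strict monotonicity needed (here the controllability from~\eqref{cntl} together with $\mu>0$ prevents the degeneracy at $p=0$). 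Item~(ii) is the comparison principle for subsolutions of the Dirichlet-type problem against $m_\mu$ on a domain $U$ avoiding $x$; this is the ``maximal subsolution'' property and follows from~\eqref{defmmu} directly — if $u \le m_\mu$ on $\partial U$ then $u$ restricted appropriately can be glued into an admissible competitor. Subadditivity~(iii) is immediate from~(ii) (take $u = m_\mu(\cdot,x,\omega) - m_\mu(z,x,\omega)$ and $U = \Rd \setminus \{z\}$), or directly from~\eqref{defmmu}. The dynamic programming identity~(vi) is the two-sided version: ``$\le$'' is~(iii), and ``$\ge$'' follows because any admissible $w$ for the $(y,x)$ problem, when we track its value along an optimal trajectory, must cross $\partial U$, giving the reverse inequality via~(ii) applied on both $U$ and $\Rd\setminus\overline U$.

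For~(iv), the remaining point is the \emph{lower} bound $m_\mu(y,x,\omega) \ge l_\mu |y-x|$ with $l_\mu \ge c\mu$. Here I would use~\eqref{cntl}: since $H(p,y,\omega) \ge H(0,y,\omega)$ and the sublevel set $\{H(\cdot,y,\omega) \le \mu\}$ is therefore a bounded neighborhood of the origin whose size is controlled (via~\eqref{coer} and the modulus from~\eqref{regpx}), one gets that the metric is comparable to Euclidean distance with constants depending only on an upper bound for $\mu$; the lower bound $c\mu$ comes from the fact that $H(0,y,\omega) \le 0$ (by~\eqref{cntl}) so that the level $\mu$ is genuinely ``above'' the bottom by an amount $\ge \mu$, and coercivity gives linear growth of the level set radius in $\mu$ near the bottom — more carefully one uses~(viii) with $\widehat\mu = 0$ and $m_0 \ge 0$. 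Item~(vii) is the ``reversed'' equation in the second slot: using~\eqref{mmustat} (stationarity) or directly the definition with the roles of the two variables, $z \mapsto m_\mu(x,z,\omega)$ solves the Hamilton-Jacobi equation with $-Dm$ because transporting from $z$ to $x$ with gradient $p$ corresponds to gradient $-p$ from the other endpoint's perspective; one checks this via~\eqref{defmmu} by a change of variables in the admissible class. Finally~(viii) is a strict-monotonicity-in-$\mu$ estimate: if $\widehat\mu < \mu$, then $m_{\widehat\mu}(\cdot,x,\omega) + c(\mu - \widehat\mu)|\cdot - x|$ should still be a subsolution at level $\mu$ — this uses that a small multiple of the distance function has gradient of size $c(\mu-\widehat\mu)$, and by the coercivity/controllability one can absorb this into the level set gap; then~(ii) gives the inequality. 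I expect item~(viii) (and the quantitative lower bound $l_\mu \ge c\mu$ in~(iv)) to be the main obstacle, since it requires making the dependence on $\mu$ explicit and quantitative near the bottom of the spectrum rather than just qualitative — one must carefully track how~\eqref{cntl} and~\eqref{coer} control the geometry of $\{H(\cdot,y,\omega) \le \mu\}$ uniformly in $y$ and $\omega$. The remaining items are essentially bookkeeping with the definition and standard comparison arguments, and since detailed proofs appear in~\cite{ASo3}, I would relegate the routine verifications to Appendix~\ref{appmappp} and only highlight the points where the quantitative $\mu$-dependence enters.
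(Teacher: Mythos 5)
Your plan is essentially the one the paper follows in Appendix~\ref{appmappp}: Perron's method for~(i), a gluing argument for the maximality~(ii), then (iii), (v) as direct corollaries, and subsolution constructions (the cone $y\mapsto c\mu|y-x|$ for the lower bound in~(iv), and $m_{\widehat\mu}+c(\mu-\widehat\mu)|\cdot-x|$ for~(viii)) verified a.e.\ via Lemma~\ref{convtrick}, followed by the maximality property. Two small deviations are worth flagging. For~(vi) you argue the reverse inequality by tracking an optimal trajectory, which implicitly invokes the control formula~\eqref{controlform}; the paper deliberately avoids this and instead observes that $\phi(y):=\min_{z\in\partial U}\bigl(m_\mu(y,z,\omega)+m_\mu(z,x,\omega)\bigr)$ is a solution of $H(D\phi,\cdot,\omega)\le\mu$ in $\Rd\setminus\overline U$ (a min of solutions is a solution by level-set convexity) with $\phi\le m_\mu(\cdot,x,\omega)$ on $\partial U$, and then applies maximality — a cleaner PDE-only argument that also forces you to first treat bounded $U$ and then approximate, a step your sketch skips. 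For~(iv) you propose deducing the lower bound from~(viii) with $\widehat\mu=0$; this works (since~(viii) in the paper does not use~(iv)), but it's an unnecessary detour — the paper verifies directly that $y\mapsto c\mu|y-x|$ is a subsolution, which follows from $H(0,y,\omega)\le0$ and the $p$-Lipschitz bound in~\eqref{reg}: if $|p|\le c\mu$ then $H(p,y,\omega)\le H(0,y,\omega)+C c\mu\le\mu$ for $c\le 1/C$. Your heuristic (``coercivity gives linear growth of the level-set radius near the bottom'') gestures at this but misattributes the mechanism: it is the Lipschitz regularity of $H$ in $p$, not coercivity, that produces the factor $\mu$; coercivity enters only for the upper bound $L_\mu\le C$. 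Otherwise the argument is sound.
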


\begin{remark} \label{contform}
The functions $m_\mu$ can be expressed by the following representation formula due to Lions~\cite{Li}, which provides the above facts with a control theoretic interpretation:
\begin{equation}\label{controlform}
m_\mu(y,x,\omega)= \inf\left\{ \int_0^1 J_\mu(\gamma'(s),\gamma(s),\omega) \, ds \, : \, \gamma \in \mathcal C(x,y) \right\},
\end{equation}
where $\mathcal C(x,y)$ is the set of Lipschitz curves $\gamma:[0,1] \to \Rd$ such that $\gamma(0) = x$ and $\gamma(1)=y$ and $J_\mu$ is the support function of the $\mu$-sublevel set of $H$, given by
\begin{equation*}\label{}
J_\mu(q,y,\omega):= \sup\left\{ p\cdot q\,:\, H(p,y,\omega) \leq \mu \right\}.
\end{equation*}
The expression~\eqref{controlform} provides us with an interpretation of $m_\mu(y,x,\omega)$ as measuring the ``cost" of moving from the point $x$ to the point $y$ in the medium $\omega$. We make no direct use of~\eqref{controlform} in this paper, preferring instead to work with the maximality property (Proposition~\ref{existMP}(ii)) which is equivalent to it. Nevertheless, our intuition is enriched from~\eqref{controlform} and it suggests an analogy between the metric problem and first-passage percolation.
\end{remark}

\begin{definition}
 For each $\mu,t>0$ and $x\in \Rd$, we define the \emph{reachable set to $x$ in time $t$} by
\begin{equation}\label{reach}
\Re_{\mu,t}(x) : = \left\{ (y,\omega) \in \Rd \times \Omega: m_\mu(y,x,\omega) \leq t \right\}.
\end{equation}
We also use the notation $\Re_{\mu,t} = \Re_{\mu,t}(0)$,  $\Re^\omega_{\mu,t}(x):= \{ y\in \Rd : (y,\omega) \in \Re_{\mu,t}(x) \}$ and $\Re^\omega_{\mu,t} = \Re^\omega_{\mu,t}(0)$. 
\end{definition}

We continue by examining some elementary properties of the reachable set. It is useful to note that, in the particular case that $x=0$ and $U:= \{ y \in\Rd \,:\, m_\mu(y,0,\omega) < t\}$, Proposition~\ref{existMP}(vi) asserts that, for every $t> 0$ and $y\in \Rd$ such that $m_\mu(y,0,\omega) \geq t$,
\begin{equation}\label{DPRT}
m_\mu(y,0,\omega) = t + \min_{z\in \Re_{\mu,t}^\omega} m_\mu(y,z,\omega).
\end{equation}
In view of~\eqref{control2} and~\eqref{DPRT}, we see that $m_\mu(y,x,\omega) < t$ for every $y$ in the interior of $\Re_{\mu,t}^\omega(x)$ and  $\partial \Re^\omega_{\mu,t} = \{ y\in\Rd: m_\mu(y,0,\omega)  = t\}$. 
In fact, \eqref{control2} and \eqref{DPRT}, give the following estimates for the growth rate of the reachable set: for every $0 < s < t$ and $\omega\in \Omega$,
\begin{equation}\label{capture2}
\left\{ x\in \Rd \, : \, \dist\left(x,\Re_{\mu,s}^\omega\right) \leq L_\mu^{-1} (t-s) \right\}  \subseteq \Re_{\mu,t}^\omega \subseteq \left\{ x\in \Rd \, : \, \dist\left(x,\Re_{\mu,s}^\omega\right) \leq l_\mu^{-1} (t-s) \right\}.
\end{equation}
We may think of $t\mapsto \Re_{\mu,t}^\omega$ as a ``growing front," and in this interpretation~\eqref{capture2} provides uniform positive lower and upper bounds on the speed of the front. In particular, for all $\mu,t>0$ and $\omega\in \Omega$,
\begin{equation}\label{capture}
B_{t/ L_\mu} \subseteq \Re_{\mu,t}^\omega \subseteq B_{t/l_\mu}.
\end{equation}

The maximality property (Proposition~\ref{existMP}(ii)) can be improved for the domain $U=\Re_{\mu,t}^\omega(x) \setminus \{ x \}$ by restricting the maximum over $\partial U$ to $\{ x \}$, as stated in the following lemma. This is the crucial fact that localizes the metric problem.

\begin{lem} \label{localize}
For every $\omega\in \Omega$, $\mu \geq 0$, $x\in\Rd$ and $w\in \mathrm{Lip}(\Re_{\mu,t}^\omega(x))$,
\begin{equation}\label{mprset}
H(Dw,y,\omega) \leq \mu \quad \mbox{in} \ \Re_{\mu,t}^\omega(x) \qquad \mbox{implies} \qquad w(\cdot) - w(x) \leq m_\mu(\cdot,x,\omega) \quad \mbox{in} \ \Re^\omega_{\mu,t}(x).
\end{equation}
\end{lem}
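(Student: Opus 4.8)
The plan is to exploit the dynamic programming / additivity identity~\eqref{DPRT} together with the maximality property Proposition~\ref{existMP}(ii) applied to the domain $U := \Re^\omega_{\mu,t}(x) \setminus \{x\}$. The point is that $\partial U = \partial \Re^\omega_{\mu,t}(x) \cup \{x\}$, and on the outer boundary $\partial\Re^\omega_{\mu,t}(x)$ one has $m_\mu(y,x,\omega) = t$ exactly (by the remark following~\eqref{DPRT}), so the comparison statement from Proposition~\ref{existMP}(ii) reads
\begin{equation*}
w(\cdot) - m_\mu(\cdot,x,\omega) \leq \max\left\{ w(x), \ \max_{y\in\partial\Re^\omega_{\mu,t}(x)} \big( w(y) - t \big) \right\} \quad \mbox{in } \Re^\omega_{\mu,t}(x).
\end{equation*}
So everything reduces to showing that the outer-boundary term is dominated by the term at $x$, i.e. that $w(y) - t \leq w(x)$ for every $y\in\partial\Re^\omega_{\mu,t}(x)$, which after relabeling is exactly the claimed inequality $w(y) - w(x) \leq m_\mu(y,x,\omega)$ at a single boundary point where $m_\mu(y,x,\omega)=t$.

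Thus I would argue as follows. First, reduce to the case $x=0$ by stationarity~\eqref{mmustat} (or just carry $x$ along). Next, observe that it suffices to prove the inequality at points $y \in \partial\Re^\omega_{\mu,t}(x)$: once the estimate holds on this sphere-like outer boundary, the displayed comparison inequality above upgrades it to all of $\Re^\omega_{\mu,t}(x)$. To handle a boundary point $y$ with $m_\mu(y,x,\omega)=t$, I would run a continuation/bootstrap argument in the parameter $t$: define $S := \sup\{ s \in [0,t] : w(\cdot) - w(x) \leq m_\mu(\cdot,x,\omega) \text{ on } \Re^\omega_{\mu,s}(x)\}$. At $s$ small, $\Re^\omega_{\mu,s}(x)$ is a tiny neighborhood of $x$ (by~\eqref{capture}), and since $w$ is Lipschitz on it while $m_\mu$ grows at least linearly with slope $l_\mu$ away from $x$ (Proposition~\ref{existMP}(iv)), the inequality holds for $s$ small enough — wait, that is not automatic because $\mathrm{Lip}(w)$ need not be $\leq l_\mu$. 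The cleaner route: on $\Re^\omega_{\mu,s}(x)$ the function $w$ is a subsolution of $H(Dw,\cdot,\omega)\leq\mu$, hence so is $\min\{w(\cdot)-w(x), m_\mu(\cdot,x,\omega)\}$, and on the \emph{open interior} Proposition~\ref{existMP}(vi)/\eqref{DPRT} lets one compare on the shrunk domain $\Re^\omega_{\mu,s}(x)$ for any $s<t$ where the outer boundary lies strictly inside $\Re^\omega_{\mu,t}(x)$; iterating $s\uparrow t$ and using continuity of $m_\mu$ and $w$ closes the gap.

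Concretely, for $s < t$ apply Proposition~\ref{existMP}(ii) on $U_s := \Re^\omega_{\mu,s}(x)\setminus\{x\}$: since $\partial U_s = \{m_\mu(\cdot,x,\omega)=s\}\cup\{x\}$ and $w$ is a subsolution of~\eqref{eikp} on $U_s \subseteq \Re^\omega_{\mu,t}(x)$, we get $w(\cdot)-m_\mu(\cdot,x,\omega) \leq \max\{w(x), \max_{\{m_\mu=s\}}(w-s)\}$ on $\Re^\omega_{\mu,s}(x)$. Now let $s$ range and take the supremum; because $\Re^\omega_{\mu,s}(x) \uparrow \Re^\omega_{\mu,t}(x)$ as $s\uparrow t$ and $w,m_\mu$ are continuous, it suffices to propagate the bound $w(y)-s \leq w(x)$ forward in $s$. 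This is a Gronwall-type / open-closed argument: the set of $s$ for which the bound holds on $\Re^\omega_{\mu,s}(x)$ is nonempty (small $s$, where one can use~\eqref{capture} and the Lipschitz bound~\eqref{lips} to see $w - w(x) - m_\mu$ is $\leq 0$ near $x$ after noting $w(\cdot) - w(x) \leq L_\mu' |\cdot - x|$ for the Lipschitz constant of $w$, which is the place that actually needs care), closed by continuity, and open because on the interior the strict comparison from the PDE propagates. The main obstacle I anticipate is precisely the base case / openness step: making rigorous that the comparison cannot first fail on the outer boundary $\partial\Re^\omega_{\mu,s}(x)$ as $s$ increases — equivalently, that a subsolution on the reachable set cannot ``outrun'' $m_\mu$ — which is morally the statement that $m_\mu$ is the \emph{maximal} subsolution restricted to its own sublevel set, but proving it localized (rather than on all of $\Rd$) requires carefully invoking~\eqref{DPRT} at each level $s$ rather than the global Proposition~\ref{existMP}(ii). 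Everything else — stationarity reduction, continuity, the final upgrade from the boundary to the whole set — is routine.
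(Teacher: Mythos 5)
Your proposal does not close the proof, and you acknowledge the gap yourself. The route you sketch --- applying Proposition~\ref{existMP}(ii) on $U_s := \Re^\omega_{\mu,s}(x)\setminus\{x\}$ for $s<t$ and then propagating the bound $w(y)-s\leq w(x)$ on the moving outer boundary $\{m_\mu(\cdot,x,\omega)=s\}$ forward in $s$ --- is circular as written: the inequality you need on $\{m_\mu=s\}$ is precisely the conclusion you are trying to prove there, and Proposition~\ref{existMP}(ii) yields only
\[
\sup_{\Re^\omega_{\mu,s}(x)}\big(w-w(x)-m_\mu\big) \leq 0 \vee \max_{\{m_\mu=s\}}\big(w-w(x)-s\big),
\]
whose right side is already dominated by the left, a tautology. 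A crude Lipschitz estimate from $\{m_\mu=S\}$ to $\{m_\mu=s\}$ fails too since, as you note, $\mathrm{Lip}(w)$ need not be $\leq l_\mu$; there is no Gronwall inequality hiding here, so the open step of your open--closed argument is not actually established. (A secondary issue: Proposition~\ref{existMP}(ii) is stated for $u\in\Lip$, i.e.\ globally Lipschitz on $\Rd$, while your $w$ lives only on $\Re^\omega_{\mu,t}(x)$, so you would first need to extend it.)

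The paper avoids the outer boundary altogether with a truncate-and-patch device. After normalizing $w(x)=0$, set $\widetilde w := m_\mu(\cdot,x,\omega) \vee \big(w \wedge (t-\ep)\big)$ on $\Re^\omega_{\mu,t}(x)$. Since $m_\mu = t > t-\ep$ near $\partial\Re^\omega_{\mu,t}(x)$, one has $\widetilde w = m_\mu$ there automatically, so $\widetilde w$ extends continuously to $\Rd$ by setting it equal to $m_\mu$ outside the reachable set. By Lemma~\ref{convtrick} and~\eqref{cntl} (the constant $t-\ep$ is a subsolution, and both the min and max of subsolutions are subsolutions, the former by level-set convexity), $\widetilde w$ is a global Lipschitz subsolution vanishing at $x$, and the maximality~\eqref{defmmu} gives $\widetilde w \leq m_\mu(\cdot,x,\omega)$. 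Hence $w\wedge(t-\ep)\leq m_\mu$ on the reachable set, and sending $\ep\to 0$ and using that $m_\mu<t$ in the interior concludes. The idea your proposal misses is that truncating at height $t-\ep$ forces agreement with $m_\mu$ near the outer boundary \emph{for free}, so no comparison on the outer boundary --- the step you could not close --- is ever needed.
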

\begin{proof}
Let $w \in \mathrm{Lip}(\Re^\omega_{\mu,t}(\omega))$ satisfy the antecedent of \eqref{mprset} and assume with no loss of generality that $w(x) = 0$. Define $\widetilde w:= m_\mu(\cdot,x,\omega) \vee (  w(\cdot) \wedge (t-\ep))$ and, noticing that $\widetilde w = m_\mu(\cdot,x,\omega)$ near the boundary of $\Re^\omega_{\mu,t}(\omega)$, extend $\widetilde w$ to be defined on $\Rd$ by taking $\widetilde w = m_\mu(\cdot,x,\omega)$ in the complement of $\Re^\omega_{\mu,t}(\omega)$. Observe that, in light of Lemma~\ref{convtrick} and \eqref{cntl}, $\widetilde w$ is a subsolution of $H(D\widetilde w,y,\omega)\leq\mu$ in $\Rd$. We deduce from the maximality property that $\widetilde w \leq m_\mu(\cdot,x,\omega)$ in $\Rd$ and in particular $ w \wedge (t-\ep) \leq m_\mu(\cdot,x,\omega)$ in $\Re_{\mu,t}^\omega(x)$. Sending $\ep \to 0$ and using that $m_\mu(y,x,\omega) < t$ in the interior of $\Re_{\mu,t}^\omega(x)$, we obtain that $w \leq m_\mu(\cdot,x,\omega)$ in $\Re_{\mu,t}^\omega(x)$, as claimed. 
\end{proof}

It follows from~Lemma~\ref{localize} that the representation formula~\eqref{defmmu} may be restricted to the reachable set. Precisely, for every $\omega\in \Omega$, $\mu \geq 0$, $x\in\Rd$ and $y \in \Re_{\mu,t}^\omega(x)$,
\begin{equation}\label{measlocal}
m_\mu(y,x,\omega) = \sup \{ w(y) - w(x) \, : \, w\in \mathrm{Lip}(\Re_{\mu,t}^\omega(x)) \  \  \mbox{and}  \  \ H(Dw,y,\omega) \leq \mu \ \mbox{in} \ \Re_{\mu,t}^\omega(x) \big\}.
\end{equation}
This is immediate from \eqref{mprset}. 

In the proof of Lemma~\ref{indyass}, we require a refinement of~Lemma~\ref{localize}. To this state this, we define, for every nonempty closed set $K\subseteq \Rd$, $x,y\in K$ and $\omega\in \Omega$,
\begin{equation}\label{e.defmKmu}
m^K_\mu(y,x,\omega):= \sup \{ w(y) - w(x) \, : \, w\in \mathrm{Lip}(K) \  \  \mbox{and}  \  \ H(Dw,y,\omega) \leq \mu \ \mbox{in} \ K \big\}.
\end{equation}
It is immediate that 
\begin{equation} \label{e.messsing}
\mbox{$m^K_\mu$ is $\mathcal G(K)$--measurable}
\end{equation}
and
\begin{equation}\label{e.gotchup}
K_1 \subseteq K_2 \qquad \mbox{implies that} \qquad m^{K_1}_\mu(y,x,\omega) \geq m^{K_2}_\mu(y,x,\omega) \ \ \mbox{for all}  \ \ x,y\in K_1.
\end{equation}
Thus~\eqref{measlocal} yields that
\begin{equation}\label{e.gotcha}
R^\omega_{\mu,t} \subseteq K \subseteq \Rd \qquad \mbox{implies that} \qquad   m_\mu(\cdot,0,\omega) = m_\mu^K(\cdot,0,\omega) \quad \mbox{in} \ \Re^\omega_{\mu,t}.
\end{equation}

\begin{lem}
Assume that $K\subseteq \Rd$ is nonempty, closed and $0\in K$. Then
\begin{equation}\label{e.gotbunk}
\inf_{y\in \partial K} m^K_\mu(y,0,\omega) \geq t  \quad \mbox{implies that} \quad m^K_\mu(\cdot,0,\omega) \equiv m_\mu(\cdot,0,\omega) \quad \mbox{in} \  \left\{ m^K_\mu(\cdot,0,\omega) \leq t\right\}.
\end{equation}
\end{lem}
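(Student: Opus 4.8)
The plan is to argue that, under the hypothesis $\inf_{y\in\partial K} m^K_\mu(y,0,\omega)\ge t$, the sublevel set $\{m^K_\mu(\cdot,0,\omega)\le t\}$ is contained in $\Re^\omega_{\mu,t}$, which by~\eqref{e.gotcha} is exactly where $m_\mu$ and $m^K_\mu$ coincide. First I would fix $\omega$ and $\mu$ and abbreviate $f:=m^K_\mu(\cdot,0,\omega)$ and $g:=m_\mu(\cdot,0,\omega)$. From~\eqref{e.gotchup} with $K_1=K\subseteq K_2=\Rd$ we get $f\ge g$ on all of $K$, so $\{f\le t\}\subseteq\{g\le t\}=\Re^\omega_{\mu,t}\cap K$ is immediate once we also know $\{f\le t\}\subseteq K$ — but that is automatic since $f$ is only defined on $K$. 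The point of the hypothesis is the reverse inclusion issue: we need $\{g\le t\}\cap K\subseteq\{f\le t\}$ on the relevant region, or rather we need to be able to invoke~\eqref{e.gotcha}, which requires $\Re^\omega_{\mu,t}\subseteq K$. This is where $\inf_{\partial K} f\ge t$ enters.

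The key step is therefore: show that the hypothesis implies $\Re^\omega_{\mu,t}\subseteq K$, i.e.\ $g\le t$ forces the point to lie in $K$. Suppose for contradiction that $y\in\Re^\omega_{\mu,t}$ but $y\notin K$. Since $0\in K$ and $K$ is closed, the segment (or any Lipschitz path realizing, approximately, $g(y)\le t$ via the control representation, or more robustly the continuity of $g$ along a path from $0$ to $y$) must cross $\partial K$ at some point $z\in\partial K$, and by the subadditivity/monotonicity built into the metric problem one has $g(z)\le g(y)\le t$. On the other hand $g=f$ on $\Re^\omega_{\mu,t}$ is not yet available — so instead I would use~\eqref{e.gotchup} directly: $f(z)\ge g(z)$ fails to help. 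The clean route is: because $g$ is itself admissible for the definition of $f$ when restricted to $K$? No — $g$ restricted to $K$ is a Lipschitz function with $H(Dg,\cdot,\omega)\le\mu$ in $K$ and $g(0)=0$, so by definition of $m^K_\mu$ as the \emph{maximal} such function (this is~\eqref{e.defmKmu}, the direct analogue of the maximality property) we get $f\ge g$ on $K$ — already known. To get $f(z)\le g(z)$ would give equality at $z$ and then $f(z)=g(z)\le t$ contradicting $\inf_{\partial K}f\ge t$ unless $g(z)=t$ exactly; so I would instead run the argument with the \emph{open} reachable set and a strict inequality, or simply observe that if $\Re^\omega_{\mu,t}\not\subseteq K$ then, shrinking $t$ slightly to $t-\ep$, the front $\Re^\omega_{\mu,t-\ep}$ reaches a boundary point $z\in\partial K$ with $g(z)=t-\ep<t\le f(z)$, and combine with $f\ge g$ to derive the contradiction directly from the continuity of the ``growing front'' estimate~\eqref{capture2}.

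Once $\Re^\omega_{\mu,t}\subseteq K$ is established, I would invoke~\eqref{e.gotcha} to conclude $g=f$ on $\Re^\omega_{\mu,t}$, and then note $\{f\le t\}\subseteq\{g\le t\}=\Re^\omega_{\mu,t}$ (the equality $\partial\Re^\omega_{\mu,t}=\{g=t\}$ and $g<t$ in the interior were recorded just after~\eqref{capture2}), so the coincidence holds throughout $\{f\le t\}$, which is exactly~\eqref{e.gotbunk}. The main obstacle I anticipate is handling the boundary case where a point has $g$-value exactly $t$: one must make sure the front has genuinely entered $K$ before reaching level $t$, and this is precisely what $\inf_{\partial K} f\ge t$ (an inequality about $f$, the larger function, on $\partial K$) buys us after translating it into a statement about $g$ via $f\ge g$ and the strict growth estimate~\eqref{strinc} or the front speed bound~\eqref{capture2}. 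A secondary, purely bookkeeping obstacle is being careful that all the cited facts (maximality, the dynamic programming identity~\eqref{DPRT}, and~\eqref{e.gotcha}) are applied on sets where they are valid, i.e.\ with $\Re^\omega_{\mu,t}\subseteq K$ in hand so that $m^K_\mu$ genuinely computes $m_\mu$ on the relevant region.
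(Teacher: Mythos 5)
Your plan takes a genuinely different route from the paper's: you want first to establish $\Re^\omega_{\mu,t}\subseteq K$ and then invoke \eqref{e.gotcha}, whereas the paper's proof is a direct maximality argument. It extends $m^K_\mu(\cdot,0,\omega)\wedge(t-\ep)$ to all of $\Rd$ by the constant value $t-\ep$ off $K$; the hypothesis $m^K_\mu\ge t$ on $\partial K$ makes this extension constant in a neighborhood of $\partial K$, hence globally Lipschitz, and by Lemma~\ref{convtrick} together with \eqref{cntl} it is a global subsolution. Maximality of $m_\mu$ then gives $m_\mu\ge m^K_\mu\wedge(t-\ep)$ in $\Rd$, and sending $\ep\to0$ and combining with \eqref{e.gotchup} finishes. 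The inclusion $\Re^\omega_{\mu,t}\subseteq K$ never appears as an intermediate step.

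As written, your proposal has a gap exactly at the spot you sense trouble. Your shrink-$t$ argument locates a boundary point $z\in\partial K$ with $g(z)=t-\ep<t\le f(z)$ and says that, ``combined with $f\ge g$,'' this is a contradiction. It is not: $g(z)\le f(z)$ and $f(z)\ge t$ are both perfectly consistent with $g(z)=t-\ep$. The inequality $f\ge g$ from \eqref{e.gotchup} points the wrong way; to contradict $\inf_{\partial K}f\ge t$ you need an \emph{upper} bound on $f$ at a boundary point, not a lower bound via $g$. To repair your approach one would set $s^*:=\sup\{s:\Re^\omega_{\mu,s}\subseteq K\}$ (finite and positive since $t>0$ forces $0\in\intr K$, and \eqref{capture} applies), show $\Re^\omega_{\mu,s^*}\subseteq K$, suppose $s^*<t$, use \eqref{capture2} and compactness to produce $z\in\partial K$ with $m_\mu(z)\le s^*$, and --- this is the step you are missing --- apply \eqref{e.gotcha} at the \emph{smaller} parameter $s^*$, where the required inclusion does hold, to convert $m_\mu(z)\le s^*$ into $m^K_\mu(z)\le s^*<t$ and thereby contradict the hypothesis. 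This can be made to work, but it is several steps longer and more delicate than the paper's one-paragraph subsolution construction, which sidesteps the whole question of whether $\Re^\omega_{\mu,t}$ sits inside $K$.
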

\begin{proof}
The argument is similar to the proof of Lemma~\ref{localize}. Consider the function $m^K_\mu(\cdot,0,\omega) \wedge (t-\ep)$ and extend this to $\Rd$ by giving it the value $(t-\ep)$ outside of $K$. This is a global subsolution by Lemma~\ref{convtrick} and the assumption that $m^K_\mu(\cdot,0,\omega) \geq t$ on $\partial K$. By the maximality of $m_\mu$ we deduce that $m_\mu \geq m^K_\mu(\cdot,0,\omega) \wedge (t-\ep)$. Sending $\ep \to 0$ yields $m_\mu(\cdot,0,\omega) \geq m_\mu^K(\cdot,0,\omega)$ in $\left\{ m^K_\mu(\cdot,0,\omega) \leq t\right\}$. In light of~\eqref{e.gotchup}, this completes the proof of~\eqref{e.gotbunk}. 
\end{proof}

 We next define, for every compact set $K$ of $\Rd$, $y \in \Rd$ and $\omega\in \Omega$,
\begin{equation}\label{mmuK}
m_\mu(y,K,\omega) := \inf_{z\in K} m_\mu(y,z,\omega) \qquad \mbox{and} \qquad m_\mu(K,y,\omega) := \inf_{z\in K} m_\mu(z,y,\omega).
\end{equation}

The next proposition provides representation formulas for these functions, which are needed to deduce~\eqref{infKmeas} below. The proof (sketch) is given in Appendix~\ref{appmappp}.

\begin{prop}\label{e.weakner}
For every $\mu>0$, the functions in~\eqref{mmuK} are the unique nonnegative solutions of
\begin{equation}\label{mmuKeqs}
H(Dm_\mu(\cdot,K,\omega),\cdot,\omega) = \mu \quad \mbox{and} \quad H(-Dm_\mu(K,\cdot,\omega),\cdot,\omega) = \mu \quad \mbox{in} \ \Rd \setminus K
\end{equation}
which satisfy the boundary condition $m_\mu(\cdot,K,\omega) = m_\mu(K,\cdot,\omega) = 0$ on $\partial K$.
Moreover, they are also given by the representation formulas
\begin{equation}\label{Krep1}
m_\mu(y,K,\omega) = \min_{x\in K} \, \sup\left\{ w(z) - w(x) \, :  \, w\in \Lip, \ H(Dw,z,\omega) \leq \mu \ \mbox{in} \ \Rd \setminus K \right\},
\end{equation}
and
\begin{equation}\label{Krep2}
m_\mu(K,z,\omega) = \min_{x\in K} \, \sup\left\{ w(x) - w(z) \, :  \, w\in \Lip, \ H(Dw,y,\omega) \leq \mu \ \mbox{in} \ \Rd \setminus K \right\}.
\end{equation}
\end{prop}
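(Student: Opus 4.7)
My plan is to handle the assertions for $u := m_\mu(\cdot,K,\omega) = \inf_{x\in K} m_\mu(\cdot, x, \omega)$; the statements for $m_\mu(K, \cdot,\omega)$ follow by the identical argument with \eqref{flipx2} in place of \eqref{mpagan}. Basic regularity is immediate from Proposition~\ref{existMP}(v): $u$ is Lipschitz with constant $L_\mu$ uniformly in $x$, and $u \equiv 0$ on $K$, in particular $u = 0$ on $\partial K$.

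For the PDE in $\Rd \setminus K$, the supersolution inequality $H(Du, \cdot, \omega) \geq \mu$ is automatic: each $m_\mu(\cdot, x_0, \omega)$ with $x_0 \in K$ is a solution of $H = \mu$ in $\Rd \setminus \{x_0\} \supseteq \Rd \setminus K$ by Proposition~\ref{existMP}(i), and the infimum of a uniformly bounded family of viscosity supersolutions is a supersolution on the intersection of their domains. The subsolution inequality is subtler, since infima of subsolutions are not in general subsolutions. To circumvent this, my plan is to establish the dual identity
\begin{equation*}
u(y) \;=\; \sup\bigl\{ w(y) \, : \, w \in \Lip,\ H(Dw, \cdot, \omega) \leq \mu \ \text{in} \ \Rd,\ w \leq 0 \ \text{on} \ K \bigr\}.
\end{equation*}
The $\geq$ direction uses Proposition~\ref{existMP}(ii) at each $x \in K$: for any admissible $w$, $w(y) \leq w(x) + m_\mu(y, x, \omega) \leq m_\mu(y, x, \omega)$, and then minimize over $x$. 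For the $\leq$ direction, I would use the centered functions $w_{x_1}(z) := m_\mu(z, x_1, \omega) - \max_{\zeta \in K} m_\mu(\zeta, x_1, \omega)$, which are global subsolutions with $w_{x_1} \leq 0$ on $K$ by construction, and choose $x_1$ far from $K$ along the ray emanating from $y$ through the optimizer $x^* \in \partial K$ of the infimum defining $u(y)$; the subadditivity \eqref{subadd} and continuity then drive $w_{x_1}(y) \to u(y)$. With the dual identity in hand, $u$ is automatically a global subsolution as a supremum of subsolutions.

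For uniqueness, given any nonnegative solution $v$ of \eqref{mmuKeqs} with $v = 0$ on $\partial K$, I would extend $v$ by zero inside $K$. Lemma~\ref{convtrick} combined with $H(0, \cdot, \omega) \leq 0 \leq \mu$ from \eqref{cntl} then ensures the extension is a global Lipschitz subsolution of $H \leq \mu$. Applying Proposition~\ref{existMP}(ii) at each $x \in K$ yields $v \leq u$, and the opposite inequality $u \leq v$ on $\Rd \setminus K$ follows from the standard comparison principle for the level-set convex, coercive Hamilton-Jacobi equation $H = \mu$ on the expanding domains $B_R \cap (\Rd \setminus K)$, using the linear growth bound of Proposition~\ref{existMP}(iv) to control the behavior at infinity.

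For the representation formulas, denoting the inner sup in \eqref{Krep1} by $\tilde m(y,x,\omega)$, the inequality $\min_x \tilde m \geq m_\mu(y, K, \omega)$ is immediate since weakening the constraint from $\Rd$ to $\Rd \setminus K$ enlarges the admissible family of $w$. The reverse inequality is the main obstacle: picking the optimizer $x^* \in \partial K$ of the infimum defining $u(y)$, I would apply Proposition~\ref{existMP}(ii) with $U = \Rd \setminus K$ to any admissible $w$, which reduces matters to the boundary estimate $w(z) - w(x^*) \leq m_\mu(z, x^*, \omega)$ for $z \in \partial K$. This bound itself should follow by approaching $x^*$ through a sequence $x^*_n \in \Rd \setminus K$ and applying Proposition~\ref{existMP}(ii) with base point $x^*_n$, then using continuity of $m_\mu$ and $w$ to pass to the limit. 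Carrying out this boundary-limit argument rigorously is the technical crux of the proof, and is completely analogous for \eqref{Krep2} with the two arguments of $m_\mu$ reversed and \eqref{flipx2} in the role of the PDE.
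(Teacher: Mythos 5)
Your proposal overlooks the one fact that makes this proposition nearly automatic, and the substitute arguments you build instead have gaps. Because $H$ is level-set convex, Lemma~\ref{convtrick} and the remark immediately following it assert that the \emph{infimum} of a family of Lipschitz subsolutions of $H(Dw,\cdot,\omega)\leq\mu$ is again a subsolution, hence the infimum of a family of solutions is a solution. Applied to $\{m_\mu(\cdot,x,\omega)\}_{x\in K}$, this gives in one stroke that $m_\mu(\cdot,K,\omega)$ solves $H=\mu$ in $\Rd\setminus K$; your worry that ``infima of subsolutions are not in general subsolutions'' is resolved by the paper's preliminaries, and the dual identity is unnecessary. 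Your argument for the $\leq$ direction of that identity is moreover not correct for non-convex $K$: with $H(p,y,\omega)=|p|$, so $m_\mu(a,b)=\mu|a-b|$, take $K=\{(1,0),(-3/2,0)\}$, $y=0$, $x^*=(1,0)$. Along the ray $x_1=te$ with $e=(1,0)$ and $t\to\infty$ you get $w_{x_1}(y)\to\mu\min_{\zeta\in K}e\cdot(\zeta-y)=-\tfrac32\mu$, far short of $u(y)=\mu$. (The identity itself is true --- take $w=u$ --- but your family does not attain it, and the proposition concerns arbitrary compact $K$.)

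Your representation-formula argument is also circular at the crux: you reduce to the boundary estimate $w(z)-w(x^*)\leq m_\mu(z,x^*,\omega)$ for $z\in\partial K$ and propose to prove it by applying Proposition~\ref{existMP}(ii) at nearby exterior base points $x_n^*$; but for a subsolution $w$ defined only on $\Rd\setminus K$, the admissible domain is $U=\Rd\setminus(K\cup\{x_n^*\})$, whose boundary includes $\partial K$ itself, so (ii) returns a bound in terms of $\max_{z\in\partial K}\bigl(w(z)-m_\mu(z,x_n^*,\omega)\bigr)$ --- the very quantity you are trying to estimate. The paper instead proves \eqref{Krep1} by showing its right-hand side is itself a nonnegative solution of the first equation in \eqref{mmuKeqs} (Perron for the inner supremum, the infimum-of-solutions observation for the outer minimum over $x\in K$) that vanishes on $K$; the uniqueness asserted in the first part of the proposition, which the paper obtains directly from Proposition~\ref{compMP}, then identifies it with $m_\mu(\cdot,K,\omega)$. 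Your uniqueness step (extend by zero, invoke maximality, then a bounded-domain comparison with growth control) is more roundabout but defensible.
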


It is immediate from~\eqref{Krep1} and~\eqref{Krep2} that, for every compact $K\in \Rd$ and $y\in \Rd$,
\begin{equation}\label{infKmeas}
\omega \mapsto m_\mu(y,K,\omega) \quad \mbox{and} \quad \omega \mapsto m_\mu(K,y,\omega) \quad \mbox{are \ $\mathcal {G}\left(\Rd \!\setminus \!K\right)$-measurable.}
\end{equation}

\subsection{The approximate cell problem: basic properties}
\label{ssacpa}

We summarize the properties of the \emph{approximate cell problem}
\begin{equation}\label{amp}
\delta v^\delta + H(p+Dv^\delta,y,\omega) = 0 \quad \mbox{in} \ \Rd. 
\end{equation}
Here $p\in \Rd$ and $\delta > 0$ are given parameters and $v^\delta= v^\delta(y,\omega\,;p)$. We note that the assertions in this section do not depend in any way on the assumptions~\eqref{sqc} or of~\eqref{cntl}, nor do they depend on the random parameter~$\omega$, and therefore they hold (with appropriate changes in the notation) for the Hamiltonians encountered in Section~\ref{APs}. 

We begin with a comparison principle for~\eqref{amp}, which can be reduced to Proposition~\ref{comp}, below, by an argument which perturbs the solutions by adding appropriate terms with linear growth (alternatively, a proof can be found in~\cite{CIL,Ba}).

\begin{prop}\label{appcmp}
Let $\delta > 0$, $p\in \Rd$ and $\omega\in \Omega$. Suppose  $u,-v\in \USC(\Rd)$ satisfy
\begin{equation}\label{}
\delta u + H(p+Du,y,\omega) \leq 0 \leq \delta v + H(p+Dv,y,\omega) \quad \mbox{in} \ \Rd,
\end{equation}
and $v$ is bounded below on $\Rd$. Then $u\leq v$ in $\Rd$.
\end{prop}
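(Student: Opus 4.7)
I would argue by contradiction and reduce the problem to the bounded-domain comparison principle (Proposition~\ref{comp}) by subtracting from $u$ a linear-growth penalty that forces the argmax of $u-v$ into a compact set, as suggested in the remark preceding the statement. The two ingredients driving the proof are (i) the hypothesis $\delta>0$, which gives a ``damping'' term, and (ii) the uniform coercivity \eqref{coer} together with the local Lipschitz continuity \eqref{reg} of $H$ in $p$.

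\textbf{Step 1 (A priori upper bound on $u$).} I would first show $\sup_{\Rd} u < \infty$. Indeed, for small $\eta>0$ the function $u(y)-\eta(1+|y|^2)^{1/2}$ still attains its supremum at some point $y_\eta\in\Rd$ (a near-maximum obtained by Ekeland's variational principle if $u$ is only USC). Testing the subsolution inequality there, and using that the gradient of the perturbation is bounded by $\eta$ together with the local Lipschitz dependence of $H$ on its first argument from \eqref{reg}, yields
\begin{equation*}
\delta u(y_\eta) \leq -H(p,y_\eta,\omega) + C\eta \leq \tfrac12 + \essinf_{\omega'} |H(p,0,\omega')|,
\end{equation*}
after absorbing $\eta$. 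Sending $\eta\to 0$ then gives a bound $\sup_{\Rd}u\leq C(|p|,H)/\delta$.

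\textbf{Step 2 (Linear-growth perturbation).} For $\varepsilon\in(0,1)$, set $\phi_\varepsilon(y):=\varepsilon(1+|y|^2)^{1/2}$ and $u_\varepsilon:=u-\phi_\varepsilon$. Because $|D\phi_\varepsilon|\leq\varepsilon$ pointwise and $H(\cdot,y,\omega)$ is uniformly Lipschitz on bounded balls by \eqref{reg} (with a Lipschitz constant controlled, via coercivity, by any upper bound on the values of $H(p+Du,y,\omega)$), a direct computation in the viscosity sense gives
\begin{equation*}
\delta u_\varepsilon + H(p+Du_\varepsilon,y,\omega)\leq C\varepsilon(1+|y|) \quad \text{in }\Rd.
\end{equation*}
By Step~1 and the lower bound $v\geq -M$, the function $u_\varepsilon-v$ tends to $-\infty$ as $|y|\to\infty$, so its supremum is attained in some compact ball $B_{R_\varepsilon}$.

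\textbf{Step 3 (Bounded-domain comparison and limit).} Choose $R\geq R_\varepsilon$ large enough that $u_\varepsilon<v$ on $\partial B_R$. Rewriting the two inequalities as stationary Hamilton-Jacobi inequalities on $B_R$ with the inhomogeneities $-\delta u_\varepsilon+C\varepsilon(1+|y|)$ and $-\delta v$, respectively, and comparing the two solutions to a common auxiliary problem, I would apply the bounded-domain comparison principle Proposition~\ref{comp} to obtain $u_\varepsilon\leq v+o_\varepsilon(1)$ on $B_R$, hence on all of $\Rd$. Sending $\varepsilon\to 0$ gives $u\leq v$ in $\Rd$.

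\textbf{Main obstacle.} The delicate point is Step~1: without any growth assumption on $u$, the linear-growth penalty of Step~2 does not automatically confine the maximum to a ball. I must first exploit $\delta>0$ and coercivity to get an unconditional upper bound on $u$, which requires the Ekeland-type reduction to a genuine maximum and a careful use of the Lipschitz bound \eqref{reg} to absorb the perturbation error. Once this is in place, the rest of the argument is a standard viscosity-solution reduction to the bounded setting.
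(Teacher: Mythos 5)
Your overall strategy — establish an a priori bound on $u$, add a linear-growth penalty to confine the maximum of $u-v$ to a compact set, and then invoke the bounded-domain comparison Proposition~\ref{comp} — is precisely the reduction the paper suggests (the paper itself only gestures at this and refers to \cite{CIL,Ba}), so the high-level plan is right. However, there is a genuine gap in Step~1. You assert that $u(y)-\eta(1+|y|^2)^{1/2}$ attains its supremum, or at least that Ekeland's principle produces a near-maximizer, but Ekeland requires the function to be bounded above, and with no a priori growth control on $u$ the supremum of this linearly penalized function may well be $+\infty$ (e.g.\ if $u$ were Lipschitz with a constant exceeding $\eta$). Since $\sup u<\infty$ is exactly what you are trying to prove, this is circular. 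The clean way to get the bound is to localize: fix $y_0$, note $u$ is bounded above on $\overline{B}(y_0,1)$ by upper semicontinuity, take the maximum of $u(y)-n|y-y_0|^2$ over $\overline{B}(y_0,1)$ at a point $y_n$ which converges to $y_0$, test the subsolution inequality there once $y_n$ is interior (using that $H$ is bounded below on $\Rd\times\Rd\times\Omega$ by \eqref{regpx} and \eqref{coer}), and conclude $u(y_0)\le u(y_n)\le C/\delta$ since $y_n$ maximizes the penalized function.

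Two smaller points. In Step~2 the error bound $C\varepsilon(1+|y|)$ is wrong: since $|D\phi_\varepsilon|\le\varepsilon$ everywhere and $\delta\phi_\varepsilon$ enters with a helpful negative sign, the correct right-hand side is $C\varepsilon-\delta\varepsilon(1+|y|^2)^{1/2}\le C\varepsilon$, not something growing in $|y|$; this is harmless for the argument but is a miscomputation. More substantively, to control the Lipschitz constant of $H$ used in Step~2 you need a bound on the gradients appearing, which via coercivity means a \emph{lower} bound on $u$ (not just the upper bound from Step~1); this is most easily handled by replacing $u$ with $\max\bigl(u,\,c_0\bigr)$ where $c_0:=\min\bigl(\inf_{\Rd}v,\,-\tfrac1\delta\sup_{y,\omega}H(p,y,\omega)\bigr)-1$, a constant low enough to be a subsolution, so that the truncation remains a subsolution, is trivially $\le v$ where it equals $c_0$, and is bounded from both sides. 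Finally, in Step~3 note that Proposition~\ref{comp} as stated is for equations without a zero-order term and with a common strict inhomogeneity $f$; with the $\delta u$ and $\delta v$ terms present the right-hand sides differ, so you should either double variables directly (the usual route) or first show the $\delta$-damping absorbs the discrepancy at a near-maximum point; the vague phrase "comparing to a common auxiliary problem" needs to be made precise.
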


For each $\delta > 0$, $p\in \Rd$ and $\omega\in \Omega$, we define
\begin{equation}\label{dvdform}
v^\delta(y,\omega\,;p) := \sup\left\{ w(y) \, : \, w\in \USC(\Rd) \ \ \mbox{satisfies} \ \ \delta w + H(p+Dw,y,\omega) \leq 0 \ \ \mbox{in} \ \Rd \right\},
\end{equation}
with the differential inequality in the definition interpreted either in the viscosity or in the almost everywhere sense, as these are equivalent in our situation by Lemma~\ref{convtrick}. It is clear that the constant function
\begin{equation*}\label{}
w: = -\frac1\delta \esssup_{y\in \Rd} H(p,y,\omega) 
\end{equation*}
belongs to the admissible class, hence $v^\delta(\cdot,\omega\,;p) \geq w$. Similarly, the function
\begin{equation*}\label{}
v:= -\frac1\delta\essinf_{y\in \Rd} H(p,y,\omega)
\end{equation*}
is a bounded supersolution of~\eqref{amp}, and Proposition~\ref{appcmp} yields $v^\delta(\cdot,\omega\,;p) \leq v$. We have shown that, for all $y,p\in \Rd$, $\omega\in \Omega$ and $\delta > 0$,
\begin{equation}\label{dvdsup2}
-\esssup_{ y\in \Rd} H(p,y,\omega) \leq \delta v^\delta(y,\omega\,;p) \leq -\essinf_{y\in \Rd} H(p,y,\omega).
\end{equation}

%\begin{remark}\label{lamemechose}
%Observe that~\eqref{cntl} implies that, for each $\omega$ belonging to a set of full probability, 
%\begin{equation}\label{supyH}
%\sup_{y\in \Rd} H(0,y,\omega) = \esssup_{\Omega} H(0,0,\cdot). 
%\end{equation}
%Indeed, that the left side of~\eqref{supyH} is almost surely constant is a consequence of the ergodic hypothesis and the translation invariance of this quantity. That this constant must be zero follows from the continuity of $H$ and the fact that~\eqref{cntl} implies that $\Prob\left[ \sup_{y\in \Q^\d} H(0,y,\cdot) \leq 0 \right] = 1$. Similar observations can be made for the sups and infs of other stationary random variables: for example, the right side of~\eqref{dvdsup2} is $\Prob$-almost surely equal to the constant $-\essinf_{\Omega} H(p,0,\cdot)$.
%\end{remark}

Immediate from~\eqref{dvdform} and~\eqref{stnary} is that the $v^\delta$'s are stationary functions. That is, for all $y,z,p\in\Rd$, $\omega\in\Omega$ and $\delta > 0$,
\begin{equation}\label{dvdstat}
v^\delta(y,\tau_z\omega\,;p) = v^\delta(y+z,\omega\,;p)
\end{equation}

We summarize some further properties of the $v^\delta$'s in the following proposition. The proofs, which are standard in the theory of viscosity solutions, are sketched in Appendix~\ref{appmappp}. For the statements we need to define the constants 
\begin{equation}\label{Kp}
K_p:= \sup\left\{ |p-q| \, : \, \omega\in \Omega, \ \ \inf_{y\in\Rd} H(q,y,\omega) \leq \sup_{y\in\Rd} H(p,y,\omega) \right\}
\end{equation}
and  
\begin{equation} \label{pip}
\Pi_p:=\max_{\sigma = \pm 1}\sup_{\omega\in \Omega} \sup_{|q| \leq K_p} \sup_{y\in\Rd} \big| H(p+q,y,\omega) - H(p+(1+ \sigma )q,y,\omega)\big|.
\end{equation}
Observe that $K_p$ is bounded above for~$|p|$ bounded by~\eqref{coer}. It follows from this and~\eqref{reg} that $\Pi_p$ is also bounded above for bounded~$|p|$.

\begin{prop} \label{vdeltas}
For every $p\in \Rd$, $\omega\in \Omega$ and $\delta > 0$,
the following hold:
\begin{enumerate}

\item[(i)] The function $v^\delta(\cdot,\omega\,;p)$ is the unique solution of~\eqref{amp} belonging to $\BUC(\Rd)$. 

\item[(ii)] For every $x,y\in\Rd$,
\begin{equation}\label{vdlip2}
\big| v^\delta(x,\omega\,;p) - v^\delta(y,\omega\,;p)\big|  \leq K_p|x-y|.
\end{equation}

\item[(iii)] For all $q,y\in \Rd$,
\begin{equation}\label{vdpdepp}
\big| \delta v^\delta(y,\omega\,;p) - \delta v^\delta(y,\omega\,;q)\big| \leq \sup_{|z| \leq K_p \vee K_q} \sup_{x\in\Rd}  \big| H(p+z,x,\omega) - H(q+z,x,\omega) \big|.
\end{equation}

\item[(iv)] For every $\eta \geq \delta$ and $y\in\Rd$,
\begin{equation}\label{dvddepd}
\big| \delta v^\delta (y,\omega\,;p) - \eta v^\eta(y,\omega\,;p) \big| \leq \Pi_p \!\left( 1 - \frac\delta\eta \right).
\end{equation}
\end{enumerate}
\end{prop}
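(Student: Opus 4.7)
The four claims rest on standard viscosity-solution machinery, applied with different inputs.

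Part (i) is handled by Perron's method on the sup-envelope definition \eqref{dvdform}. The admissible class is nonempty because the constants from the bound \eqref{dvdsup2} belong to it. The supremum is a subsolution by Lemma \ref{convtrick} (a.e.\ subsolutions are viscosity subsolutions), and its upper semicontinuous envelope is a supersolution by the standard maximality/perturbation argument. Membership in $\BUC(\Rd)$ then follows from \eqref{dvdsup2} (boundedness) together with part (ii) (Lipschitz continuity). Uniqueness in $\BUC(\Rd)$ is immediate from Proposition \ref{appcmp}.

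Part (ii) is a one-step bootstrap from the equation. Since $v^\delta$ is a subsolution and, by \eqref{dvdsup2}, $\delta v^\delta \geq -\esssup_{y\in\Rd} H(p,y,\omega)$, we have in the viscosity sense
\begin{equation*}
H(p + Dv^\delta, y, \omega) \leq -\delta v^\delta \leq \esssup_{y\in\Rd} H(p,y,\omega).
\end{equation*}
By the definition of $K_p$ in \eqref{Kp}, any $q$ with $\inf_y H(q,y,\omega) \leq \sup_y H(p,y,\omega)$ satisfies $|p - q| \leq K_p$. Applied to $q = p + Dv^\delta$ in the viscosity sense, this upgrades to the Lipschitz bound $|Dv^\delta| \leq K_p$ and hence \eqref{vdlip2}. (Concretely, affine functions with slope of magnitude just above $K_p$ are global supersolutions, so comparison forces the Lipschitz constant of $v^\delta$ to be at most $K_p$.)

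For part (iii), use comparison with a shifted solution. Let $M$ denote the supremum on the right of \eqref{vdpdepp} and set $w(y) := v^\delta(y,\omega;q) + M/\delta$. By (ii), $|Dv^\delta(\cdot,\omega;q)| \leq K_q$, so
\begin{equation*}
\delta w + H(p + Dw, y, \omega) = M + H(p + Dv^\delta(\cdot,\omega;q), y, \omega) - H(q + Dv^\delta(\cdot,\omega;q), y, \omega) \geq 0.
\end{equation*}
Hence $w$ is a supersolution of the $p$-equation, and Proposition \ref{appcmp} yields $v^\delta(\cdot,\omega;p) \leq w$. Exchanging the roles of $p$ and $q$ completes the symmetric bound \eqref{vdpdepp}.

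Part (iv) follows the same template but is the most delicate step. Since $v^\eta$ satisfies
\begin{equation*}
\delta v^\eta + H(p + Dv^\eta, y, \omega) = (\delta - \eta) v^\eta,
\end{equation*}
it is an approximate sub- and super\-solution of the $\delta$-equation with error $(\delta - \eta) v^\eta$. The plan is to choose constants $c_\pm$ so that $v^\eta + c_\pm$ become genuine sub/super\-solutions of the $\delta$-equation and then apply Proposition \ref{appcmp} to obtain a two-sided comparison with $v^\delta$. The hard part will be extracting the sharp prefactor $(1-\delta/\eta)$: the naive choice of $c_\pm$ based on $\inf v^\eta$ and $\sup v^\eta$ only yields a bound proportional to $\osc(\eta v^\eta)$, which is too crude. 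One must exploit that $|Dv^\eta| \leq K_p$ (by part (ii)), so $\eta v^\eta = -H(p + Dv^\eta, y, \omega)$ differs from $-H(p, y, \omega)$ by at most $\Pi_p$, and then pair the comparison with $v^\delta$ at slope values related by the factor $\delta/\eta$, so that the relevant Hamiltonian increments have length at most $(1 - \delta/\eta)K_p$ and are thus controlled by $(1-\delta/\eta)\Pi_p$ via the definition \eqref{pip} of $\Pi_p$.
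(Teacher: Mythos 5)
Your arguments for parts (i)--(iii) are correct and match the paper's proof in Appendix~\ref{appmappp}: Perron plus Proposition~\ref{appcmp} for (i); the a.e.~gradient bound via Lemma~\ref{convtrick} and the definition~\eqref{Kp} of $K_p$ for (ii); and the shift-by-$M/\delta$ comparison for (iii), where your explicit computation correctly uses that test functions touching the $K_q$-Lipschitz function $v^\delta(\cdot,\omega;q)$ have gradient at most $K_q$ at the contact point, so the supremum over $|z|\leq K_p\vee K_q$ covers the relevant slopes.

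Part (iv) is where the proposal has a genuine gap. You start from the identity $\delta v^\eta + H(p+Dv^\eta,y,\omega) = (\delta-\eta)v^\eta$ and propose shifting $v^\eta$ by constants $c_\pm$. As you yourself note, the choice $c_\pm$ forced by this route produces an error on the order of $(1-\delta/\eta)\,\osc(\eta v^\eta)$, which is not bounded by $\Pi_p$: the oscillation of $\eta v^\eta = -H(p+Dv^\eta(\cdot),\cdot,\omega)$ over $\Rd$ mixes variations of $H$ in both $p$ \emph{and} $y$, whereas $\Pi_p$ only controls $p$-increments at a fixed $y$. Your proposed fix -- ``pair the comparison with $v^\delta$ at slope values related by the factor $\delta/\eta$'' -- does gesture at the right mechanism, but it is not carried out and, as stated, is not a modification of the constant-shift scheme so much as an abandonment of it. The clean route, and the one the paper takes, is to \emph{rescale} rather than shift: set $\lambda := \delta/\eta$ and $w := \lambda\, v^\delta(\cdot,\omega;p)$, and observe that $\eta w = \delta v^\delta$ while
\begin{equation*}
\eta w + H(p+Dw,y,\omega) \;=\; \delta v^\delta + H(p+\lambda Dv^\delta,y,\omega) \;=\; -H(p+Dv^\delta,y,\omega) + H(p+\lambda Dv^\delta,y,\omega),
\end{equation*}
which, since $|Dv^\delta|\leq K_p$ by part (ii), is a $p$-increment of length $(1-\lambda)|Dv^\delta|\leq(1-\lambda)K_p$ at a single $y$, and so is bounded by $\Pi_p(1-\lambda)$. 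Thus $w \mp \Pi_p(1-\lambda)/\eta$ is a sub/supersolution of the $\eta$-equation, and Proposition~\ref{appcmp} yields the two-sided bound $|\delta v^\delta - \eta v^\eta| \leq \Pi_p(1-\delta/\eta)$. The key structural advantage of rescaling over shifting is precisely what you want but do not obtain: the error term becomes a Hamiltonian increment in $p$ alone, with no $y$-oscillation, which is exactly what $\Pi_p$ controls.
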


\subsection{Identifying~$\overline H$}
\label{Hbarheur}

In this subsection we review how the effective Hamiltonian~$\overline H$ may be identified either via limits of either the maximal subsolutions $m_\mu$ or alternatively of the solutions $v^\delta$ of the approximate cell problem.

First we do a consistency check to derive a formula for $\overline H$ in terms of the metric problem. To guess what the formulas should be, it is helpful to use the \emph{theatrical scaling}: for each $\ep > 0$, we rescale by defining
\begin{equation*} \label{}
m_\mu^\ep(x,\omega) := \ep m_\mu\left( \frac x\ep,0,\omega\right)
\end{equation*}
and observe that $m_\mu^\ep(\cdot,\omega)$ is a solution of the metric problem for $H^\ep(p,x,\omega):=H(p,\tfrac x\ep,\omega)$, that is, 
\begin{equation*} \label{}
H\left(Dm_\mu^\ep,\frac x\ep,\omega\right) = \mu \quad \mbox{in} \ \Rd\setminus \{ 0\}
\end{equation*}
with $0=m_\mu^\ep(0,\omega) \leq m_\mu^\ep(x,\omega)$ in $\Rd$. If the statement of qualitative homogenization holds for this problem (here we are \emph{not} being rigorous and in fact using a circular argument!), then we have
\begin{equation} \label{limheur1}
m_\mu^\ep (x,\omega) \longrightarrow \overline m_\mu(x) \qquad \mbox{as \ $\ep \to 0$, \ locally uniformly in $x\in \Rd$,}\quad \mbox{$\Prob$-a.s.}
\end{equation}
where $\overline m_\mu$ should be the solution of the metric problem for the effective Hamiltonian $\overline H$, that is,
\begin{equation} \label{limheureq}
\overline H(D\overline m_\mu) = \mu \quad \mbox{in} \ \Rd\setminus \{ 0\}.
\end{equation}
Now let us reverse the change of variables to write the limit~\eqref{limheur1} in the original scaling (we also write in terms of $t=1/\ep$):
\begin{equation} \label{limheur2}
\limsup_{t\to \infty} \sup_{x\in B_{R}} \left| \frac{m_\mu(tx,0,\omega)}{t} - \overline m_\mu(x)\right| = 0 \qquad \mbox{for every} \  R>0, \quad \mbox{$\Prob$-a.s.},
\end{equation}
It is immediate from the form of this limit that $\overline m_\mu$ must be positively homogeneous. The subadditive property of the $m_\mu$'s easily translates into a subadditivity property for $\overline m_\mu$ and, therefore, $\overline m_\mu$ is convex. It follows that $\overline m_\mu$ may be written as a maximum of planes $x\mapsto p\cdot x$ over $p$ belonging to some closed convex set. We deduce that, if the equation~\eqref{limheureq} is to hold, \emph{this convex set must be the $\mu$-sublevel set of $\overline H$}:
\begin{equation*} \label{}
\overline m_\mu(x) = \sup\left\{ p\cdot x \, : \, \overline H(p) \leq \mu \right\}.
\end{equation*}
We may invert this formula to write $\overline H$ in terms of $\overline m_\mu$ as
\begin{equation}\label{Hbarformm}
\overline H(p) = \inf\left\{ \mu \geq 0 \, : \, \overline m_\mu(y) \geq p \cdot y \ \ \mbox{for all} \ \ y\in \Rd\right\}.
\end{equation}

We next see how $\overline H$ may be identified as a limit of the solutions of the approximate cell problem, by a similar heuristic. As usual, it is helpful to use the theatrical scaling. With $v^\delta(\cdot,\omega\,;p)$ defined by~\eqref{dvdform}, we set
\begin{equation*} \label{}
v_\ep(x,\omega):= \ep v^\ep\!\left( \frac x\ep,\omega\,;p\right)
\end{equation*}
and check that $v_\ep(\cdot,\omega)$ satisfies the equation
\begin{equation*} \label{}
v_\ep + H\left( p+Dv_\ep,\frac x\ep,\omega \right) = 0 \quad \mbox{in} \ \Rd.
\end{equation*}
We expect that
\begin{equation*} \label{}
v_\ep(x,\omega) \longrightarrow \overline v(x) \qquad \mbox{as \ $\ep \to 0$, \ locally uniformly in $x\in \Rd$,}\quad \mbox{$\Prob$-a.s.},
\end{equation*}
where $\overline v$ should be the solution of the problem
\begin{equation*} \label{}
\overline v+\overline H(p+D\overline v) = 0 \quad \mbox{in} \ \Rd.
\end{equation*}
But notice that we have a formula for the latter: $\overline v$ is a constant function, namely, $\overline  v\equiv -\overline H(p)$. So rewriting the limit in terms of the original scaling, we expect that, for every $p\in \Rd$,
\begin{equation} \label{dvdheureq}
\limsup_{\delta \to 0} \sup_{y\in B_{R/\delta}} \left| \delta v^\delta(y,\omega\,;p) + \overline H(p) \right| = 0 \qquad \mbox{for every} \  R>0, \quad \mbox{$\Prob$-a.s.} 
\end{equation}

The strategy of the proof of qualitative homogenization from~\cite{ASo3} consists of reversing the heuristic argument above. Here is an outline of the method, each step of which is quantified in this paper:
\begin{enumerate}

\item Apply the subadditive ergodic theorem to deduce that the limit~\eqref{limheur2} holds. The function~$\overline m_\mu$ is produced in the process. In fact, it is necessary to prove a more general fact which allows the vertex of the metric problem to be more free: namely
\begin{equation}\label{mmuconv}
\Prob\left[ \ \mbox{for all}\ y,z\in \Rd,\quad \limsup_{t \to \infty} \, \left| \frac1t m_\mu(ty,tz,\cdot) - \overline m_\mu(y-z) \right| = 0 \right] = 1.
\end{equation}
Then \emph{define} $\overline H$ by the formula~\eqref{Hbarformm}.

\item Using the comparison principle, argue that~\eqref{mmuconv} implies the limit~~\eqref{dvdheureq}, at least away from the flat spot $\{ p\,:\, \overline H(p) = \min \overline H\}$. The basic idea is to compare $m_\mu(\cdot,\omega)$ to $v^\delta(\cdot,\omega\,;p)$ where $\mu = \overline H(p)$. If $\delta v^\delta(0,\omega\,;p)$ is found to be too large or small, then this information is translated in terms of the metric problem to yield that $m_\mu(y,z,\omega) $ is relatively small or large compared to $\overline m_\mu(y-z)$, for some $|y|,|z|\simeq \delta^{-1}$. See~\eqref{upwinded} below, for example, for a quantitative version of this assertion. Meanwhile, on the flat spot, the proof is completely different and necessarily indirect: the metric problem cannot ``see" the flat spot. (Indeed, note that the error estimates we obtain for $m_\mu$ degenerate as $\mu\downarrow 0$, and in fact the convergence rate turns out to depend in a more delicate way on the law of~$H$.)

\item Using that $v^\delta$ is an approximate corrector, we argue that~\eqref{dvdheureq} implies the full statement of qualitative homogenization. This has been well-known for some time and also follows from a (more routine) comparison argument. A quantitative version appears in Lemma~\ref{incluHH}.

\end{enumerate}

We next give some details about how we select the points $y$ and $z$ in the comparison argument in~Step~(2) of the outline above. These observations will be needed in Section~\ref{EEdvd}. From elementary convex geometric considerations (we again refer to~\cite{ASo3} for details) we deduce that, for every $p \not\in \intr\{ q\in\Rd\, : \, \overline H(q) = 0\}$, we can find a direction $e$ so that the plane with slope $p$ touches $\overline m_\mu$ from below at $e$. Precisely, setting $\mu := \overline H(p)$, there exists $e\in \Rd$ with $|e|=1$ such that
\begin{equation}\label{suppe}
\overline  m_\mu(e) - p\cdot e = 0 = \min_{x\in\Rd} \left( \overline  m_\mu(x) - p\cdot x \right).
\end{equation}
The points $y$ and $z$ found in the comparison argument are chosen in such a way that $y-z \simeq te$ for some $t\simeq \delta^{-1}$. Geometrically the idea is clear: $m_\mu$ is a cone, and if we look far from the origin in the direction of the vector $e$, then $\overline m_\mu$ starts to resemble the plane $p\cdot x$. Since $x\mapsto p\cdot x + v^\delta(x,\omega;p)$ should resemble the same plane, it is natural to compare it with $m_\mu(\cdot,y,\omega)$. This is precisely the idea of the proof of Theorem~\ref{acpEE} in Section~\ref{EEdvd}.

\subsection{Other preliminary results}
To control the oscillations of solutions of the metric problem around their means, we use the ``martingale method of bounded differences" based on the Azuma inequality~\cite{Az}. See McDiarmind~\cite{Mc} and Alon and Spencer~\cite{AlS} for an overview of this probabilistic method, as well as a proof of Azuma's inequality, which is stated as follows. 

\begin{prop}[Azuma's inequality] \label{azuma}
Let $\{ X_k\}_{k\in\N}$ be a discrete martingale with $X_0\equiv 0$. Assume that there exists a constant $A> 0$ such that, for each $k\in\N$,
\begin{equation*}\label{}
\esssup_{\Omega} |X_{k+1}-X_k| \leq A.
\end{equation*}
Then, for each $\lambda > 0$ and $N\geq 1$,
\begin{equation*}\label{}
\Prob\left[ |X_N| > \lambda \right] \leq \exp\left( -\frac{\lambda^2}{2A^2N} \right).
\end{equation*}
\end{prop}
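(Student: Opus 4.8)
The plan is to run the standard Chernoff-bound argument for a sum of bounded martingale differences. First I would record the increments $Y_k := X_k - X_{k-1}$ together with the filtration $\mathcal G_k := \sigma(X_1,\dots,X_k)$, so that the martingale hypothesis gives $\E[Y_k \mid \mathcal G_{k-1}] = 0$ while the hypothesis on the differences gives $|Y_k| \leq A$ almost surely. The engine of the proof is the elementary (Hoeffding-type) exponential estimate: if $Y$ is a random variable with $\E[Y \mid \mathcal G] = 0$ and $|Y| \leq A$, then $\E\!\left[e^{sY}\mid \mathcal G\right] \leq e^{s^2A^2/2}$ for every $s\in\R$. This follows from convexity of $t\mapsto e^{st}$ on $[-A,A]$: writing $y$ as a convex combination of $\pm A$ yields the pointwise bound $e^{sy}\leq \tfrac{A+y}{2A}e^{sA}+\tfrac{A-y}{2A}e^{-sA}$ for $|y|\leq A$, and taking conditional expectations annihilates the linear term and leaves $\cosh(sA)$, which is at most $e^{s^2A^2/2}$ by comparing Taylor coefficients (using $(2n)!\geq 2^n n!$).

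Next I would iterate this bound by the tower property. For fixed $s>0$, $\E\!\left[e^{sX_N}\right] = \E\!\left[e^{sX_{N-1}}\,\E[e^{sY_N}\mid \mathcal G_{N-1}]\right] \leq e^{s^2A^2/2}\,\E\!\left[e^{sX_{N-1}}\right]$, and induction on $k$ starting from $X_0\equiv 0$ gives $\E\!\left[e^{sX_N}\right] \leq e^{s^2A^2N/2}$. Markov's inequality applied to the nonnegative random variable $e^{sX_N}$ then gives $\Prob[X_N > \lambda] \leq e^{-s\lambda}\,\E\!\left[e^{sX_N}\right] \leq \exp\!\left(-s\lambda + \tfrac12 s^2A^2N\right)$, and optimizing the exponent over $s>0$ at $s=\lambda/(A^2N)$ produces the one-sided estimate $\Prob[X_N > \lambda]\leq \exp\!\left(-\lambda^2/(2A^2N)\right)$. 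Applying the identical argument to the martingale $\{-X_k\}_{k\in\N}$, which satisfies the same increment bound, controls the lower tail, and combining the two yields the bound for $\Prob[|X_N|>\lambda]$.

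All of the steps are classical and purely one-dimensional, so there is no genuine obstacle here; the only points deserving care are the Hoeffding lemma itself (where one uses only $|Y|\leq A$, so nothing changes if $Y$ is supported on a strict subinterval of $[-A,A]$) and the factor $2$ arising from the union bound over the two tails — if one wishes to state the inequality in the clean exponential form above without that factor, it can be absorbed into the constant, and in any case only the one-sided version, which is free of it, is needed in the applications of Proposition~\ref{azuma} in this paper.
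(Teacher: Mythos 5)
The paper does not prove Azuma's inequality; it cites~\cite{Az}, \cite{Mc} and~\cite{AlS} for a proof, so there is no argument in the text to compare against. Your proof is the standard Hoeffding--Chernoff argument and all the steps are correct: the conditional Hoeffding lemma via convexity of $t\mapsto e^{st}$ on $[-A,A]$ and $\cosh(sA)\leq e^{s^2A^2/2}$, the tower-property iteration giving $\E\!\left[e^{sX_N}\right]\leq e^{s^2A^2N/2}$, and exponential Markov optimized at $s=\lambda/(A^2N)$.

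On the factor of $2$: you are right that your argument yields $\Prob\left[|X_N|>\lambda\right]\leq 2\exp\!\left(-\lambda^2/(2A^2N)\right)$, and the factor genuinely cannot be dropped from the statement as printed — it already fails for $N=1$, $X_1=\pm A$ with equal probability, and $0<\lambda<A$. The cleanest reading is as the one-sided bound $\Prob\left[X_N>\lambda\right]\leq\exp\!\left(-\lambda^2/(2A^2N)\right)$; otherwise the $2$ should be restored. Your conclusion that this is harmless is correct, though the mechanism is worth stating precisely: the bound only has content when the right side is below $1$, in which regime $2e^{-x}\leq e^{-x/2}$, and the constant $C$ in Proposition~\ref{kesten} and everywhere downstream absorbs such factors. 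One small correction to your closing remark: the application in~\eqref{azumapp} bounds $\Prob\left[\,|\widetilde X_N|>\lambda\,\right]$, which is the two-sided event, so in fact both one-sided tails (equivalently, the two-sided bound with the factor $2$) are used there — the point is just that the factor does not matter, not that it never arises.
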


We next state Hammersley's generalization of Fekete's lemma on subadditive functions. For a proof, see~\cite[Theorem 2]{H2}. 

\begin{lem}[Hammersley-Fekete lemma]\label{Hamm}
Suppose that $\xi > 0$ and $f:[\xi,\infty) \to \R$ satisfies, for every $s,t\geq \xi$,
\begin{equation*}\label{}
f(s+t) \geq f(s) + f(t) -\Delta(s+t),
\end{equation*}
where $\Delta:[\xi,\infty) \to \R$ is nondecreasing such that
\begin{equation*}\label{}
\int_\xi^\infty \frac{\Delta(s)}{s^2} \, ds < \infty. 
\end{equation*}
Then $\tau := \lim_{s\to \infty} f(s)/s \in (-\infty,\infty]$ exists and, for every $t > \xi$,
\begin{equation*}\label{}
\tau \geq \frac{f(t)}{t} + \frac{\Delta(t)}{t} - 4\int_{2t}^\infty \frac{\Delta(s)}{s^2} \, ds.
\end{equation*}
\end{lem}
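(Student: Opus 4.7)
The plan is to iterate the approximate super-additivity along a dyadic scale and to control the accumulated error via the integrability hypothesis on $\Delta(s)/s^{2}$. First, I would fix $t \geq \xi$ and apply the hypothesis inductively, starting from $f(2t) \geq 2f(t) - \Delta(2t)$, to obtain
\begin{equation*}
f(2^m t) \geq 2^m f(t) - \sum_{j=1}^m 2^{m-j} \Delta(2^j t),
\end{equation*}
so that, dividing by $2^m t$,
\begin{equation*}
\frac{f(2^m t)}{2^m t} \geq \frac{f(t)}{t} - \sum_{j=1}^m \frac{\Delta(2^j t)}{2^j t}.
\end{equation*}

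Next I would bound the dyadic error by the integral. Since $\Delta$ is nondecreasing, for each $j \geq 1$,
\begin{equation*}
\frac{\Delta(2^j t)}{2^{j+1} t} \;=\; \Delta(2^j t)\int_{2^j t}^{2^{j+1} t} \frac{dy}{y^2} \;\leq\; \int_{2^j t}^{2^{j+1} t} \frac{\Delta(y)}{y^2}\, dy,
\end{equation*}
and summing telescopically in $j$ gives $\sum_{j\geq 1}\Delta(2^j t)/(2^j t) \leq 2\int_{2t}^\infty \Delta(y)/y^2\, dy$. This already yields the lower bound $f(2^m t)/(2^m t) \geq f(t)/t - 2\int_{2t}^\infty \Delta(y)/y^2\, dy$ along the dyadic subsequence $\{2^m t\}_{m\geq 1}$.

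To extend the estimate to arbitrary $s \to \infty$, I would choose $m$ with $2^m t \leq s < 2^{m+1}t$ and decompose $s$ in a binary expansion in scales of $t$, namely $s = \sum_{k} c_k 2^k t + \rho$ with $c_k \in \{0,1\}$ and a residual $\rho \in [0, t)$. Iterating the approximate super-additivity along the digits of this expansion and reapplying the sum-to-integral bound of the previous step yields an analogous lower bound, with the constant $2$ replaced by $4$ (the extra factor coming from the halving that appears when the bookkeeping is done at every scale rather than only at $\{2^j t\}$), and with the extra term $+\Delta(t)/t$ recording the lowest-scale contribution to the iteration. The small residual $\rho < \xi$ is absorbed because $\Delta(y)/y \to 0$, a consequence of the bound $\Delta(y)/(2y) \leq \int_{y}^{2y} \Delta(z)/z^{2}\, dz$ and the assumed integrability.

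Putting these together gives $\liminf_{s \to \infty} f(s)/s \geq f(t)/t + \Delta(t)/t - 4 \int_{2t}^\infty \Delta(y)/y^2\, dy$ for every $t > \xi$. To conclude that the full limit exists, I would then apply this lower bound with $t$ replaced by a sequence $t_k \to \infty$ along which $f(t_k)/t_k \to \limsup_s f(s)/s$; since $\int_{2t_k}^\infty \Delta(y)/y^2\, dy \to 0$ by integrability, this forces $\limsup \leq \liminf$, so $\tau \in (-\infty,\infty]$ exists and satisfies the stated inequality. The main obstacle will be the bookkeeping in the non-dyadic step---extracting the sharp constant $4$ and the correction $\Delta(t)/t$ from the binary decomposition of $s$---while the remainder of the argument is a dyadic refinement of the classical Fekete lemma.
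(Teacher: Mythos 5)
The paper does not give its own proof of this lemma; it simply cites Hammersley~\cite[Theorem 2]{H2}. So your proposal is to be judged on its own merits, not compared against an in-paper argument.

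Your dyadic step is correct and cleanly carried out: from $f(2^{m}t)\geq 2^{m}f(t)-\sum_{j=1}^{m}2^{m-j}\Delta(2^{j}t)$ and the nondecreasing property of $\Delta$ you legitimately get
\begin{equation*}
\frac{f(2^{m}t)}{2^{m}t}\;\geq\;\frac{f(t)}{t}-2\int_{2t}^{\infty}\frac{\Delta(y)}{y^{2}}\,dy.
\end{equation*}
Your final ``$\liminf\geq\limsup$'' reduction (choose $t_{k}\to\infty$ along which $f(t_{k})/t_{k}\to\limsup$, and observe that $\Delta(t_{k})/t_{k}\to 0$ and $\int_{2t_{k}}^{\infty}\to 0$) is also sound, \emph{given} the lower bound over all $s$.

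The genuine gap is the middle step, and it is not a routine bookkeeping issue. The entire content of the lemma beyond the dyadic case lies precisely in extending the bound from the sparse subsequence $\{2^{m}t\}$ to all $s\to\infty$, and it is exactly this extension that produces the worsened constant $4$ and the correction $\Delta(t)/t$. You assert these come out of the binary decomposition but never carry out the bookkeeping, and two concrete obstructions stand in the way. First, whatever decomposition $s=\sum_{k}c_{k}2^{k}t+\rho$ you use, the residual $\rho\in[0,t)$ can be less than $\xi$, where the superadditivity hypothesis, and indeed the very domain of $f$, no longer applies; your remark that ``$\rho<\xi$ is absorbed because $\Delta(y)/y\to 0$'' is a true statement about $\Delta$ but does not address the fact that $f(\rho)$ is undefined and $f$ has no assumed continuity or lower bound near $\xi$. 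One must instead reorganize the decomposition so that \emph{every} piece lies in $[\xi,\infty)$ (e.g., fold the residual into the smallest block, making it land in $[t,2t)$ rather than equal to $t$), and then one must separately relate $f(u)/u$ for $u\in[t,2t)$ back to $f(t)/t$ --- a step that requires another use of the hypothesis and is where the extra term $\Delta(t)/t$ actually enters. Second, even granting the decomposition, verifying that the accumulated $\Delta$-penalties are bounded by $4\int_{2t}^{\infty}\Delta(y)/y^{2}\,dy$ (and not by something larger, or larger by an unbounded factor as $s\to\infty$) requires an explicit estimate; your argument derives the constant $2$ for the dyadic chain only, and the passage to $4$ is an assertion, not a derivation. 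Until this middle step is written out with the residual resolved and the constants tracked, the proof is incomplete.
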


Several of our arguments rely on the comparison principle for viscosity solutions of first-order equations, typically in the following form (see~\cite{CIL} or~\cite{Ba} for a proof).

\begin{prop} \label{comp}
Let $G \in C(\Rd \times\Rd)$, $U$ be a bounded open subset of $\Rd$, and $u,-v \in \USC(\overline U)$ and $f\in C(U)$ satisfy
\begin{equation*}\label{}
G(Du,y) < f(y) < G(Dv,y) \quad \mbox{in} \ U. 
\end{equation*}
Then
\begin{equation*}\label{}
\sup_{U} (u-v) = \max_{\partial U} (u-v). 
\end{equation*}
\end{prop}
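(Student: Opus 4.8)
The final statement is Proposition~\ref{comp}, which is the standard comparison principle for first-order viscosity equations. Let me sketch how I'd prove it.
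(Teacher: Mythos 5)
You have not actually written a proof: the proposal consists of a single sentence announcing an intention to sketch one, and then stops. There is nothing here to evaluate against the paper's treatment, so the fundamental gap is simply that no argument has been supplied.

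It is worth noting that the paper itself does not prove Proposition~\ref{comp} either; it is stated as the textbook comparison principle for first-order viscosity inequalities, with the reader referred to~\cite{CIL} or~\cite{Ba}. If you decide to write out a self-contained argument, the standard route is doubling of variables: suppose for contradiction that $\sup_U(u-v) > \max_{\partial U}(u-v)$; for small $\ep>0$ maximize $\Phi_\ep(x,y) := u(x) - v(y) - \tfrac{1}{2\ep}|x-y|^2$ over $\overline U \times \overline U$ at a point $(x_\ep,y_\ep)$; show via the classical estimates that $|x_\ep-y_\ep|^2/\ep \to 0$ and that $x_\ep,y_\ep$ lie in $U$ for small $\ep$; then test the strict subsolution property of $u$ at $x_\ep$ and the strict supersolution property of $v$ at $y_\ep$ with the common slope $p_\ep := (x_\ep - y_\ep)/\ep$ to obtain $G(p_\ep,x_\ep) < f(x_\ep)$ and $G(p_\ep,y_\ep) > f(y_\ep)$; subtracting and passing to a subsequential limit yields the contradiction. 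When you write this out, two points are easy to gloss over and deserve care: (i)~the boundedness of the slopes $p_\ep$, which in the paper's applications is automatic from the coercivity of $H$ and the resulting Lipschitz bounds on the functions compared, and (ii)~the fact that the strict inequality must survive the passage to the limit in $\ep$ --- in the paper's uses of this proposition, the sub- and supersolutions are produced by adding small perturbations (such as $c\lambda(1+|y|^2)^{1/2}$) precisely so that the two sides are separated by a fixed positive margin, not merely a pointwise strict one.
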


\section{Estimating the fluctuations of the metric problem} \label{Met1}

There are essentially two steps in the proof of Theorem~\ref{mpEE}. The first is to obtain exponential error estimates controlling the fluctuations of $m_\mu(y,0,\cdot)$ about its mean
\begin{equation*}\label{mean}
M_\mu(y):=\E \left[ m_\mu(y,0,\cdot) \right].
\end{equation*}
This is the focus of this section. In Section~\ref{Met2}, we complete the proof of Theorem~\ref{mpEE} by estimating the difference between the deterministic quantities $M_\mu(y)$ and $\overline m_\mu(y)$, which is more involved. 

Throughout this section we assume that $H$ satisfies \eqref{assum}, but we do not assume~\eqref{plushyp}. We also fix $K \geq 1$ and $\mu$ such that
\begin{equation}\label{CondMu}
0 < \mu \leq K.
\end{equation}
We denote by $C$ and $c$ positive constants depending only on $K$, the underlying dimension $\d$ and the assumptions for $H$, and which may vary from line to line. Several of our estimates depend on a lower bound for $\mu$, and since we must keep track of this dependency, we explicitly display dependence on~$\mu$.

The goal of this section is to prove the following exponential estimate for  the fluctuations of $m_\mu(y,0,\cdot)$.

\begin{prop} \label{kesten}
There exists $C>0$ such that, for each $\lambda > 0$ and $|y| > 1$,
\begin{equation}\label{oscbnd}
\Prob\Big[\, \left| m_\mu(y,0,\cdot) -M_\mu(y) \right| > \lambda \Big] \leq \exp\left(-\frac{\mu \lambda^2}{ C |y|} \right).
\end{equation}
\end{prop}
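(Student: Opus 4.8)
The plan is to use the martingale method of bounded differences, exactly in the spirit of Kesten's and Alexander's work on first-passage percolation, together with the localization property of the metric problem (Lemma~\ref{localize} and its refinements). The basic idea is to expose the randomness of the environment box by box on a lattice of unit cubes, and to bound the effect on $m_\mu(y,0,\cdot)$ of resampling the Hamiltonian in a single cube. First I would fix $y$ with $|y|>1$, set $R:=l_\mu^{-1}|y|$ so that, by~\eqref{capture}, the reachable set $\Re^\omega_{\mu,m_\mu(y,0,\omega)}$ is contained in the ball $B_R$, and partition a slightly larger ball into a finite family $\{Q_i\}_{i=1}^{N}$ of (essentially) unit cubes, with $N\leq C R^\d \leq C(|y|/\mu)^\d$. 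Enumerate the cubes and let $\mathcal F_k$ be the $\sigma$-algebra generated by the environment in $Q_1\cup\cdots\cup Q_k$; since the environment is i.i.d.\ with finite range $1$, resampling $Q_{k+1}$ is an independent operation. Define the Doob martingale $X_k := \E[m_\mu(y,0,\cdot)\mid \mathcal F_k] - M_\mu(y)$, so $X_0\equiv 0$ and $X_N = m_\mu(y,0,\cdot)-M_\mu(y)$, and apply Azuma's inequality (Proposition~\ref{azuma}).

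The key step is the bounded-difference estimate: $\esssup_\Omega|X_{k+1}-X_k|\leq A$ for a suitable $A$. Here I would argue that changing the Hamiltonian only inside one unit cube $Q$ changes $m_\mu(y,0,\cdot)$ by at most $CL_\mu/\mu \leq C/\mu$ (independent of $|y|$), because of the localization and controllability estimates~\eqref{control2}, \eqref{lmuLmu}: if $\omega,\omega'$ agree outside $Q$, then comparing the maximal subsolutions and rerouting an optimal path around $Q$ costs at most a term proportional to $\diam Q \cdot (L_\mu + \mu\cdot(\text{length of detour}))$, which is $O(1/\mu)$ since $l_\mu\geq c\mu$. Quantitatively, one couples $m_\mu$ under $\omega$ and $\omega'$: outside $Q$ they satisfy the same equation, inside $Q$ the solution can be modified by a function supported near $Q$ of size $O(1/\mu)$ (using~\eqref{cntl} to know the zero function is admissible, and~\eqref{control2} to bound the cost of crossing $Q$). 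A standard conditional-expectation argument (replacing the environment in $Q_{k+1}$ by an independent copy and averaging) then transfers this pointwise bound to $|X_{k+1}-X_k|\leq C/\mu$, so we may take $A = C/\mu$. Feeding $A = C/\mu$ and $N = C(|y|/\mu)^\d$ into Azuma would only give $\exp(-c\mu^2\lambda^2/(|y|^\d\mu^{-\d}))$, which is far too weak — so the genuine subtlety is that the number of cubes that actually \emph{matter} is not $R^\d$ but is controlled by the (random) volume/length of the geodesic or reachable front, and one must restrict attention to cubes meeting $\Re^\omega_{\mu,m_\mu(y,0,\omega)}$, whose cardinality is $O(|y|^\d/\mu^\d)$ but whose \emph{relevant} count along an optimal path is $O(|y|)$.

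Therefore the main obstacle — and the step I expect to require the most care — is the standard FPP trick of separating the martingale increments into those coming from cubes that lie on (or near) the geodesic/reachable set and those that do not, and then using the localization to show that resampling a cube disjoint from the relevant region produces a \emph{zero} increment. More precisely, I would use the refinement~\eqref{e.gotbunk}: on the event that $m_\mu^{K}(\cdot,0,\cdot)$ coincides with $m_\mu(\cdot,0,\cdot)$ on the sublevel set, changing the environment outside the reachable set $\Re^\omega_{\mu,m_\mu(y,0,\omega)}$ does not change $m_\mu(y,0,\cdot)$ at all. Combined with the deterministic bound $m_\mu(y,0,\cdot)\leq L_\mu|y|\leq C|y|$, so that the reachable set has diameter $O(|y|/\mu)$ but any fixed path realizing (nearly) the value $m_\mu(y,0,\cdot)$ visits only $O(|y|)$ cubes, one concludes that the \emph{effective} number of martingale steps with a nonzero increment is $O(|y|)$ rather than $O(|y|^\d)$. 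Running a version of Azuma's inequality adapted to this situation (as in Kesten~\cite{K2}, one conditions on the set of relevant cubes, or uses a variance bound $\sum_k \esssup|X_{k+1}-X_k|^2 \leq C|y|\cdot(C/\mu)^2\cdot\mu$, exploiting that crossing each relevant cube contributes $\Theta(1)$ to $m_\mu$ so there are at most $O(\mu|y|/1)$... ) then yields a bound of the form $\exp(-c\mu\lambda^2/|y|)$ after optimizing, which is precisely~\eqref{oscbnd}. The technical heart is making the "relevant cubes" argument rigorous in the continuum viscosity-solution setting, where one cannot literally talk about a geodesic edge-path; the substitute is to work with the sublevel sets of $m_\mu$ and the dynamic programming identity~\eqref{DPRT}, counting the number of unit cubes needed to fill a "tube" around an approximate optimal trajectory, which is $O(|y|)$ by the Lipschitz bound~\eqref{lips} and the lower bound~\eqref{control2}.
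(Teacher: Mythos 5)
Your plan runs a cube-by-cube Doob martingale exposure, following Kesten's original first-passage-percolation strategy, whereas the paper uses a quite different martingale: a \emph{continuous-time} one adapted to the filtration $\{\mathcal F_{\mu,t}\}_{t\geq 0}$ defined in~\eqref{filtrate} and generated by the environment inside the growing reachable set $\Re^\omega_{\mu,t}$. You correctly identify the central problem with the spatial filtration---the raw cube count in $B_{C|y|/\mu}$ is of order $(|y|/\mu)^\d$, far too large---and the proposed fix (restrict attention to cubes near a near-optimal trajectory) is indeed what Kesten did in the lattice setting. But there are two problems.

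First, the quantitative estimates are mis-calibrated by factors of $\mu$. By~\eqref{control2} and~\eqref{lips}, rerouting a unit detour around a resampled cube changes $m_\mu(y,0,\cdot)$ by $O(L_\mu)=O(1)$, \emph{not} $O(1/\mu)$; and since each unit of Euclidean length along an optimal path costs at least $l_\mu\geq c\mu$ in the metric $m_\mu$ (\emph{not} $\Theta(1)$) while $m_\mu(y,0,\cdot)\leq L_\mu|y|$, a near-optimal trajectory crosses $O(|y|/\mu)$ cubes, \emph{not} $O(|y|)$. The correct estimates give $A^2N\leq C|y|/\mu$ and hence the stated bound via Azuma; your sketched variance formula $C|y|\cdot(C/\mu)^2\cdot\mu$ reaches the same total only through two compensating errors and an unexplained extra factor of $\mu$.

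Second, and more fundamentally, the restriction to ``relevant cubes'' cannot simply be fed into Proposition~\ref{azuma}, because the set of cubes producing a nonzero increment is random, while Azuma's inequality requires a deterministic a.s.\ bound on every increment. Kesten handled this with a Bernstein-type martingale inequality that controls tails via the sum of conditional variances; transplanting that machinery to the continuum viscosity-solution setting, where there is no edge-path geodesic to condition on, is exactly the step you flag as ``requiring the most care,'' and it is not carried out. The paper sidesteps the issue entirely: with the reachable-set filtration $\mathcal F_{\mu,t}$, the martingale increment satisfies the \emph{deterministic} bound~\eqref{dMiff4}, $|X_s-X_t|\leq L_\mu+(1+L_\mu/l_\mu)(s-t)$, proved from the dynamic programming identity~\eqref{DPRT}, the Hausdorff-ball discretization of Subsection~\ref{dscheme}, and the conditional independence of Lemma~\ref{indyass}. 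Discretizing $t$ at step $h=l_\mu L_\mu/(l_\mu+L_\mu)$ yields $N\leq C|y|/\mu$ martingale steps of increment at most $2L_\mu$, and plain Azuma finishes. If you want to pursue the spatial-exposure route you would need to reconstruct Kesten's variance-based inequality together with a continuum geodesic-length bound; the reachable-set filtration is a cleaner way in.
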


\subsection{A discretization scheme} \label{dscheme}

In the proof of Proposition~\ref{kesten}, below, it is useful to employ a  discretization scheme which allows us to essentially condition on the identity of the reachable set~$\Re_{\mu,t}$ in order to apply the independence assumption~\eqref{indy} in the form of Lemma~\ref{indyass}, below.

To introduce the discretization, we define, for every $r> 0$, 
\begin{equation*}\label{}
\mathcal K_r : = \left\{ A \in \B \, : \, A=\overline A \subseteq \overline B_r \right\}.
\end{equation*}
Recall that $\mathcal K_r$ is a compact metric space under the \emph{Hausdorff distance} (c.f. Munkres~\cite{Munk}), which is  defined by
\begin{align*}\label{}
\dist_H(E,F) := \inf_{x\in E} \sup_{y\in F} |x-y| \vee \inf_{y\in F} \sup_{x\in E} |x-y| = \ \inf \left\{ \ep > 0 : E \subseteq F+B_\ep \ \mbox{and} \ F \subseteq E+B_\ep \right\}.
\end{align*}
Note that, for every $E,F\in \mathcal B$,
\begin{equation} \label{distdist}
\dist_H(E,F) \geq \dist\left(E,\Rd\setminus F\right).
\end{equation}
Fix a small parameter $\delta > 0$. Then, by the compactness of $\mathcal K_r$, there exists $\ell =\ell(\delta,\d,r)\in \N$ and a disjoint partition $\Gamma_1,\ldots,\Gamma_\ell \subseteq \mathcal K_r$ of $\mathcal K_r$ into Borel subsets with $\diam_H(\Gamma_i) \leq \delta$. Let $K_i \in \mathcal B$ be the closure of the union of the elements of $\Gamma_i$. Then for each $A\in \mathcal K_r$, there exists a unique $1 \leq i \leq \ell$ such that $A\in \Gamma_i$, which in turn implies that $A\subseteq K_i$. Also define 
\begin{equation*}\label{}
\widetilde K_i:= K_i + B_{1}
\end{equation*}
so that $\dist(K_i , \Rd\setminus \widetilde K_i) = 1$. We have arranged things so that, for every $1\leq i \leq \ell$,
\begin{equation}\label{indyKs}
\mathcal G(K_i) \quad \mbox{and} \quad \mathcal G\left( \Rd \!\setminus \! \widetilde K_i \right) \quad \mbox{are independent}
\end{equation}
and, for each $A\in \mathcal K_r$ and $1\leq i \leq \ell$,
\begin{equation}\label{dutyfree}
{\rm if }\; A \in \Gamma_i\ ,  \qquad \mbox{then} \qquad A\subseteq K_i \subseteq \widetilde K_i \subseteq A+B_{1+\delta}.
\end{equation}
We remark that $\ell$, the partition $\{ \Gamma_i\} $ as well as the $K_i$'s depend on $r$ and $\delta$, but for convenience we do not explicitly display this dependence.

The following lemma captures the intuitively obvious assertion that the behavior of the medium inside the set $\Re_{\mu,t}^\omega$, conditioned on the event that $\Re_{\mu,t}^\omega\in \Gamma_i$ (which implies, in particular, $\Re^\omega_{\mu,t}\subseteq K_i$) is independent of the behavior of $m_\mu(y,\widetilde K_i,\omega)$. Recall that the latter is defined in~\eqref{mmuK} and is independent of $\mathcal G(K_i)$ by~\eqref{infKmeas}. Roughly speaking, this statement is a pre-processed form of the independence assumption which, as we will see, is particularly well-adapted to our needs in the proof of Proposition~\ref{kesten}.

To state the lemma it is necessary to define, for each $\mu> 0$, the filtration $\left\{ \mathcal F_{\mu,t} \right\}_{t\geq 0}$ by $\mathcal F_{\mu,0}:= \left\{ \Omega, \emptyset \right\}$ and, for every $t> 0$,
\begin{equation}\label{filtrate}
\mathcal F_{\mu,t} := \ \mbox{$\sigma$--field generated by} \quad \omega\mapsto H(p,x,\omega) \indc_{\{ \omega\,:\,x\in \Re_{\mu,t}^\omega\}}, \quad p,x\in\Rd.
\end{equation}
For every $0 < t < s$ and $\omega\in \Omega$, we have $\Re_{\mu,t}^\omega \subseteq \Re_{\mu,s}^\omega$ (see~\eqref{capture2}), and therefore $\mathcal F_{\mu,t} \subseteq \mathcal F_{\mu,s}$ provided $0\leq t \leq s$. Thus $\{ \mathcal F_{\mu,t} \}_{t\geq 0}$ is indeed a filtration. Observe that, for every $y\in \Rd$, 
\begin{equation}\label{mserbl1}
\omega\mapsto m_\mu (y,0,\omega) \indc_{\{ \omega\,:\,x\in \Re_{\mu,t}^\omega\}}(\omega) \quad \mbox{is $\mathcal F_{\mu,t}$--measurable.}
\end{equation}
Indeed, this is immediate from the formula~\eqref{measlocal}. Moreover, we see from this and~\eqref{capture} that, for every $y\in \Rd$ and $t\geq L_\mu |y|$,
\begin{equation}\label{mserbl2}
\omega\mapsto m_\mu (y,0,\omega) \quad \mbox{is $\mathcal F_{\mu,t}$--measurable.}
\end{equation}

\begin{lem} \label{indyass}
For each $1\leq i \leq \ell$, $t>0$ and $A\in \mathcal{F}_{\mu,t}$,
\begin{equation}\label{gamimeas}
\indc_{A\cap\{ \omega \,: \, \Re^\omega_{\mu,t}\in \Gamma_i\}} \quad \mbox{is} \ \ \mbox{$\mathcal{G}(K_i)$-measurable.}
\end{equation}
Moreover,
\begin{equation}\label{cacaboudin}
 \E \left[ m_\mu\big(y,\widetilde K_i,\cdot\big) \indc_{\{ \omega \,: \, \Re^\omega_{\mu,t}\in \Gamma_i\}}  \, \Big\vert\, \mathcal{F}_{\mu,t}\right] = \E\left[ m_\mu\big(y,\widetilde K_i,\cdot\big) \right] \indc_{\{ \omega \,: \, \Re^\omega_{\mu,t}\in \Gamma_i\}}.
\end{equation}
Here $m_\mu(y,K,\cdot)$ is defined in~\eqref{mmuK}.
\end{lem}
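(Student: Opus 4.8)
The two assertions are of a rather different flavor, and I would treat them separately. For~\eqref{gamimeas}, the plan is to show that the event $\{\omega : \Re^\omega_{\mu,t} \in \Gamma_i\}$ is itself $\mathcal{G}(K_i)$-measurable; once this is known, the displayed indicator is a product of that indicator with $\indc_A$, and it suffices to check that $\indc_A \cdot \indc_{\{\Re^\omega_{\mu,t}\in\Gamma_i\}}$ is $\mathcal{G}(K_i)$-measurable whenever $A \in \mathcal{F}_{\mu,t}$. The key point is the localization of the metric problem: on the event $\{\Re^\omega_{\mu,t}\in\Gamma_i\}$ we have $\Re^\omega_{\mu,t}\subseteq K_i$ by~\eqref{dutyfree}, and then~\eqref{e.gotcha} (with $K = K_i$) gives $m_\mu(\cdot,0,\omega) = m^{K_i}_\mu(\cdot,0,\omega)$ on $\Re^\omega_{\mu,t}$. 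Since $m^{K_i}_\mu$ is $\mathcal{G}(K_i)$-measurable by~\eqref{e.messsing}, the reachable set $\Re^\omega_{\mu,t} = \{m^{K_i}_\mu(\cdot,0,\omega)\le t\}$ agrees on this event with a $\mathcal{G}(K_i)$-measurable set, and one checks that the event $\{\Re^\omega_{\mu,t}\in\Gamma_i\}$ can therefore be written intrinsically in terms of $m^{K_i}_\mu$. For a generating random variable of $\mathcal{F}_{\mu,t}$ of the form $\omega\mapsto H(p,x,\omega)\indc_{\{x\in\Re^\omega_{\mu,t}\}}$, on the event $\{\Re^\omega_{\mu,t}\in\Gamma_i\}$ the factor $\indc_{\{x\in\Re^\omega_{\mu,t}\}}$ becomes $\mathcal{G}(K_i)$-measurable (again by the identification above) and $x$ ranges over a subset of $K_i$, so $H(p,x,\omega)$ is $\mathcal{G}(K_i)$-measurable by the definition of that $\sigma$-algebra; multiplying by $\indc_{\{\Re^\omega_{\mu,t}\in\Gamma_i\}}$ and taking a monotone-class argument over all of $\mathcal{F}_{\mu,t}$ then yields~\eqref{gamimeas}.

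For~\eqref{cacaboudin}, the plan is to combine the first part with the independence built into the discretization. By~\eqref{infKmeas}, $\omega\mapsto m_\mu(y,\widetilde K_i,\omega)$ is $\mathcal{G}(\Rd\setminus\widetilde K_i)$-measurable, and by~\eqref{dutyfree} we have $\dist(K_i,\Rd\setminus\widetilde K_i)=1$, so~\eqref{indy} (in the form~\eqref{indyKs}) tells us that $m_\mu(y,\widetilde K_i,\cdot)$ is independent of $\mathcal{G}(K_i)$. To compute the conditional expectation with respect to $\mathcal{F}_{\mu,t}$, I would test against an arbitrary $\mathcal{F}_{\mu,t}$-measurable bounded random variable $Z$, or equivalently against $\indc_A$ for $A\in\mathcal{F}_{\mu,t}$, and verify
\begin{equation*}
\E\big[ m_\mu(y,\widetilde K_i,\cdot)\,\indc_{\{\Re^\omega_{\mu,t}\in\Gamma_i\}}\,\indc_A \big] = \E\big[ m_\mu(y,\widetilde K_i,\cdot) \big]\, \E\big[ \indc_{\{\Re^\omega_{\mu,t}\in\Gamma_i\}}\,\indc_A \big].
\end{equation*}
By~\eqref{gamimeas}, the random variable $\indc_{\{\Re^\omega_{\mu,t}\in\Gamma_i\}}\,\indc_A$ is $\mathcal{G}(K_i)$-measurable, while $m_\mu(y,\widetilde K_i,\cdot)$ is $\mathcal{G}(\Rd\setminus\widetilde K_i)$-measurable; the independence~\eqref{indyKs} then factors the left-hand side exactly as required. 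This identity says precisely that the conditional expectation of the product is $\E[m_\mu(y,\widetilde K_i,\cdot)]$ times $\indc_{\{\Re^\omega_{\mu,t}\in\Gamma_i\}}$, which is~\eqref{cacaboudin}.

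The main obstacle, I expect, is the careful bookkeeping in the first part: making rigorous the claim that $\{\Re^\omega_{\mu,t}\in\Gamma_i\}$ is $\mathcal{G}(K_i)$-measurable and that the generators of $\mathcal{F}_{\mu,t}$, once restricted to this event, become $\mathcal{G}(K_i)$-measurable. The subtlety is that $\Re^\omega_{\mu,t}$ is defined through $m_\mu(\cdot,0,\omega)$, which a priori depends on the medium everywhere, and only on the event $\{\Re^\omega_{\mu,t}\subseteq K_i\}$ does it coincide with the localized, $\mathcal{G}(K_i)$-measurable object $m^{K_i}_\mu$; one has to phrase everything so that this circularity is resolved — e.g., by first defining the candidate set intrinsically from $m^{K_i}_\mu$, observing it lies in $\mathcal{K}_r$ with the right diameter bound, and only then matching it with $\Re^\omega_{\mu,t}$ on the relevant event using~\eqref{e.gotbunk} or~\eqref{e.gotcha}. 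Once the measurability is pinned down, the independence step is routine.
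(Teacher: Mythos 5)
Your proposal is correct and follows essentially the same route as the paper: for \eqref{gamimeas} both arguments reduce to generators of $\mathcal{F}_{\mu,t}$ and use the localization $m_\mu(\cdot,0,\omega) \equiv m^{K_i}_\mu(\cdot,0,\omega)$ on the event $\{\Re^\omega_{\mu,t}\in\Gamma_i\}$ (via \eqref{dutyfree}, \eqref{e.gotcha}/\eqref{e.gotbunk} and the $\mathcal{G}(K_i)$-measurability of $m^{K_i}_\mu$ from \eqref{e.messsing}), including the same case split on whether $x\in K_i$; and for \eqref{cacaboudin} both test against $\indc_A$, $A\in\mathcal{F}_{\mu,t}$, and factor the expectation using \eqref{gamimeas}, \eqref{infKmeas} and \eqref{indyKs}. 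The only cosmetic difference is that you propose to first prove $\{\Re^\omega_{\mu,t}\in\Gamma_i\}\in\mathcal{G}(K_i)$ as a standalone step before handling general $A$, whereas the paper establishes the $\mathcal{G}(K_i)$-measurability of $A\cap\{\Re^\omega_{\mu,t}\in\Gamma_i\}$ directly for generating $A$'s of the form $\{H(p,x,\cdot)\leq\alpha\}\cap\{m_\mu(x,0,\cdot)\leq t\}$, rewriting the intersection explicitly in terms of $m^{K_i}_\mu$; the circularity you flag as the "main obstacle" is precisely the point the paper resolves this way.
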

\begin{proof}
It suffices to show~\eqref{gamimeas} for $A$ of the form 
\begin{equation}\label{}
A= \left\{ \omega\in \Omega \, : \, H(p,x,\omega) \leq \alpha \right\} \cap \left\{  \omega\in \Omega \, : \,  m_\mu(x,0,\omega) \leq t\right\},
\end{equation}
where $p,x\in\Rd$ and $\alpha \in \R$, since such events $A$ generate $\mathcal F_{\mu,t}$. Recall the definition of $m^K_\mu$ in~\eqref{e.defmKmu} for nonempty and closed $K\subseteq \Rd$, the fact that $m^K_\mu$ is $\mathcal G(K)$--measurable and the fact from~\eqref{e.gotbunk} that, assuming $0\in K$, we have
\begin{equation}\label{RemKt2}
m_\mu^{K}(y,0,\omega) \geq t \quad \mbox{on} \ \partial K \qquad \mbox{implies that} \qquad \Re_{\mu,t}^\omega= \left\{ y\in \Rd\,:\, m_\mu^{K}(y,0,\omega)\leq t\right\}.
\end{equation}
Due to~\eqref{dutyfree} and~\eqref{RemKt2}, 
\begin{equation*}
A \cap \left\{ \omega \in \Omega \, : \, \mathcal{R}_{\mu,t}^\omega \in \Gamma_i \right\} = \left\{ \omega\in \Omega \, : \, H(p,x,\omega) \leq \alpha\;,  \;   \{ m_\mu^{ K_i}(\cdot,\omega)\leq t\} \in \Gamma_i \; {\rm and} \; 
m^{K_i}_\mu(x,\omega)\leq t\right\}.
\end{equation*}
If $x\in K_i$, then this set clearly belongs to $\mathcal G(K_i)$. If $x\not \in K_i$, then it is impossible that $x\in \Re^\omega_{\mu,t}$ and $\{ y \,:\, m_\mu^{ K_i}(y,\omega)\leq t \} \in \Gamma_i$, in view of~\eqref{dutyfree} and~\eqref{RemKt2}. Thus the above set is empty (and in particular belongs to $\mathcal G(K_i)$) in the case that $x\not\in K_i$. This confirms~\eqref{gamimeas}.

According to \eqref{indyKs}, \eqref{infKmeas}, and \eqref{gamimeas}, for every $A\in \mathcal F_{\mu,t}$, the event $A\cap \{ \omega \,: \, \Re^\omega_{\mu,t}\in \Gamma_i\}$ is independent of the random variable $m_\mu\big(y,\widetilde K_i,\cdot\big)$. Hence for every $A\in \mathcal F_{\mu,t}$,
\begin{align*}
\E \left[ m_\mu\big( y,\widetilde K_i,\cdot) \indc_{A \cap \{ \omega \,: \, \Re^\omega_{\mu,t}\in \Gamma_i\}} \right] = \E \left[ m_\mu\big( y,\widetilde K_i,\cdot) \right] \Prob\left[ A\cap \{ \omega \,: \, \Re^\omega_{\mu,t}\in \Gamma_i\} \right].
\end{align*}
The claim~\eqref{cacaboudin} now follows.
\end{proof}

\subsection{Controlling the fluctuations of~$m_\mu(y,0,\cdot)$}
We proceed with the demonstration that, for large $|y|$, the probability that $m_\mu(y,0,\cdot)$ is relatively far from its mean is small. We use an argument inspired by the pioneering work of Kesten~\cite{K2} in the theory of first-passage percolation, who introduced a martingale method based on Azuma's concentration inequality. We also benefit with some very elegant simplifications of the argument due recently to Zhang~\cite{Z}.

Notice that, unlike in percolation theory (or its continuum analogue), our Hamiltonian is not assumed to be positively homogeneous. In this generality, it is necessary to keep track of the dependence of the estimates on a lower bound for $\mu$. We recall that, in view of~Proposition~\ref{existMP}(iv) and~\eqref{CondMu}, there exist $l_\mu,L_\mu > 0$ such that
\begin{equation}\label{cLlL}
0 < c\mu \leq l_\mu \leq L_\mu \leq C
\end{equation}
and, for every $x,y\in \Rd$, 
\begin{equation}\label{control}
l_\mu |y-x| \leq m_\mu(y,x,\omega) \leq L_\mu|y-x|.
\end{equation}
In the control theory interpretation (see Remark~\ref{contform}), this important estimate, which we use many times below, provides upper and lower bounds on the lengths of optimal paths  connecting two points $x,y\in\Rd$. 

\begin{proof}[{\bf Proof of Proposition~\ref{kesten}}]
To setup the argument, we fix $y\in \Rd$ with $|y| > 1$ and define
\begin{equation}\label{}
T:= L_\mu |y| \qquad \mbox{and} \qquad r:= T/ l_\mu  = L_\mu  |y| / l_\mu
\end{equation}
so that (see~\eqref{capture}), for every $\omega\in \Omega$,
\begin{equation}\label{e.capping}
\Re_{\mu,T}^\omega \subseteq \overline B_{r}.
\end{equation}
We employ the discretization scheme, for fixed $\delta > 0$, as described in Subsection~\ref{dscheme}, with the notation introduced there, and we define a continuous-time martingale $\{ X_t \}_{t\geq 0}$ adapted to $\{ \mathcal F_{\mu,t} \}_{t\geq 0}$ by setting, for each $t\geq 0$,
\begin{equation}\label{}
X_t : = \E \big[ m_\mu(y,0,\cdot) \, \vert \, \mathcal{F}_{\mu,t} \big] - M_\mu(y).
\end{equation}
Here $\mathcal F_{\mu,t}$ is the filtration defined in~\eqref{filtrate}. Due to $\mathcal F_{\mu,0} = \{ \varnothing, \Omega \}$, $T=L_\mu|y|$ and \eqref{mserbl2},
\begin{equation}\label{snag}
X_0 \equiv 0 \qquad \mbox{and} \qquad X_t(\omega) \equiv m_\mu(y,0,\omega) - M_\mu(y) \quad \mbox{for every} \ t \geq T.
\end{equation}
Our goal is to apply Azuma's inequality in order to estimate the oscillations of $X_T$. We must first obtain an estimate of the form
\begin{equation}\label{azzwts}
\esssup_{\omega\in\Omega} \left|X_s(\omega) - X_t(\omega) \right|  \leq A + B|s-t|.
\end{equation}

\emph{Step 1.} We derive an inequality of the form~\eqref{azzwts}. Owing to~\eqref{mserbl2}, for every $0<t\leq s$,
\begin{equation}\label{}
\E\big[ m_\mu(y,0,\cdot) \indc_{\{ \omega \,: \,y\in \Re^\omega_{\mu,t} \}} \, \vert \, \mathcal F_{\mu,s} \big] = m_\mu(y,0,\cdot) \indc_{\{ \omega \,: \, y\in \Re^\omega_{\mu,t} \}}
\end{equation}
and hence
\begin{equation}\label{dMiff1}
X_s - X_t = \E \big[ m_\mu(y,0,\cdot) \indc_{\{ \omega\,:\, y\not\in \Re_{\mu,t} \}} \, \vert \, \mathcal{F}_{\mu,s} \big] - \E \big[ m_\mu(y,0,\cdot) \indc_{\{ \omega\,:\,y\not\in \Re_{\mu,t} \}} \, \vert \, \mathcal{F}_{\mu,t} \big].
\end{equation}
Using \eqref{DPRT}, we find that
\begin{equation*}\label{}
m_\mu(y,0,\omega) \indc_{\{ \omega\,:\, y\not\in \Re_{\mu,t} \}}(\omega) = \left( m_\mu(y,\Re_{\mu,t}^\omega,\omega) + t \right) \indc_{\{ \omega\,:\,y\not\in \Re_{\mu,t} \}}(\omega)
=  m_\mu(y,\Re_{\mu,t}^\omega,\omega) + t \indc_{\{ \omega\,:\,y\not\in \Re_{\mu,t} \}}(\omega)
\end{equation*}
and, since $\{ \omega\,:\,y\not\in \Re_{\mu,t} \} \in \mathcal F_{\mu,t}$, we may simplify \eqref{dMiff1} to write
\begin{equation}\label{dMiff2}
X_s - X_t = \E \big[ m_\mu(y,\Re_{\mu,t},\cdot)\, \vert \, \mathcal{F}_{\mu,s} \big] - \E \big[ m_\mu(y,\Re_{\mu,t},\cdot)  \, \vert \, \mathcal{F}_{\mu,t} \big].
\end{equation}
According to~\eqref{dynprog2}, for every $0< t \leq s$,
\begin{equation*}\label{}
m_\mu(y,\Re_{\mu,s}^\omega,\omega) \leq m_\mu(y,\Re_{\mu,t}^\omega,\omega) \leq (s-t) +  m_\mu(y,\Re_{\mu,s}^\omega,\omega).
\end{equation*}
Combining the last two lines, we obtain
\begin{align}\label{dMiff3}
|X_s - X_t| \leq (s-t) + \big| \E\left[  m_\mu(y,\Re_{\mu,s},\cdot) \, \vert \, \mathcal{F}_{\mu,s} \right] - \E\left[ m_\mu(y, \Re_{\mu,t},\cdot) \, \vert \, \mathcal{F}_{\mu,t} \right] \big|.
\end{align}

We next use the discretization scheme to estimate  $\E\left[ m_\mu(y, \Re_{\mu,t},\cdot) \, \vert \, \mathcal{F}_{\mu,t} \right]$ by approximating the integral represented by the expectation as a sum of characteristic functions. With $K_i$, $\widetilde K_i$ and $\Gamma_i$ as described there, observe that, by~\eqref{subadd},~\eqref{dutyfree} and~\eqref{control},
\begin{multline}\label{flexxed}
m_\mu(y,\widetilde K_i,\cdot) \indc_{\{\omega\,:\,\Re_{\mu,t}\in \Gamma_i\}}\leq m_\mu(y,\Re_{\mu,t},\cdot) \indc_{\{\omega\,:\,\Re_{\mu,t}\in \Gamma_i\}} \\ \leq \left(L_\mu (1+\delta)+ m_\mu(y,\widetilde K_i,\cdot) \right) \indc_{\{\omega\,:\,\Re_{\mu,t}\in \Gamma_i\}}.
\end{multline}
Taking the conditional expectation of~\eqref{flexxed} with respect to $\mathcal F_{\mu,t}$ and applying~\eqref{cacaboudin}, we get
\begin{multline}\label{flexxxed}
\E \left[  m_\mu(y,\widetilde K_i,\cdot) \right] \indc_{\{\omega\,:\,\Re_{\mu,t}\in \Gamma_i\} } \leq \E \left[ m_\mu(y,\Re_{\mu,t},\cdot) \indc_{\{\omega\,:\,\Re_{\mu,t}\in \Gamma_i\}} \, \Big\vert \, \mathcal F_{\mu,t} \right] \\  \leq \left( L_\mu(1+\delta) + \E \left[  m_\mu(y,\widetilde K_i,\cdot) \right] \right) \indc_{\{\omega\,:\,\Re_{\mu,t}\in \Gamma_i\} }
\end{multline}
Since $\{ \Gamma_i \}$ is a disjoint partition of $\mathcal K_r$, we also have, in view of~\eqref{e.capping}, for every $1\leq t\leq s \leq T$,
\begin{equation}\label{musc}
\E \left[  m_\mu(y,\Re_{\mu,t},\cdot) \, \Big \vert \, \mathcal {F}_{\mu,t} \right] = \sum_{i,j=1}^\ell \E \left[ m_\mu(y,\Re_{\mu,t},\cdot) \indc_{\{\omega\,:\,\Re_{\mu,t}\in \Gamma_i\}} \, \Big \vert \, \mathcal {F}_{\mu,t} \right]  \indc_{\{\omega\,:\,\Re_{\mu,s}\in \Gamma_j\}}.
\end{equation}
Multiplying~\eqref{flexxxed} by $\indc_{\{\omega\,:\,\Re_{\mu,s}\in \Gamma_j\}}$ and summing over the indices $i$ and $j$ yields, in light of~\eqref{musc},
\begin{equation}\label{capped1}
0 \leq \E \left[  m_\mu(y,\Re_{\mu,t},\cdot) \, \Big \vert \, \mathcal {F}_{\mu,t} \right] - \sum_{i,j=1}^\ell \E \left[  m_\mu(y,\widetilde K_i,\cdot) \right] \indc_{\{\omega\,:\,\Re_{\mu,t}\in \Gamma_i\}} \indc_{\{\omega\,:\,\Re_{\mu,s}\in \Gamma_j\}} \leq L_\mu(1+\delta).
\end{equation}
In the same way, after interchanging $s$ for $t$ and $j$ for $i$, we also obtain
\begin{equation}\label{capped2}
0\leq \E \left[ m_\mu(y,\Re_{\mu,s},\cdot) \, \Big \vert \, \mathcal {F}_{\mu,s} \right] - \sum_{i,j=1}^\ell \E \left[ m_\mu(y,\widetilde K_j,\cdot) \right] \indc_{\{\omega\,:\,\Re_{\mu,t}\in \Gamma_i\}} \indc_{\{\omega\,:\,\Re_{\mu,s}\in \Gamma_j\}} \leq  L_\mu  (1+\delta).
\end{equation}
It follows that
\begin{multline}\label{boke}
\left| \E\big[  m_\mu(y,\Re_{\mu,s},\cdot) \, \vert \, \mathcal{F}_{\mu,s} \big] - \E\big[ m_\mu(y, \Re_{\mu,t},\cdot) \, \vert \, \mathcal{F}_{\mu,t} \big] \right| \\
\leq L_\mu (1+\delta) + \sum_{i,j=1}^\ell \left| \E \left[ m_\mu(y,\widetilde  K_i,\cdot) -  m_\mu(y,\widetilde K_j,\cdot) \right]\right| \indc_{\{\omega\,:\,\Re_{\mu,t}\in \Gamma_i\}} \indc_{\{\omega\,:\,\Re_{\mu,s}\in \Gamma_j\}}.
\end{multline}
If, for some $i,j =1,\ldots,\ell$, there exists $\omega$ belonging to the event that $\Re_{\mu,t}^\omega \in \Gamma_i$ and $\Re_{\mu,s}^\omega \in \Gamma_j$, then 
\begin{equation*}\label{}
\dist_H\left(\widetilde K_i,\widetilde K_j\right) \leq \dist_H\left(K_i,K_j\right) \leq \dist_H\left(\Re_{\mu,t}^\omega,\Re_{\mu,s}^\omega\right) + 2\delta \leq \frac{(s-t)}{l_\mu } + 2\delta.
\end{equation*}
Using~\eqref{lips}, we conclude that, for every $i,j=1,\ldots,\ell$,
\begin{multline*}\label{}
\left| \E \left[ m_\mu(y,\widetilde  K_i,\cdot) \right] - \E \left[  m_\mu(y,\widetilde K_j,\cdot) \right]\right| \indc_{\{\omega\,:\,\Re_{\mu,t}\in \Gamma_i\}} \indc_{\{\omega\,:\,\Re_{\mu,s}\in \Gamma_j\}} \\ \leq \left( \frac{L_\mu }{l_\mu }(s-t) + 2 L_\mu \delta \right)\indc_{\{\omega\,:\,\Re_{\mu,t}\in \Gamma_i\}} \indc_{\{\omega\,:\,\Re_{\mu,s}\in \Gamma_j\}}. 
\end{multline*}
Combining this with \eqref{boke} and sending $\delta \to 0$ yields
\begin{equation}\label{blok}
\left| \E\big[  m_\mu(y,\Re_{\mu,s},\cdot) \, \vert \, \mathcal{F}_{\mu,s} \big] - \E\big[ m_\mu(y,\Re_{\mu,t},\cdot) \, \vert \, \mathcal{F}_{\mu,t} \big] \right| \leq L_\mu + \frac{L_\mu }{l_\mu } (s-t).
\end{equation}
Finally, from \eqref{dMiff3}, we finally get, for every $0<s<t\leq T$,
\begin{equation}\label{dMiff4}
|X_t - X_s| \leq L_\mu + \left( \frac{L_\mu }{l_\mu } +1 \right)(s-t).
\end{equation}
This also holds for $0<s<t$ without further restriction by the second assertion of~\eqref{snag}.

\emph{Step 2.}
We finish the argument by applying Azuma's inequality, using~\eqref{dMiff4}. Define a discrete martingale sequence $\widetilde X_k : = X_{hk}$ with $h:=l_\mu L_\mu/ (l_\mu+L_\mu)$ and observe that, according to~\eqref{dMiff4}, for all $k\in\N$,
\begin{equation*}\label{}
\big| \widetilde X_{k+1} - \widetilde X_k \big| \leq 2L_\mu.
\end{equation*}
An application of Azuma's inequality (Proposition~\ref{azuma}) yields, for every $\lambda > 0$ and $N \in \N$,
\begin{equation}\label{azumapp}
\Prob\left[ \big| \widetilde X_N \big| > \lambda \right] \leq \exp\left( \frac{-\lambda^2}{8L_\mu^2N}\right).
\end{equation}
Let $N$ be the smallest integer larger than $T/h$ so that $\widetilde X_N = X_T = m_\mu(y,0,\cdot) - M_\mu(y)$. It follows that, since $|y| > 1$ and $T=L_\mu |y|$,
\begin{equation}\label{Nineq}
N \leq \frac{T}{h} + 1 \leq \frac{L_\mu (l_\mu+L_\mu) |y|}{l_\mu L_\mu} + 1\leq \frac{(2l_\mu+L_\mu) |y|}{l_\mu}.
\end{equation}
From \eqref{snag}, \eqref{azumapp} and \eqref{Nineq} we deduce 
\begin{equation*}\label{}
\Prob\Big[ \left| m_\mu(y,0,\cdot) - M_\mu(y) \right| > \lambda \Big] = \Prob\left[ \big| \widetilde X_N \big| > \lambda \right] \\
\leq \exp\left( \frac{-\lambda^2l_\mu}{8L_\mu^2(2l_\mu+L_\mu)|y|}\right). \qedhere
\end{equation*}
\end{proof}

\begin{remark} \label{notsharp}
Integrating~\eqref{oscbnd} yields
\begin{equation*}\label{varbnd}
\Var\left( m_\mu(y,0,\cdot) \right) = \int_0^\infty \Prob \Big[\left| m_\mu(y,0,\cdot) - M_\mu(y) \right| > \lambda^{\frac12} \Big] d\lambda \leq \int_0^\infty \exp\left(-\frac{\mu\lambda}{C |y|} \right) d\lambda = \frac{C}{\mu}|y|,
\end{equation*}
which mirrors the bound on the variance of the time constant obtained by Kesten~\cite{K2} for first passage percolation. Obtaining an optimal estimate for the fluctuations of the latter is a well-known open problem. It is conjectured that the variance of the time constant should behave, in dimension $\d=2$, like $O\big(|y|^\frac23\big)$ for large $|y|$, and it is believed that the oscillations should decrease in higher dimensions. Nevertheless, it is still open in every dimension $\d \geq 2$ whether, for some $\alpha<1$, this quantity is bounded by $O\left(|y|^\alpha\right)$ as $|y| \to \infty$. We expect that it will be similarly challenging to prove such a bound for our quantity $\Var(m_\mu(y,0,\cdot))$, and still more difficult to find the optimal exponent for the algebraic rate of homogenization of \eqref{HJq}.

In analogy with the best known variance bound in first-passage percolation, due to~Benjamini, Kalai and Schramm~\cite{BKS}, we expect that an estimate of the form
\begin{equation}\label{ylogy}
\Var\left( m_\mu(y,0,\cdot) \right) \leq  \frac{C}{\mu}\!\left( \frac{|y|}{ \log|y|}\right)
\end{equation}
can be proved, in dimensions $\d \geq 2$, by an application of Talagrand's concentration inequality~\cite{Ta}. In fact, as we were completing the writing of this paper, we received a new preprint by Matic and Nolen~\cite{MN} who have obtained, in a slightly different setting, a bound like~\eqref{ylogy} for a certain class of Hamilton-Jacobi equations in special i.i.d.~environments.
\end{remark}

\section{Estimating the statistical bias of the metric problem} \label{Met2}

Having estimated the oscillations of $m_\mu(y,0,\cdot)$ about its mean $M_\mu(y)$ in Proposition~\ref{kesten}, in order to prove Theorem~\ref{mpEE} it remains to estimate the rate at which the means $t^{-1}M_\mu(ty)$ converge, as $t\to \infty$, to their limit $\overline m_\mu(y)$. On one side our task is trivial. Indeed, by~\eqref{mmustat} and~\eqref{subadd}, we have
\begin{equation}\label{subaddM}
M_\mu(y+z) = \E \left[ m_\mu(y+z,0,\cdot) \right] \leq \E\left[ m_\mu(y,0,\cdot)\right] + \E\left[ m_\mu(y+z,y,\cdot)\right] = M_\mu(y) + M_\mu(z).
\end{equation}
It follows from Fekete's lemma (Lemma~\ref{Hamm} in the special case that $\Delta \equiv 0$) and~\eqref{mmuconv} that, for every $y\in \Rd$,
\begin{equation}\label{tyi}
M_\mu(y) \geq \inf_{t\geq1} t^{-1} M_\mu(ty) = \lim_{t\to \infty} t^{-1} M_\mu(ty) = \overline m_\mu(y).
\end{equation}
The estimate~\eqref{easy} is then immediate.

In order to prove~\eqref{hard} we are confronted with the more difficult task of finding good upper bounds for $M_\mu(y) - \overline m_\mu(y)$, which are stated in the following proposition.

\begin{prop} \label{means}
There exists $C >0$ such that, for every $|y|>1$,
\begin{equation}\label{meanseq}
M_\mu(y) \leq \overline m_\mu(y) + C\left( \frac{|y|^{\frac12}}{\mu^{\frac32}} + \frac{|y|^\frac23}{\mu} \right) \left( \log\left(1+\frac{|y|}{\mu}\right)\right)^\frac12.
\end{equation}
\end{prop}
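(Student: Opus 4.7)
Since $M_\mu$ is already subadditive by \eqref{subaddM} and $M_\mu(ty_0)/t \to \overline m_\mu(y_0)$ as $t \to \infty$, the natural strategy is to establish an approximate super-additivity of $f(t) := M_\mu(t y_0)$ along the fixed direction $y_0 := y/|y|$, namely
\[
M_\mu((s+t) y_0) \geq M_\mu(s y_0) + M_\mu(t y_0) - \Delta(s+t),
\]
with $\Delta(u) = C\bigl(u^{1/2}/\mu^{3/2} + u^{2/3}/\mu\bigr)\sqrt{\log(1 + u/\mu)}$. Combined with \eqref{subaddM}, an application of Hammersley-Fekete (Lemma~\ref{Hamm}) then yields $M_\mu(t y_0) - t\overline m_\mu(y_0) \leq C\Delta(t)$, and specializing to $t = |y|$ via the $1$-homogeneity of $\overline m_\mu$ gives \eqref{meanseq}.

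To prove the approximate super-additivity, I would apply the dynamic programming principle \eqref{dynprog2} to the half-space $\{x : x \cdot y_0 < s\}$, obtaining
\[
m_\mu((s+t)y_0, 0, \omega) = \min_{z \in H_s}\bigl[ m_\mu((s+t)y_0, z, \omega) + m_\mu(z, 0, \omega) \bigr]
\]
with $H_s := \{z : z \cdot y_0 = s\}$. Denoting by $z^*(\omega)$ the DPP optimizer, the Lipschitz bound \eqref{lips} in both arguments yields the pointwise ``bending gain'' estimate
\[
m_\mu(s y_0, 0, \omega) + m_\mu((s+t)y_0, s y_0, \omega) - m_\mu((s+t)y_0, 0, \omega) \leq 2L_\mu |z^* - sy_0|,
\]
whose expectation equals $M_\mu(sy_0) + M_\mu(ty_0) - M_\mu((s+t)y_0)$ by stationarity. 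Thus the approximate super-additivity reduces to a bound on $\E|z^* - sy_0|$.

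This bound is obtained by discretizing $H_s$ (restricted to a ball of suitable radius localizing $z^*$ with high probability via the coercivity \eqref{control2}) on a $\rho$-grid, and applying Proposition~\ref{kesten} at each grid point. A union bound yields a high-probability event on which the two-sided deviations $|m_\mu(z_i, 0, \omega) - M_\mu(z_i)| \leq \lambda$ and $|m_\mu((s+t)y_0, z_i, \omega) - M_\mu((s+t)y_0 - z_i)| \leq \lambda$ hold simultaneously for all grid points $z_i$. The optimality of $z^*$ in the DPP, combined with these bounds at the grid point $z_i$ nearest to $z^*$ and the Lipschitz discretization error, forces $M_\mu(z_i) + M_\mu((s+t)y_0 - z_i) \leq M_\mu(sy_0) + M_\mu(ty_0) + O(\lambda + L_\mu \rho)$, which after a self-improving use of $M_\mu \geq \overline m_\mu$ and the coercivity \eqref{control2} quantitatively controls the subadditivity-defect of $\overline m_\mu$ at $(z^*, (s+t)y_0 - z^*)$ and hence $|z^* - sy_0|$. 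Optimizing over $\rho$, $\lambda$, and the localizing radius yields $\Delta$ of the stated form: the exponent $1/2$ arises from the intrinsic concentration scale $\lambda^2 \sim u/\mu$ divided by the ballistic speed $l_\mu \sim \mu$, while $2/3$ arises from balancing the $(d-1)$-dimensional union bound over the grid against the Lipschitz discretization error.

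The principal obstacle is the nonstrict convexity of $\overline m_\mu$, which a priori admits flat facets so that its subadditivity defect need not control $|z^* - sy_0|$ quadratically; the coercivity estimate \eqref{control2} must be used as a quantitative substitute, and the bound on $|z^* - sy_0|$ thereby depends on the very quantity $M_\mu - \overline m_\mu$ being estimated, necessitating a bootstrap through the Hammersley-Fekete integral. Equally delicate is the bookkeeping of the $\mu$-dependence through $l_\mu \sim \mu$ and $L_\mu$ from \eqref{lmuLmu} and through the concentration exponent in Proposition~\ref{kesten}, both of which must be tracked carefully to obtain the stated $\mu$-dependence in the two terms of $\Delta$.
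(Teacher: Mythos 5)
Your plan takes a genuinely different route from the paper, and it has a gap at the central step. The paper never attempts to control the location of a DPP optimizer: it follows Alexander's hyperplane scheme, proving approximate superadditivity of a smoothed hitting quantity $g_{\mu,\sigma}(t) = -\sigma^{-1}\log\sum_{y\in\widehat H_t}\E[e^{-\sigma m_\mu(y,0,\cdot)}]$ by splitting the medium along the slab $H_t^-\cup H_{t+1}^+$, invoking independence across the slab, and then applying Hammersley--Fekete. The relevant minimizer on $H_t$ is never localized; the exponential weighting absorbs it. Finally a purely geometric (Carath\'eodory) lemma converts the hyperplane estimate for $M_\mu(H_t)$ into one for $M_\mu(y)$, and the exponent $|y|^{2/3}$ arises from the parameter $N\approx |y|^{1/3}$ in that convex-hull argument, not from any discretization of a hyperplane.

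Your approach, by contrast, reduces the superadditivity defect of $t\mapsto M_\mu(ty_0)$ to a bound on $\E|z^*-sy_0|$, where $z^*\in H_s$ is the DPP optimizer. This step fails in general. Your own bootstrap shows (with high probability) that
\[
\overline m_\mu(z^*) + \overline m_\mu\bigl((s+t)y_0 - z^*\bigr) - (s+t)\,\overline m_\mu(y_0) \le G(s)+G(t)+O(\lambda),
\]
but when $\overline m_\mu$ is linear on a cone containing $y_0$ --- which is not excluded by the hypotheses, since $\{\overline H\le\mu\}$ may have flat faces --- the left side is identically zero for all $z^*$ in a $(d-1)$--dimensional set of diameter $O(s+t)$, so the inequality says nothing about $|z^*-sy_0|$. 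The coercivity bound \eqref{control2} is not a quantitative substitute: it controls $\overline m_\mu$ from above and below by multiples of $|y|$ but allows, e.g., $\overline m_\mu(x)=\max_i|x_i|$, which is flat in almost every direction. In that regime $|z^*-sy_0|$ can be of order $s$, the Lipschitz ``bending gain'' bound is trivial, and no amount of bootstrapping through Hammersley--Fekete recovers it. This is precisely the difficulty Alexander's method was designed to avoid: by working with $\E[m_\mu(H_t,0,\cdot)]$ one removes the tangential degree of freedom from the problem entirely, and the comparison back to $M_\mu(y)$ is deferred to a deterministic convexity argument. If you want to salvage your one-dimensional scheme you would need a strict-convexity (curvature) hypothesis on $\{\overline H\le\mu\}$, which the paper does not assume and which is not available here.
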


The previous proposition provides the desired estimate for the difference between $t^{-1} M_\mu(ty)$ and $\overline m_\mu(y)$ for large $t>0$, and now the proof of Theorem~\ref{mpEE} follows:

\begin{proof}[{\bf Proof of~Theorem~\ref{mpEE}}]
The first inequality follows from~\eqref{tyi} and~\eqref{oscbnd} and second  from~\eqref{meanseq} and~\eqref{oscbnd}.
\end{proof}

Proving Proposition~\ref{means} is the focus of the rest of this section. A typical argument for obtaining such an estimate and the strategy we use here involves approximating $M_\mu(y)$ by another quantity which is \emph{superadditive} (see also the discussion in Hammersley~\cite{H}). Fekete's lemma may then be applied ``from the other side" to obtain an estimate on the deviation of this approximate quantity from its asymptotic limit. The desired estimate in terms of the original quantity then follows, depending on the quality of the approximation. This strategy was used by Alexander~\cite{A} in the context of first-passage percolation to obtain estimates on the deviation of the expected passage time from the limiting time constant.

In our context, it turns out to be more convenient to first obtain estimates for the difference between the quantities 
\begin{equation*}\label{}
\E \left[ m_\mu(H_t,0,\cdot) \right] : = \E \left[ \min_{z\in H_t} m_\mu(z,0,\cdot) \right]\qquad \mbox{and} \qquad \overline m_\mu(H_t) : = \min_{z\in H_t} \overline m_\mu(z),
\end{equation*}
where $H_t$ is a given plane at a distance $t$ from the origin. We then argue that the value of $\E \left[ m_\mu(H_t,0,\cdot) \right]$ must be close to 
\begin{equation*}\label{}
M_\mu(H_t) : = \min_{z\in H_t} M_\mu(z),
\end{equation*}
which yields good estimates for the deviation of the latter quantity from  $\overline m_\mu(H_t)$. This is then transformed, using a simple geometric argument, into an estimate for $M_\mu(y) - \overline m_\mu(y)$ for large $|y|$. 

Here is an illustration of the outline of key steps in the proof of Proposition~\ref{means}:
\begin{equation*} \label{}
M_\mu(H_t) - \overline m_\mu(H_t) \hspace{1em}  = \hspace{1em}   \underbrace{M_\mu(H_t) - \E \left[ m_\mu(H_t,0,\cdot) \right]}_{\mbox{{\small estimated by Lemma~\ref{cmte-minE}}}} \hspace{1em} + \hspace{1em}  \underbrace{\E \left[ m_\mu(H_t,0,\cdot) \right] - \overline m_\mu(H_t)}_{\mbox{{\small estimated by Lemma~\ref{EHtrate}}}}
\end{equation*}
\begin{equation*} \label{}
\mbox{estimate for} \ \ M_\mu(H_t) - \overline m_\mu(H_t)
\hspace{1em}
 \underset{\mbox{{\small Lemma~\ref{CHlem}}}}{\xrightarrow{\hspace{5em}}} \hspace{1em}
\mbox{estimate for} \ \ M_\mu(y) - \overline m_\mu(y).
\end{equation*}

As in Section~\ref{Met1}, we fix $K>0$ and $\mu$ satisfying~\eqref{CondMu}. The symbols $C$ and $c$ denote positive constants which may depend on $K$ and $H$ and may vary in each occurrence. 

\subsection{Introduction of the approximating quantity}
It is difficult to work directly with statistical properties of the quantity $m_\mu(H_t,0,\cdot)$. We consider instead an approximating quantity to which the independence assumption is easier to apply. Fix a unit direction $e \in \Rd$ which for notational convenience we take to be $e=e_\d=(0,\ldots,0,1)$. For each $t> 0$, define the plane
\begin{equation}\label{}
H_t := te + \{ e \}^\perp = \left\{ (x',t) \, : \, x'\in \R^{\d-1} \right\}
\end{equation}
and its discrete analogue
\begin{equation}\label{}
\widehat H_t : = \left\{ (n,t) \, : \, n\in \Z^{\d-1} \right\}.
\end{equation}
We also denote, for $t> 0$, the halfspaces
\begin{equation}\label{}
H_t^+=\left\{(x',x_d)\in \Rd \, : \, x_d\geq t \right\} \qquad \mbox{and} \qquad H_t^-=\left\{(x',x_d)\in \Rd \, : \, x_d\leq t \right\}.
\end{equation}
Define, for each $\sigma,t>0$, the quantities
\begin{equation}\label{defgt}
G_{\mu,\sigma} (t):= \sum_{y\in \widehat{H}_t}  \E \left[ \exp\left( -\sigma m_\mu(y,0,\cdot) \right) \right] \qquad \mbox{and} \qquad g_{\mu,\sigma} (t):= - \frac{1}{\sigma} \log G_{\mu,\sigma}(t).
\end{equation}
Below we will see that $g_{\mu,\sigma}(t)$  is a good approximation of $\E \left[ m_\mu(H_t,0,\cdot) \right]$ for appropriate choices of the parameter $\sigma>0$. Since $g_{\mu,\sigma}$ is a logarithm of the expectation of the sum of exponentials, it can be used naturally with the independence assumption (see the proof of Lemma~\ref{superadd}, below). 

We begin with a technical lemma, also used many times below, which asserts that a substantial portion of the quantity $G_{\mu,\sigma}(t)$ is contributed by lattice points $(n,t) \in \widehat H_{t}$ with $|n| \leq O(t)$. This implies in particular that $G_{\mu,\sigma}$ and $g_{\mu,\sigma}$ are finite.

\begin{lem} \label{chop}
There exists $C> 0$ such that, for every $t> 0$, $0< \sigma \leq 1$ and $R\geq 2(L_\mu/l_\mu)t$,
\begin{equation}\label{chopin}
G_{\mu,\sigma}(t) \leq \C(l_\mu \sigma)^{1-d}  \sum_{y\in \widehat H_t \cap B_R }  \E\left[ \exp\left( -\sigma m_\mu(y,0,\cdot) \right) \right].  
\end{equation}
\end{lem}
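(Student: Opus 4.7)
The approach is to truncate the sum defining $G_{\mu,\sigma}(t)$ at the ball $B_R$ and control the resulting tail. The only ingredient from the theory of the metric problem that is needed is the two-sided deterministic estimate from Proposition~\ref{existMP}(iv), namely
\[
l_\mu |y| \leq m_\mu(y,0,\omega) \leq L_\mu|y|,
\]
which yields the deterministic pointwise bound $\E\bigl[\exp(-\sigma m_\mu(y,0,\cdot))\bigr] \leq \exp(-\sigma l_\mu |y|)$ for every $y \in \Rd$.

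To estimate the tail, I would write each $y \in \widehat H_t$ as $y=(n,t)$ with $n\in\Z^{\d-1}$, and split the exponent as
\[
\exp(-\sigma l_\mu |y|) \leq \exp\!\left( -\tfrac{\sigma l_\mu R}{2}\right)\exp\!\left(-\tfrac{\sigma l_\mu |y|}{2}\right) \leq \exp\!\left( -\tfrac{\sigma l_\mu R}{2}\right)\exp\!\left(-\tfrac{\sigma l_\mu |n|}{2}\right)
\]
for every $y\in \widehat H_t$ with $|y|\geq R$, where in the last step I use $|y|\geq |n|$. Summing over $n\in\Z^{\d-1}$ and comparing the sum with the integral $\int_{\R^{\d-1}} e^{-\sigma l_\mu |x|/2}\,dx$ via the change of variables $x = u/(\sigma l_\mu)$ gives
\[
\sum_{y\in \widehat H_t \setminus B_R} \E\bigl[\exp(-\sigma m_\mu(y,0,\cdot))\bigr] \leq C (\sigma l_\mu)^{1-\d}\exp\!\left(-\tfrac{\sigma l_\mu R}{2}\right),
\]
where the constant $C$ depends only on $\d$ and the (bounded) upper bound $L_\mu/2$ for $\sigma l_\mu /2$. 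This is where the exponent $1-\d$ enters: it is the Jacobian of the rescaling in $\R^{\d-1}$.

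For the truncated sum, I would simply retain the contribution of $y_0:=(0,t)$, which lies in $B_R$ since the hypothesis $R \geq 2(L_\mu/l_\mu)t \geq 2t$ ensures $|y_0|=t \leq R$. The upper bound in Proposition~\ref{existMP}(iv) yields $m_\mu(y_0,0,\omega) \leq L_\mu t$, hence
\[
\sum_{y\in \widehat H_t \cap B_R} \E\bigl[\exp(-\sigma m_\mu(y,0,\cdot))\bigr] \geq \exp(-\sigma L_\mu t).
\]
The assumption $R\geq 2(L_\mu/l_\mu)t$ is now used a second time to ensure balance between the two estimates: it gives precisely $\sigma l_\mu R/2 \geq \sigma L_\mu t$, and therefore
\[
\sum_{y\in \widehat H_t \setminus B_R} \E\bigl[\exp(-\sigma m_\mu(y,0,\cdot))\bigr] \leq C (\sigma l_\mu)^{1-\d} e^{-\sigma L_\mu t} \leq C (\sigma l_\mu)^{1-\d}\!\!\sum_{y\in \widehat H_t \cap B_R} \!\E\bigl[\exp(-\sigma m_\mu(y,0,\cdot))\bigr].
\]
Adding the truncated sum to both sides produces the claim.

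This proof is essentially elementary once one has the two-sided control~\eqref{control2}, and I do not expect a serious obstacle. The only point requiring genuine care is the tracking of the constant $C(\sigma l_\mu)^{1-\d}$ in the tail estimate: one must verify that the hypothesis $R\geq 2(L_\mu/l_\mu)t$ is exactly strong enough to absorb the decay factor $e^{-\sigma L_\mu t}$ lost in the lower bound for the truncated sum. In particular the constant $2$ in the hypothesis is sharp for the simple splitting $|y|\geq R/2 + |y|/2$ used above; a weaker hypothesis would force a more delicate decomposition of $\exp(-\sigma l_\mu|y|)$.
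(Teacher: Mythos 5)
Your proof is correct and follows essentially the same route as the paper's. The ingredients are identical: the two-sided deterministic bounds $l_\mu|y|\le m_\mu(y,0,\omega)\le L_\mu|y|$, a tail estimate for the lattice sum on $\widehat H_t\setminus B_R$ yielding a factor $C(l_\mu\sigma)^{1-\d}e^{-l_\mu\sigma R/2}$, the single-point lower bound $e^{-L_\mu\sigma t}$ for the $B_R$ sum via $y_0 = te$, and the hypothesis $R\ge 2(L_\mu/l_\mu)t$ used exactly to ensure $l_\mu\sigma R/2\ge L_\mu\sigma t$. The only difference is a cosmetic one in how the tail sum is estimated: the paper compares with the radial integral $\int_R^\infty r^{\d-2}e^{-l_\mu\sigma r}\,dr$ and then invokes $r^{\d-2}\le Ce^{r/2}$, while you first split off the factor $e^{-\sigma l_\mu R/2}$ pointwise and then sum $e^{-\sigma l_\mu|n|/2}$ over the whole lattice $\Z^{\d-1}$; both produce $(l_\mu\sigma)^{1-\d}$ by a change of variables in $\R^{\d-1}$.
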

\begin{proof}
According to~\eqref{control}, for every $\omega\in \Omega$ and $y\in\Rd$,
\begin{equation}\label{un}
\exp\left( -L_\mu \sigma |y| \right) \leq \exp\left(-\sigma m_\mu(y,0,\omega) \right) \leq \exp\left( -l_\mu \sigma |y| \right).
\end{equation}
Thus
\begin{multline*}
\sum_{y\in \widehat H_t \setminus B_R } \exp\left( -\sigma m_\mu(y,0,\omega) \right) \leq \sum_{y\in \widehat H_t \setminus B_R } \exp \left( -l_\mu \sigma |y| \right)   \leq \sum_{y\in \Z^{\d-1} \setminus B_{R} } \exp \left( -l_\mu \sigma |y| \right) \\ \leq C \int_{R}^\infty r^{\d-2} \exp\left( -l_\mu \sigma r \right)\, dr = C(l_\mu \sigma)^{1-d}\int_{l_\mu \sigma R}^\infty r^{\d-2} \exp\left(-r \right)\, dr
\end{multline*}
and, using the inequality $r^{d-2} \leq C \exp(r/2)$ to estimate the last integral on the right side, we obtain
\begin{equation}\label{une}
\sum_{y\in \widehat H_t \setminus B_R } \exp\left( -\sigma m_\mu(y,0,\omega) \right) \leq C (l_\mu \sigma)^{1-d}\int_{l_\mu \sigma R}^\infty \exp\left(-r/2 \right)\, dr.
\end{equation}
On the other hand,
\begin{equation}\label{deux}
 \exp\left(-\sigma m_\mu(te,0,\omega) \right) \geq \exp\left(-L_\mu
\sigma t\right) = \int_{2L_\mu \sigma t }^\infty \exp\left( - r/2\right)\, dr.
\end{equation}
Since $2R \geq (L_\mu /l_\mu)t$, it follows from \eqref{une} and \eqref{deux} that, for every $\omega\in \Omega$,
\begin{align*}\label{}
 \sum_{y\in \widehat H_t \setminus B_R } \exp\left( -\sigma m_\mu(y,0,\omega) \right) & \leq C (l_\mu\sigma)^{1-\d}
 \exp\left(-\sigma m_\mu(te,0,\omega) \right)  \\
 & \leq C(l_\mu \sigma)^{1-\d} \sum_{y \in \hat H_t\cap B_R} \exp\left(-\sigma m_\mu(y,0,\omega) \right).
\end{align*}
Taking expectations yields
\begin{align*}\label{}
G_{\mu,\sigma}(t) & =  \sum_{y\in \widehat H_t \cap B_R}  \E \left[ \exp\left( -\sigma m_\mu(y,0,\cdot) \right)\right] + \sum_{y\in \widehat H_t \setminus B_R } \E \left[ \exp\left( -\sigma m_\mu(y,0,\cdot) \right)  \right] \\ 
&\leq \left(1+C (l_\mu \sigma)^{1-\d} \right)  \sum_{y\in \widehat H_t \cap B_R } \E \left[ \exp\left( -\sigma m_\mu(y,0,\cdot) \right) \right].
\end{align*}
We now obtain~\eqref{chopin}, since $\sigma\leq 1$ implies $l_\mu\sigma \leq C$ and hence $1+C(l_\mu \sigma)^{1-\d} \leq (1+C)(l_\mu\sigma)^{1-\d}$.
\end{proof}

Next we show that $g_{\mu,\sigma}(t)$ gives a good upper bound for $\E\left[ m_\mu(H_t,0,\cdot)\right]$ for large $t$ and appropriate choices of $\sigma>0$. 

\begin{lem} \label{gooda}
There exists $C> 0$ such that, for every $t> 1$ and $0 < \sigma \leq 1$,
\begin{equation}\label{goodappr}
\E \left[ m_\mu(H_t,0,\cdot) \right] - C \left( \frac{\sigma t}{\mu^2} + \frac1\sigma \log \left(1+ \frac t{\sigma\mu} \right) \right) \leq g_{\mu,\sigma}(t) \leq \E\left[ m_\mu(H_t,0,\cdot) \right] + C.
\end{equation}
\end{lem}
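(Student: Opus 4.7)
The lemma has two directions, and the strategy is standard: the upper bound on $g_{\mu,\sigma}(t)$ is essentially Jensen's inequality, while the lower bound requires converting the concentration estimate of Proposition~\ref{kesten} into an MGF bound and then carefully summing, using Lemma~\ref{chop} to truncate.

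For the upper bound $g_{\mu,\sigma}(t) \leq \E[m_\mu(H_t,0,\cdot)] + C$, I would argue as follows. For each $\omega\in\Omega$, there is a lattice point $y_0(\omega)\in\widehat H_t$ within distance $\sqrt{\d-1}$ of the minimizer of $m_\mu(\cdot,0,\omega)$ over $H_t$, and by the Lipschitz estimate \eqref{lips},
$$m_\mu(y_0(\omega),0,\omega) \leq m_\mu(H_t,0,\omega) + L_\mu\sqrt{\d-1}.$$
Hence $\sum_{y\in \widehat H_t} e^{-\sigma m_\mu(y,0,\omega)} \geq e^{-\sigma(m_\mu(H_t,0,\omega)+C)}$, and taking expectations combined with Jensen's inequality applied to $\exp$ gives $G_{\mu,\sigma}(t)\geq e^{-\sigma(\E[m_\mu(H_t,0,\cdot)]+C)}$; the claim follows.

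For the lower bound, the first step is to convert the Gaussian tail estimate from Proposition~\ref{kesten} into a sub-Gaussian MGF estimate. For $|y|>1$, the two-sided inequality $\Prob[|m_\mu(y,0,\cdot)-M_\mu(y)|>\lambda]\leq \exp(-\mu\lambda^2/(C|y|))$, integrated against $\sigma e^{\sigma\lambda}\,d\lambda$ and completing the square, yields the standard bound
$$\E\!\left[\exp(-\sigma m_\mu(y,0,\cdot))\right] \leq C \exp\!\left(-\sigma M_\mu(y) + C\sigma^2|y|/\mu\right).$$
Since $m_\mu(y,0,\cdot)\geq m_\mu(H_t,0,\cdot)$ pointwise for every $y\in H_t$, setting $m:=\E[m_\mu(H_t,0,\cdot)]$ we obtain $M_\mu(y)\geq m$ for all $y\in\widehat H_t$. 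I would then apply Lemma~\ref{chop} with $R=2(L_\mu/l_\mu)t\leq Ct/\mu$, noting that for $y\in\widehat H_t\cap B_R$ we have $|y|\leq R\leq Ct/\mu$, so $\sigma^2|y|/\mu\leq C\sigma^2 t/\mu^2$. Combining these with the cardinality estimate $\#(\widehat H_t\cap B_R)\leq CR^{\d-1}\leq C(t/\mu)^{\d-1}$ and the $(l_\mu\sigma)^{1-\d}\leq C(\sigma\mu)^{1-\d}$ factor from the chop lemma produces
$$G_{\mu,\sigma}(t) \leq C\left(\frac{t}{\sigma\mu^2}\right)^{\d-1}\exp\!\left(-\sigma m + C\sigma^2 t/\mu^2\right).$$
Taking $-\sigma^{-1}\log$ and observing that under $t>1$, $\sigma\leq 1$, $\mu\leq K$ one has $t/\sigma\geq 1$, so $1+t/(\sigma\mu^2)\leq C(1+t/(\sigma\mu))^{2}$, the polynomial prefactor is absorbed into a constant multiple of $\sigma^{-1}\log(1+t/(\sigma\mu))$, giving the stated lower bound on $g_{\mu,\sigma}(t)$.

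The main technical obstacle is converting the tail inequality into the MGF estimate uniformly in $|y|$ and $\mu$, since the $|y|$-dependence of the variance parameter $C|y|/\mu$ must be tracked precisely throughout. The secondary subtlety is absorbing the various $\log(1/\mu)$ factors arising from the chop lemma's $(l_\mu\sigma)^{1-\d}$ prefactor and the $R^{\d-1}$ cardinality count into the single correction $\sigma^{-1}\log(1+t/(\sigma\mu))$, which relies on the elementary observation that $t/\sigma\geq 1$ in our regime.
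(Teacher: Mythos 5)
Your proposal follows exactly the paper's strategy: Jensen plus the Lipschitz snap-to-lattice for the upper bound, and for the lower bound a moment-generating-function estimate derived from Proposition~\ref{kesten}, summed over the discrete hyperplane after truncating with Lemma~\ref{chop}, with the polynomial-in-$t$ prefactor absorbed into the $\sigma^{-1}\log(1+t/(\sigma\mu))$ correction. The only discrepancy worth flagging is the intermediate MGF bound
\[
\E\!\left[\exp(-\sigma m_\mu(y,0,\cdot))\right] \leq C \exp\!\left(-\sigma M_\mu(y) + C\sigma^2|y|/\mu\right).
\]
This inequality is true, but it does \emph{not} follow from the argument you describe (``integrated against $\sigma e^{\sigma\lambda}\,d\lambda$ and completing the square''). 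That naive computation produces
\[
\E\!\left[\exp(-\sigma m_\mu(y,0,\cdot))\right] \leq \left( 1 + \sigma M_\mu(y)\exp\!\left(\tfrac{C\sigma^2|y|}{4\mu}\right)\right)\exp\!\left(-\sigma M_\mu(y)\right),
\]
i.e.\ a prefactor that grows like $\sigma M_\mu(y) \sim \sigma t/\mu$ rather than a constant, and there is no uniform way to trade $\sigma M_\mu(y)$ for $C\sigma^2|y|/\mu$ (for small $\sigma$ the former dominates). To get a clean constant prefactor one must actually use the centering $\E[m_\mu(y,0,\cdot) - M_\mu(y)] = 0$ through the standard sub-Gaussian tail $\Rightarrow$ sub-Gaussian MGF lemma (moment expansion with the first moment vanishing), which is a genuinely different argument than the direct tail integration. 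The paper sidesteps this by simply keeping the polynomial prefactor, which then gets swallowed harmlessly into the logarithmic correction term along with the $R^{\d-1}$ count and the $(l_\mu\sigma)^{1-\d}$ factor from Lemma~\ref{chop}. Either route yields the lemma, so the gap is cosmetic, but the justification you give for your MGF estimate is not a valid derivation of it.
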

\begin{proof}
The upper bound in \eqref{goodappr} is easy. Using~\eqref{lips}, we have
\begin{multline*}\label{}
G_{\mu,\sigma}(t) \geq \E \left[ \, \sup_{y\in \widehat H_t} \exp\left( -\sigma m_\mu(y,0,\cdot) \right) \right] = \E \left[ \exp\left( -\sigma \inf_{y\in \widehat H_t} m_\mu(y,0,\cdot) \right) \right] \\
\geq \E \left[\exp\left(  -\sigma \left( m_\mu(H_t,0,\cdot) +L_\mu(\d-1)^{\frac12}\right) \right)\right]  =\exp\left(-\sigma L_\mu(\d-1)^{\frac12} \right) \,\E \left[ \exp\left(-\sigma m_\mu(H_t,0,\cdot) \right) \right].
\end{multline*}
After taking the logarithm of both sides of this inequality, an application of Jensen's inequality and a rearrangement yield the second inequality of \eqref{goodappr} with $C = (\d-1)^{\frac12} L_\mu$.

To obtain the lower bound, we use both \eqref{oscbnd} and \eqref{chopin}. 
For every $|y| >1$, we have
\begin{align*}
\lefteqn{\E \left[ \exp\left( -\sigma m_\mu(y,0,\cdot) \right) \right] = \int_0^\infty \sigma \exp(-\sigma s) \Prob \left[ m_\mu(y,0,\cdot) \leq s \right] \, ds} \qquad \qquad & \\
& \leq  \exp\left( -\sigma M_\mu(y) \right) + \int_0^{M_\mu(y)} \sigma\exp(-\sigma s) \Prob \left[ m_\mu(y,0,\cdot) \leq s \right] \, ds\\
& = \left( 1 + \sigma \int_0^{M_\mu(y)} \exp(\sigma \lambda) \Prob\left[ m_\mu(y,0,\cdot) - M_\mu(y) \leq -\lambda \right]\, d\lambda \right) \exp\left( -\sigma M_\mu(y) \right).
\end{align*}
Applying \eqref{oscbnd}, we obtain
\begin{equation*}
\E \left[ \exp\left( -\sigma m_\mu(y,0,\cdot) \right) \right] \leq \left( 1 + \sigma \int_0^{M_\mu(y)} \exp\left(\sigma \lambda - \frac{\mu \lambda^2}{C |y|}\right) \, d\lambda \right) \exp\left( -\sigma M_\mu(y) \right).
\end{equation*}
We estimate the integrand above by completing the square, i.e.,
\begin{equation*}\label{}
\sigma \lambda - \frac{\mu \lambda^2}{C|y|} = -\frac{\mu}{C|y|} \left( \lambda - \frac{\sigma C |y|}{2\mu} \right)^2 + \frac{1}{4\mu} \sigma^2 C|y| \leq \frac{1}{4\mu} \sigma^2 C|y|,
\end{equation*}
and thus obtain
\begin{equation}\label{commybn}
\E \left[ \exp\left( -\sigma m_\mu(y,0,\cdot) \right) \right]  \leq \left( 1 +\sigma M_\mu(y) \exp\left(\frac{1}{4\mu} \sigma^2 C|y| \right) \right) \exp\left( -\sigma M_\mu(y) \right).
\end{equation}
Summing \eqref{commybn} over $y\in \widehat H_t\cap B_R$, with $R:=2(L_\mu / l_\mu)t$, and applying \eqref{chopin}, we get
\begin{align*}
G_{\mu,\sigma}(t) & \leq Cl_\mu^{1-d}\sigma^{1-\d} \sum_{y\in \widehat H_t \cap B_R} \left( 1 + \sigma M_\mu(y) \exp\left( \frac{1}{4\mu} \sigma^2 C|y|  \right) \right) \exp\left( -\sigma M_\mu(y) \right) \\
& \leq Cl_\mu^{1-d}\sigma^{1-\d} \sum_{y\in \widehat H_t \cap B_R} \left( 1 + \sigma M_\mu(y) \exp\left( \frac{1}{4\mu} \sigma^2 C|y| \right) \right) \exp\left( -\sigma \E \left[ m_\mu(H_t,0,\cdot) \right]   \right) \\
& \leq C l_\mu^{1-d}\sigma^{1-\d} R^{\d-1} \exp\left( -\sigma\E\left[ m_\mu(H_t,0,\cdot) \right] \right) \left( 1 + \sigma L_\mu t \exp\left( \frac{1}{4\mu} \sigma^2 CR\right) \right)
\end{align*}
In view of~\eqref{cLlL}, we have $l_\mu^{1-d} \leq C\mu^{1-d}$ and $R\leq Ct/\mu$. Using these with $\sigma \leq 1$, $t>1$, we obtain
\begin{equation*}
G_{\mu,\sigma}(t) \leq C t^{\d} \sigma^{1-\d} \mu^{2-2\d} \exp\left( -\sigma\E\left[ m_\mu(H_t,0,\cdot) \right] + \frac{ C \sigma^2 t}{\mu^2}\right).
\end{equation*}
Taking logarithms, dividing by $-\sigma$ and rearranging this expression yields:
\begin{equation*}
g_{\mu,\sigma}(t) = -\frac1\sigma \log G_{\mu,\sigma}(t) \geq \E \left[ m_\mu(H_t,0,\cdot) \right] - \frac{C\sigma t}{\mu^2} - \frac1\sigma\log\left( \frac{Ct^d}{\sigma^{d-1}\mu^{2d-2}} \right).
\end{equation*}
We can estimate the logarithm factor in the last term on the right side as follows:
\begin{equation*} \label{}
\log\left(\frac{Ct^d}{\sigma^{d-1}\mu^{2d-2}} \right) \leq C\log\left(1+ \frac{t}{\sigma\mu}\right).
\end{equation*}
This completes the proof of the lower bound of~\eqref{goodappr} and hence of the lemma.
\end{proof}

\subsection{The (almost) superadditivity of $g_{\mu,\sigma}$ and estimates for $\E \left[ m_\mu(H_t,0,\omega) \right] - \overline m_\mu(H_t)$} 

The next step is to prove that $g_{\mu,\sigma}$ is essentially superadditive, which is summarized in the following lemma. Unlike the approach taken in~\cite{A}, we do not use an abstract result like the van den Berg-Kesten inequality, which does not seem to easily apply in the continuous setting. We opt instead for a simpler ``splitting technique" to apply the independence assumption more directly. A similar technique was employed by Sznitman~\cite{SzEE}.

The critical property of the $m_\mu$'s needed here, which allows us to exploit the independence of the random medium, is the \emph{dynamic programming principle}. It asserts that, if every path from $x$ to $y$ passes through a surface, then, for some $z$ on the surface,  the cost of moving from $x$ to $y$ is equal to the sum of the cost of moving from $x$ to $z$ and from $z$ to $y$. Precisely, for every open $U \subseteq \Rd$ with $x\in U$ and every $y\in \Rd \setminus U$ and $\omega\in \Omega$, 
\begin{equation}\label{dynprog}
m_\mu(y,x,\omega) = \min_{z\in \partial U} \big( m_\mu(y,z,\omega) + m_\mu(z,x,\omega) \big).
\end{equation}
See Proposition~\ref{existMP}(vi). 

\begin{lem} \label{superadd}
There exists $C> 0$ such that, for every $s,t> 1$ and $0< \sigma \leq 1$,
\begin{equation}\label{superaddeq}
g_{\mu,\sigma}(t+s) \geq g_{\mu,\sigma}(t) + g_{\mu,\sigma}(s) - 
\frac {C}\sigma \left( 1+ \log\left(1+\frac{s+t}{\sigma\mu}\right) \right).
\end{equation}
\end{lem}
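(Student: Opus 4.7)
The goal is to establish an almost-submultiplicative bound
\[
G_{\mu,\sigma}(t+s) \leq G_{\mu,\sigma}(t)\,G_{\mu,\sigma}(s)\cdot\exp\!\bigl(C(1+\log(1+(s+t)/(\sigma\mu)))\bigr),
\]
which translates to \eqref{superaddeq} after taking $-\sigma^{-1}\log$.

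\textbf{Step 1 (splitting by dynamic programming).} For each $y \in \hat H_{t+s}$, apply the dynamic programming principle \eqref{dynprog} to the open half-space $U=\{x\in\Rd:x_d<t\}$, whose boundary is $H_t$. This yields $z \in H_t$ with
\[
m_\mu(y,0,\omega) = m_\mu(z,0,\omega) + m_\mu(y,z,\omega).
\]
Project $z$ to the nearest lattice point $\hat z \in \hat H_t$, so that $|z-\hat z|\le C\sqrt{d-1}$. Two applications of the Lipschitz estimate \eqref{lips} then give
\[
m_\mu(y,0,\omega) \;\geq\; m_\mu(\hat z,0,\omega) + m_\mu(y,\hat z,\omega) - C L_\mu.
\]
Exponentiating, the minimum becomes a sum:
\[
\exp\!\bigl(-\sigma m_\mu(y,0,\omega)\bigr) \;\leq\; e^{C\sigma L_\mu}\sum_{\hat z \in \hat H_t}\exp\!\bigl(-\sigma m_\mu(\hat z,0,\omega)\bigr)\exp\!\bigl(-\sigma m_\mu(y,\hat z,\omega)\bigr).
\]

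\textbf{Step 2 (decoupling via a one-unit spatial split).} The factors on the right share dependence on the medium near the plane $H_t$, so are not independent under $\Prob$. To decorrelate them I replace them by surrogates measurable with respect to the independent $\sigma$-algebras $\mathcal G(H_t^-)$ and $\mathcal G(H_{t+1}^+)$, which are independent by \eqref{indy} since $\dist(H_t^-,H_{t+1}^+) \geq 1$. Shifting the intermediate endpoint from $\hat z \in H_t$ to $\hat z + e_d \in H_{t+1}$ at the Lipschitz cost $L_\mu$, I intend to bound
\[
m_\mu(\hat z,0,\omega) \;\geq\; \tilde m^-_\mu(\hat z,\omega) - C,\qquad m_\mu(y,\hat z,\omega) \;\geq\; \tilde m^+_\mu(y,\hat z+e_d,\omega) - C,
\]
where $\tilde m^-_\mu$ and $\tilde m^+_\mu$ are produced from the restricted metric $m^K_\mu$ of \eqref{e.defmKmu} with $K=H_t^-$ and $K=H_{t+1}^+$ respectively. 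The key is that any admissible path from $0$ to $\hat z$ crossing $H_t$ only at its endpoint lies in $H_t^-$ (by the first-crossing argument), so its cost is controlled by $\mathcal G(H_t^-)$-measurable data; symmetrically for paths from $\hat z+e_d$ to $y$ using the last crossing of $H_{t+1}$. The localization property of Lemma~\ref{localize}, together with its refinement \eqref{e.gotbunk}, is what makes such a confinement quantitatively usable.

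\textbf{Step 3 (take expectation, factor, reindex).} With surrogates in place, take expectations and use independence to split the joint expectation into a product. Since $\tilde m^\pm_\mu \geq m_\mu$ (the restricted quantities dominate by \eqref{e.gotchup}), one has $\E[\exp(-\sigma \tilde m^\pm_\mu)]\leq \E[\exp(-\sigma m_\mu)]$, so I may replace the surrogate expectations with honest expectations of $\exp(-\sigma m_\mu)$ at a controlled loss. This gives
\[
\E\!\bigl[\exp(-\sigma m_\mu(y,0,\cdot))\bigr] \;\leq\; e^{C\sigma}\!\!\sum_{\hat z \in \hat H_t \cap B_R}\!\!\E\!\bigl[\exp(-\sigma m_\mu(\hat z,0,\cdot))\bigr]\cdot\E\!\bigl[\exp(-\sigma m_\mu(y,\hat z+e_d,\cdot))\bigr],
\]
where the restriction to $B_R$, with $R\simeq (s+t)/\mu$, comes from Lemma~\ref{chop}. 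Summing over $y\in \hat H_{t+s}$ and using stationarity \eqref{mmustat} to write $\E[\exp(-\sigma m_\mu(y,\hat z+e_d,\cdot))]=\E[\exp(-\sigma m_\mu(y-\hat z-e_d,0,\cdot))]$, the inner sum reindexes over $\hat H_{s-1}$; a Lipschitz shift back to $\hat H_s$ identifies it, up to a factor $e^{C\sigma L_\mu}$, with $G_{\mu,\sigma}(s)$. The outer sum is bounded by $G_{\mu,\sigma}(t)$ again via Lemma~\ref{chop}.

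\textbf{Step 4 (collect errors and conclude).} Multiplicative errors arise from (a) the Lipschitz discretizations ($e^{O(\sigma L_\mu)}$), (b) the cardinality $|\hat H_t \cap B_R|\lesssim R^{d-1}\lesssim ((s+t)/\mu)^{d-1}$ absorbed by passing the min to a sum, and (c) the tail estimate of Lemma~\ref{chop}, contributing factors of order $(l_\mu\sigma)^{1-d}$. After taking $-\sigma^{-1}\log$ these become additive and sum to at most $C\sigma^{-1}(1+\log(1+(s+t)/(\sigma\mu)))$, exactly matching the remainder in \eqref{superaddeq}.

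The main obstacle is Step 2: the restriction inequality $m^K_\mu \geq m_\mu$ runs in the wrong direction for an exponential \emph{upper} bound on $\E[\exp(-\sigma m_\mu)]$, so the useful independent lower bound on $m_\mu(\hat z,0,\omega)$ in terms of $\mathcal G(H_t^-)$-measurable data must be built from the first/last crossing structure of the geodesic together with the localization lemma, with a one-unit buffer between the two half-spaces to invoke the finite range of dependence hypothesis. The rest of the proof is a careful bookkeeping of polynomial factors.
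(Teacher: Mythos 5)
Your high-level architecture (dynamic programming split at $H_t$, a one-unit buffer to invoke \eqref{indy}, expectation factoring, reindex over $\hat H_s$, Lemma~\ref{chop} for the cardinality loss) matches the paper's, but you leave the decisive step open, and the tools you point to would not close it. You correctly observe that the restricted metric $m^K_\mu$ of \eqref{e.defmKmu} dominates $m_\mu$ (the sup is over \emph{fewer} constraints), so replacing $m_\mu(\hat z,0,\cdot)$ by $m^{H_t^-}_\mu(\hat z,0,\cdot)$ moves the exponential bound in the wrong direction; and neither Lemma~\ref{localize}, nor \eqref{e.gotbunk}, nor ``first/last crossing of the geodesic'' produces a $\mathcal G(H_t^-)$-measurable random variable that lies \emph{below} $m_\mu(\hat z,0,\cdot)$ for an individual $\hat z\in \hat H_t$. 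Indeed, for a fixed $\hat z$ sitting on the plane, both $m_\mu(\hat z,0,\cdot)$ and $m_\mu(y,\hat z,\cdot)$ genuinely depend on the medium on both sides of $H_t$, and there is no one-sided lower-bounding surrogate for them individually.

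The paper dodges this by changing the order of operations. Before exponentiating or discretizing, it keeps the infimum over the plane and observes that, since $0\in H_t^-$ and $y\in H_{t+1}^+$,
\[
m_\mu(y,0,\omega)\;\geq\;m_\mu(H_t,0,\omega)+m_\mu(y,H_{t+1},\omega)-L_\mu
\;=\;m_\mu(H_t^+,0,\omega)+m_\mu(y,H_{t+1}^-,\omega)-L_\mu,
\]
where the equalities $m_\mu(H_t,0,\omega)=m_\mu(H_t^+,0,\omega)$ and $m_\mu(y,H_{t+1},\omega)=m_\mu(y,H_{t+1}^-,\omega)$ replace the plane by the appropriate half-space. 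The key input is then Proposition~\ref{e.weakner} and its consequence \eqref{infKmeas}: the quantity $m_\mu(y,K,\cdot)=\inf_{z\in K}m_\mu(y,z,\cdot)$ --- an infimum of the \emph{global} metric, not a restricted metric --- admits a representation in which the admissible competitors only need to satisfy the PDE in $\Rd\setminus K$, hence it is $\mathcal G(\Rd\setminus K)$-measurable. Applied to $K=H_t^+$ and $K=H_{t+1}^-$ this gives $\mathcal G(H_t^-)$- and $\mathcal G(H_{t+1}^+)$-measurability respectively, whence independence by \eqref{indy}. One then exponentiates, \emph{factors the expectation}, and only afterwards reintroduces the lattice sums $\hat H_t\cap B_R$ and $\hat H_s$ via the Lipschitz estimate \eqref{lips} and Lemma~\ref{chop}. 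Your Step~1, by exponentiating the minimum into a sum over $\hat z$ first, forces you to decorrelate the two factors for each fixed $\hat z$, which is precisely what is not possible; the fix is to push the infimum over the half-space through to the expectation before summing, and to use \eqref{infKmeas} rather than $m^K_\mu$ or the reachable-set localization.
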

\begin{proof}
Fix $s,t> 1$, $y\in H_{s+t}$ such that $|y|\leq R:=2(L_\mu/l_\mu)(s+t)$ and $\omega \in \Omega$. Observe that, in view of  \eqref{dynprog}, 
\begin{equation*}
m_\mu(y,0,\omega) = \min_{z\in H_t} \left( m_\mu (z,0,\omega) + m_\mu(y,z,\omega)\right) \geq m_\mu (H_t,0,\omega) +m_\mu(y,H_{t},\omega).
\end{equation*}
Thus, using~\eqref{lips},
\begin{equation*}
m_\mu(y,0,\omega) \geq  m_\mu (H_t,0,\omega) +m_\mu(y,H_{t+1},\omega) - L_\mu.
\end{equation*}
Since $0\in H^-_t$ and $y\in H^+_{t+1}$, we have 
\begin{equation*}\label{}
 m_\mu (H_t,0,\omega)=  m_\mu (H_t^+,0,\omega) \quad \mbox{and} \quad   m_\mu(y,H_{t+1},\omega) = m_\mu(y,H_{t+1}^-,\omega),
\end{equation*}
which is immediate from $H_t \subseteq H_t^\pm$,~\eqref{mmuK} and~\eqref{control}. Applying \eqref{infKmeas}, we conclude that 
\begin{equation}\label{}
m_\mu (H_t^+,0,\cdot) \ \ \mbox{is} \ \ \mathcal{G}(H^-_t)\mbox{-measurable} \quad \mbox{and} \quad m_\mu (y,H_{t+1}^-,\cdot) \ \ \mbox{is} \ \  \mathcal{G}(H_{t+1}^+)\mbox{-measurable.}
\end{equation}
In light of~\eqref{indy}, these random variables are independent and thus
\begin{equation*}
\E\left[ \exp\left( -\sigma m_\mu(y,0,\cdot) \right) \right]
\leq \exp \left( \sigma L_\mu\right) \E\left[ \exp\left(-\sigma  m_\mu (H_t^+,0,\cdot)\right)\right]
\E\left[ \exp\left(-\sigma m_\mu(y,H_{t+1}^-,\cdot) \right)\right].
\end{equation*}

Returning to the discrete setting, we next claim that
\begin{equation}\label{hbecahb}
m_\mu (H_t^+,0,\omega) =m_\mu(H_t,0,\omega) \geq \min_{z\in \hat{H}_t\cap B_R}  m_\mu (z,0,\omega)-  L_\mu (d-1)^{\frac12}.
\end{equation}
Indeed, it is clear from~\eqref{control} that any $z\in \hat H_t$ attaining the (implicit) minimum  on the left side of~\eqref{hbecahb} must belong to $B_R$, and~\eqref{hbecahb} then follows from~\eqref{lips}. In a similar way,  since $|y|\leq R$, 
\begin{equation*}
m_\mu(y,H_{t+1}^-,\omega) \geq \min_{z\in \hat{H}_{t}\cap B_{2R}} m_\mu(y,z,\omega)- L_\mu\left(1+(d-1)^{\frac12}\right).
\end{equation*}
Combining these inequalities, we obtain
\begin{equation*}
\E\left[ \exp\left(-\sigma m_\mu(y,0,\cdot) \right) \right]
\leq \exp\left(C\sigma \right) \sum_{ z, z'\in \hat{H}_t\cap B_{2R}} \E\left[ \exp\left(-\sigma  m_\mu (z,0,\cdot)\right)\right]
\E\left[ \exp\left(-\sigma  m_\mu(y,z',\cdot) \right) \right].
\end{equation*}
Note that, if $z'\in \hat{H}_t$, then $y-z'\in \hat{H}_s$. So, 
in view of the definition of $G_{\mu, \sigma}$ and~\eqref{mmustat}, we have
\begin{equation*}
\sum_{ z'\in \hat{H}_t} 
\E\left[ \exp\left(-\sigma  m_\mu(y,z',\cdot) \right) \right]=
\sum_{ z'\in \hat{H}_t} 
\E\left[ \exp\left(-\sigma  m_\mu(y-z',0,\cdot) \right) \right]=
G_{\mu,\sigma}(s).
\end{equation*}
Therefore
\begin{equation*}
\E\left[ \exp\left(-\sigma m_\mu(y,0,\cdot) \right) \right]
\leq \exp\left(C \sigma \right) G_{\mu, \sigma}(t) G_{\mu,\sigma}(s).
\end{equation*}
Summing over all $y\in \hat H_{t+s} \cap B_{R}$, and using Lemma~\ref{chop} yields, in view of the definition of $R$,
\begin{align*}
G_{\mu, \sigma}(s+t) & \leq C R^{\d-1} l_\mu^{1-d} \sigma^{1-\d} \exp\left( C\sigma\right) G_{\mu, \sigma}(t) G_{\mu, \sigma}(s) \\ & \leq C (s+t)^{d-1}\mu^{2-2d} \sigma^{1-d} \exp(C\sigma) G_{\mu, \sigma}(t) G_{\mu, \sigma}(s).
\end{align*}
We obtain the lemma after taking the logarithm of both sides of this expression, dividing by~$-\sigma$, rearranging the resulting the expression  and then estimating a logarithm term in a similar way as near the end of the proof of Lemma~\ref{gooda}.
\end{proof}

We next use Lemma~\ref{Hamm} to obtain a rate of convergence for the means $t^{-1} \E \left[ m_\mu(H_t,0,\cdot) \right]$ to their limit $\overline m_\mu(H_t)$.

\begin{lem}\label{EHtrate}
There exists $C>0$ such that, for every $t> 1$,
\begin{equation} \label{EHtrateq}
\E \left[ m_\mu( H_t,0,\cdot) \right] \leq \overline m_\mu(H_t) + C \left(\frac{t}{\mu^2}\log \left(1+\frac t\mu\right)\right)^{\frac12}.
\end{equation}
\end{lem}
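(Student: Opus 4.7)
\textbf{Proof plan for Lemma~\ref{EHtrate}.}

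The idea is to exploit that $g_{\mu,\sigma}$ is almost superadditive (Lemma~\ref{superadd}) and closely approximates $\E[m_\mu(H_t,0,\cdot)]$ (Lemma~\ref{gooda}), so that Hammersley-Fekete (Lemma~\ref{Hamm}) provides a quantitative rate at which $t^{-1}g_{\mu,\sigma}(t)$ converges to its limit $\tau_{\mu,\sigma} := \lim_{t\to\infty} t^{-1}g_{\mu,\sigma}(t)$. Concretely, I would apply Lemma~\ref{Hamm} to $f=g_{\mu,\sigma}$ with the defect
\[
\Delta(t) := \frac{C}{\sigma}\Bigl(1+\log\Bigl(1+\frac{t}{\sigma\mu}\Bigr)\Bigr),
\]
which comes directly from Lemma~\ref{superadd} and is manifestly nondecreasing with $\int_\xi^\infty \Delta(s)/s^2\,ds<\infty$. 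Rearranging the conclusion of Lemma~\ref{Hamm} (using $\Delta\geq 0$) gives
\[
g_{\mu,\sigma}(t) \leq \tau_{\mu,\sigma}\, t + 4t\int_{2t}^\infty \frac{\Delta(s)}{s^2}\,ds,
\]
and a direct computation of the integral yields $4t\int_{2t}^\infty \Delta(s)/s^2\,ds \leq (C/\sigma)(1+\log(1+t/(\sigma\mu)))$.

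The next step is to identify the limit $\tau_{\mu,\sigma}$. By the upper bound half of Lemma~\ref{gooda}, $g_{\mu,\sigma}(t)\leq \E[m_\mu(H_t,0,\cdot)]+C$, so
\[
\tau_{\mu,\sigma} \leq \limsup_{t\to\infty} \frac{\E[m_\mu(H_t,0,\cdot)]}{t}.
\]
For any $z\in H_1$, clearly $\E[m_\mu(H_t,0,\cdot)] \leq M_\mu(tz)$, and since $m_\mu(tz,0,\omega)/t\to \overline m_\mu(z)$ $\Prob$-a.s.\ by~\eqref{mmuconv} with the uniform bound $m_\mu(tz,0,\omega)/t\leq L_\mu|z|$ from~\eqref{control}, bounded convergence gives $M_\mu(tz)/t\to \overline m_\mu(z)$. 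Taking the infimum over $z\in H_1$ (and using positive $1$-homogeneity of $\overline m_\mu$) produces $\tau_{\mu,\sigma}\leq \overline m_\mu(H_1) = \overline m_\mu(H_t)/t$.

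Combining this bound on $\tau_{\mu,\sigma}$ with the lower bound of Lemma~\ref{gooda}, which transfers the estimate from $g_{\mu,\sigma}$ back to $\E[m_\mu(H_t,0,\cdot)]$ at a cost of $C\bigl(\sigma t/\mu^2 + \sigma^{-1}\log(1+t/(\sigma\mu))\bigr)$, yields
\[
\E[m_\mu(H_t,0,\cdot)] \leq \overline m_\mu(H_t) + C\Bigl( \frac{\sigma t}{\mu^2} + \frac{1}{\sigma}\Bigl(1+\log\Bigl(1+\tfrac{t}{\sigma\mu}\Bigr)\Bigr)\Bigr).
\]
Finally, optimize the right-hand side in $\sigma\in(0,1]$ by taking $\sigma := \mu\bigl(t^{-1}\log(1+t/\mu)\bigr)^{1/2}$ (this balances the two dominant error terms and keeps $\sigma\leq 1$ for the range of $t,\mu$ of interest; for the small complementary range the trivial bound $\E[m_\mu(H_t,0,\cdot)]\leq L_\mu t$ already dominates the stated right-hand side). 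A short calculation shows that with this choice both error terms are $O\bigl(\mu^{-1}\sqrt{t\log(1+t/\mu)}\bigr)$, which is exactly the bound claimed in~\eqref{EHtrateq}.

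The main obstacle is really just careful bookkeeping: handling the logarithm-of-logarithm arising because $\log(1+t/(\sigma\mu))$ involves $\sigma$ itself, verifying that it absorbs into $\log(1+t/\mu)$ up to constants depending only on $K$, and confirming the admissibility constraint $\sigma\leq 1$ over the relevant regime of $(t,\mu)$.
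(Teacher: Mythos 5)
Your proof is correct and follows essentially the same route as the paper: apply the Hammersley--Fekete lemma to the almost-superadditive quantity $g_{\mu,\sigma}$, identify its limit as $\overline m_\mu(H_1)$, transfer the estimate back to $\E[m_\mu(H_t,0,\cdot)]$ via Lemma~\ref{gooda}, and optimize over $\sigma$ with the same choice $\sigma = \mu t^{-1/2}(\log(1+t/\mu))^{1/2}$. Your identification of the limit is a modest simplification of the paper's (you only need the one-sided inequality $\limsup_t t^{-1}\E[m_\mu(H_t,0,\cdot)]\le\overline m_\mu(H_1)$, obtained pointwise in $z\in H_1$, whereas the paper proves full equality via uniform convergence over a compact set), and you correctly flag and dispose of the constraint $\sigma\le 1$ which the paper leaves implicit.
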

\begin{proof}
According to Lemma~\ref{superadd}, the quantity $g_{\mu,\sigma}$ is almost superadditive. More precisely, for all $s,t> 0$, we have
\begin{equation}\label{ineq:hammer}
g_{\mu,\sigma}(s+t)\geq g_{\mu,\sigma}(s)+g_{\mu,\sigma}(t) - \Delta_{\mu,\sigma}(s+t),
\end{equation}
where, taking $k$ to be the constant $C$ in~\eqref{superaddeq} and
\begin{equation*}
\Delta_{\mu,\sigma}(t) :=  \frac {k}\sigma \left( 1+ \log\left(1+\frac{t}{\sigma\mu}\right) \right).
\end{equation*}
Since $\Delta_{\mu,\sigma}$ is increasing on $[1,\infty)$ and
\begin{equation*}\label{}
\int_1^\infty \frac{\Delta_{\mu,\sigma}(t)}{t^2}\, dt < \infty,
\end{equation*}
we may apply Lemma~\ref{Hamm} to deduce that $\overline g_{\mu,\sigma} : = \lim_{t\to \infty} g_{\mu,\sigma}(t)/t$ exists and,  for every $t> 1$,
\begin{equation}\label{slhamm}
\frac{1}{t}g_{\mu,\sigma}(t) - 4 \int_{2t}^\infty \frac{\Delta_{\mu,\sigma}(s)}{s^2}\, ds \leq \overline g_{\mu,\sigma}.
\end{equation}
An easy integration by parts yields
\begin{equation}\label{slchampignon}
4\int_{2t}^\infty \frac{\Delta_{\mu,\sigma}(s)}{s^2}\, ds \leq \frac{C}{\sigma t} \left( 1+ \log\left( 1+ \frac{t}{\sigma\mu} \right) \right).
\end{equation}
In view of the second inequality in \eqref{goodappr}, we also have 
\begin{equation}\label{berthilion}
\overline g_{\mu,\sigma} \leq \liminf_{t\to \infty}\frac{1}{t} \left( \E\left[ m_\mu(H_t,0,\cdot) \right] + C\right) = \liminf_{t\to \infty}\frac{1}{t}  \E\left[ m_\mu(H_t,0,\cdot) \right].
\end{equation}
We next claim that
\begin{equation} \label{caramelaubeurresale}
\lim_{t\to \infty} \frac1t  \E\left[ m_\mu(H_t,0,\cdot) \right] = \overline m_\mu(H_1). 
\end{equation}
To see this, note that $$\frac1t m_\mu(H_t,0,\omega) = \frac1t \inf_{z\in H_1} m_\mu(tz,0,\omega) = \inf_{z\in H_1 \cap B_{(L_\mu/l_\mu)}} \frac{m_\mu(tz,0,\omega)}{t}$$
and, in view of~\eqref{mmuconv} and the fact that $z\mapsto t^{-1} m_\mu(tz,0,\omega)$ is Lipschitz uniformly in $t>0$, we deduce that \begin{equation*} \label{}
\Prob\left[ \limsup_{t\to \infty} \sup_{z\in H_1 \cap B_{(L_\mu/l_\mu)}} \left|\frac{m_\mu(tz,0,\omega)}{t} - \overline m_\mu(z)  \right| = 0 \right] = 1. 
\end{equation*}
We now obtain~\eqref{caramelaubeurresale} from these two lines and the dominated convergence theorem (which applies since $t^{-1} m_\mu(H_t) \leq L_\mu$).

Combining~\eqref{slhamm},~\eqref{slchampignon},~\eqref{berthilion} and~\eqref{caramelaubeurresale}, we obtain
\begin{equation*}
\frac{1}{t} g_{\mu,\sigma}(t)\leq 
\overline m_\mu(H_1) +\frac{C}{\sigma t} \left( 1+ \log\left( 1+ \frac{t}{\sigma\mu} \right) \right).
\end{equation*}
Multiplying by $t$, applying the first inequality in \eqref{goodappr} and using the homogeneity of $\overline m_\mu$ yields
\begin{equation*}
\E \left[ m_\mu(H_t,0,\cdot) \right] \leq \overline m_\mu(H_t) + C\left( \frac{\sigma t}{\mu^2} + \frac1\sigma+ \frac1{\sigma} \log\left( 1+\frac t{\sigma\mu} \right) \right),
\end{equation*}
and choosing $\sigma:= \mu t^{-\frac12}(\log(1+t/\mu))^{\frac12}$ completes the proof. 
\end{proof}

\subsection{Error estimates for $M_\mu(y)-\overline m_\mu(y)$ and the proof of~\eqref{hard}}
It is the rate of convergence of $t^{-1}M_\mu(ty)$ to $\overline m_\mu(y)$ that we wish to estimate, not that of $t^{-1} \E \left[ m_\mu(H_t,0,\cdot) \right]$ to $\overline m_\mu(H_1)$. In order to reach our desired goal, we must compare the quantities $t^{-1} M_\mu(ty)$ and $t^{-1}\E \left[ m_\mu(H_t,0,\cdot) \right]$. This is accomplished in two steps. The first is to show that $\E\left[ m_\mu(H_t,0,\cdot) \right]$ is very close to
\begin{equation*}\label{}
M_\mu(H_t):= \min_{y\in H_t} M_\mu(y).
\end{equation*}
This yields an estimate for the difference between $M_\mu(H_t)$ and $\overline m_\mu(H_t)$. The second step is to use elementary convex geometry to relate $M_\mu(y)$ to the values of $M_\mu(H)$ for all the possible planes $H$ passing through $y$.

\begin{lem} \label{cmte-minE}
There exists $C> 0$ such that, for every $t>1$,
\begin{equation}\label{wrestle}
 M_\mu(H_t) \leq \E \left[ m_\mu(H_t,0,\cdot) \right] + C\left( \frac{t}{\mu^2}\log \left(1+\frac t\mu \right)\right)^\frac12.
\end{equation}
\end{lem}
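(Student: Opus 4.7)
The plan is to exploit the concentration estimate from Proposition~\ref{kesten} applied not at a single point but uniformly over a discrete set of candidate minimizers on $H_t$, and then recover the desired deterministic inequality by integrating the tail probability.

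First I would pass from $H_t$ to a finite set. By~\eqref{control}, any minimizer $y^\ast \in H_t$ of $m_\mu(\cdot,0,\omega)$ satisfies $m_\mu(y^\ast,0,\omega) \leq m_\mu(te,0,\omega) \leq L_\mu t$, hence $c\mu|y^\ast| \leq L_\mu t$, and so $y^\ast \in B_R$ with $R := C t/\mu$. The Lipschitz bound~\eqref{lips} allows me to replace $H_t \cap B_R$ by $\widehat H_t \cap B_R$ at the cost of an additive constant:
\begin{equation*}
m_\mu(H_t,0,\omega) \;\geq\; \min_{y\in \widehat H_t \cap B_R} m_\mu(y,0,\omega) - C.
\end{equation*}
The cardinality of this grid is bounded by $C(t/\mu)^{\d-1}$, and every such $y$ has $|y|\geq t > 1$, so Proposition~\ref{kesten} applies with the uniform bound $|y|\leq R \leq Ct/\mu$.

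Next, since $M_\mu(y) \geq M_\mu(H_t)$ for every $y\in H_t$, Proposition~\ref{kesten} yields for every $\lambda >0$ and each $y\in \widehat H_t\cap B_R$ that
\begin{equation*}
\Prob\big[ m_\mu(y,0,\cdot) \leq M_\mu(H_t) - \lambda \big] \leq \Prob\big[ m_\mu(y,0,\cdot) \leq M_\mu(y) - \lambda \big] \leq \exp\!\left(-\frac{c\mu^2 \lambda^2}{t}\right).
\end{equation*}
A union bound over the grid gives
\begin{equation*}
\Prob\!\left[ \min_{y\in\widehat H_t\cap B_R} m_\mu(y,0,\cdot) \leq M_\mu(H_t) - \lambda \right] \leq C\left(\frac{t}{\mu}\right)^{\d-1} \exp\!\left(-\frac{c\mu^2\lambda^2}{t}\right).
\end{equation*}

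Finally, using $\E[X]\leq \int_0^\infty \Prob[X>\lambda]\,d\lambda$ with $X := M_\mu(H_t)-\min_y m_\mu(y,0,\cdot)$, together with the discretization step, gives
\begin{equation*}
M_\mu(H_t) - \E[m_\mu(H_t,0,\cdot)] \leq C + \int_0^\infty \min\!\left\{1,\; C\!\left(\tfrac{t}{\mu}\right)^{\d-1}\!\exp\!\left(-\tfrac{c\mu^2\lambda^2}{t}\right)\right\} d\lambda.
\end{equation*}
I split the integral at the balance point $\lambda_0$ where the exponential bound equals $1$, namely $\lambda_0 \asymp \bigl((t/\mu^2)\log(1+t/\mu)\bigr)^{1/2}$. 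The portion $\lambda<\lambda_0$ contributes $\lambda_0$, and the Gaussian tail beyond $\lambda_0$ contributes $O(\sqrt{t}/\mu)$, which is absorbed into $\lambda_0$. This produces the right-hand side of~\eqref{wrestle}. There is no serious obstacle here beyond keeping the dependence on $\mu$ explicit in Proposition~\ref{kesten} and in the grid cardinality; the only mildly delicate point is ensuring the logarithmic loss coming from the union bound over $\asymp (t/\mu)^{\d-1}$ points is the dominant one and yields exactly the $\log(1+t/\mu)^{1/2}$ factor in the statement.
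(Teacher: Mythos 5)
Your proposal is correct and follows essentially the same route as the paper's proof: restrict the minimum to the grid $\widehat H_t\cap B_R$ via the Lipschitz estimate, apply Proposition~\ref{kesten} with a union bound over $\asymp(t/\mu)^{\d-1}$ points, and integrate the tail probability split at the balance point $\lambda_0\asymp\bigl((t/\mu^2)\log(1+t/\mu)\bigr)^{1/2}$. The paper makes the splitting point explicit by introducing a large constant $A$ in $\lambda_1$ and verifying the tail contributes $O(1)$, but the content is the same.
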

\begin{proof}
Let $R:= (L_\mu/l_\mu)t$. For every $\omega\in \Omega$, there exists $z\in H_t \cap B_R$ such that $m_\mu(z,0,\omega) = m_\mu(H_t,0,\omega)$. Hence there exists $\hat z \in \widehat H_t \cap B_R$ such that
\begin{equation*}\label{}
m_\mu(\hat z,0,\omega) \leq m_\mu(H_t,0,\omega) + L_\mu (\d-1)^{\frac12}. 
\end{equation*}
For every $z\in H_t$ we have $\E \left[ m_\mu(z,0,\cdot) \right] =M_\mu(z) \geq M_\mu(H_t)$ and thus, for every $\lambda >0$,
\begin{multline}\label{}
\left\{ \omega\in \Omega\,:\, M_\mu(H_t) - m_\mu(H_t,0,\omega) \geq \lambda + L_\mu (\d-1)^{\frac12} \right\} \\
\subseteq \bigcup_{z\in \hat H_t\cap B_R} \left\{ \omega\in \Omega\,:\, m_\mu(z,0,\omega) \leq M_\mu(z) - \lambda \right\}.
\end{multline}
Applying~\eqref{oscbnd} and using $R=(L_\mu/l_\mu)t\leq Ct/\mu$, we find
\begin{multline}\label{probbnd}
\Prob\left[ M_\mu(H_t) - m_\mu(H_t,0,\cdot) \geq \lambda + L_\mu(\d-1)^{\frac12} \right]  \leq C R^{\d-1} \max_{z\in H_t} \Prob\left[ m_\mu(z,0,\cdot) - M_\mu(z) \leq -\lambda \right] \\
\leq CR^{\d-1} \exp\left( -\frac{\mu\lambda^2}{CR} \right) \leq C \mu^{1-\d} t^{\d-1}\exp\left( -\frac{\mu^2\lambda^2}{Ct} \right).
\end{multline}
We wish to use the expression
\begin{equation}\label{spread}
M_\mu(H_t) - \E\left[ m_\mu(H_t,0,\cdot) \right]  \leq \int_0^\infty \Prob\left[ M_\mu(H_t) - m_\mu(H_t,0,\cdot) \geq \lambda  \right]\, d\lambda
\end{equation}
and then apply~\eqref{probbnd} to the right side of~\eqref{spread}, but due to the factor $t^{\d-1}$ on the right side of~\eqref{probbnd}, this bound is not very helpful unless $\lambda$ is large relative to $t$. With this in mind we fix $A> 1$, to be selected below, define 
\begin{equation*}\label{}
\lambda_1:= \left( \frac{At}{\mu^2} \log\left( 1  +\frac t\mu\right) \right)^{\frac12}
\end{equation*}
and then estimate the right side of~\eqref{spread} by 
\begin{align*}
\lefteqn{ \int_0^\infty \Prob\left[ M_\mu(H_t) - m_\mu(H_t,0,\cdot) \geq \lambda  \right]\, d\lambda } \qquad \qquad \\ 
& \leq  \lambda_1 + L_\mu(\d-1)^{\frac12} + \int_{\lambda_1}^\infty \Prob\left[ M_\mu(H_t) - m_\mu(H_t,0,\cdot) \geq \lambda + L_\mu (\d-1)^{\frac12} \right]\, d\lambda \\
& \leq \lambda_1 + L_\mu(\d-1)^{\frac12} + C\mu^{1-\d}t^{\d-1}\int_{\lambda_1}^\infty \exp\left(-\frac{\mu^2\lambda^2}{Ct}\right) \, d\lambda.
\end{align*}
Observe that
\begin{multline*}
\mu^{1-\d}t^{\d-1}\int_{\lambda_1}^\infty \exp\left(-\frac{\mu^2\lambda^2}{Ct}\right) \, d\lambda \leq \mu^{1-\d}t^{\d-1} \int_{\lambda_1}^\infty \exp\left( - \frac{\mu^2 \lambda_1\lambda}{Ct} \right) \, d\lambda \\
= C \mu^{1-\d}t^{\d-1} \frac{t}{\mu^2\lambda_1} \exp\left( -\frac{\mu^2\lambda_1^2}{Ct} \right) \leq  C \frac {t^d}{\mu^{d+1}} \left(1+\frac t{\mu^2}\right)^{-\frac AC}.
\end{multline*}
By selecting $A$ to be a large enough constant, the last expression on the right is at most $C$. Combining the last two sets of inequalities with~\eqref{spread}, we obtain 
\begin{equation}\label{}
M_\mu(H_t) - \E\left[ m_\mu(H_t,0,\cdot) \right]  \leq \lambda_1 + L_\mu(\d-1)^\frac12 + C \leq \lambda_1+ C,
\end{equation}
which implies~\eqref{wrestle}.
\end{proof}

Lemmas~\ref{EHtrate} and~\ref{cmte-minE} give an estimate on the difference of $M_\mu(H_t)$ and $\overline m_\mu(H_t)$.
\begin{cor}
There exists $C> 0$ such that, for every $t>1$,
\begin{equation}\label{MHtrate}
M_\mu(H_t) \leq \overline m_\mu(H_t) +C\left(\frac t {\mu^2} \log \left(1+\frac t\mu\right)\right)^{\frac12}.
\end{equation}
\end{cor}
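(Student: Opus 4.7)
The plan is essentially immediate: this corollary is a direct combination of the two preceding lemmas, and requires no new ideas. I would start by observing that the quantity $M_\mu(H_t) - \overline m_\mu(H_t)$ can be split through the intermediate quantity $\E\bigl[ m_\mu(H_t,0,\cdot) \bigr]$, exactly as in the schematic diagram the authors drew just before Lemma~\ref{cmte-minE}:
\[
M_\mu(H_t) - \overline m_\mu(H_t) = \bigl(M_\mu(H_t) - \E[m_\mu(H_t,0,\cdot)]\bigr) + \bigl(\E[m_\mu(H_t,0,\cdot)] - \overline m_\mu(H_t)\bigr).
\]

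Next I would apply Lemma~\ref{cmte-minE} to bound the first bracket by $C\bigl(\tfrac{t}{\mu^2}\log(1+\tfrac{t}{\mu})\bigr)^{1/2}$, and Lemma~\ref{EHtrate} to bound the second bracket by a quantity of exactly the same form. Summing the two bounds and absorbing the factor $2$ into the constant $C$ yields the claimed inequality for all $t>1$.

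There is no real obstacle here — both ingredients are already in hand, and the only thing to verify is that the logarithmic and polynomial factors in the two estimates match, which they do by construction (both were obtained by optimizing an auxiliary parameter $\sigma$ with the same choice $\sigma = \mu t^{-1/2}(\log(1+t/\mu))^{1/2}$). Thus the proof is a one-line addition of the two lemmas, and the corollary serves purely as a convenient restatement of what has just been established, ready to be fed into the geometric argument (Lemma~\ref{CHlem}) that transfers the halfspace estimate to the pointwise estimate needed for Proposition~\ref{means}.
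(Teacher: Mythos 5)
Your argument is correct and is exactly the paper's (unstated) proof: the corollary is obtained by adding the bounds of Lemmas~\ref{cmte-minE} and~\ref{EHtrate}, i.e., by the triangle decomposition through $\E\left[ m_\mu(H_t,0,\cdot) \right]$. One minor inaccuracy in your aside: the matching form of the two error terms is not because both arise from optimizing $\sigma$ --- Lemma~\ref{cmte-minE} has no $\sigma$; its bound comes from integrating the concentration estimate~\eqref{oscbnd} against a threshold $\lambda_1$ of the same order --- but this does not affect the validity of simply summing the two estimates.
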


The relationship between $M_\mu(H_t)$ and $M_\mu(y)$ depends on the following geometric lemma.

\begin{lem}\label{CHlem}
There exists $C> 0$ such that, for every $N\in \N^*$ and $\alpha > 0$,
\begin{equation}\label{convhull}
 \conv \left\{ y\in \Rd \, : \, M_\mu(y) \leq \alpha \right\} \subseteq \left\{ y \in \Rd \,: \, M_\mu(Ny) \leq (N+C/\mu)\alpha \right\}.
\end{equation}
\end{lem}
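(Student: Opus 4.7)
My plan is to exploit three basic properties of $M_\mu$: subadditivity \eqref{subaddM}, the two-sided linear bound $l_\mu|y| \leq M_\mu(y) \leq L_\mu|y|$ with $c\mu \leq l_\mu \leq L_\mu \leq C$ (from \eqref{control2}), and the Lipschitz estimate $|M_\mu(y)-M_\mu(z)| \leq L_\mu|y-z|$ (from \eqref{lips}, after taking expectations). Given $y \in \conv\{M_\mu \leq \alpha\}$, invoke Carathéodory's theorem in $\Rd$ to write
\[
y = \sum_{i=1}^{\d+1} \lambda_i y_i, \qquad \lambda_i \geq 0, \quad \sum_i \lambda_i = 1, \quad M_\mu(y_i) \leq \alpha.
\]

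Next I would set $n_i := \lfloor N\lambda_i \rfloor \in \N$ and $z := \sum_{i=1}^{\d+1} n_i y_i$. Since $n_i \leq N\lambda_i$, the sum $\sum_i n_i \leq N$, so iterating subadditivity gives
\[
M_\mu(z) \leq \sum_{i=1}^{\d+1} n_i M_\mu(y_i) \leq N \alpha.
\]
The error $Ny - z = \sum_i (N\lambda_i - n_i) y_i$ is controlled by $|N\lambda_i - n_i| \leq 1$ together with the lower bound $|y_i| \leq M_\mu(y_i)/l_\mu \leq \alpha/(c\mu)$, so $|Ny-z| \leq (\d+1)\alpha/(c\mu) \leq C\alpha/\mu$.

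Finally, applying subadditivity once more and then the Lipschitz bound,
\[
M_\mu(Ny) \leq M_\mu(z) + M_\mu(Ny - z) \leq N\alpha + L_\mu|Ny - z| \leq N\alpha + \frac{C\alpha}{\mu} = \left( N + \frac{C}{\mu} \right)\alpha,
\]
which is exactly \eqref{convhull}. There is no real obstacle here: the only mild subtlety is making sure the Carathéodory vertices $y_i$ have $M_\mu(y_i) \leq \alpha$ rather than some slightly larger value, which is automatic because $\{M_\mu \leq \alpha\}$ is a closed sublevel set of a continuous function and $\conv$ in the statement is its closed convex hull; the lower-bound $l_\mu \geq c\mu$ in \eqref{lmuLmu} is what forces the $1/\mu$ in the error term and explains the form of the constant.
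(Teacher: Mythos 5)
Your proof is correct and follows essentially the same argument as the paper: Carath\'eodory's theorem, rounding the barycentric coefficients so that $N\sigma_j\in\N$ (your $n_i=\lfloor N\lambda_i\rfloor$ is exactly the paper's $N\sigma_j$), applying subadditivity to the resulting lattice combination, and absorbing the rounding error via the linear upper bound $M_\mu(\cdot)\leq L_\mu|\cdot|$ together with $l_\mu\geq c\mu$. The only cosmetic difference is that you work with $z=\sum n_i y_i$ directly rather than with $z/N$ as in the paper.
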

\begin{proof}
Let $\alpha>0$, $N\in \N^*$ and select $y\in \conv \{ y \in \Rd \,:\, M_\mu(y) \leq \alpha\}$. According to Carath\'eodory's theorem (see for example~\cite{Gb}), there exist $y_1,\ldots,y_{\d+1}$ and $\lambda_1,\ldots,\lambda_{\d+1} \in [0,1]$ such that 
\begin{equation*}\label{}
y= \sum_{j=1}^{\d+1} \lambda_j y_j, \qquad 1 = \sum_{j=1}^{\d+1} \lambda_j \qquad \mbox{and} \qquad M_\mu(y_j) \leq \alpha \ \ \mbox{for every} \ \ 1\leq j\leq \d+1.
\end{equation*}
For each $1\leq j \leq \d+1$, select $\sigma_j \in\Q$ such that $\sigma_jN \in \N$ and $0 \leq \lambda_j - \sigma_j \leq 1/N$, define
$z: =\sum_{j=1}^{\d+1} \sigma_j y_j$ and observe that, by~\eqref{control},
\begin{equation*}\label{}
|y-z| \leq \frac{\d+1}{N} \max_{1\leq j \leq N} |y_j| \leq \frac{(\d+1)\alpha}{N l_\mu}
\end{equation*}
and, by~\eqref{subadd},~\eqref{control} and~\eqref{cLlL},
\begin{equation} \label{bing}
M_\mu(Ny) \leq M_\mu(Nz) + L_\mu N |z-y| \leq M_\mu(Nz) + \left( \frac{(\d+1) L_\mu}{l_\mu} \right) \alpha \leq M_\mu(Nz) + \frac{C\alpha}{\mu}.
\end{equation}
Notice that, since $N\sigma_j \in \N$ and $Nz = \sum_{j=1}^{\d+1} (N\sigma_j)y_j$, we may apply~\eqref{subaddM} to deduce that
\begin{equation}\label{bang}
M_\mu(Nz) \leq \sum_{j=1}^{\d+1} (N\sigma_j)M_\mu(y_j) \leq N\alpha \sum_{j=1}^{\d+1} \sigma_j \leq N\alpha \sum_{j=1}^{\d+1} \lambda_j = N\alpha.
\end{equation}
Combining~\eqref{bing} and~\eqref{bang} yields the lemma.
\end{proof}

The previous lemma and~\eqref{MHtrate} yield a rate of convergence for $M_\mu(y)$ to $\overline m_\mu(y)$. 

\begin{proof}[{\bf Proof of Proposition~\ref{means}}]
The first step is to show that, for every $z\in \Rd$ such that $|z| > 1$,
\begin{equation}\label{zinCH} 
z \in \conv\left\{ y\in \Rd \,:\, M_\mu(y) \leq \overline m_\mu(z) + k\left(\frac{|z|}{\mu^2}\log \left(1+\frac{|z|}{\mu}\right)\right)^{\frac12} \right\},
\end{equation}
where $k>C$ where $C$ is as in~\eqref{MHtrate}. Suppose on the contrary that ~\eqref{zinCH} fails for some $z\in \Rd$ with $t:=|z|>1$. By elementary convex separation, there exists a plane $H$  with $z\in H$ such that 
\begin{equation*}\label{}
M_\mu(H) > \overline m_\mu(z) + A \quad \mbox{where} \quad A:= k \left(\frac t{\mu^2} \log \left(1+\frac t\mu\right)\right)^{\frac12}.
\end{equation*}
Since $H$ is at most a distance of $|z|=t$ from the origin, we may assume with no loss of generality that $H=H_s$ for some $s\leq t$. We deduce that
\begin{equation}\label{e.Sgrosse}
M_\mu(H_s) > \overline m_\mu(z) + A  \geq \overline m_\mu(H_s) +  k\left(\frac s{\mu^2} \log \left(1+\frac s\mu\right)\right)^{\frac12}.
\end{equation}
Using $\overline m_\mu(H_s) \geq 0$, $M_\mu(H_s) \leq L_\mu s$ and $\mu \leq K$, we see that by making $k$ larger, if necessary, we may deduce that $s>1$. Now~\eqref{e.Sgrosse} contradicts~\eqref{MHtrate}. We have proved~\eqref{zinCH}. 

We now fix $|y|>1$ and proceed with the demonstration of~\eqref{meanseq}. Note that we may assume $|y| \geq 1+\mu^{-1}$, since otherwise~\eqref{meanseq} follows for a suitable $C>0$ from $|y|>1$, $M_\mu(y) \leq L_\mu|y|$ and $m_\mu (y)\geq0$. Now apply~\eqref{zinCH} to $z:=y/N$, where $N \in\N^*$ is chosen below such that $N \leq |y|$, to obtain
\begin{equation*}\label{}
y/N \in \conv\left\{ x\in \Rd \,:\, M_\mu(x) \leq \overline m_\mu(y/N) + C\left( \frac{|y|}{N\mu^2} \log\left( 1+\frac{|y|}{N\mu}\right)\right)^{\frac12}\right\}
\end{equation*}
and, after an application of~\eqref{convhull},
\begin{align*}\label{}
M_\mu(y) & \leq  \left( N + \frac{C}{\mu} \right) \left( \overline m_\mu(y/N) + C \left( \frac{|y|}{N\mu^2} \log\left(1+\frac{|y|}{N\mu}\right)\right)^\frac12\right) \\
& \leq \overline m_\mu(y) + CL_\mu \frac{|y|}{N\mu} + 
C   \left( \frac {N|y|}{\mu^2}\log\left(1+\frac{|y|}{N\mu}\right)\right)^\frac12 + C  \left( \frac{|y|}{N\mu^4} \log\left(1+\frac{|y|}{N\mu}\right)\right)^\frac12. 
\end{align*}
Now we optimize $N$ to obtain~\eqref{meanseq}. In the case that $|y|^{-1}\leq \mu^3$, then we let $N$ be the smallest integer larger than $|y|^{\frac13}$ to get, after using that $|y| > 1$ and $\mu \leq K$,  that
\begin{equation}\label{}
M_\mu(y) \leq \overline m_\mu(y) + C \frac{|y|^{\frac23}}{\mu} \left( \log\left(1+\frac{|y|}{\mu}\right)\right)^\frac12.
\end{equation}
If, on the other hand, $0<\mu^3 < |y|^{-1}$, then we take $N$ be the smallest integer larger than $\mu^{-1}$ and find that
\begin{equation}\label{}
M_\mu(y) \leq \overline m_\mu(y) + C\frac{|y|^{\frac12}}{\mu^{\frac32}} \left( \log\left(1+|y|\right)\right)^\frac12.
\end{equation}
Note that in either case we have~\eqref{meanseq} and we have chosen $N$ so that $N\leq (1+\mu^{-1}) \vee |y| \leq |y|$, as required.
\end{proof}

\subsection{Some further error estimates}
We conclude this section with versions of~\eqref{easy} and~\eqref{hard} which hold uniformly for $y\in B_R$. These estimates, which are needed in the next section, follow from Theorem~\ref{mpEE} and a simple covering argument. 

\begin{lem} \label{mpEER}
There exists $C> 0$ such that, for every $\lambda \geq 4L_\mu$ and $R\geq 3$,
\begin{equation}\label{easyR}
\Prob\left[ \inf_{y\in B_R} \left( m_\mu(y,0,\cdot) - \overline m_\mu(y) \right) \leq -\lambda \right] \leq C R^\d \exp\left( -\frac{\mu\lambda^2}{CR}\right),
\end{equation}
and, if
\begin{equation}\label{lamb-condR}
\lambda \geq C\left( \frac{R^{\frac12}}{\mu^{\frac32}} + \frac{R^\frac23}{\mu} \right) \left( \log\left(1+\frac{R}{\mu}\right)\right)^\frac12,
\end{equation}
then
\begin{equation}\label{hardR}
\Prob\left[ \sup_{y\in B_R} \left( m_\mu(y,0,\cdot) - \overline m_\mu(y) \right) \geq \lambda \right] \leq C R^\d \exp\left(- \frac{\mu \lambda^2}{CR}\right).
\end{equation}
\end{lem}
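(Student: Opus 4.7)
The plan is a straightforward covering argument resting on two Lipschitz estimates. For every $\omega\in \Omega$, the function $m_\mu(\cdot,0,\omega)$ is $L_\mu$-Lipschitz by~\eqref{lips}, and $\overline m_\mu$ inherits the same Lipschitz constant as the (locally uniform) limit in~\eqref{mmuconv}. Hence $y \mapsto m_\mu(y,0,\omega) - \overline m_\mu(y)$ is $2L_\mu$-Lipschitz uniformly in $\omega$. I will reduce the events in~\eqref{easyR} and~\eqref{hardR} to a union of pointwise events on a sufficiently fine net in $B_R$, then apply Theorem~\ref{mpEE} at each net point and take a union bound.

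Set $\eta := \lambda/(4L_\mu)$ and pick $\{y_i\}_{i=1}^N \subseteq B_R$ to be an $\eta$-net of $B_R$ with cardinality $N \leq C(R/\eta)^d = C(RL_\mu/\lambda)^d$. Because $\lambda \geq 4L_\mu$ and $L_\mu \leq C$ by~\eqref{cLlL}, one has $N \leq CR^d$. For the lower tail, if $\inf_{y\in B_R}\bigl(m_\mu(y,0,\omega)-\overline m_\mu(y)\bigr) \leq -\lambda$, then by Lipschitz continuity some $y_i$ satisfies $m_\mu(y_i,0,\omega) - \overline m_\mu(y_i) \leq -\lambda + 2L_\mu\eta = -\lambda/2$. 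Points with $|y_i| \leq 1$ can be discarded since there $|m_\mu(y_i,0,\omega)-\overline m_\mu(y_i)| \leq L_\mu < \lambda/2$ (using $\lambda \geq 4L_\mu$). For $|y_i| > 1$, \eqref{easy} applied with threshold $\lambda/2$ gives
\[
\Prob\bigl[\,m_\mu(y_i,0,\cdot) - \overline m_\mu(y_i) \leq -\lambda/2\,\bigr] \leq \exp\!\left(-\frac{\mu\lambda^2}{4C|y_i|}\right) \leq \exp\!\left(-\frac{\mu\lambda^2}{4CR}\right),
\]
and a union bound over the $N \leq CR^d$ net points yields~\eqref{easyR}.

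For the upper tail~\eqref{hardR} the same discretization works. The one point that requires a line of verification is that condition~\eqref{lamb-cond} at each net point $y_i$ with threshold $\lambda/2$ follows from~\eqref{lamb-condR} at scale $R$: the right-hand side of~\eqref{lamb-cond} is monotone increasing in $|y|$, and $|y_i| \leq R$, so after enlarging the constant in~\eqref{lamb-condR} by a harmless factor (which simultaneously absorbs the factor of $2$ coming from $\lambda/2$ and guarantees $\lambda \geq 4L_\mu$), one may apply~\eqref{hard} pointwise to obtain the exponential bound $\exp(-\mu\lambda^2/(4CR))$ at each $y_i$ with $|y_i|>1$; small-$|y_i|$ points are again impossible by the same absolute deviation bound. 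The union bound then completes the proof. No real obstacle is anticipated; the only book-keeping is to select the mesh $\eta = \lambda/(4L_\mu)$ fine enough to cut the required deviation in half while keeping the cardinality of the net polynomial in $R$.
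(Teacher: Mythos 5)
Your proof is correct and follows essentially the same covering argument as the paper's: both reduce the events to pointwise events on a polynomial-cardinality net in $B_R$, apply Theorem~\ref{mpEE} at each net point (using the $2L_\mu$-Lipschitz continuity of $m_\mu(\cdot,0,\omega)-\overline m_\mu(\cdot)$ to pass from the net to the continuum), and finish with a union bound. The only cosmetic difference is the choice of mesh (the paper covers $B_R$ by balls of fixed radius~$2$ centered in $B_R\setminus B_1$ and absorbs the resulting $2L_\mu$ error via $\lambda\geq 4L_\mu$, whereas you use a $\lambda/(4L_\mu)$-net and discard near-origin points afterward), which is immaterial.
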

\begin{proof}
We may select $y_1,\ldots,y_N \in B_R \setminus B_1$ with $N\leq CR^\d$ such that $B_R$ is covered by the balls $B(y_j,2)$. Then by~\eqref{lips} we have, for any $\lambda > 0$,
\begin{equation*}
\Prob\left[ \inf_{y\in B_R} \left( m_\mu(y,0,\cdot) - \overline m_\mu(y) \right) \leq -\frac \lambda2 -2L_\mu  \right] \leq \sum_{j=1}^N \Prob\left[ m_\mu(y_j,0,\cdot) - \overline m_\mu(y_j) \leq -\frac\lambda2 \right].
\end{equation*}
According to~\eqref{easy}, for each $1\leq j \leq N$,
\begin{equation*}\label{}
\Prob\left[ m_\mu(y_j,0,\cdot) - \overline m_\mu(y_j) \leq -\frac\lambda2 \right] \leq \exp\left( - \frac{\mu\lambda^2}{C|y_j|}\right) \leq \exp\left( - \frac{\mu\lambda^2}{CR}\right).
\end{equation*}
Since $N\leq CR^\d$, we obtain, for every $\lambda \geq 4L_\mu$,
\begin{equation*}\label{}
\Prob\left[ \inf_{y\in B_R} \left( m_\mu(y,0,\cdot) - \overline m_\mu(y) \right) \leq -\lambda \right] \leq CR^\d \exp\left( - \frac{\mu\lambda^2}{CR}\right).
\end{equation*}

The estimate~\eqref{hardR} is obtained in a very similar way from Theorem~\ref{mpEE} and a covering argument. We omit the proof. 
\end{proof}

\section{Error estimates for the approximate cell problem} \label{EEdvd}

Here we obtain estimates on the difference between $-\delta v^\delta(y,\omega\,;p)$ and $\overline H(p)$, study the rate for the almost sure convergence
\begin{equation}\label{limencore}
\lim_{\delta\to 0} \sup_{y\in B_{R/\delta}} \left| \delta v^\delta(y,\omega\,;p) + \overline H(p) \right| = 0
\end{equation}
and prove Theorems~\ref{acpEE} and~\ref{acpCR}. One difficulty arises from the fact that  the rate for the approximate cell problem may be very different depending on whether or not $p$ belongs to the interior of the flat spot $\{ \overline H = 0\}$. Recall that the flat spot  is never empty since, e.g.,  $\bar H(0)=0$ (see Appendix~\ref{appmappp}). Moreover, the flat spot $\{ \overline H = 0\}$ is \emph{not} in general equal to $\{ 0\}$ and may indeed have nonempty interior (see, e.g.,~\cite{ASo1}). We use the metric problem to control the $-\delta v^\delta$'s from above, and from below for $p$'s away from the flat spot $\{ \overline H = 0\}$. To obtain the upper bound on the flat spot we study directly the behavior of the $\delta v^\delta$'s.  

We recall here two important deterministic (i.e., uniform in $\omega\in\Omega$) estimates from Section~\ref{ssacpa}:
\begin{equation}\label{dvdsup}
-\sup_{\Rd \times \Omega} H(p,\cdot) \leq \delta v^\delta(y,\omega\,;p) \leq -\essinf_{\Rd \times \Omega} H(p,\cdot)
\end{equation}
and
\begin{equation}\label{vdlip}
\left| v^\delta(y,\omega\,;p) - v^\delta(z,\omega\,;p) \right| \leq K_p|y-z|,
\end{equation}
where $K_p> 0$ depends only on the assumptions for $H$ and an upper bound for $|p|$. Note that the left and right of~\eqref{dvdsup} are bounded  for bounded $|p|$ by~\eqref{regpx}.

\subsection{The ballistic regime}
We combine the exponential error estimates for the metric problem obtained in the previous section with a comparison argument to obtain estimates on the difference between $-\delta v^\delta(y,\omega\,;p)$ and $\overline H(p)$. The comparison argument, which was introduced in~\cite{ASo3} to prove homogenization, yields an estimate from below for $\delta v^\delta + \overline H(p)$ for all $p\in \Rd$ and from above only for $p$'s away from the flat spot.

In the next two proofs, we work with a fixed $p\in \Rd$ and denote by $C$ and $c$ positive constants which may vary in each occurrence and depend only on an upper bound for $|p|$ and the assumptions for $H$.

\begin{proof}[{\bf Proof of Theorem~\ref{acpEE}(i)}]
We actually prove a more general, deterministic statement: namely that, for every $0 <  \delta \leq \lambda \leq 1$, there exists a fixed constant $R\leq C/\delta$ and a finite set~$K\subseteq \Rd$ consisting of at most $C\delta^{-2d}$ elements (which will be identified in the argument) such that
\begin{multline}\label{upwinded}
\left\{ \omega\in \Omega\, : \, -\delta v^\delta(0,\omega\,;p) \geq \overline H(p) + \lambda \right\} \\ \subseteq \, \bigcup_{z\in K} \left\{ \omega\in\Omega\,:\, \inf_{y\in B(z,R)} \left( m_\mu(z,y,\omega) - \overline m_\mu(z-y) \right)  \leq  -\frac \lambda{8\delta} \right\},
\end{multline}
where $\mu:= \overline H(p) + \lambda/4$. Admitting~\eqref{upwinded} for the moment, let us see how to derive~\eqref{dvdEEabove} as a consequence of it and Theorem~\ref{mpEE} (or more precisely, its corollary, Lemma~\ref{mpEER}). We simply use~\eqref{pres},~\eqref{mmustat}, a union bound and~\eqref{easyR} to estimate the probability of right side of~\eqref{upwinded}, keeping in mind that $\mu=\lambda/4$, $R \leq C/\delta$ and $|K| \leq \delta^{-2d}$. We have:
\begin{align*}
\Prob\left[-\delta v^\delta(0,\cdot\,;p) \geq \overline H(p) + \lambda \right] & \leq C\delta^{-2\d} \ \Prob\left[ \inf_{y\in B_R}\left( m_\mu(y,0,\cdot) - \overline m_\mu(y)  \right)  \leq  -\frac {\lambda}{8\delta}  \right] \\
& \leq C\delta^{-2\d} R^\d \exp\left( -\frac{\mu\lambda^2}{C\delta^2R} \right) \\
& \leq C\delta^{-3\d} \exp\left( -\frac{\lambda^3}{C\delta} \right).
\end{align*}

The proof of~\eqref{upwinded} is by a simple comparison argument. We argue that, if $-\delta v^\delta(0,\omega;p)$ is too large, then we can find some translation of $m_\mu$ which is much too small-- otherwise $v^\delta(\cdot,\omega;p)$ and $y\mapsto m_\mu(y,z,\omega) - p\cdot y$ would touch somewhere, in violation of the comparison principle. 

For the rest of the argument, we fix $\omega\in \Omega$ for which $-\delta v^\delta (0,\omega\,;p) \geq \overline H(p) + \lambda$.

\emph{Step 1.} We prepare $v^\delta$ and $m_\mu$ for the comparison: we subtract a plane of slope $p$ from $m_\mu$ and, since we need to introduce some strictness in order to ensure that the two functions touch each other, we bend~$v^\delta$ by a negligible amount. Consider 
\begin{equation*}\label{}
w(y):= v^\delta(y,\omega\,;p) - v^\delta(0,\omega\,;p) + c\lambda \left(1+|y|^2 \right)^{\frac12} - c\lambda,
\end{equation*}
By~\eqref{reg} and~\eqref{vdlip}, if $0< c<1$ sufficiently small, then $w$ satisfies
\begin{equation}\label{e.beckbleh}
H(p+Dw,y,\omega) \geq -\delta v^\delta(y,\omega\,;p) - \frac14\lambda \quad \mbox{in} \ \Rd. 
\end{equation}
Define $U:= \left\{ y\in \Rd \, : \,  w(y) < \lambda/4\delta \right\}$ and notice that, for every $y\in U$,
\begin{equation*}\label{}
-\delta v^\delta (y,\omega\,;p) \geq -\delta v^\delta(0,\omega\,;p) - \frac14\lambda \geq \overline H(p) + \frac34 \lambda.
\end{equation*}
In particular,
\begin{equation}\label{ostar}
H(p+Dw,y,\omega) \geq \overline H(p) + \frac12 \lambda \quad \mbox{in} \ U.
\end{equation}
According to \eqref{dvdsup}, there exists $y_1\in \Rd$ such that $|y_1| \leq C/\lambda\delta$ and
\begin{equation}\label{minwy1}
w(y_1) = \inf_{y\in \Rd} w(y) \leq w(0) = 0.
\end{equation}
Next we denote $\hat w(y):= w(y) -w(y_1) + c\lambda\left( 1 + |y-y_1|^2 \right)^{\frac12} - c\lambda$ and observe that, by~$|Dw| \leq K_p+1$ and~\eqref{reg}, if $c> 0$ is small enough, then we obtain from~\eqref{ostar} that
\begin{equation}\label{}
H(p+D\hat w,y,\omega) \geq \overline H(p) + \frac13 \lambda \quad \mbox{in} \ U.
\end{equation}
Also notice that $\hat w \geq w$, $\hat w(y_1) = 0$ and $\hat w(y) \geq c\lambda\left( 1 + |y-y_1|^2 \right)^{\frac12} - c\lambda$.  Therefore, if we define $V:=\left\{ y\in \Rd \, : \,  \hat w(y) < \lambda/4\delta \right\}$, then, for some $0 < R \leq C/\delta$, we have
\begin{equation}\label{Vgtus}
V \subseteq U \cap B(y_1,R).
\end{equation}
To prepare $m_\mu$ for the comparison, recall that $\mu= \overline H(p) + \lambda/4$, define 
\begin{equation*} \label{}
\hat m(y):= -m_\mu(y_1,y,\omega) - p\cdot (y-y_1)
\end{equation*}
and observe that, according to~\eqref{flipx},
\begin{equation}\label{}
H(p+D\hat m,y,\omega) \leq \mu= \overline H(p) +\frac14\lambda \quad \mbox{in}  \ \Rd.
\end{equation}

\emph{Step 3.} We compare $\widehat w$ and $\widehat m$ in $V$ and then unwind the consequences. Since $y_1\in V$, an application of Proposition~\ref{comp} gives
\begin{equation}\label{cpsty}
\max_{y\in \partial V} \left( -m_\mu(y_1,y,\omega) + p\cdot(y_1-y) \right) = \max_{\partial V} \left( \hat m - \hat w\right) + \frac{\lambda}{4\delta} \geq \hat m(y_1) - \hat w(y_1) + \frac{\lambda}{4\delta}  = \frac{\lambda}{4\delta}.
\end{equation}
Since $\mu > \overline H(p)$ we see from~\eqref{Hbarformm} that $p\in \partial \overline m_\mu(0)$, in particular, for every $z\in \Rd$,
\begin{equation}\label{mnupz}
\overline m_\mu(z) \geq p \cdot z.
\end{equation}
Using~\eqref{cpsty},~\eqref{mnupz} and the fact that $\partial V \subseteq \bar B(y_1,R)$, we obtain
\begin{equation*}\label{}
\inf_{y\in \bar B(y_1,R)} \left( m_\mu(y_1,y,\omega) - \overline m_\mu(y_1-y) \right)  \leq -\frac \lambda{4\delta}. 
\end{equation*}
In view of~\eqref{lips} and $|y_1| \leq C/\lambda\delta$, we deduce that, for some $c> 0$ small enough, we may ``snap to a grid" to deduce that there exists
\begin{equation*}\label{}
z\in K:=\{ c\lambda k/\delta \, : \, k\in \Zd \} \cap B_{C/\lambda\delta},
\end{equation*}
such that 
\begin{equation*}\label{}
\inf_{y\in B(z,R)} \left( m_\mu(z,y,\omega) - \overline m_\mu(z-y) \right)  \leq  -\frac \lambda{8\delta}.
\end{equation*}
Note that $K$ has $C\lambda^{-2d} \leq C\delta^{-2d}$ elements. This completes the proof of~\eqref{upwinded}.
\end{proof}

\begin{proof}[{\bf Proof of Theorem~\ref{acpEE}(ii)}]
The argument is similar to the proof of Theorem~\ref{acpEE}(i) above, but the two are not completely analogous and the details here are a bit more complicated. In particular, it is here that we need the existence of $|e|=1$ satisfying~\eqref{suppe}.

To setup the argument, let $0<\delta\leq 1$ and $\lambda > 0$ such that~\eqref{lamb-condout2} holds. Set $\mu:=\overline H(p)$. Since $\mu > 0$ by assumption, there exists $e\in \Rd$ with $|e|=1$ such that~\eqref{suppe} holds. 

The deterministic statement we prove is this: there exists $R\leq C/\delta$ and a finite set $K \subseteq \Rd$ with at most $C\lambda^{-2d}$ elements such that 
\begin{equation}\label{e.donckers}
\left\{ \omega\in \Omega \, : \, -\delta v^\delta (0,\omega\,;p) \leq \overline H(p) - \lambda \right\} \\
\subseteq \bigcup_{z\in K} \left( E_{1}(z) \cup E_2(z) \right)
\end{equation}
where we define the events $E_1(z), E_2(z) \in \mathcal F$ for each $z\in \Rd$ by
\begin{equation*} \label{}
E_1(z) := \left\{ \omega \in \Omega \,:\, m_\mu(z,z-Re,\omega) - \overline m_\mu(Re)   \geq \frac{\lambda}{10\delta}  \right\}
\end{equation*}
and 
\begin{equation*} \label{}
E_2(z) := \left\{ \omega \in \Omega \,:\, \sup_{y\in B(z,R)} \left( - m_\mu(y,z-Re,\omega)  + \overline m_\mu(y-z+Re)  \right) \geq \frac{\lambda}{10\delta} \right\}.
\end{equation*}
Postponing the demonstration of~\eqref{e.donckers}, let us finish the proof of the theorem. Using~\eqref{pres},~\eqref{mmustat}, a union bound and $|K| \leq C\lambda^{-2\d} \leq C\delta^{-2\d}$, we find that
\begin{equation}\label{basec}
\Prob\left[ -\delta v^\delta (0,\cdot\,;p) \geq \overline H(p) + \lambda \right] \leq C\delta^{-2\d} \left( \Prob\left[ E_1(Re) \right] + \Prob\left[ E_2(Re) \right] \right).
\end{equation}
Applying~\eqref{easyR}, we get
\begin{equation*}\label{}
\Prob\left[ E_2(Re) \right] = \Prob\left[ \inf_{y\in B_R} \left( m_\mu(y,0,\omega)  - \overline m_\mu(y) \right) \leq -\frac{\lambda}{8\delta} \right] \leq C R^\d \exp\left( -\frac{\mu\lambda^2}{C\delta^2R} \right) \leq C\delta^{-\d} \exp \left( -\frac{\mu\lambda^2}{C\delta} \right)
\end{equation*}
and, using the assumption~\eqref{lamb-condout2}, we apply~\eqref{hard} to get
\begin{equation*}\label{}
\Prob\left[ E_1(Re) \right] = \Prob\!\left[ m_\mu(Re,0,\cdot) - \overline m_\mu(Re)  \geq \frac{\lambda}{8\delta}\right] \leq \exp\left( - \frac{\mu\lambda^2}{C\delta^2R}\right) \leq  \exp\left( - \frac{\mu\lambda^2}{C\delta}\right).
\end{equation*}
Combining the last two sets of inequalities with~\eqref{basec} yields~\eqref{dvdEEbelow}.

We have left to prove~\eqref{e.donckers}, for which we make a comparison argument. Fix $\omega\in \Omega$ for which $-\delta v^\delta (0,\omega\,;p) \leq \overline H(p) - \lambda$.

\emph{Step 1.} We prepare $v^\delta$ and $m_\mu$ for the comparison. According to~\eqref{reg},~\eqref{vdlip} and Lemma~\ref{convtrick}, if $c> 0$ is chosen sufficiently small, then
\begin{equation*}\label{}
w(y):= v^\delta(y,\omega\,;p) - v^\delta(0,\omega\,;p) - c\lambda |y|
\end{equation*}
satisfies
\begin{equation}\label{e.froutbleh}
H(p+Dw,y,\omega) \leq -\delta v^\delta(y,\omega\,;p) + \frac14\lambda \quad \mbox{in} \ \Rd. 
\end{equation}
(Note that, in contrast to Step 1 in the proof of Theorem~\ref{acpEE}(i) above, we have perturbed $v^\delta$ by a nonsmooth function. Thus, unlike the derivation of~\eqref{e.beckbleh}, the inequality~\eqref{e.froutbleh} does not immediately hold in the viscosity sense. This relies on the level-set convexity of $H$ and explains the appeal to Lemma~\ref{convtrick}.)

Define $U:= \left\{ y\in \Rd \, : \,  w(y) > -\lambda/4\delta \right\}$ and observe that, for every $y\in U$,
\begin{equation*}\label{}
-\delta v^\delta (y,\omega\,;p) \leq -\delta v^\delta(0,\omega\,;p)  + \frac14\lambda \leq \overline H(p) - \frac34 \lambda
\end{equation*}
and, therefore,
\begin{equation*}\label{}
H(p+Dw,y,\omega) \leq \overline H(p) - \frac12 \lambda \quad \mbox{in} \ U.
\end{equation*}
According to \eqref{dvdsup}, there exists $y_2\in \Rd$ such that $|y_2| \leq C/\lambda\delta$ and
\begin{equation}\label{maxwy1}
w(y_2) = \sup_{y\in \Rd} w(y) \geq w(0) = 0.
\end{equation}
By $|Dw| \leq K_p+1$ and~\eqref{reg}, there exists $c> 0$ such that $\hat w (y) : = w(y) - w(y_2) -  c\lambda |y-y_2|$ satisfies
\begin{equation*}\label{}
H(p+D\hat w,y,\omega) \leq \overline H(p) - \frac13 \lambda \quad \mbox{in} \ U.
\end{equation*}
Let $V:= \left\{ y\in \Rd \, : \,  \hat w(y) > -\lambda/4\delta \right\}$ and note that, in light of the fact that $\hat w\leq w$, $\hat w(y_2) = 0$ and $\hat w(y) \leq -c\lambda |y-y_2|$, there exists $0 < R \leq C/\delta$ such that
\begin{equation}\label{Vstuck}
V \subseteq U \cap B(y_2,R).
\end{equation}
%In view of the definition of the unit vector $e$, $|e|=1$ such that $p\in \partial \overline m_\mu(e)$, that is, for every $z\in \Rd$,
%\begin{equation}\label{mnupzh}
%\overline m_\mu(z) -  p \cdot z \geq 0 = \overline m_\mu(e) -  p \cdot e.
%\end{equation}
Define $\hat m(y):= m_\mu(y,y_2-Re,\omega) - p\cdot y$ and observe that, in view of~\eqref{mpagan},
\begin{equation*}\label{}
H(p+D\hat m,y,\omega) = \mu =\overline H(p) \quad \mbox{in} \ \Rd\setminus \{ y_2-Re\} \supseteq V.
\end{equation*}

\emph{Step 2.} We apply the comparison principle: comparing $\hat w$ to $\hat m$ in $V$ yields the inequality
\begin{equation*}\label{}
\hat w(y_2) - \hat m(y_2) \leq \sup_{y\in V} \left( \hat w(y) - \hat m(y) \right) = \max_{y\in \partial V}  \left( \hat w(y) - \hat m(y) \right).
\end{equation*}
Using~\eqref{maxwy1} and that $\hat w(y_2) = 0$ and $\hat w\equiv -\lambda/4\delta$ on $\partial V$, we obtain
\begin{equation*}\label{}
\max_{y\in \partial V}  \left( m_\mu(y_2,y_2-Re,\omega) - m_\mu(y,y_2-Re,\omega) - p\cdot(y_2-y) \right)  = \max_{y\in \partial V} \left( \hat m(y_2) - \hat m(y) \right) \geq \frac{\lambda}{4\delta}.
\end{equation*}
We next split the left side of the above inequality into two pieces, one of which must be at least half of the right side. Recalling that $e$ has been chosen so that~\eqref{suppe} holds, we deduce that either
\begin{equation}\label{option1}
\frac\lambda{8\delta}  \leq   m_\mu(y_2,y_2-Re,\omega) - p\cdot (Re) =m_\mu(y_2,y_2-Re,\omega) - \overline m_\mu(Re)  
\end{equation}
or else
\begin{multline}\label{option2}
\frac\lambda{8\delta}  \leq \max_{y\in \partial V} \left( - m_\mu(y,y_2-Re,\omega) + p\cdot (y-y_2+Re) \right) \\ \leq \max_{y\in \partial V} \left( - m_\mu(y,y_2-Re,\omega)  + \overline m_\mu(y-y_2+Re)  \right).
\end{multline}
Using~\eqref{lips},~\eqref{Vstuck} and that $|y_2| \leq C/\lambda\delta$, we may ``snap to a grid" to obtain that there exists
\begin{equation*}\label{}
z\in K:=\{ c\lambda k/\delta \, : \, k\in \Zd \} \cap B_{C/\lambda\delta},
\end{equation*}
such that either
\begin{equation}\label{option1g}
m_\mu(z,z-Re,\omega) - \overline m_\mu(Re)   \geq \frac{\lambda}{10\delta} 
\end{equation}
or else 
\begin{equation}\label{option2g}
\sup_{y\in B(z,R)} \left( - m_\mu(y,z-Re,\omega)  + \overline m_\mu(y-z+Re)  \right) \geq \frac{\lambda}{10\delta}.
\end{equation}
That is, either $\omega\in E_1(z)$ or else $\omega \in E_2(z)$ for some $z\in K$. Note that $|K| = C\lambda^{-2d} \leq C\delta^{-2d}$. 
\end{proof}

A covering argument now yields explicit error estimates for~\eqref{limencore} in balls of radius~$O\left(\delta^{-1}\right)$.

\begin{lem}\label{dvdEE2}
For each $K> 0$, there exist $C,c>0$, depending on $K$ and $H$, such that, for each $p\in B_K$, $R > 0$ and $0 < \delta \leq c$,
\begin{equation}\label{dvdEE2up}
\Prob\left[ \sup_{y\in B_{R/\delta}}-\delta v^\delta (y,\cdot\, ; p) \geq \overline H(p) + C|\log \delta|^{\frac13} \delta^{\frac13} \right] \leq R^\d \delta^2,
\end{equation}
and, if $\overline H(p) > 0$, then
\begin{multline}\label{dvdEE2dnF}
\Prob\left[ \inf_{y\in B_{R/\delta}}-\delta v^\delta (y,\cdot\, ; p) \leq \overline H(p) - C\left( \overline H(p)^{-\frac32} \delta^{\frac12} +  \overline H(p)^{-1} \delta^{\frac13} \right) \left( 1 + |\log \delta| + |\log \overline H(p)| \right)^{\frac12}  \right] \\ \leq R^\d \delta^2.
\end{multline}
\end{lem}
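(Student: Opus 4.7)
\smallskip

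\noindent\textbf{Proof plan for Lemma~\ref{dvdEE2}.} The strategy is to upgrade the pointwise (``at $0$'') estimates of Theorem~\ref{acpEE} to uniform estimates on $B_{R/\delta}$ through a standard discretization argument. The two deterministic tools we have in hand are exactly those needed: the stationarity~\eqref{dvdstat}, which tells us that for every $y \in \Rd$ the law of $-\delta v^\delta(y,\cdot\,;p)$ coincides with that of $-\delta v^\delta(0,\cdot\,;p)$, and the deterministic Lipschitz estimate~\eqref{vdlip}, which lets us control $v^\delta$ throughout a unit ball by its value at the center. Accordingly, choose a maximal $1$-separated family $\{y_1,\ldots,y_N\} \subseteq B_{R/\delta +1}$ whose unit balls cover $B_{R/\delta}$; by a volume comparison $N \leq CR^\d \delta^{-\d}$.

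For the upper bound~\eqref{dvdEE2up}, set $\lambda_0 := C_0 \delta^{\frac13}|\log \delta|^{\frac13}$, where $C_0$ is a large constant to be chosen depending on the constant $C$ appearing in Theorem~\ref{acpEE}(i). Since $K_p \delta \leq \lambda_0/2$ for $\delta \leq c$ small, the Lipschitz bound yields
\begin{equation*}
\left\{\sup_{y\in B_{R/\delta}} -\delta v^\delta(y,\cdot\,;p) \geq \overline H(p) + \lambda_0\right\} \subseteq \bigcup_{j=1}^{N} \left\{ -\delta v^\delta(y_j,\cdot\,;p) \geq \overline H(p) + \tfrac12\lambda_0\right\}.
\end{equation*}
Stationarity reduces each event on the right to an event involving $-\delta v^\delta(0,\cdot\,;p)$, so Theorem~\ref{acpEE}(i) (applied with $\lambda = \lambda_0/2$, which is admissible since $\lambda_0/2 \geq \delta$) and a union bound give
\begin{equation*}
\Prob[\,\cdot\,] \leq N \cdot C\delta^{-3\d} \exp\!\left(-\frac{\lambda_0^3}{8C\delta}\right) \leq C R^\d \delta^{-4\d} \exp\!\left(-\frac{C_0^3}{8C}|\log\delta|\right).
\end{equation*}
Selecting $C_0$ large enough to force the exponential factor below $\delta^{4\d+2}$ yields the claimed bound $R^\d \delta^2$.

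For the lower bound~\eqref{dvdEE2dnF}, denote by $\lambda_1$ the argument of the probability, i.e.
$\lambda_1 := C_0\bigl(\overline H(p)^{-3/2}\delta^{1/2} + \overline H(p)^{-1}\delta^{1/3}\bigr)(1+|\log\delta|+|\log\overline H(p)|)^{1/2}$ with $C_0$ large. Since $p\in B_K$ and $\overline H(p) \leq \esssup H(p,\cdot,\cdot) \leq C$ by~\eqref{dvdsup}, we have $\overline H(p)^{-1}\geq c>0$, so $\lambda_1 \geq c\,C_0\,\delta^{1/3}|\log\delta|^{1/2}$ and hence $K_p\delta \leq \lambda_1/2$ for small $\delta$. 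Taking $C_0$ at least twice the threshold constant of~\eqref{lamb-condout2} ensures that $\lambda_1/2$ still satisfies~\eqref{lamb-condout2}, so Theorem~\ref{acpEE}(ii) applies. The same covering-plus-stationarity reduction then gives
\begin{equation*}
\Prob[\,\cdot\,] \leq N \cdot C\delta^{-3\d} \exp\!\left(-\frac{\overline H(p)\lambda_1^2}{4C\delta}\right) \leq CR^\d \delta^{-4\d} \exp\!\left(-\frac{cC_0^2}{4C}(1+|\log\delta|)\right),
\end{equation*}
where in the last step we used $\overline H(p)\lambda_1^2/\delta \geq C_0^2\bigl(\overline H(p)^{-2}+\overline H(p)^{-1}\delta^{-1/3}\bigr)(1+|\log\delta|+|\log\overline H(p)|) \geq cC_0^2(1+|\log\delta|)$. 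Choosing $C_0$ sufficiently large delivers the desired bound $R^\d \delta^2$.

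\smallskip

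\noindent\emph{Main obstacle.} There is nothing deep here; the only delicate point is bookkeeping, namely arranging the leading constant $C_0$ in $\lambda_0,\lambda_1$ so that (a)~the halved thresholds $\lambda_0/2, \lambda_1/2$ still meet the smallness/admissibility requirements~\eqref{lamb-condout2} of Theorem~\ref{acpEE}, and (b)~the exponential decay beats the polynomial factor $N\cdot \delta^{-3\d} \sim \delta^{-4\d}$ coming from the union bound and the pointwise estimates. Both adjustments depend only on $K$ and $H$, so the constants $C,c$ in the statement absorb them without difficulty.
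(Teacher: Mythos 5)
Your argument is correct and essentially identical to the paper's: cover $B_{R/\delta}$ by finitely many balls, reduce each to a pointwise event via stationarity~\eqref{dvdstat} and the Lipschitz bound~\eqref{vdlip2}, apply Theorem~\ref{acpEE} with a union bound, and absorb the polynomial prefactor $N\delta^{-3\d}$ into the exponential by taking the leading constant in $\lambda_0,\lambda_1$ large. The only cosmetic difference is the choice of cover: you use unit balls ($N\sim R^\d\delta^{-\d}$) while the paper uses balls of radius $\lambda/(2\delta K_p)$ ($N\sim R^\d\lambda^{-\d}$); both prefactors are polynomial in $\delta^{-1}$, so either works. One point you gloss over in part (ii), which the paper sidesteps by omitting that half of the proof: Theorem~\ref{acpEE}(ii) is stated for $\lambda\leq 1$, and your $\lambda_1$ can exceed $1$ when $\overline H(p)$ is small relative to $\delta$; the statement implicitly presumes this regime is not reached (e.g.\ it is only invoked for fixed $p$ and $\delta\to 0$), so this does not affect the intended applications, but it is worth being aware that the estimate is not literally uniform over the full stated range of parameters.
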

\begin{proof}
Since both of the estimates are obtained from the first two statements of Theorem~\ref{acpEE} using a similar argument, we prove only~\eqref{dvdEE2up}. To do so, we apply~\eqref{dvdEEabove} with $\lambda:= A |\log\delta|^{\frac13} \delta^{\frac13}$, for $A> 0$ chosen sufficiently large, and use a simple covering argument. Notice that if $0 < \delta <\frac12$ is sufficiently small, depending on $A$, then $\delta \leq \lambda$. There exist points $y_1,\ldots,y_N\in B_{R/\delta}$ such that $N \leq CR^\d  \lambda^{-\d}$ and the balls $B(y_j,\lambda/2\delta K_p)$ cover $B_{R/\delta}$. According to~\eqref{dvdstat}, \eqref{vdlip} and~\eqref{dvdEEabove},
\begin{align*}\label{}
\Prob\left[ \sup_{y\in B_{R/\delta}}-\delta v^\delta (y,\cdot\, ; p) \geq \overline H(p) + \lambda  \right] & \leq \Prob\left[ \max_{1\leq j \leq N}-\delta v^\delta (y_j,\cdot\, ; p) \geq \overline H(p) + \frac\lambda2  \right] \\
& \leq CN\Prob\left[ -\delta v^\delta (0,\cdot\, ; p) \geq \overline H(p) + \frac\lambda2  \right] \\
& \leq C R^\d  \lambda^{-\d} \delta^{-3\d} \exp\left( - \frac{\lambda^3}{C\delta}\right) \\
& \leq C R^\d \delta^{-4\d} \exp\left( -\frac{A^3|\log\delta|}{C}\right).
\end{align*}
We therefore obtain~\eqref{dvdEE2up} if we choose $A>0$ so that $A^3 \geq C (4\d+2)$. 
\end{proof}

We next apply~Lemma~\ref{dvdEE2} along a certain subsequence $\delta_n\to 0$ to prove, with the help of~\eqref{dvddepd} and the Borel-Cantelli lemma, the first two statements of Theorem~\ref{acpCR}.

\begin{proof}[{\bf Proof of Theorem~\ref{acpCR}(i) and Theorem~\ref{acpCR}(ii)}]
The arguments for the two statements are almost identical, so we prove only~\eqref{aboveRas}. Let $R> 0$, $\delta_n = n^{-1}$ and apply~\eqref{dvdEE2up} to obtain
\begin{equation*}\label{}
\sum_{n=1}^\infty \, \Prob\left[ \sup_{y\in B_{R/\delta_n}}-\delta_n v^{\delta_n} (y,\cdot\, ; p) \geq \overline H(p) + C |\log \delta_n|^{\frac13} \delta_n^{\frac13} \right] \leq C + R^\d \sum_{n> 1/c_2} \delta_n^2 \leq C+ R^\d \sum_{n=1}^\infty \frac1{n^2} < \infty. 
\end{equation*}
By the Borel-Cantelli lemma, we deduce that, there exists $C>0$ such that, for every $R> 0$,
\begin{equation*}\label{}
\Prob\left[ \limsup_{n\to \infty} \sup_{y\in B_{R/\delta_n}} \frac{-\delta_n v^{\delta_n} (y,\cdot\, ; p) - \overline H(p)}{ C |\log \delta_n|^{\frac13} \delta_n^{\frac13}} \leq 1 \right] = 1.
\end{equation*}
Intersecting these events for each $R=1,2,3,\ldots$, we find an event $\Omega_1$ of full probability such that, for every $R> 0$ and $\omega\in \Omega_1$,
\begin{equation*}\label{}
\limsup_{n\to \infty} \sup_{y\in B_{R/\delta_n}} \frac{-\delta_n v^{\delta_n} (y,\cdot\, ; p) - \overline H(p)}{ C |\log \delta_n|^{\frac13} \delta_n^{\frac13}} \leq 1.
\end{equation*}
Notice that $\delta_{n+1} / \delta_n = 1-\delta_{n+1}$ and so, according to~\eqref{dvddepd}, for any $\delta_{n+1} \leq \eta < \delta_n$ and $\omega\in\Omega$,
\begin{equation*}\label{squisheta}
\sup_{y\in \Rd} \left| {\delta_n} v^{\delta_{n}}(y,\omega\,;p) - {\eta} v^{\eta}(y,\omega\,;p) \right| \leq C \delta_{n+1} \leq C\delta_n.
\end{equation*}
Hence for every $\omega\in \Omega_1$ we have
\begin{align*}
\limsup_{\delta\to 0} \sup_{y\in B_{R/\delta}} \frac{-\delta v^\delta(y,\omega\,;p) - \overline H(p)}{ C |\log \delta|^{\frac13} \delta^{\frac13}} & = \limsup_{n\to \infty} \sup_{\delta_{n}\leq \eta < \delta_{n-1}} \sup_{y\in B_{R/\eta}} \frac{-\eta v^\eta(y,\omega\,;p) - \overline H(p)}{ C |\log \eta|^{\frac13} \eta^{\frac13}} \\
& \leq  \limsup_{n\to \infty} \sup_{y\in B_{2R/\delta_n}} \frac{-\delta_n v^{\delta_n} (y,\omega\, ; p) - \overline H(p) + C\delta_n }{ C|\log \delta_{n}|^{\frac13} \delta_{n}^{\frac13}} 
 \leq 1. \qedhere
\end{align*}
\end{proof}

\subsection{The sub-ballistic regime}

We show that the behavior of $\delta v^\delta (0,\cdot\, ; p)$ for $p$'s on the flat spot $\{ \overline H = 0 \}$ is determined by the distribution of $H(0,0,\cdot)$ near its maximum and that with further (quite reasonable) assumptions on this distribution, we obtain exponential error estimates and an algebraic rate of convergence for $-\delta v^\delta(0,\omega\,;p)$ to $\overline H(p)$.

We begin with the simple observation, which is probably well-known and essentially taken from~\cite{ASo1}, that $-\delta v^\delta(\cdot,\omega\,;p)$ is controlled pointwise from below by $H(0,\cdot,\omega)$.

\begin{lem} \label{dumb}
For every $p\in \Rd$ and $\omega\in \Omega$,
\begin{equation}\label{disdomdn}
-\delta v^\delta(0,\omega\, ;p) \geq \sup_{R>0} \left( \sup_{y\in B_R} H(0,y,\omega) -K_p R \delta \right),
\end{equation}
where the constant $K_p>0$ is defined in~\eqref{Kp}.
\end{lem}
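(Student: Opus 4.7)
The plan is to leverage hypothesis~\eqref{cntl}, which forces $H(p,y,\omega) \geq H(0,y,\omega)$ for every $p$, together with the uniform Lipschitz regularity of $v^\delta$. First, by Proposition~\ref{vdeltas}(i) and~\eqref{vdlip2}, $v^\delta(\cdot,\omega;p)$ is Lipschitz with constant $K_p$ and is a viscosity solution of~\eqref{amp}. By Lemma~\ref{convtrick}, the viscosity and almost-everywhere formulations of the equation coincide in our setting, so
\begin{equation*}
\delta v^\delta(y,\omega;p) + H\bigl(p+Dv^\delta(y,\omega;p),\, y,\, \omega\bigr) = 0 \qquad \text{for a.e.\ } y\in\Rd.
\end{equation*}

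Next, applying~\eqref{cntl} at each such point gives $H\bigl(p+Dv^\delta(y,\omega;p),y,\omega\bigr) \geq H(0,y,\omega)$, and hence
\begin{equation*}
-\delta v^\delta(y,\omega;p) \geq H(0,y,\omega) \qquad \text{for a.e.\ } y\in\Rd.
\end{equation*}
The left-hand side is continuous in $y$ (since $v^\delta$ is continuous) and the right-hand side is continuous by~\eqref{reg}, so this inequality in fact holds for \emph{every} $y\in\Rd$.

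Finally, for arbitrary $R>0$ and $y\in B_R$, the Lipschitz bound~\eqref{vdlip2} yields $v^\delta(0,\omega;p) - v^\delta(y,\omega;p) \leq K_p|y| \leq K_p R$, so that
\begin{equation*}
-\delta v^\delta(0,\omega;p) \geq -\delta v^\delta(y,\omega;p) - K_p R\delta \geq H(0,y,\omega) - K_p R\delta.
\end{equation*}
Taking the supremum first over $y\in B_R$ and then over $R>0$ produces~\eqref{disdomdn}. There is no genuine obstacle here: the lemma amounts to a direct reading of the controllability hypothesis~\eqref{cntl} combined with the uniform Lipschitz estimate~\eqref{vdlip2}, the only minor subtlety being the passage from an a.e.\ lower bound on $-\delta v^\delta$ to a pointwise one, which is justified by the joint continuity of both sides.
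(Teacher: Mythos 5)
Your proof is correct and follows essentially the same route as the paper: apply~\eqref{cntl} to deduce $-\delta v^\delta(y,\omega;p) \geq H(0,y,\omega)$ pointwise, then transport this to $y=0$ via the Lipschitz bound~\eqref{vdlip2}. The only cosmetic difference is how you justify the pointwise inequality $-\delta v^\delta \geq H(0,\cdot,\omega)$: you invoke a.e.\ differentiability plus continuity, whereas the paper observes directly that the viscosity subsolution inequality $\delta v^\delta + H(0,y,\omega) \leq 0$ contains no gradient, so it is automatically a pointwise statement. Both justifications are valid and the content is the same.
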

\begin{proof}
Fix $p,y\in \Rd$ and $\omega\in \Omega$. Due to~\eqref{acp} and~\eqref{cntl}, $v^\delta(\cdot,\omega\,;p)$ satisfies
\begin{equation}\label{blague2}
\delta v^\delta + H(0,y,\omega) \leq \delta v^\delta + H(p+Dv^\delta,y,\omega) = 0 \quad \mbox{in} \ \Rd. 
\end{equation}
While this holds in the viscosity sense, there are no derivatives in the expression on the left of~\eqref{blague2}, and we deduce that
\begin{equation}\label{fsub}
-\delta v^\delta(y,\omega\, ; p) \geq H(0,y,\omega). 
\end{equation}
Combining~\eqref{fsub} and \eqref{vdlip} yields \eqref{disdomdn}.
\end{proof}

For $p$'s on the flat spot, the rate of convergence for $-\delta v^\delta(0,\omega\,;p)$ to $0=\overline H(p)$ given by~\eqref{disdomdn} is essentially optimal (see Lemma~\ref{stupid} below). We next exhibit exponential error estimates for $-\delta v^\delta(0,\cdot\,;p)$ under the additional hypothesis~\eqref{plushyp}. 

\begin{prop} \label{dvdEEF}
Assume~\eqref{plushyp}. There exist $C, c>0$ such that, for all $\delta > 0$ and $0 < \lambda \leq c$,
\begin{equation}\label{dvdEEFdn}
\Prob\left[ -\delta v^\delta(0,\cdot\,;p) \leq -\lambda \right]  \leq \exp\left( - \frac{\lambda^{\d+\theta}}{C\delta^\d} \right).
\end{equation}
\end{prop}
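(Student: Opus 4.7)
The plan hinges on the pointwise lower bound from Lemma~\ref{dumb}, namely
\begin{equation*}
-\delta v^\delta(0,\omega\,;p) \geq \sup_{y\in B_R} H(0,y,\omega) - K_p R \delta,
\end{equation*}
together with the lower-tail hypothesis~\eqref{plushyp} and a direct application of the finite range of dependence~\eqref{indy}. First I would optimize the free parameter by setting $R := \lambda/(2K_p\delta)$, which absorbs the deterministic drift $K_pR\delta$ into half of $\lambda$ and reduces the proposition to the bound
\begin{equation*}
\Prob\!\left[\sup_{y\in B_R} H(0,y,\cdot) \leq -\tfrac12 \lambda\right] \leq \exp\!\left(-\frac{\lambda^{\d+\theta}}{C\delta^\d}\right).
\end{equation*}

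Next I would fix a set of points $y_1,\dots,y_N \in B_R \cap \Zd$ with $|y_i-y_j|\geq 1$ for $i\neq j$; provided $R$ exceeds a fixed multiple of $\sqrt{\d}$, we may arrange $N \geq c_0 R^\d$. By stationarity~\eqref{stnary}, each $H(0,y_i,\cdot)$ has the same law as $H(0,0,\cdot)$. By~\eqref{indy} applied to the singletons $\{y_i\}$, which lie in the $\sigma$-algebras $\mathcal{G}(\{y_i\})$, the random variables $\{H(0,y_i,\cdot)\}_{i=1}^N$ are mutually independent. The hypothesis~\eqref{plushyp} then gives, for $\lambda$ sufficiently small, $\Prob[H(0,y_i,\cdot)>-\lambda/2]\geq c\lambda^{\theta}$, and independence yields
\begin{equation*}
\Prob\!\left[\max_{1\leq i\leq N} H(0,y_i,\cdot)\leq -\tfrac12 \lambda\right] \leq (1-c\lambda^{\theta})^{N} \leq \exp(-cN\lambda^{\theta}) \leq \exp(-c'R^\d \lambda^{\theta}).
\end{equation*}
Substituting $R=\lambda/(2K_p\delta)$ produces the claimed $\exp(-c''\lambda^{\d+\theta}/\delta^\d)$.

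In the complementary regime $R=O(1)$, i.e.\ $\lambda\lesssim \delta$, the grid contains only boundedly many points and the above argument degenerates to the single-point bound
\begin{equation*}
\Prob\!\left[H(0,0,\cdot)\leq -\tfrac12 \lambda\right] \leq 1-c(\lambda/2)^{\theta} \leq \exp(-c'\lambda^{\theta}).
\end{equation*}
But in this same regime $\lambda^{\d+\theta}/\delta^\d=(\lambda/\delta)^\d\,\lambda^{\theta}$ is bounded by a constant times $\lambda^{\theta}$, so the target exponent $\lambda^{\d+\theta}/(C\delta^\d)$ is dominated by $c'\lambda^{\theta}$ once $C$ is chosen large enough, and the single-point estimate already implies~\eqref{dvdEEFdn}.

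No genuine obstacle is present: the estimate is essentially a Bernoulli tail, with Lemma~\ref{dumb} providing the only nontrivial input from the PDE side. The main care is bookkeeping the constants (through $K_p$ and those in~\eqref{plushyp}) and splitting the two regimes so that the large-$R$ covering argument and the small-$R$ trivial bound together cover all $0<\lambda\leq c$.
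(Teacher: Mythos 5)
Your proposal is correct and follows essentially the same route as the paper: apply Lemma~\ref{dumb} with $R=\lambda/(2K_p\delta)$, pick a grid of $N\gtrsim R^\d$ points at mutual distance at least $1$, use~\eqref{stnary} and~\eqref{indy} to factor the probability, and close with~\eqref{plushyp} and $(1-t)^N\leq e^{-Nt}$. The explicit case split for $R=O(1)$ is a harmless refinement — the paper's bound $N\geq cR^\d$ already degenerates gracefully to $N=1$ when $R$ is small, so the single inequality chain covers all $\lambda,\delta$ uniformly.
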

\begin{proof}
Fix $p\in \Rd$ and apply~\eqref{disdomdn} with $R:=\lambda/(2K_p\delta)$ to discover that
\begin{equation*}%\label{stipz}
\Prob\left[ -\delta v^\delta(0,\cdot\,;p) \leq -\lambda \right] \leq \Prob\left[ \sup_{y\in B_R} H(0,y,\cdot) \leq K_pR\delta-\lambda  \right] = \Prob\left[ \sup_{y\in B_R} H(0,y,\cdot) \leq -\frac\lambda 2\right].
\end{equation*}
Select $y_1,\ldots, y_N \in B_R$ such that $N \geq c R^\d$ and, for every $i\neq j$, $|y_i - y_j| > 1$. Then~\eqref{stnary} and~\eqref{indy} yield
\begin{equation*}
\Prob\left[ \sup_{y\in B_R} H(0,y,\cdot) \leq -\frac\lambda 2\right] \leq \Prob\left[ \sup_{1\leq j \leq N} H(0,y_j,\cdot) \leq -\frac\lambda 2\right]  = \prod_{1\leq j \leq N} \Prob\left[ H(0,y_j,\cdot) \leq -\frac\lambda 2\right].
\end{equation*}
Using ~\eqref{plushyp}, we find
\begin{multline*}
\prod_{1\leq j \leq N} \Prob\left[ H(0,y_j,\cdot) \leq -\frac\lambda 2\right] = \left( \Prob\left[ H(0,0,\cdot) \leq -\frac\lambda 2\right] \right)^N  \leq \left(1-c \lambda^\theta \right)^N \\ \leq \exp\left( -cN\lambda^{\theta} \right) \leq \exp\left( -c \lambda^{\d+\theta} \delta^{-\d}\right).
\end{multline*}
Combining the lines above yields~\eqref{dvdEEFdn}. 
\end{proof}

We now obtain, under assumption~\eqref{plushyp}, exponential error estimates and a rate of convergence, from below, for $-\delta v^\delta$ for all $p\in \Rd$. First, we combine~\eqref{dvdEEbelow} and~\eqref{dvdEEFdn} to obtain error estimates independent of $\overline H(p)$. 

\begin{proof}[{\bf Proof of Theorem~\ref{acpEE}(iii)}]
We begin with the observation that there exists $c>0$ such that, for every $0< \delta <c$ is sufficiently small, then
\begin{equation} \label{gumpa}
\delta^{\frac16} | \log\delta|^{\frac1{4}} \geq c\, \sup_{0< \sigma \leq 1} \left( \sigma \wedge \left( \sigma^{-\frac32}\delta^{\frac12} + \sigma^{-1} \delta^{\frac13}\right)\left( 1+ |\log\delta|+ |\log \sigma| \right)^{\frac12} \right).
\end{equation}
To see this, fix $\sigma > 0$ such that $\delta^{\frac16} | \log\delta|^{\frac1{4}} \leq \sigma$ and  observe that if $c>0$ is sufficiently small, then
\begin{align*}
\sigma^{-\frac32} \delta^{\frac12} \left( 1+ |\log\delta|+ |\log \sigma| \right)^{\frac12}  \leq C\delta^{\frac14} \left| \log \delta\right|^{\frac18} < \delta^{\frac16}\left| \log \delta \right|^{\frac14}
\end{align*}
and
\begin{align*}
\sigma^{-1} \delta^{\frac13} \left( 1+ |\log\delta|+ |\log \sigma| \right)^{\frac12}  \leq C \delta^{\frac16} \left| \log \delta \right|^{\frac14}.
\end{align*} 
This completes the proof of~\eqref{gumpa}.

We deduce that, for any $0<\delta < c$ and $\lambda >0$ satisfying~\eqref{lambang} for sufficiently large $C>0$,
\begin{equation}\label{dichotomy}
\lambda \geq \sup_{\sigma > 0} \left( \sigma \wedge C\left( \sigma^{-\frac32}\delta^{\frac12} + \sigma^{-1} \delta^{\frac13}\right)\left( 1+ |\log\delta|+ |\log \sigma| \right)^{\frac12} \right).
\end{equation}
Now,~\eqref{dichotomy} ensures that~\eqref{dvdEEbelow} and~\eqref{dvdEEFdn} overlap in an appropriate way to yield the theorem. Indeed, we may apply~\eqref{dvdEEFdn} in case $\lambda \geq 2\overline H(p)$, while in the case that $\lambda < 2\overline H(p)$, we may apply~\eqref{dvdEEbelow}, since~\eqref{dichotomy} ensures that $\lambda < 2\overline H(p)$ implies~\eqref{lamb-condout2}.
\end{proof}

Arguing in a similar way as in the proof of Lemma~\ref{dvdEE2}, we obtain the following result as an application~\eqref{EEFq}. The details are left to the reader.

\begin{lem}\label{dvEEF}
Assume~\eqref{plushyp} and let $\alpha$ and $\beta$ be defined as in~\eqref{alphbeta}. Then there exist  $C,c> 0$ such that, for each $R > 0$ and $0 < \delta \leq  c$,
\begin{equation}\label{dnF2}
\Prob\left[ \inf_{y\in B_{R/\delta}}-\delta v^\delta (y,\cdot\, ; p) \leq \overline H(p) - C |\log \delta|^{\beta} \delta^{\alpha} \right] \leq R^\d \delta^2.
\end{equation}
\end{lem}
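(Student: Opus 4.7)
The plan is to adapt the covering argument from the proof of Lemma~\ref{dvdEE2}, invoking~\eqref{EEFq} of Theorem~\ref{acpEE}(iii) in place of~\eqref{dvdEEabove}. I would set $\lambda := A|\log\delta|^{\beta}\delta^{\alpha}$ for a constant $A$ to be chosen sufficiently large and note that, for $\delta$ small (in terms of $A$), this $\lambda$ lies in $[\delta,c]$ and satisfies the hypothesis~\eqref{lambang} needed to invoke Theorem~\ref{acpEE}(iii). Then I would select points $y_1,\ldots,y_N\in B_{R/\delta}$ with $N\leq CR^\d\delta^{-\d}\lambda^{-\d}$ such that the balls $B(y_j,\lambda/(2K_p\delta))$ cover $B_{R/\delta}$. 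The Lipschitz estimate~\eqref{vdlip}, the stationarity~\eqref{dvdstat}, a union bound and~\eqref{EEFq} would then combine to give
\[
\Prob\!\left[\inf_{y\in B_{R/\delta}}-\delta v^\delta(y,\cdot\,;p)\leq \overline H(p)-\lambda\right]\leq CR^\d\delta^{-4\d}\lambda^{-\d}\exp\!\left(-\frac{1}{C}\!\left(\frac{\lambda^3}{\delta}\wedge\frac{\lambda^{\d+\theta}}{\delta^\d}\right)\right).
\]

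The task then reduces to verifying that, for $A$ large enough, the right side is bounded by $R^\d\delta^2$. Since the prefactor grows only polynomially in $\delta^{-1}$, it suffices to show that once $A$ is large, the argument of the exponential dominates $|\log\delta|$ by an arbitrarily large multiple. For the first branch, I would write $\lambda^3/\delta=A^3|\log\delta|^{3/4}\delta^{3\alpha-1}$ and use $\alpha\leq 1/6$ to get $\delta^{3\alpha-1}\geq\delta^{-1/2}$, which easily swamps $|\log\delta|$. For the second branch, $\lambda^{\d+\theta}/\delta^\d=A^{\d+\theta}|\log\delta|^{(\d+\theta)/4}\delta^{\alpha(\d+\theta)-\d}$, the exponent on $\delta$ is non-positive because $\alpha\leq \d/(\d+\theta)$; it is strictly negative unless $\alpha=\d/(\d+\theta)$, in which case the constraint $\alpha\leq 1/6$ forces $\d+\theta\geq 6\d\geq 6$, so $|\log\delta|^{(\d+\theta)/4}\geq|\log\delta|^{3/2}$ still dominates $|\log\delta|$.

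The only mild obstacle is this case analysis confirming that the second term in the minimum is never anemic; it relies on the specific pairing of $\beta=1/4$ with the upper bound $\alpha\leq 1/6$ built into the definitions in~\eqref{alphbeta}. Beyond this, everything is a routine packaging of Theorem~\ref{acpEE}(iii) that mirrors the proof of Lemma~\ref{dvdEE2} line by line, so I would leave the remaining arithmetic to the reader as the authors do.
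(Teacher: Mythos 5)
Your proposal is correct and follows exactly the route the paper intends: mimic the covering argument of Lemma~\ref{dvdEE2} with~\eqref{EEFq} in place of~\eqref{dvdEEabove}, then verify via the case split on $\alpha$ that the exponential swamps the polynomial prefactor. (A minor overcount: the covering number is $N\leq CR^\d\lambda^{-\d}$, not $CR^\d\delta^{-\d}\lambda^{-\d}$, since the covering balls have radius $\lambda/(2\delta K_p)$ which grows as $\delta\to 0$; but since the prefactor is polynomial in $\delta^{-1}$ either way, this does not affect the conclusion.)
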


Using~\eqref{dnF2} we complete the proof of Theorem~\ref{acpCR}. 

\begin{proof}[{\bf Proof of Theorem~\ref{acpCR}(iii)}]
The statement follows from~\eqref{dnF2} by an argument very similar to the proof of Theorem~\ref{acpCR}(i) given above. 
\end{proof}

\subsection{The rate may be arbitrarily slow on the flat spot}
\label{doucement}
 As explained in Appendix~\ref{appmappp}, the vector $p=0$ belongs to the flat spot: that is, $\bar H(0)=0$. We can also see this from~\eqref{dvdheureq} by observing that~\eqref{cntl} implies that $v^\delta(\cdot,\omega\,;0)\equiv 0$. We show here that~\eqref{plushyp} is necessary for the existence of an algebraic rate of convergence like~\eqref{dnF2} for the limit~\eqref{limencore} at $p=0$. Furthermore, without some assumption on the distribution of the random variable $H(0,0,\cdot)$ near its maximum, there is no restraint on how slowly the limit~\eqref{limencore} may converge for $p=0$.

We begin by exhibiting an upper bound for $-\delta v^\delta(0,\omega\,;0)$ to match~\eqref{disdomdn}, which shows that, for $p$'s on the flat spot, the rate given in Lemma~\ref{dumb} is essentially optimal.

\begin{lem} \label{stupid}
There exists $C \geq 1$ such that, for every $\omega\in \Omega$ and $R,\delta > 0$,
\begin{equation}\label{disdomup}
-\delta v^\delta (0,\omega\,;0) \leq (- \delta R) \vee \frac{\delta R}{C+\delta R} \, \sup_{y\in B_R} H(0,y,\omega).
\end{equation}
\end{lem}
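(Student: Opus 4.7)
The plan is to leverage the maximality characterization~\eqref{dvdform} of $v^\delta(\cdot,\omega\,;0)$ by producing an explicit Lipschitz subsolution of $\delta w + H(Dw,\cdot,\omega) \leq 0$ in $\Rd$ whose value at the origin is as large as possible. Since Lemma~\ref{stupid} is a pointwise assertion in $\omega$, no probabilistic ingredient is required; the argument is a cone-comparison dual to the one used in Lemma~\ref{dumb}.

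Fix $\omega$, $R>0$, $\delta>0$, and set $M:=\sup_{y\in B_R}H(0,y,\omega)$, which is at most $0$ thanks to~\eqref{cntl}. For a parameter $\gamma\in[0,1]$ I would consider the tent function $w_\gamma(y):=\gamma(R-|y|)_+$, which is Lipschitz, vanishes outside $B_R$, satisfies $|Dw_\gamma|=\gamma$ almost everywhere in $B_R$, and has $w_\gamma(0)=\gamma R$. Using Lemma~\ref{convtrick} to reduce the viscosity subsolution condition to the almost everywhere inequality, one checks two regions. Outside $\overline{B_R}$, both $Dw_\gamma$ and $w_\gamma$ vanish and the inequality reduces to $H(0,y,\omega)\leq 0$, which is~\eqref{cntl}. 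Inside $B_R$, the uniform $C^{0,1}$-bound from~\eqref{reg} provides a constant $C\geq 1$ (independent of $(y,\omega)$) such that $H(Dw_\gamma,y,\omega)\leq H(0,y,\omega)+C\gamma\leq M+C\gamma$, whence using $w_\gamma\leq \gamma R$,
\begin{equation*}
\delta w_\gamma+H(Dw_\gamma,\cdot,\omega)\leq \gamma(\delta R+C)+M,
\end{equation*}
which is nonpositive as soon as $\gamma(\delta R+C)\leq -M$.

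To conclude, I would choose the largest admissible $\gamma$, namely $\gamma:=\min\{1,\ -M/(\delta R+C)\}\in[0,1]$, and invoke the maximality~\eqref{dvdform} to obtain $v^\delta(0,\omega\,;0)\geq w_\gamma(0)=\gamma R$, so that $-\delta v^\delta(0,\omega\,;0)\leq -\delta\gamma R$. Inspecting the two branches of the minimum defining $\gamma$ yields $-\delta \gamma R=-\delta R$ when $-M\geq \delta R+C$, and $-\delta \gamma R=\tfrac{\delta R}{C+\delta R}M$ otherwise; these are precisely the two quantities appearing on the right-hand side of~\eqref{disdomup}, so the bound follows in both cases. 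The only mildly delicate point is the viscosity subsolution check at the tip $\{0\}$ and on $\partial B_R$: the former is controlled by the observation that $|D\varphi(0)|\leq \gamma$ for any $C^1$ test function touching $w_\gamma$ from above at $0$, while the latter is vacuous because no such test function can exist on $\partial B_R$. Both are handled cleanly by Lemma~\ref{convtrick}, after which the argument reduces to the elementary optimization in $\gamma$ above.
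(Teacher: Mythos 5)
Your proof is correct and follows essentially the same route as the paper's: construct the tent subsolution $\gamma(R-|y|)_+$, bound $H$ along it using the Lipschitz estimate~\eqref{reg} in the $p$-variable together with $H(0,\cdot,\omega)\leq 0$ from~\eqref{cntl}, and then choose the largest admissible slope $\gamma$. The only cosmetic difference is that you invoke the maximality characterization~\eqref{dvdform} directly where the paper appeals to the comparison principle~Proposition~\ref{appcmp}, and you phrase the optimal slope explicitly as $\gamma=\min\{1,-M/(\delta R+C)\}$ rather than implicitly via the auxiliary quantity $h_{\alpha,R}$; both choices lead to the identical bound.
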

\begin{proof}
Fix $\omega\in \Omega$, $R, \delta > 0$ and define, for every $0\leq \alpha \leq 1$,
\begin{equation*}\label{}
h_R :=  -\sup_{y \in  B_R} H(0,y,\omega)\quad \mbox{and} \quad h_{\alpha,R} :=  -\sup_{(p,y) \in B_\alpha \times B_R} H(p,y,\omega),
\end{equation*}
where we set $h_{\alpha,R}:= h_R$ in the case that $\alpha=0$. Observe that $h_R\geq 0$ and, due to~\eqref{reg}, for any $0 \leq \alpha \leq 1$,
\begin{equation}\label{har}
|h_R - h_{\alpha,R}| \leq C \alpha.
\end{equation}
Next fix $0 \leq \alpha \leq 1$ to be selected below, define $u(x) := \alpha(R-|x|)_+$ and note that $u \in \Lip$, $|Du| \leq \alpha$ in $\Rd$ and $u \equiv 0$ in $\Rd\setminus B_R$. We conclude that
\begin{equation*}\label{}
\delta u + H(Du,y,\omega) \leq \left( \delta \alpha R - h_{\alpha,R}\right) \vee 0 \quad \mbox{in} \ \Rd.
\end{equation*}
Choose $0 \leq \alpha \leq 1$ to be the largest number for which $\alpha \delta R \leq h_{\alpha,R} \wedge \delta R$. The comparison principle (Proposition~\ref{appcmp}) yields that $\delta\alpha R = \delta u(0) \leq \delta v^\delta(0,\omega\,;0)$. According to~\eqref{har},
\begin{equation*}\label{}
\alpha \geq \frac{h_R}{C+\delta R} \wedge 1. 
\end{equation*}
which concludes the proof. \end{proof}

The previous lemma states that the rate $-\delta v^\delta(0,\omega\,;0)$ converges to $0$ is controlled from below, up to a factor of $2$, by the maximum of $H(0,\cdot,\omega)$ in the ball $B_{C / \delta}$. Using the independence assumption and an easy covering argument, we relate the latter to the distribution of $H(0,0,\omega)$ near its maximum to recover the following estimate.

\begin{lem} \label{H0dbad}
There exists $C > 0$ such that, for every $\delta > 0$ and $0< \lambda \leq 1$, 
\begin{equation}\label{logbad}
\log \Prob\left[ -\delta v^\delta(0,\cdot\,;0) \leq -\lambda \right] \geq C\delta^{-\d}\, \log \left( 1 - C\lambda^{-\d} \,\Prob\left[ H(0,0,\cdot) > -4\lambda \right]\right),
\end{equation}
where the inequality is vacuous if the argument in the logarithm on the right side is negative.
\end{lem}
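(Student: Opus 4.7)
The key observation is that Lemma~\ref{stupid} already reduces the problem to a tail estimate on the random field $H(0,\cdot,\omega)$, for which the finite range dependence~\eqref{indy} and stationarity~\eqref{stnary} are well-suited.

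\emph{Reduction via Lemma~\ref{stupid}.} I would apply~\eqref{disdomup} with the choice $R := C_0/\delta$, where $C_0\geq 1$ is the constant appearing on the right-hand side of~\eqref{disdomup}. Then $\delta R = C_0$, so $\frac{\delta R}{C_0+\delta R} = \tfrac12$ and $-\delta R = -C_0 \leq -\lambda$ (using $\lambda \leq 1 \leq C_0$). Consequently,
\[
\Prob\!\left[-\delta v^\delta(0,\cdot\,;0)\leq -\lambda\right]\;\geq\;\Prob\!\left[\sup_{y\in B_{C_0/\delta}} H(0,y,\cdot)\leq -2\lambda\right],
\]
reducing the problem to a lower bound on the probability that $H(0,\cdot,\omega)$ stays well below its essential supremum throughout the large ball $B_{C_0/\delta}$.

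\emph{Partition and sub-grid.} I would cover $B_{C_0/\delta}$ by $N \leq C\delta^{-\d}$ closed unit cubes $Q_1,\dots,Q_N$. Inside each $Q_k$, place a sub-grid $\{y_{k,j}\}_{j=1}^{M}$ of $M\leq C\lambda^{-\d}$ points at spacing of order $\lambda$, fine enough that the uniform $y$-modulus of continuity provided by~\eqref{regpx} guarantees
\[
\max_{j} H(0,y_{k,j},\omega)\leq -4\lambda\;\Longrightarrow\;\sup_{y\in Q_k}H(0,y,\omega)\leq -2\lambda.
\]
By~\eqref{stnary} and a union bound,
\[
\Prob\!\left[\sup_{y\in Q_k} H(0,y,\cdot)>-2\lambda\right]\;\leq\;C\lambda^{-\d}\,\Prob\!\left[H(0,0,\cdot)>-4\lambda\right]\;=:\;q.
\]

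\emph{Combine across cubes.} Writing $A_k:=\{\sup_{y\in Q_k}H(0,y,\cdot)\leq -2\lambda\}$, these are decreasing events in the underlying random field. By~\eqref{indy}, the $\sigma$-algebras $\mathcal G(Q_k),\mathcal G(Q_{k'})$ are independent whenever $\dist(Q_k,Q_{k'})\geq 1$. Using a $2^\d$-coloring so that same-color cubes are $1$-separated, and then combining color classes via a positive-association argument (the $A_k$ are monotone in the pointwise ordering of the field), I would obtain
\[
\Prob\!\left[\bigcap_{k=1}^N A_k\right]\;\geq\;\prod_{k=1}^N \Prob[A_k]\;\geq\;(1-q)^N\;\geq\;\bigl(1-C\lambda^{-\d}p\bigr)^{C\delta^{-\d}},
\]
with $p:=\Prob[H(0,0,\cdot)>-4\lambda]$. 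Taking logarithms yields~\eqref{logbad}.

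\emph{Main obstacle.} The delicate step is the last one: the events $A_k$ are only jointly independent within a single color class, not across classes, so factoring over all $N$ cubes requires either a Harris--FKG-type monotonicity inequality for the $1$-dependent random field $H$, or a more elementary coloring workaround in which a multiplicative $2^\d$ factor is absorbed into the constants $C$ of the final bound. I would also need to make sure the uniform modulus in~\eqref{regpx} is enough to justify the $\lambda$-scale sub-grid in the second step (any loss in the modulus-to-Lipschitz passage is again absorbed into the constant $C$ multiplying $\lambda^{-\d}$).
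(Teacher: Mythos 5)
Your plan reproduces the paper's proof quite faithfully: reduce via Lemma~\ref{stupid} to a lower bound on $\Prob[\sup_{B_R}H(0,\cdot,\cdot)\leq-2\lambda]$ with $R\asymp\delta^{-1}$; discretize with $N\lesssim\delta^{-\d}$ coarse cells, each carrying a sub-grid of $M\lesssim\lambda^{-\d}$ points; reduce the per-cell marginal to $q\leq C\lambda^{-\d}\Prob[H(0,0,\cdot)>-4\lambda]$ via stationarity and the union bound; and factor across cells to get $(1-q)^N$. The first two steps match the paper exactly (modulo the $y$-modulus caveat you rightly flag, which is really furnished by~\eqref{regpx}, not~\eqref{reg}). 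The paper's version of the third step selects the points so that $|y_{ij}-y_{kj}|>D$ whenever $i\neq k$ and then asserts the factorization $\Prob[\bigcap_{i,j}E_{ij}]=\Prob[\bigcap_j E_{1j}]^N$ as an exact product via~\eqref{indy} and~\eqref{stnary}.

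The gap is in your proposed repair of step (3), which is precisely the step you identify as the main obstacle. Harris--FKG positive association does not follow from~\eqref{indy} alone: a $1$-dependent field need not be positively associated (take $X_n=Y_n(1-Y_{n+1})$ with i.i.d.\ $Y_n$ to see anticorrelation at range $1$), and the paper does not assume $H(0,\cdot,\cdot)$ is an increasing functional of an i.i.d.\ input as a Poissonian potential would be, so invoking FKG silently strengthens the hypotheses. Nor does the ``$2^\d$-coloring with a constant absorbed'' workaround deliver: independence gives $\prod_{k\in I_c}\Prob[A_k]$ only \emph{within} each color class, and recombining the $2^\d$ classes again requires positive association; this cannot be absorbed into a multiplicative constant because the target $(1-q)^N$ with $N\asymp\delta^{-\d}$ is genuinely exponential in $N$, which is exactly the shape Proposition~\ref{plushyp-roi} needs downstream to produce a $\delta$-independent lower bound. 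A complete argument must either arrange the whole sub-grids $\{y_{ij}\}_j$ to be pairwise $>D$-separated across $i$ (which is in tension with covering $B_R$ at scale $c\lambda$: $>D$-separated sets leave uncovered moats whose modulus contribution is $O(1)$, not $O(\lambda)$) or rest on an explicit positive-association hypothesis beyond~\eqref{indy}. As written your proposal, like the paper's own ``='', leaves this point open, so you should regard the factorization as unproven rather than merely technical.
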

\begin{proof}
Fix $0 < \lambda \leq 1$. There exists a finite collection of points $\{ y_{ij} \,:\, 1\leq i \leq N, 1 \leq j \leq M\} \subseteq B_R$ such that $N\leq CR^\d$,  $M\leq C\lambda^{-\d}$, $B_R$ is covered by the balls $B(y_{ij},c\lambda)$ and $|y_{ij} - y_{kj}| > D$ if $i\neq k$. According to~\eqref{stnary},~\eqref{indy} and~\eqref{reg}, we have
\begin{multline*}
\Prob \left[ \sup_{y\in B_R} H(0,y,\cdot) \leq -2\lambda \right] \geq \Prob \left[ \sup_{1\leq i \leq N} \sup_{1\leq j \leq M} H(0,y_{ij},\cdot) \leq -4\lambda \right] \\
 = \Prob \left[ \sup_{1 \leq j \leq M} H(0,y_{1j},\cdot) \leq -4\lambda \right]^N \geq \big( 1 - M \Prob \left[ H(0,0,\cdot) > -4\lambda \right]\big)^N.
\end{multline*}
Setting $R:=C /\delta$, with $C\geq 1$ as in Lemma~\ref{stupid}, applying~\eqref{disdomup} and taking the logarithm of the resulting expression yields~\eqref{logbad}.
\end{proof}

We next show that the assumption~\eqref{plushyp} is essentially necessarily for~\eqref{limencore} to have an algebraic rate of convergence at $p=0$. 

\begin{prop} \label{plushyp-roi}
Assume, contrary to~\eqref{plushyp}, that there exists $C> 0$ and $\theta > \d$ such that, for every $0 < \lambda \leq 1/C$,
\begin{equation}\label{conthyp}
\Prob\left[ H(0,0,\cdot) > -\lambda \right] \leq C \lambda^\theta.
\end{equation}
Then there exists $c> 0$ such that, for every $0 < \delta \leq c$,
\begin{equation}\label{badest}
\Prob \left[ -\delta v^\delta(0,\cdot\,;0) \leq -\delta^\gamma \right] \geq c\quad \mbox{for} \quad \gamma:= \d/(\theta-\d). 
\end{equation}
Moreover, for $\omega$ belonging to an event of full probability, we have, for every $\eta > \gamma$,
\begin{equation}\label{tresmauvais}
\liminf_{\delta \to 0} \frac{\delta v^\delta(0,\omega\,;0)}{\delta^\eta} = +\infty.
\end{equation}
\end{prop}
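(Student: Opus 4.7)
The plan is to derive both assertions from Lemma~\ref{H0dbad}, the hypothesis~\eqref{conthyp}, and a monotonicity of $v^\delta(\cdot,\omega;0)$ in $\delta$. For~\eqref{badest}, I would apply Lemma~\ref{H0dbad} with $\lambda=\delta^\gamma$; the exponent $\gamma=d/(\theta-d)$ is tailored precisely so that, after inserting the bound $\Prob[H(0,0,\cdot)>-4\lambda]\leq C\lambda^\theta$ coming from~\eqref{conthyp}, the quantity $C\lambda^{-d}\Prob[H(0,0,\cdot)>-4\lambda]$ becomes bounded by $C\delta^{\gamma(\theta-d)}=C\delta^d$. The formula in Lemma~\ref{H0dbad} then yields $\log \Prob[-\delta v^\delta(0,\cdot;0)\leq -\delta^\gamma] \geq C\delta^{-d}\cdot(-C\delta^d)=-C$ for $\delta$ small, which is~\eqref{badest}.

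For~\eqref{tresmauvais}, fix $\eta>\gamma$ and pick any intermediate exponent $\eta_1\in(\gamma,\eta)$. Repeating the above with $\lambda=\delta^{\eta_1}$ gives $C\lambda^{-d}\Prob[H(0,0,\cdot)>-4\lambda]\leq C\delta^{\eta_1(\theta-d)}$, which is small since $\eta_1(\theta-d)>d$. Using $|\log(1-x)|\leq 2x$ for small $x\geq 0$ in Lemma~\ref{H0dbad} thus delivers
\begin{equation*}
\Prob\bigl[\,\delta v^\delta(0,\cdot;0) < \delta^{\eta_1}\,\bigr] \leq C\delta^{s}, \qquad s := \eta_1(\theta-d)-d > 0.
\end{equation*}
Along the geometric sequence $\delta_n=2^{-n}$ these tail probabilities are summable in $n$, so the Borel-Cantelli lemma produces an event $\Omega_{\eta_1}\in\mathcal F$ of full probability on which $\delta_n v^{\delta_n}(0,\omega;0)\geq \delta_n^{\eta_1}$ for all $n$ sufficiently large.

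The main obstacle is then to upgrade this control along a subsequence to a bound valid for every sufficiently small $\delta$; the continuity estimate~\eqref{dvddepd} cannot be used for this, since its interpolation error $\Pi_0(1-\delta/\delta_n)$ is of constant order on a geometric sequence. The resolution is that, at $p=0$, the map $\delta\mapsto v^\delta(\cdot,\omega;0)$ is \emph{monotone nonincreasing}: the function $w\equiv 0$ belongs to the admissible class in~\eqref{dvdform} by~\eqref{cntl}, so $v^\delta\geq 0$, and for $0<\delta\leq\delta'$ the identity $\delta' v^\delta + H(Dv^\delta,y,\omega)=(\delta'-\delta)v^\delta\geq 0$ shows that $v^\delta$ is a bounded supersolution of the $\delta'$-equation, whence Proposition~\ref{appcmp} yields $v^{\delta'}\leq v^\delta$. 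Armed with this, for $\omega\in\Omega_{\eta_1}$, $n$ large, and $\delta\in[\delta_{n+1},\delta_n]$,
\begin{equation*}
\delta v^\delta(0,\omega;0) \geq \delta v^{\delta_n}(0,\omega;0) = \frac{\delta}{\delta_n}\,\delta_n v^{\delta_n}(0,\omega;0) \geq \tfrac{1}{2}\,\delta_n^{\eta_1},
\end{equation*}
and dividing by $\delta^\eta\leq\delta_n^\eta$ gives $\delta v^\delta(0,\omega;0)/\delta^\eta \geq \tfrac12\,\delta_n^{\eta_1-\eta}\to+\infty$. Intersecting the events $\Omega_{\eta_1}$ along a sequence $\eta_1\downarrow\gamma$ finally produces a single full-probability event on which~\eqref{tresmauvais} holds for every $\eta>\gamma$.
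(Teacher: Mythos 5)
Your proof is correct, and the second half is a genuine improvement over what the paper actually provides. For~\eqref{badest} your argument coincides with the paper's: plug $\lambda=\delta^\gamma$ into Lemma~\ref{H0dbad}, observe that the choice of $\gamma$ exactly balances the $\delta^{-d}$ factor against the tail bound from~\eqref{conthyp}, and conclude. For~\eqref{tresmauvais} both you and the paper set $\lambda=\delta^{\eta_1}$ and invoke Borel--Cantelli along a subsequence, but the paper then simply says to ``argue as in the proof of Theorem~\ref{acpCR},'' i.e.\ interpolate using~\eqref{dvddepd} with $\delta_n=1/n$. Taken literally, that interpolation is inadequate here: the error term $\Pi_0\delta_{n+1}$ must be negligible against the target $\delta_n^{\eta_1}$, which fails whenever $\eta_1\geq 1$ (and since $\gamma=d/(\theta-d)$ can be arbitrarily large as $\theta\downarrow d$, this regime is unavoidable); moreover $\sum n^{-s}$ need not converge for $\eta_1$ near $\gamma$. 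Your monotonicity observation --- that $v^\delta(\cdot,\omega;0)\geq 0$ by~\eqref{cntl}, and hence for $\delta\leq\delta'$ the function $v^\delta$ is a bounded-below supersolution of the $\delta'$-equation, giving $v^{\delta'}\leq v^\delta$ by Proposition~\ref{appcmp} --- cleanly sidesteps both issues: a geometric subsequence makes Borel--Cantelli automatic for any $s>0$, and the interpolation between dyadic scales loses only a factor of $2$ rather than an additive constant. This monotonicity in $\delta$ is special to $p=0$ (it relies on the nonnegativity of $v^\delta$ coming from $w\equiv 0$ being admissible), which is precisely the point under consideration, so the trick is well matched to the problem. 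The concluding intersection over a sequence $\eta_1\downarrow\gamma$ is handled correctly. In short: same strategy, but your version fills a real gap in the paper's terse sketch.
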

\begin{proof}
Let $\lambda > 0$. Using the elementary inequality $-2t \leq \log(1-t)$ for $0 \leq t \leq \tfrac12$, we apply~\eqref{logbad} and~\eqref{conthyp} to obtain, for sufficiently small $\delta,\lambda> 0$,
\begin{equation*}\label{}
\log \Prob\left[ -\delta v^\delta(0,\cdot\,;0) \leq -\lambda \right] \geq C\delta^{-\d} \log \left( 1 - C\lambda^{\theta-\d} \right) \geq - C\delta^{-\d} \lambda^{\theta-\d}.
\end{equation*}
Setting $\lambda := \delta^\beta$ yields~\eqref{badest} for sufficiently small $\delta > 0$, while on the other hand setting $\lambda = \delta^\alpha$ for $\alpha> \beta$, applying the Borel-Cantelli lemma along a subsequence and arguing as in the proof of Theorem~\ref{acpCR} yields~\eqref{tresmauvais}. 
\end{proof}

It is clear from~\eqref{logbad} that, if $\Prob\left[ H(0,0,\cdot) > -\lambda \right]$ decays to $0$ very quickly as $\lambda \to 0$, then $-\delta v^\delta(0,\omega\,;0)$ will converge to zero very slowly. We conclude this section by exhibiting an example demonstrating an arbitrarily slow rate. 

\begin{example}
Let $\rho : [0,1] \to [0,\infty)$ be a given increasing continuous function with $\rho(0) = 0$. We will construct a Hamiltonian $H$, satisfying~\eqref{assum}, such that
\begin{equation}\label{catestrophe}
\liminf_{\delta \to 0} \frac{\delta v^\delta(0,\omega\,;p)}{\rho(\delta)} \geq 1 \quad \mbox{almost surely.}
\end{equation}
According to~\eqref{logbad}, it suffices to exhibit an $H$ for which
\begin{equation}\label{pasmal}
C \log \left( 1 - C \rho(\delta)^{-\d}\, \Prob\left[ H(0,0,\cdot) > -4\rho(\delta) \right]\right) \geq -\delta^{\d+2}.
\end{equation}
Indeed,~\eqref{logbad} and~\eqref{pasmal} yield
\begin{equation*}\label{}
\Prob\left[ \delta v^\delta(0,\cdot\,;0) \geq \rho(\delta) \right] \geq \exp(-C\delta^2) \geq 1 - C\delta^2
\end{equation*}
for $\delta > 0$ sufficiently small. Then the Borel-Cantelli lemma and an argument similar to the proof of Theorem~\ref{acpCR}, using~\eqref{dvddepd}, give~\eqref{catestrophe}.

It is easy to check that for~\eqref{pasmal}, it suffices to have
\begin{equation}\label{pasmal2}
\Prob\left[ H(0,0,\cdot) > -\sigma  \right] \leq c\sigma^\d \left( \rho^{-1}(\sigma) \right)^{\d+2} =: \hat \rho(\sigma),
\end{equation}
and so we need to construct a Hamiltonian with a very thin distribution near its maximum. This is quite simple. We may take, for example,
\begin{equation*}\label{}
H(p,y,\omega) : = \frac12 |p|^2 - \phi(V(y,\omega))
\end{equation*}
where $V$ is a Poissonian potential~(see for example~\cite{Szb}) and $\phi:[0,\infty) \to (0,\infty)$ is a continuous, decreasing function such that $\phi(t)$ decays very slowly to 0 as $t\to \infty$ (the precise rate of decay required can be explicitly calculated in terms of $\hat \rho$). We leave it to the interested reader to fill in the details.
\end{example}

\subsection{Uniform error estimates for the approximate cell problem}
The proofs of Theorems~\ref{EEH} and~\ref{CRH}, given in the next section, depend on the following extensions of Theorems~\ref{acpEE} and~\ref{acpCR} which hold uniformly for bounded $|p|$ and for $y$ in balls of radius $O\big( \delta^{-N} \big)$, for any $N\geq 1$. We omit the arguments, since the error estimates follow easily from an application of Theorem~\ref{acpEE} combined with~\eqref{dvdstat} and a routine covering argument, and then the convergence rates from the latter using the nearly same argument as in the proof of Theorem~\ref{acpCR}. 

\begin{prop}\label{ballaz}
For each $K > 0$, there exist constants $C,c > 0$, depending on $K$ and $H$, such that, for each $R> 0$, the following hold:
\begin{enumerate}

\item[(i)] For every $0 <  \lambda \leq 1$ and $0 < \delta \leq c\lambda$,
\begin{equation}\label{dvdEEaboveR}
\Prob\left[ \sup_{(y,p) \in B_R\times B_K} \left( -\delta v^\delta(y,\cdot\,;p) - \overline H(p)\right) \geq \lambda \right] \leq CR^\d  \delta^{-5\d} \exp\left( -\frac{\lambda^3}{C\delta} \right).
\end{equation}

\item[(ii)] If~\eqref{plushyp} holds and $\lambda,\delta >0$ satisfy $0< \delta \leq c \lambda$ and
\begin{equation} \label{lambangR}
\lambda \geq C \delta^{\frac16} | \log\delta|^{\frac{1}{4}},
\end{equation}
then
\begin{multline}\label{EEFqR}
\quad \ \ \Prob\left[ \inf_{(y,p) \in B_R\times B_K} \left( -\delta v^\delta (y,\cdot\, ; p) - \overline H(p)\right)  \leq - \lambda \right] \\ \leq C R^\d \delta^{-5\d } \exp\left( - \frac1{C} \left(  \frac{\lambda^3}{\delta} \wedge \frac{\lambda^{\d+\theta}}{\delta^\d}\right) \right).
\end{multline}
\end{enumerate}
\end{prop}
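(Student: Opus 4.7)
The plan is to derive both estimates from the pointwise versions in Theorem~\ref{acpEE} by a union bound over a $\delta$-net of $B_R\times B_K$, after establishing the deterministic Lipschitz regularity of the map $(y,p)\mapsto -\delta v^\delta(y,\omega\,;p) -\overline H(p)$ in both variables.

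First I would verify that $\overline H$ is Lipschitz on $B_K$. By~\eqref{vdpdepp}, together with~\eqref{reg} and the uniform boundedness of $K_p$ on $B_K$ (which follows from~\eqref{coer}), the maps $p\mapsto \delta v^\delta(y,\omega\,;p)$ are $L$-Lipschitz on $B_K$ for some $L=L(K,H)$, uniformly in $y,\omega,\delta$. Passing to the $\delta\to 0$ limit along the full-probability event of~\eqref{dvdlimit} yields $|\overline H(p)-\overline H(q)|\leq L|p-q|$ on $B_K$. Combined with~\eqref{vdlip2}, which controls the $y$-oscillation of $\delta v^\delta$ by $\delta K_p \leq C\delta$, this shows that the map $(y,p)\mapsto -\delta v^\delta(y,\omega\,;p) -\overline H(p)$ is deterministically Lipschitz on $B_R\times B_K$ with a constant $C=C(K,H)$ independent of $R$, $\omega$ and $\delta$.

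Next I would fix a $\delta$-net $\{(y_i,p_j)\}_{i,j}$ in $B_R\times B_K$ of cardinality at most $CR^\d\delta^{-2\d}$. Choosing $c=c(K,H)>0$ small enough that the oscillation of $(y,p)\mapsto -\delta v^\delta(y,\omega\,;p) -\overline H(p)$ over any $\delta$-ball is at most $\lambda/2$ whenever $\delta\leq c\lambda$, we obtain the deterministic inclusion
\begin{equation*}
\left\{ \sup_{B_R\times B_K}\left( -\delta v^\delta(y,\cdot\,;p) -\overline H(p)\right) \geq \lambda \right\} \subseteq \bigcup_{i,j} \left\{ -\delta v^\delta(y_i,\cdot\,;p_j) -\overline H(p_j) \geq \tfrac{\lambda}{2}\right\}.
\end{equation*}
By stationarity~\eqref{dvdstat}, each event on the right has the same probability as $\Prob[-\delta v^\delta(0,\cdot\,;p_j) - \overline H(p_j)\geq \lambda/2]$, which Theorem~\ref{acpEE}(i) bounds by $C\delta^{-3\d}\exp(-\lambda^3/(C\delta))$ (after adjusting $C$). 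A union bound over the $CR^\d\delta^{-2\d}$ net points then produces $CR^\d\delta^{-5\d}\exp(-\lambda^3/(C\delta))$, which is~\eqref{dvdEEaboveR}.

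For part~(ii), I would run the identical scheme with the inequality sign reversed, using Theorem~\ref{acpEE}(iii) in place of~(i); hypothesis~\eqref{lambangR} is exactly the condition needed to invoke~\eqref{EEFq} at each net point. No new probabilistic or pde input beyond what is already established should be required. The only delicate point is bookkeeping: verifying that $c=c(K,H)$ and $C=C(K,H)$ can be chosen independently of $R$, and that the $\delta^{-5\d}$ factor correctly records the three contributions of $(R/\delta)^\d$ (net in $y$), $\delta^{-\d}$ (net in $p$), and $\delta^{-3\d}$ (from the single-point estimates).
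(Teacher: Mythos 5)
Your proof is correct and follows exactly the route the paper sketches (the paper itself omits the argument, noting that it follows from Theorem~\ref{acpEE}, stationarity~\eqref{dvdstat}, and a routine covering argument). Your bookkeeping of the $\delta^{-5\d}$ factor is accurate, the Lipschitz bounds in $(y,p)$ you invoke are the right ones, and the absorption of $\lambda/2$ into the constant $C$ in the exponential is standard; the only small point worth noting is that for part~(ii) the covering estimate implicitly requires $\lambda$ bounded above (to ensure the $\delta$-ball oscillation is $\leq \lambda/2$ and to apply Theorem~\ref{acpEE}(iii), which needs $\lambda \leq c$), but for $\lambda$ larger than a fixed constant the event on the left of~\eqref{EEFqR} is deterministically empty by~\eqref{dvdsup}, so this is harmless.
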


\begin{prop} \label{acpCRU}
There exists an event $\Cl[O]{O-acpCRU}\in \mathcal F$ of full probability such that, for every $\omega\in \Cr{O-acpCRU}$, the following hold:
\begin{enumerate}
\item[(i)] For every $R,K> 0$ and $N\geq 1$,
\begin{equation}\label{aboveRas2}
\limsup_{\delta\to 0}  \sup_{y\in B_{(R/\delta)^N}} \ \sup_{p\in B_K} \frac{-\delta v^\delta(y,\omega\,;p) - \overline H(p)}{ \delta^{\frac13}|\log \delta|^{\frac13} } < \infty.
\end{equation}

\item[(ii)] If~\eqref{plushyp} holds and we define $\alpha$ and $\beta$ as in~\eqref{alphbeta}, then, for every $R,K> 0$ and $N\geq 1$,
\begin{equation}\label{belowRasF3}
\liminf_{\delta\to 0} \inf_{y\in B_{(R/\delta)^N}} \ \sup_{p\in B_K} \frac{-\delta v^\delta(y,\omega\,;p) - \overline H(p)}{ \delta^{\alpha} |\log \delta|^{\beta} } >  -\infty.
\end{equation}
\end{enumerate}

\end{prop}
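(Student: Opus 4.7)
The strategy is to derive Proposition~\ref{acpCRU} from Proposition~\ref{ballaz} via the Borel-Cantelli lemma along a subsequence $\delta_n \downarrow 0$, upgrade the result to all $\delta > 0$ using the interpolation estimate~\eqref{dvddepd}, and finally take a countable intersection of full-probability events to obtain uniformity in $R$, $K$, and $N$.

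The first task is Proposition~\ref{ballaz}. For part (i), I would build a net $\{(y_j, p_k)\} \subseteq B_R \times B_K$ of spacing $\eta = c\lambda$ for some $c = c(K,H) > 0$. By the Lipschitz bound~\eqref{vdlip2} (uniformly for $|p| \leq K$, using~\eqref{coer} and~\eqref{reg} to bound $K_p$) and the $p$-regularity estimate~\eqref{vdpdepp} combined with~\eqref{reg}, the map $(y,p) \mapsto \delta v^\delta(y,\omega\,;p)$ is Lipschitz in $(y,p)$ with constant depending only on $K$ and $H$; since $\overline H$ is continuous on $B_K$, the same holds for $(y,p) \mapsto -\delta v^\delta(y,\omega\,;p) - \overline H(p)$. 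The cardinality of the net is at most $C(R/\lambda)^d (K/\lambda)^d$, and the event on the left of~\eqref{dvdEEaboveR} is contained in the union over $(j,k)$ of $\{-\delta v^\delta(y_j, \cdot\,; p_k) - \overline H(p_k) \geq \lambda/2\}$. By stationarity~\eqref{dvdstat} each of these has the same probability as the corresponding event at $y = 0$, so Theorem~\ref{acpEE}(i) bounds each one by $C\delta^{-3d} \exp(-\lambda^3/(C\delta))$. Using $\delta \leq c\lambda$ to absorb the factors $\lambda^{-2d}$ into $\delta^{-2d}$ yields the prefactor $\delta^{-5d}$ in~\eqref{dvdEEaboveR}. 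Part (ii) is entirely analogous, invoking~\eqref{EEFq} in place of~\eqref{dvdEEabove}.

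Next, fix positive integers $R$, $K$, $N$, set $\delta_n := n^{-1}$, and choose $\lambda_n := A\delta_n^{1/3}|\log \delta_n|^{1/3}$ with $A=A(R,K,N,H) > 0$ to be fixed. Applying~\eqref{dvdEEaboveR} with ball of radius $(R/\delta_n)^N$ gives
\begin{equation*}
\Prob\!\left[ \sup_{(y,p) \in B_{(R/\delta_n)^N} \times B_K}\!\!\!\!\!\big( -\delta_n v^{\delta_n}(y,\cdot\,;p) - \overline H(p) \big) \geq \lambda_n \right] \leq C R^{Nd} \delta_n^{-(N+5)d} \exp\!\left( -\frac{A^3 |\log \delta_n|}{C} \right).
\end{equation*}
Choosing $A$ large enough that $A^3/C > (N+5)d + 2$ makes the right-hand side summable, so by Borel-Cantelli there is a full-probability event $\Omega_{R,K,N}$ on which only finitely many of these events occur. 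To interpolate between $\delta_{n+1}$ and $\delta_n$, the estimate~\eqref{dvddepd} gives, uniformly in $y$ and $|p| \leq K$,
\begin{equation*}
\big| \eta v^\eta(y,\omega\,;p) - \delta_n v^{\delta_n}(y,\omega\,;p) \big| \leq \Pi_p\!\left( 1 - \frac{\eta}{\delta_n} \right) \leq C\delta_n \qquad \mbox{for} \ \delta_{n+1} \leq \eta < \delta_n,
\end{equation*}
which is negligible compared to $\lambda_n$ as $n \to \infty$. Since $B_{(R/\eta)^N} \subseteq B_{(2R/\delta_n)^N}$ and $|\log \eta| \asymp |\log \delta_n|$ in this range, the bound~\eqref{aboveRas2} holds on $\Omega_{R,K,N}$. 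Setting $\Cr{O-acpCRU} := \bigcap_{R,K,N \in \mathbb N^*} \Omega_{R,K,N}$ yields~\eqref{aboveRas2} for all positive integer parameters, and monotonicity in $R$, $K$, $N$ then delivers the result for all $R,K>0$ and $N \geq 1$. Part (ii) is proved identically with $\lambda_n := A\delta_n^\alpha |\log \delta_n|^\beta$ and~\eqref{EEFqR} in place of~\eqref{dvdEEaboveR}.

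The only delicate point is verifying that the polynomial blow-up of the covering factor, $\delta_n^{-(N+5)d}$, is absorbed by the exponential decay $\exp(-A^3|\log \delta_n|/C) = \delta_n^{A^3/C}$ for $A$ chosen depending on $N$. This works precisely because Theorem~\ref{acpEE} provides exponential error estimates while the cost of uniformizing in $(y,p)$ and enlarging the domain to $(R/\delta)^N$ is only polynomial in $\delta^{-1}$; no genuinely new ideas beyond those of Theorem~\ref{acpCR} are needed.
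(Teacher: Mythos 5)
Your proposal is correct and follows essentially the same route the paper has in mind: a covering argument combined with stationarity and Theorem~\ref{acpEE} to get the uniform error estimates of Proposition~\ref{ballaz}, and then the Borel-Cantelli/interpolation scheme from the proof of Theorem~\ref{acpCR} (subsequence $\delta_n = n^{-1}$, estimate~\eqref{dvddepd} to fill the gaps, countable intersection over integer $R,K,N$). Your closing remark about the polynomial covering cost $\delta^{-(N+5)d}$ being absorbed by the exponential concentration in Theorem~\ref{acpEE} is exactly the point that makes this routine.
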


\section{Error estimates for homogenization}  \label{EEue}
 
We now present the proofs of Theorems~\ref{EEH} and~\ref{CRH}. The main step is to precisely quantify how the $\delta v^\delta$'s control the $u^\ep$'s, so that we may apply the results of the previous section to obtain error estimates and a rate of convergence for the latter.

For each $\ep,T > 0$, let $u^\ep=u^\ep(\cdot,\cdot,\omega), u=u(\cdot,\cdot) \in \BUC(\Rd\times [0,T])$ be the unique solutions of 
\begin{equation*}
\left\{ 
\begin{aligned}
& u^\ep_t + H\left(Du^\ep,\frac x\ep,\omega \right) = 0 & \mbox{in} & \ \Rd\times(0,\infty), \\
& u^\ep(\cdot,0,\omega) =   u_0,
\end{aligned} 
\right. \qquad \mbox{and} \qquad \left\{ 
\begin{aligned}
& u_t + \overline H\left(Du\right) = 0 & \mbox{in} & \ \Rd\times(0,\infty), \\
& u(\cdot,0) = u_0,
\end{aligned}
\right.
\end{equation*}
where $u_0\in C^{0,1}(\Rd)$ is the given initial data with $\| u_0\|_{C^{0,1}(\Rd)} \leq K$. 

It turns out (see, e.g.,~\cite{Ba}) that there exists a constant $L>0$, depending on $K$ and the assumptions for $H$, such that, for all $\ep > 0$, $x,y\in \Rd$, $s,t\geq 0$ and $\omega\in \Omega$,
\begin{equation}\label{ueplip}
 |u^\ep(x,t,\omega) - u^\ep(y,s,\omega)| \leq L\left( |x-y| + |s-t| \right).
\end{equation}
and 
\begin{equation}\label{ulip}
|u(x,t) - u(y,s)| \leq L\left( |x-y| + |s-t| \right).
\end{equation}
These estimates are derived principally from the coercivity of $H$. Recall that, due to~\eqref{dvdsup} and~\eqref{dvdheureq}, the effective Hamiltonian shares the same rate of coercivity assumed  in~\eqref{coer}. It also follows easily from this that for each $\ep > 0$, $x\in \Rd$, $0 \leq t\leq T$ and $\omega\in \Omega$,
\begin{equation}\label{uesup}
|u(x,t)| + |u^\ep(x,t,\omega)| \leq K + LT \leq C(1+T).
\end{equation}

The important link between the $\delta v^\delta$'s and the $u^\ep$'s is summarized in the following lemma. Then Theorems~\ref{EEH} and~\ref{CRH} follow relatively easily from it and Propositions~\ref{ballaz} and~\ref{acpCRU}. The basic idea is that the event that  $|u^\ep(x,t,\omega) - u(x,t)|$ is large should  only be observed if $|\delta v^\delta + \overline H(p)|$ is also large. The proof, which is rather technical and lengthy,  follows along the lines of~\cite{ICD} with necessary modifications to deal with to the lack of uniform estimates on the difference between $-\delta v^\delta$ and $\overline H(p)$.
It essentially consists of quantifying the perturbed test function method~\cite{E2} to argue that, if $-\delta v^\delta$ is close to $\overline H(p)$, then, up to an appropriate error, it properly captures the oscillations of $u^\ep$, that is, 
\begin{equation}\label{}
u(x,t) \approx u^\ep(x,t,\omega) - \ep v^\delta\left( \frac x\ep,\omega \,; Du(x,t) \right),\qquad \ep \ll \delta.
\end{equation}
Rather than apply the comparison principle, we must use the proof of it, following~\cite{ICD}. The difficulty is that, since~$u$ is not in general~$C^1$, we cannot insert~$p=Du(x,t)$ into~$v^\delta(x,\omega\,;p)$. There are other technical difficulties (the presence of \emph{three} nonsmooth functions, the fact that~$v^\delta$ is not smooth in~$p$) which we handle by the standard viscosity theoretic technique of doubling (or rather tripling) the variables. 

Throughout this section, we fix $K>0$, assume $H$ satisfies~\eqref{assum}, and let $C$ and $c$ denote positive constants which may vary in each occurrence and depend only on $K$ and $H$.

\begin{lem}\label{incluHH}
There exists $C > 1$ such that, for every $\lambda, T, \ep, \delta > 0$ satisfying
\begin{equation}\label{HHconds}
\lambda \leq 1, \quad T\geq 1, \quad \ep \leq \lambda T \quad \mbox{and}  \quad \frac{ C \ep}{ T\lambda^2} \leq \delta 
\end{equation}
and each initial datum $u_0 \in C^{0,1}(\Rd)$ with $\| u_0 \|_{C^{0,1}(\Rd)} \leq K$, we have
\begin{multline} \label{incluHH-1}
\left\{ \omega\in \Omega \, : \, \inf_{x\in B_T} \inf_{0 < t \leq T} \left( u^\ep(x,t,\omega) - u(x,t) \right) <-  C \lambda T \right\} \\
\subseteq \left\{ \omega\in \Omega \, : \,  \sup_{|y| \leq C\delta \lambda T^2/\ep^2  } \ \sup_{|p| \leq C} \left( -\delta v^\delta(y,\omega\,;p) - \overline H(p) \right) \geq \lambda  \right\},
\end{multline}
and
\begin{multline}\label{incluHH-2}
\left\{ \omega \in \Omega \, : \, \sup_{x\in B_T} \sup_{0 \leq t \leq T} \left( u^\ep(x,t,\omega) - u(x,t) \right) > C \lambda T  \right\} \\ \subseteq \left\{ \omega \in \Omega \, : \, \inf_{|y| \leq C\delta \lambda T^2/\ep^2  } \ \inf_{|p| \leq C} \left( -\delta v^\delta (y,\omega\,;p) - \overline H(p)\right)\leq - \lambda \right\}.
\end{multline}

\end{lem}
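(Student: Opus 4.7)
I will prove \eqref{incluHH-1} by contraposition; \eqref{incluHH-2} follows by a symmetric argument with the roles of sub- and supersolutions reversed. Fix $\omega$ outside the right-hand event of \eqref{incluHH-1}:
\begin{equation*}
-\delta v^\delta(y, \omega; p) - \overline H(p) \leq \lambda \quad \text{for every } |y| \leq C_*\delta\lambda T^2/\ep^2 \text{ and } |p| \leq C_*,
\end{equation*}
with $C_*$ to be selected large enough, depending only on $K$ and $H$. The goal is to deduce that $u^\ep(x, t, \omega) \geq u(x, t) - C\lambda T$ throughout $B_T \times (0, T]$.

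The approach is Evans's perturbed test function method, made quantitative in the manner of Capuzzo-Dolcetta--Ishii~\cite{ICD}. The heuristic is that, for each $p$, the function $\psi_p^\delta(x, t) := u(x, t) + \ep v^\delta(x/\ep, \omega; p)$ is an approximate subsolution of the $\ep$-equation, because formally at any point where $Du = p$ one has
\begin{equation*}
(\psi_p^\delta)_t + H\!\left(D\psi_p^\delta, x/\ep, \omega\right) = u_t + H(p + Dv^\delta, x/\ep, \omega) = -\overline H(p) - \delta v^\delta(x/\ep, \omega; p) \leq \lambda.
\end{equation*}
A comparison with $u^\ep$, together with the uniform estimate $\|\ep v^\delta\|_\infty \leq C\ep/\delta$ from \eqref{dvdsup}, then yields $u \leq u^\ep + C(\lambda T + \ep/\delta)$, which is controlled by $C\lambda T$ under the hypothesis $C\ep/(T\lambda^2) \leq \delta$ of \eqref{HHconds}.

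To implement this rigorously despite the nonsmoothness of $u$ and the fact that $v^\delta$ is only Lipschitz in $p$, I use the standard doubling-of-variables technique. Introduce parameters $\eta, \gamma > 0$ to be optimized, and consider a function of the form
\begin{equation*}
\Phi(x, y, t, s) := u(x, t) - u^\ep(y, s, \omega) + \ep v^\delta\!\left(\tfrac{y}{\ep}, \omega; \bar p_{xy}\right) + \bar p_{xy}\cdot(x-y) - \tfrac{|x-y|^2}{2\eta} - \tfrac{(t-s)^2}{2\eta} - \gamma\!\left(|x|^2 + |y|^2 + t^2 + s^2\right),
\end{equation*}
where $\bar p_{xy}$ is a smooth truncation of $(x-y)/\eta$ to the ball $B_{2L}$, ensuring the $p$-argument of $v^\delta$ remains bounded. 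The $\gamma$-penalty, together with $\|v^\delta\|_\infty \leq C/\delta$, guarantees that $\Phi$ attains its supremum at some interior point $(\bar x, \bar y, \bar t, \bar s)$. Standard estimates at a maximum give $\bar p := (\bar x - \bar y)/\eta$ satisfying $|\bar p| \leq C$ (using \eqref{ueplip}--\eqref{ulip} and the Lipschitz bound on $v^\delta$ in $p$ from Proposition~\ref{vdeltas}(iii)), and $|\bar x|, |\bar y| \leq C/\gamma$, so choosing $\gamma$ of order $\ep/(\delta\lambda T^2)$ places $\bar y/\ep$ inside the range required by the hypothesis; one also has $|\bar t - \bar s| = O(\sqrt{\eta})$. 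Applying the viscosity subsolution inequality for $u$ at $(\bar x, \bar t)$ and the supersolution inequality for $u^\ep$ at $(\bar y, \bar s)$ with the corresponding frozen test functions, and adding the cell-problem equation satisfied by $v^\delta(\cdot, \omega; \bar p)$ at $\bar y/\ep$, the main terms cancel and yield
\begin{equation*}
-\delta v^\delta(\bar y/\ep, \omega; \bar p) - \overline H(\bar p) \geq -C\gamma T - C\ep/(\eta\delta).
\end{equation*}
The hypothesis bounds the left-hand side above by $\lambda$, which translates into an upper bound on $\sup\Phi$. Evaluating $\Phi(x_0, x_0, t_0, t_0)$ for arbitrary $(x_0, t_0) \in B_T \times (0, T]$ and comparing, I obtain
\begin{equation*}
u(x_0, t_0) - u^\ep(x_0, t_0, \omega) \leq C\!\left(\lambda T + \gamma T^2 + \eta + \ep/\delta\right).
\end{equation*}
With the choices $\eta \asymp \ep/\delta$ and $\gamma \asymp \ep/(\delta\lambda T^2)$, and using \eqref{HHconds}, all four terms are bounded by $C\lambda T$, completing the proof of \eqref{incluHH-1}.

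The principal technical difficulty is the viscosity combination step: the $p$-Lipschitz constant of $v^\delta$ in Proposition~\ref{vdeltas}(iii) is only $O(1/\delta)$, so the cross-terms produced by differentiating $v^\delta$ in the test function contribute $O(\ep/(\eta\delta))$ to the error. Balancing this against the main $\lambda$-error is precisely what dictates the lower bound on $\delta$ in \eqref{HHconds}, and this optimization, propagated through the rest of the analysis, governs the exponents that appear in Theorem~\ref{EEH}.
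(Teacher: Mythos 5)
Your approach is the same in spirit as the paper's: a quantified perturbed test function argument in the style of Capuzzo-Dolcetta--Ishii, based on maximizing a doubled auxiliary functional involving $u$, $u^\ep$ and $\ep v^\delta(\cdot/\ep,\omega;p)$ with a truncated $p=p(x,y)\approx (x-y)/\alpha$. But there is a genuine gap precisely at the step you summarize as ``applying the \dots supersolution inequality for $u^\ep$ at $(\bar y,\bar s)$ \dots and adding the cell-problem equation satisfied by $v^\delta$.'' In your $\Phi$, the variable $y$ enters \emph{both} $u^\ep(y,s,\omega)$ and $\ep v^\delta(y/\ep,\omega;\bar p)$, and both functions are merely Lipschitz. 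So when you freeze $(\bar x,\bar t)$ and let $(y,s)$ vary, the candidate test function touching $u^\ep$ from below contains the term $\ep v^\delta(y/\ep,\omega;\bar p)$, which is not $C^1$; you cannot read off the viscosity supersolution inequality for $u^\ep$, nor independently ``add'' the equation for $v^\delta$, because the two are entangled in the same variable. This is exactly the obstruction the paper flags and resolves by introducing a \emph{second} doubling: a further auxiliary function $\Psi(x,z,t)$ with an extra variable $z$ and penalty $|z-x|^2/2\sigma$, so that $u^\ep$ is tested in $(x,t)$ (holding $z$ fixed) and $v^\delta$ in $z$ (holding $(x,t)$ fixed), followed by $\sigma\to 0$. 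Your proposal omits this iterated/tripled step, and without it the central differential inequality does not follow.

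Two secondary issues. First, you do not treat the boundary alternative $t_0=0$ or $s_0=0$; the paper handles this by distinguishing the event $E$ and estimating $M(\omega)$ directly there via the Lipschitz bounds~\eqref{ueplip}--\eqref{ulip} and the bound $\|\ep v^\delta\|_\infty\le C\ep/\delta$. Second, for the truncation of $(x-y)/\alpha$ you propose a smooth truncation to $B_{2L}$; the paper instead uses the radial truncation $\zeta$, which is \emph{constant} outside $B_L$, and that feature is actually used to show that the truncation is inactive at the maximum (i.e.\ $|x_0-y_0|\le L\alpha$). With a smooth truncation that is not eventually constant, the step establishing $|\bar p|\le C$ and the identity $\bar p_{x_0y_0}=(x_0-y_0)/\alpha$ needs a separate argument, which you do not supply. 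These omissions, especially the missing tripling, mean the proof as written does not close.
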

\begin{proof}
Since the arguments for~\eqref{incluHH-1} and~\eqref{incluHH-2} are nearly identical, we prove only~\eqref{incluHH-2}. 

\emph{Step 1.} We setup the argument. With $L>0$ as in~\eqref{ueplip} and~\eqref{ulip}, define $\zeta:\Rd\to\Rd$ by
\begin{equation*}\label{}
\zeta(x):= \frac{L\wedge |x|}{|x|} x,
\end{equation*}
and notice that $\zeta(x)$ has the same direction as $x$ and
\begin{equation} \label{ss}
|\zeta(x)| = L\wedge |x| \quad \mbox{and} \quad |\zeta(x) - \zeta(y)| \leq |x-y|.
\end{equation}
Fix $T, \lambda, \ep, \delta > 0$ satisfying
\begin{equation}\label{HHconds2}
\lambda \leq 1, \quad T\geq 1, \quad \ep \leq \lambda T \quad \mbox{and}  \quad \frac{A \ep}{ T\lambda^2} \leq \delta 
\end{equation}
where the constant $A \geq 2$ is selected below. Also fix parameters $\alpha,\gamma > 0$ to be chosen below and consider the auxiliary function $\Phi:\Rd \times \Rd \times [0,T] \times [0,T] \times \Omega \to \R$ given by
\begin{multline}\label{auxfun}
\Phi(x,y,t,s,\omega):= u^\ep ( x,t,\omega) - u(y,s) - \ep v^\delta \left( \frac x\ep , \omega\, ; \zeta\left( \frac{x-y}{\alpha}  \right) \right) -  \frac1{2\alpha}|x-y|^2  -\frac1{2\ep} (t-s)^2 \\- \lambda s -\gamma  \left(1+|x|^2 \right)^{\frac12} + \gamma.
\end{multline}
Using~\eqref{dvdsup},~\eqref{uesup} and~\eqref{ss}, we have, for each $(x,y,t,s,\omega) \in \Rd \times \Rd \times [0,T] \times [0,T] \times \Omega$,
\begin{equation}\label{crudeP}
|\Phi(x,y,t,s,\omega)| \leq C(1+T) + C \ep\delta^{-1}  - \frac{1}{2\alpha}|x-y|^2 - \frac{1}{2\ep} (t-s)^2 -\lambda s- \gamma \left(1+|x|^2 \right)^{\frac12} + \gamma.
\end{equation}
It follows that, for each $\omega\in \Omega$, the function $\Phi(\cdot,\omega)$ attains its global maximum at some point of $\Rd \times \Rd\times [0,T] \times [0,T]$. Set
\begin{equation*}\label{}
M(\omega):=\max_{\Rd \times \Rd\times [0,T] \times [0,T]} \Phi(\cdot,\omega)
\end{equation*}
and denote by $E\in \mathcal F$ the event that the global maximum of $\Phi(\cdot,\omega)$ is attained by some point $(x,y,t,s)$ with either $t=0$ or $s=0$; that is,
\begin{equation*}\label{}
E:=\left\{ \omega\in \Omega\,: \, M(\omega) = \sup_{(x,y,t,s) \in \Rd\times\Rd \times [0,T]\times [0,T]}  \Phi(x,y,0,s,\omega) \vee \Phi(x,y,t,0,\omega) \right\}
\end{equation*}
Note that, for notational simplicity, we omit the dependence of $E$ on $\alpha$ and $\ep$. 

The lemma follows from the fact that, under an appropriate choice of the parameters,
\begin{equation}\label{clmone}
\sup_{\omega\in E} \sup_{x\in B_T} \sup_{0 < t \leq T} \left( u^\ep(x,t,\omega) - u(x,t)\right)  \leq C \lambda T. 
\end{equation}
and, for some $R \leq C\delta \lambda T^2/ \ep^2$, 
\begin{equation}\label{clmtwo}
\Omega \setminus E \subseteq \left\{ \omega \in \Omega \, : \, \inf_{(y,p) \in B_{R} \times B_{L}} \left( -\delta v^\delta (y,\omega\,;p) - \overline H(p)\right)\leq - \frac \lambda2 \right\}.
\end{equation}
Proving~\eqref{clmone} is relatively easy, while~\eqref{clmtwo} involves a more involved comparison argument. 

\emph{Step 2.} We prepare for the proof of~\eqref{clmtwo} by recording two elementary estimates that necessarily hold at any global maximum point of $\Phi(\cdot,\omega)$ and for any $\omega\in \Omega$. For the moment, we fix $\omega\in \Omega$ and a point $(x_0, y_0, t_0, s_0) \in \Rd \times \Rd\times [0,T] \times [0,T]$ satisfying
\begin{equation}\label{bgpt}
\Phi( x_0, y_0, t_0, s_0,\omega) = M(\omega).
\end{equation}
By~\eqref{bgpt} and~\eqref{crudeP}, we have
\begin{equation*} \label{}
\Phi( x_0, y_0, t_0, s_0,\omega) \geq \Phi(0,0,0,0,\omega) \geq -C(1+T) - \frac{C\ep}{\delta}.
\end{equation*}
Substituting the definition of $\Phi$ and rearranging, using that $1+\ep \delta^{-1} \leq 2T$ by~\eqref{HHconds2}, we get
\begin{equation}\label{easybnds}
\gamma |x_0| + \frac{1}{2\ep} (t_0-s_0)^2 \leq C(1+T) + \frac{C\ep}{\delta}\leq CT.
\end{equation}
Recall that a Lipschitz function with Lipschitz constant $k$ cannot be touched from below (or above) by a $C^1$ function $\varphi$ unless $|D\varphi| \leq k$ at the touching point. We use this observation to deduce that, if $s_0\neq 0$, then by~\eqref{ulip} and the fact that $s\mapsto u(y,s) + \lambda s + (s-t)^2/2\ep$ has a local minimum at $s=s_0$, we have
\begin{equation}\label{s0t0}
|s_0-t_0| \leq (L +\lambda) \ep \leq (L+1)\ep.
\end{equation}
The inequality~\eqref{s0t0} is also satisfied for a similar reason if $t_0\neq 0$, and trivially if $s_0=t_0=0$, so it holds without restriction. We also use a similar idea to get that 
\begin{equation}\label{liplam}
|x_0-y_0| \leq L\alpha. 
\end{equation}
If not, then $y\mapsto \zeta( (x_0-y) / \alpha)$ is constant in a neighborhood of $y_0$ and we obtain from~\eqref{bgpt} that
\begin{equation} \label{e.touchu}
y \mapsto u(y,s_0) +  \frac1{2\alpha}|x_0-y|^2 \quad \mbox{has a local minimum at} \ y = y_0.
\end{equation}
In view of~\eqref{ulip}, we deduce from~\eqref{e.touchu} that $\alpha^{-1}|x_0-y_0| \leq L$. So we see that $|x_0-y_0| \leq L \alpha$ holds anyway, in contradiction to the assumption that it did not. Thus we obtain~\eqref{liplam} and, in particular,
\begin{equation}\label{zetafix}
\zeta\left( \frac{x_0-y_0}{\alpha} \right) = \frac{x_0-y_0}{\alpha}.
\end{equation}

We now begin the argument for~\eqref{clmtwo}, following the classical proof of the comparison principle for viscosity solutions and~\cite{ICD}. For the next several steps, we work with fixed $\omega\in \Omega\setminus E$ and $(x_0, y_0, t_0, s_0) \in \Rd \times \Rd\times (0,T] \times (0,T]$ such that~\eqref{bgpt} holds. 

\emph{Step 3.} We give the first part of the proof of~\eqref{clmtwo}. Here we fix $(x,t)=(x_0,t_0)$, allow $(y,s)$ to vary and use the equation for $u$. The goal is to derive~\eqref{goal1}, below.

From~\eqref{bgpt}, we see that 
\begin{multline}\label{frtp}
(y,s) \mapsto u(y,s) + \ep v^\delta \left( \frac {x_0}\ep , \omega\, ; \zeta\left( \frac{x_0-y}{\alpha}  \right)  \right) +  \frac1{2\alpha}|x_0-y|^2 +\frac1{2\ep} (t_0-s)^2 + \lambda s \\ \mbox{has a local minimum at} \ (y,s) = (y_0,s_0).
\end{multline}
According to~\eqref{vdpdepp} and~\eqref{ss},
\begin{multline}\label{lstln}
\ep v^\delta \left( \frac {x_0}\ep , \omega\, ; \zeta\left( \frac{x_0-y}{\alpha}  \right)  \right) - \ep v^\delta \left( \frac {x_0}\ep , \omega\, ; \zeta\left( \frac{x_0-y_0}{\alpha}  \right)  \right) \\ \leq \frac{C\ep}{\delta} \left| \zeta\left( \frac{x_0-y}{\alpha}  \right) - \zeta\left( \frac{x_0-y_0}{\alpha}  \right) \right| \leq \frac{C\ep|y-y_0|}{\delta\alpha}.
\end{multline}
Using~\eqref{frtp}, \eqref{lstln}, the fact that equality holds in~\eqref{lstln} at $y=y_0$ and by enlarging $C> 0$ slightly, we obtain that
\begin{multline}\label{frto}
(y,s) \mapsto u(y,s) +  \frac1{2\alpha}|x_0-y|^2 +\frac1{2\ep} (t_0-s)^2 + \lambda s + C\frac{\ep}{\delta\alpha} |y-y_0| \\ \mbox{has a strict local minimum at} \ (y,s) = (y_0,s_0).
\end{multline}
It follows that, for all sufficiently small $\beta > 0$, there exist  $(y_\beta,s_\beta)\in \Rd\times[0,T]$ such that $(y_\beta,s_\beta) \rightarrow (y_0,s_0)$ as $\beta \to 0$ and
\begin{multline}\label{frto2}
(y,s) \mapsto u(y,s) +  \frac1{2\alpha}|x_0-y|^2 +\frac1{2\ep} (t_0-s)^2 + \lambda s + C\frac{\ep}{\delta\alpha}  \left(\beta+|y-y_0|^2 \right)^{\frac12} \\ \mbox{has a local minimum at} \ (y,s) = (y_\beta,s_\beta).
\end{multline}
Using the equation for $u$, we obtain
\begin{equation*}\label{}
-\lambda + \frac1\ep (t_0 - s_\beta) + \overline H\left( \frac{x_0 - y_\beta}{\alpha} - Q_\beta \right) \geq 0,
\end{equation*}
where $Q_\beta := C\frac{\ep}{\delta\alpha}  \left( \beta+|y_\beta-y_0|^2 \right)^{-\frac12} (y_\beta-y_0)$. Since $|Q_\beta| \leq C\ep/\delta\alpha$, the Lipschitz continuity of $\overline H$ yields
\begin{equation*}\label{}
-\lambda + \frac1\ep (t_0 - s_\beta) + \overline H\left( \frac{x_0 - y_\beta}{\alpha}  \right) \geq - C\frac{\ep}{\delta\alpha},
\end{equation*}
and, after letting $\beta \to 0$, we find
\begin{equation}\label{goal1}
-\lambda + \frac1\ep (t_0-s_0) + \overline H\left( \frac{x_0-y_0}{\alpha} \right) \geq - C\frac{\ep}{\delta\alpha}. 
\end{equation}

\emph{Step 4.}
We give the second step in the proof of~\eqref{clmtwo}. Here we fix $(y,s) = (y_0,s_0)$ and let $(x,t)$ vary, in order to use the equations for $u^\ep$ and $v^\delta$. The intermediate goal is to prove~\eqref{goal2}, below, to complement~\eqref{goal1}. This leads to some analysis that is a little more complicated that what we have just performed above to produce~\eqref{goal1}, due to the fact that we have two (in general, nonsmooth) functions $u^\ep$ and $v^\delta$ which depend on the variable $x$. This sort of technical difficulty is typically handled using an ``iterated" perturbed test function argument (an idea introduced in~\cite{E2}). We need to quantify this idea, and so, following~\cite{ICD}, we fix another parameter $\sigma > 0$ (which will be sent to zero shortly) and introduce a second auxiliary function $\Psi:\Rd\times \Rd \times [0,T] \to \R$ defined by
\begin{multline}\label{auxfun2}
\Psi(x,z,t):= u^\ep ( x,t,\omega) - \ep v^\delta \left( \frac z\ep , \omega\, ; \zeta\left( \frac{z-y_0}{\alpha}\right) \right) -  \frac1{2\alpha}|x-y_0|^2  -\frac1{2\ep} (t-s_0)^2 \\ - \gamma\left(1+|x|^2 \right)^{\frac12} - \frac{1}{2\sigma} |z-x|^2 - \gamma \left(1 + |x-x_0|^2 \right)^{\frac12}-\frac\gamma2(t-t_0)^2.
\end{multline}
The last two terms in~\eqref{auxfun2} provide some strictness and therefore, by~\eqref{bgpt}, there exist points $(x_\sigma,z_\sigma,t_\sigma) \in \Rd \times\Rd \times [0,T]$ such that $(x_\sigma,z_\sigma,t_\sigma)\to (x_0,x_0,t_0)$ as $\sigma \to 0$ and
\begin{equation*}\label{}
\Psi(x_\sigma,z_\sigma,t_\sigma) = \sup_{\Rd\times\Rd\times[0,T]} \Psi. 
\end{equation*}
Freezing $z=z_\sigma$ and letting $(x,t)$ vary, we have
\begin{multline*}\label{}
(x,t) \mapsto u^\ep(x,t) -  \frac1{2\alpha}|x-y_0|^2  -\frac1{2\ep} (t-s_0)^2 - \gamma\left(1+|x|^2 \right)^{\frac12} - \frac{1}{2\sigma} |z_\sigma-x|^2\\ - \gamma \left(1 + |x-x_0|^2 \right)^{\frac12} -\frac\gamma2(t-t_0)^2\qquad 
 \mbox{has a local maximum at} \ (x,t) = (x_\sigma,t_\sigma).
\end{multline*}
It follows from the equation for $u^\ep$ that
\begin{equation*}\label{}
\frac1\ep (t_\sigma-s_0)+\gamma(t_\sigma-t_0) + H\left( \frac{x_\sigma-y_0}{\alpha} + \frac{x_\sigma-z_\sigma}{\sigma} + P_\sigma,\frac{x_\sigma}{\ep} ,\omega\right) \leq 0,
\end{equation*}
where $P_\sigma:= \gamma \left( 1+|x_\sigma|^2 \right)^{-\frac12} x_\sigma + \gamma \left( 1+|x_\sigma-x_0|^2 \right)^{-\frac12} (x_\sigma-x_0)$. Since $|P_\sigma| \leq C\gamma$, we use~\eqref{reg} to obtain
\begin{equation}\label{dbl1}
\frac1\ep (t_\sigma-s_0) +\gamma(t_\sigma-t_0)+ H\left( \frac{x_\sigma-y_0}{\alpha} + \frac{x_\sigma-z_\sigma}{\sigma}, \frac{x_\sigma}{\ep} ,\omega\right) \leq C \gamma. 
\end{equation}
On the other hand, freezing $(x,t) = (x_\sigma,t_\sigma)$ and letting $z$ vary, we get
\begin{equation*}\label{}
z \mapsto \ep v^\delta \left( \frac z\ep , \omega\, ; \zeta\left( \frac{z-y_0}{\alpha}\right) \right) +  \frac{1}{2\sigma} |z-x_\sigma|^2 \quad \mbox{has a local minimum at} \ z=z_\sigma. 
\end{equation*}
According to~\eqref{ss},
\begin{equation}\label{zetamv}
\left| \zeta\left( \frac{z-y_0}{\alpha} \right) - \zeta\left( \frac{z_{\sigma}-y_0}{\alpha} \right) \right| \leq \frac{C}{\alpha}|z-z_\sigma|.
\end{equation}
with equality at $z=z_\sigma$. Hence by using~\eqref{vdpdepp} again and enlarging $C$ slightly, we obtain 
\begin{multline*}\label{}
z \mapsto \ep v^\delta \left( \frac z\ep , \omega\, ; \zeta\left( \frac{z_\sigma-y_0}{\alpha} \right) \right) +  \frac{1}{2\sigma} |z-x_\sigma|^2 + C\frac{\ep}{\delta\alpha} |z-z_\sigma| \\ \mbox{has a strict local minimum at} \ z=z_\sigma. 
\end{multline*}
Thus we can find points $z_{\sigma,\kappa} \to z_\sigma$ as $\kappa \to 0$, such that 
\begin{multline*}\label{}
z \mapsto \ep v^\delta \left( \frac z\ep , \omega\, ; \zeta\left( \frac{z_\sigma-y_0}{\alpha} \right) \right) +  \frac{1}{2\sigma} |z-x_\sigma|^2 + C\frac{\ep}{\delta\alpha} \left(\kappa+ |z-z_\sigma|^2 \right)^{\frac12} \\ \mbox{has a local minimum at} \ z=z_{\sigma,\kappa}. 
\end{multline*}
Using the equation for $v^\delta$, we discover that
\begin{equation*}\label{}
\delta v^\delta \left( \frac{z_{\sigma,\kappa}}{\ep} , \omega\,;\zeta\left( \frac{z_\sigma-y_0}{\alpha}\right) \right) + H\left( \zeta\left(\frac{z_\sigma-y_0}{\alpha}\right) + \frac{x_\sigma - z_{\sigma,\kappa}}{\sigma} - P_{\sigma,\kappa} , \frac{z_{\sigma,\kappa}}{\ep} ,\omega \right) \geq 0,
\end{equation*}
where $P_{\sigma,\kappa} := C\frac{\ep}{\delta\alpha} \left( \kappa + |z_{\sigma,\kappa}-z_{\sigma}|^2 \right)^{-\frac12} (z_{\sigma,\kappa}-z_{\sigma})$. Since $|P_{\sigma,\kappa}| \leq C\ep / \delta\alpha$, we use~\eqref{reg} to get
\begin{equation*}\label{}
\delta v^\delta \left( \frac{z_{\sigma,\kappa}}{\ep} , \omega\,; \zeta\left( \frac{z_\sigma-y_0}{\alpha}\right) \right) + H\left( \zeta \left( \frac{z_\sigma-y_0}{\alpha}\right) + \frac{x_\sigma - z_{\sigma,\kappa}}{\sigma} , \frac{z_{\sigma,\kappa}}{\ep} ,\omega \right) \geq -C\frac{\ep}{\delta\alpha}. 
\end{equation*}
Letting $\kappa \to 0$ yields
\begin{equation}\label{dbl2}
\delta v^\delta \left( \frac{z_{\sigma}}{\ep} , \omega\,; \zeta\left( \frac{z_\sigma-y_0}{\alpha}\right) \right) + H\left( \zeta \left( \frac{z_\sigma-y_0}{\alpha}  \right) + \frac{x_\sigma-z_\sigma}{\sigma} , \frac{z_{\sigma}}{\ep} ,\omega \right) \geq -C\frac{\ep}{\delta\alpha}.
\end{equation}
Comparing~\eqref{dbl1} and~\eqref{dbl2}, using~\eqref{reg},~\eqref{vdpdepp} and~\eqref{zetamv} and then sending $\sigma \to 0$ yields
\begin{equation}\label{goal2}
\delta v^\delta \left( \frac{x_0}{\ep}, \omega\,;\frac{x_0-y_0}{\alpha} \right) \geq \frac1\ep (t_0-s_0) - C\frac{\ep}{\delta\alpha} - C\gamma.
\end{equation}

\emph{Step 5.} We complete the proof of~\eqref{clmtwo}. By combining~\eqref{goal1} and~\eqref{goal2} and setting $\gamma:= \ep / (\delta\alpha)$, we deduce that
\begin{equation*}\label{}
-\delta v^\delta\left( \frac{x_0}{\ep} , \omega\, ; \frac{x_0-y_0}{\alpha} \right) \leq \overline H\left( \frac{x_0-y_0}{\alpha}\right) + C\frac{\ep}{\delta\alpha} - \lambda. 
\end{equation*}
Select $\alpha:=  \lambda T$ to deduce that, for every $\delta \geq A\ep / T\lambda^2 = A\ep / (\alpha\lambda)$, we have $A\ep / \delta\alpha \leq \lambda$, and hence
\begin{equation*}\label{}
-\delta v^\delta\left( \frac{x_0}{\ep} , \omega\, ; \frac{x_0-y_0}{\alpha} \right) \leq \overline H\left( \frac{x_0-y_0}{\alpha}\right) - \frac12\lambda
\end{equation*}
provided that $A\geq 2$ is chosen sufficiently large. In light of~\eqref{easybnds} and~\eqref{liplam}, we have shown that
\begin{equation}\label{gotcha}
\Omega\setminus E \subseteq \left\{ \omega \in \Omega \, : \, \inf_{y\in B_R} \inf_{p\in B_{L}} \left( -\delta v^\delta (y,\omega\,;p) - \overline H(p)\right)\leq - \frac12\lambda \right\},
\end{equation}
where 
\begin{equation*}\label{}
R := \frac{|x_0|}{\ep} \leq \frac{CT}{\gamma\ep} = \frac{C\delta\lambda T^2}{\ep^2}.
\end{equation*}
This completes the proof of~\eqref{clmtwo}. 

\emph{Step 6.} We give the argument for~\eqref{clmone}. Note that $\delta \geq A\ep/T\lambda^2 \geq A\ep/T\lambda$ and $2\gamma \leq \lambda$ since $A\geq 2$. Therefore, for any $\omega\in \Omega$, $x\in B_T$ and $0\leq t \leq T$,
\begin{multline}\label{flod}
u^\ep(x,t,\omega) - u(x,t) = \Phi(x,x,t,t,\omega) + \ep v^\delta(x,\omega\,;0) + \lambda t + \gamma(1+|x|^2)^{\frac12} - \gamma \\  \leq M(\omega) + C \ep\delta^{-1} +  \lambda T + \gamma T  \leq M(\omega) + 3\lambda T.
\end{multline}
Fix $\omega\in E$ and let $(x_0,y_0,t_0,s_0)\in \Rd\times\Rd\times[0,T]\times[0,T]$ be a point which satisfies~\eqref{bgpt} and such that either $t_0=0$ or $s_0=0$. If $t_0=0$, then using~\eqref{s0t0},~\eqref{liplam},~\eqref{HHconds2} and the assumption that $0 < \ep \leq \lambda T$,
\begin{multline*}\label{}
M(\omega) = \Phi(x_0,y_0,t_0,s_0,\omega) \leq u_0(x_0) - u(y_0,s_0) - \ep v^\delta\left(\frac{x_0}{\ep},\omega\,; \frac{x_0-y_0}{\alpha} \right) \\ \leq L|x_0-y_0| + L s_0 + C\ep\delta^{-1} 
\leq L^2 \alpha + C(L+1)\ep + C\ep\delta^{-1}  \leq C \lambda T.
\end{multline*}
If $s_0=0$, we also get the bound $M(\omega) \leq C\lambda T$ via a similar observation. Combining this with~\eqref{flod}, we obtain
\begin{equation*}\label{}
\sup_{x\in B_T} \sup_{0 < t \leq T} \left( u^\ep(x,t,\omega) - u(x,t)\right)  \leq C \lambda T. 
\end{equation*}
This completes the proof of~\eqref{clmone} and thus that of~\eqref{incluHH-2}.
\end{proof}

We are now ready to complete the proofs of our main results. 

\begin{proof}[{\bf Proof of Theorem~\ref{EEH}}]
The theorem is obtained in a straightforward way from the combination of Lemma~\ref{incluHH} and Proposition~\ref{ballaz}. 

We first give the proof of (i). Fix $0<\ep\leq 1$ and $T\geq 1$, set $\delta := A \ep / T\lambda^2$, where $A>0$ is the constant $C>0$ from Lemma~\ref{incluHH}, and let $0<\lambda \leq 1$. Observe that $\lambda \geq C \ep^{\frac13}$ for large enough $C>0$ implies that
\begin{equation*} \label{}
\delta  =  \frac{A\ep}{T\lambda^2} \leq \frac{A}{T\lambda^2}\left( \frac{\lambda}{C} \right)^3 \leq c\lambda,
\end{equation*}
and thus for such $\lambda$ the hypothesis of Proposition~\ref{ballaz}(i) is in force. We apply first~\eqref{incluHH-1} and then second~\eqref{dvdEEaboveR} to discover that
\begin{align*}
\Prob\left[ \inf_{x\in B_T} \inf_{0\leq t \leq T} \left( u^\ep(x,t,\cdot) - u(x,t) \right)  \leq - \lambda T \right]
& \leq \Prob \left[ \sup_{|y| \leq C\delta \lambda T^2/\ep^2  } \ \sup_{|p| \leq C} \left( -\delta v^\delta(y,\omega\,;p) - \overline H(p) \right) \geq c\lambda  \right] \\
& \leq C\left( \delta \lambda T^2 \ep^{-2} \right)^d \delta^{-5d} \exp\left( -\frac{\lambda^3}{C\delta}\right) \\
& = C T^{6d} \lambda^{9d} \ep^{-6d} \exp\left( -\frac{T\lambda^5}{C\ep}\right).
\end{align*}

We move along to the argument for (ii). Just as above, we fix $0<\ep,\lambda \leq 1$ and $T\geq 1$ and set $\delta := A \ep / T\lambda^2$, where $A>0$ is the constant $C>0$ from Lemma~\ref{incluHH}. We need check that, with this choice of $\delta$, the hypothesis \eqref{alpbangp} with $C>0$ sufficiently large implies~\eqref{lambangR}. Indeed, if $\lambda \geq C\ep^{\tfrac18}\left| \log \ep\right|^{\tfrac 3{16}}$, then
\begin{equation*} \label{}
\left| \log \delta\right| \leq C \left| \log \ep \right|
\end{equation*}
and hence 
\begin{align*}
\lambda\geq C\ep^{\tfrac18}\left| \log \ep\right|^{\tfrac 3{16}} \geq C\left(\frac{T\lambda^2 \delta}{A} \right)^{\frac18} \left| \log \ep \right|^{\frac3{16}} \geq \left( CA^{-\frac18}  \right) \lambda^{\frac14} \delta^{\frac18} \left| \log \delta \right|^{\frac3{16}}.
\end{align*}
A rearrangment produces
\begin{equation*} \label{}
\lambda \geq C^{\frac43} A^{-\frac{1}{6}}\delta^{\frac16}  \left| \log \delta \right|^{\frac14}
\end{equation*}
which implies~\eqref{lambangR}, as desired, if we take $C$ sufficiently large. Now combine~\eqref{EEFqR} and~\eqref{incluHH-2}:
\begin{align*}
\Prob\left[ \sup_{x\in B_T} \sup_{0\leq t \leq T} \left( u^\ep(x,t,\cdot) - u(x,t) \right)  \geq \lambda T \right]
& \leq \Prob \left[ \inf_{|y| \leq C\delta \lambda T^2/\ep^2  } \ \inf_{|p| \leq C} \left( -\delta v^\delta(y,\omega\,;p) - \overline H(p) \right) \leq -c\lambda  \right] \\
& \leq C\left( \delta \lambda T^2 \ep^{-2} \right)^d \delta^{-5d} \exp\left( - \frac1{C} \left(  \frac{\lambda^3}{\delta} \wedge \frac{\lambda^{\d+\theta}}{\delta^\d}\right) \right) \\
& = C T^{6\d} \lambda^{9\d} \ep^{-6\d} \exp\left( - \frac1{C} \left(  \frac{T\lambda^5}{\ep} \wedge \frac{T^\d\lambda^{3\d+\theta}}{\ep^\d}\right) \right).
\end{align*}
This completes the proof.
\end{proof}

\begin{proof}[{\bf Proof of Theorem~\ref{CRH}}]
The result follows from a combination of Lemma~\ref{incluHH} and Proposition~\ref{acpCRU}. We first prove~(i). Define, for each $\ep > 0$, 
\begin{equation*}\label{}
\lambda(\ep):= A \ep^{\frac15} |\log \ep |^{\frac15} \qquad \mbox{and} \qquad \delta (\ep):= \frac{A\ep}{T\lambda(\ep)^2},
\end{equation*}
where $A \geq 1$ will be selected below. It is straightforward to check that
\begin{equation*}\label{}
\lambda(\ep) \geq  A^{\frac43} \delta(\ep)^{\frac13} \left| \log \delta(\ep) \right|^{\frac13}.
\end{equation*}
According to~\eqref{incluHH-1}, 
\begin{align*}
\lefteqn{   \bigcap_{\eta >0} \, \bigcup_{0 < \ep \leq\eta} \left\{ \omega\in \Omega \, : \, \inf_{x\in B_T} \inf_{0<t\leq T} \left( u^\ep(x,t,\omega) - u(x,t) \right) \leq - CT\lambda(\ep) \right\}   } \qquad & \\
& \subseteq \bigcap_{\eta >0} \bigcup_{0 < \ep \leq\eta} \left\{ \omega\in \Omega \, : \,  \sup_{|y| \leq C\delta(\ep) \lambda(\ep) T^2/\ep^2  } \sup_{|p| \leq C} \left( -\delta(\ep) v^{\delta(\ep)} (y,\omega\,;p) - \overline H(p) \right) \geq \lambda(\ep)  \right\} \\
& \subseteq \bigcap_{\eta > 0} \bigcup_{0 < \ep \leq\eta} \left\{ \omega\in \Omega \, : \,  \sup_{|y| \leq CT / \ep \lambda(\ep) } \sup_{|p| \leq C} \left( -\delta(\ep) v^{\delta(\ep)} (y,\omega\,;p) - \overline H(p) \right) \geq  A \delta(\ep)^{\frac13} \left| \log \delta(\ep) \right|^{\frac13}  \right\}.
\end{align*}
If we choose $A$ large enough, then~\eqref{aboveRas2} yields that the last event is of probability zero, where we have used $N=2$ in~\eqref{aboveRas2} and the fact that $\ep\lambda(\ep) \leq \delta(\ep)^2$ for small enough $\ep$. We deduce that
\begin{equation*}\label{}
\Prob\left[ \liminf_{\ep \to 0} \inf_{x\in B_T} \inf_{0<t\leq T} \frac{u^\ep(x,t,\cdot) - u(x,t)}{\lambda(\ep)} \geq -T \right] = 1.
\end{equation*}
This completes the proof of~(i).

To prove~(ii), we define instead
\begin{equation*}\label{}
\lambda(\ep):= A \ep^{\bar a} |\log\ep|^{\bar b},
\end{equation*}
with $\delta(\ep)$ the same as above and $A\geq 1$ to be selected. With $\alpha$ and $\beta$ as defined in~\eqref{alphbeta}, and recalling that  $\bar a=\alpha(1+2\alpha)^{-1}$ and $\bar b=\beta(1+2\alpha)^{-1}$, we see that
\begin{equation*} \label{}
\alpha = \frac{\bar a}{1-2\bar a} \quad \mbox{and} \quad \beta = \frac{\bar b}{1-2\bar a}.
\end{equation*}
Thus for sufficiently large $A$, we find
\begin{equation*} \label{}
\lambda(\ep) \geq A \left( \frac{T\delta(\ep)\lambda(\ep)^2}{C} \right)^{\bar{a}}  \left| \log \ep \right|^{\bar{b} } \geq A \left( \frac{T\delta(\ep)\lambda(\ep)^2}{A} \right)^{\bar{a}}  \left| \log \delta(\ep) \right|^{\bar{b} }
\end{equation*}
and a rearrangement produces
\begin{align*}\label{}
\lambda(\ep) \geq c \left( A^{1-\bar a} \delta(\ep)^{\bar a} \left| \log \delta(\ep) \right|^{\bar b} \right)^{1/(1-2\bar a)} = c A^{(1-\bar a)/(1-2\bar a)} \delta(\ep)^\alpha \left| \log \delta(\ep) \right|^\beta \geq C \delta(\ep)^\alpha \left| \log \delta(\ep) \right|^\beta.
\end{align*}
We proceed similarly as above, using~\eqref{incluHH-2} to obtain
\begin{align*}
\lefteqn{   \bigcap_{\eta >0} \, \bigcup_{0 < \ep \leq\eta} \left\{ \omega\in \Omega \, : \, \sup_{x\in B_T} \sup_{0<t\leq T} \left( u^\ep(x,t,\omega) - u(x,t) \right) \geq C T \lambda(\ep) \right\}   } \qquad & \\
& \subseteq \bigcap_{\eta >0} \bigcup_{0 < \ep \leq\eta} \left\{ \omega\in \Omega \, : \,  \inf_{|y| \leq C\delta(\ep) \lambda(\ep) T^2/\ep^2  } \inf_{|p| \leq C} \left( -\delta(\ep) v^{\delta(\ep)} (y,\omega\,;p) - \overline H(p) \right) \leq -\lambda(\ep)  \right\} \\
& \subseteq \bigcap_{\eta > 0} \bigcup_{0 < \ep \leq\eta} \left\{ \omega\in \Omega \, : \,  \inf_{|y| \leq CT / \ep \lambda(\ep) } \inf_{|p| \leq C} \left( -\delta(\ep) v^{\delta(\ep)} (y,\omega\,;p) - \overline H(p) \right) \leq - C \delta(\ep)^{\alpha} \left| \log \delta(\ep) \right|^{\beta}  \right\}.
\end{align*}
According to~\eqref{belowRasF3}, the last set on the right has probability zero. It follows that
\begin{equation*}\label{}
\Prob\left[ \limsup_{\ep \to 0} \sup_{x\in B_T} \sup_{0<t\leq T} \frac{u^\ep(x,t,\cdot) - u(x,t)}{\lambda(\ep)} \leq T \right] = 1,
\end{equation*}
which completes the proof of~(ii).
\end{proof}

\section{Convergence rates in almost periodic environments} \label{APs}

We conclude by showing that the techniques in the previous section may be used to obtain convergence rate for the homogenization of~\eqref{HJq} in almost periodic (and in particular periodic) media. Since it is necessary to reproduce the qualitative homogenization theory from scratch, we take the opportunity to efficiently reorganize and quantify the argument.

Departing from the hypotheses in the rest of this paper, here we consider $H \in C(\Rd\times\Rd)$ satisfying, for each $K> 0$, the regularity assumption
\begin{equation}\label{regAP}
H \ \mbox{is uniformly continuous on} \  B_K \times \Rd \ \mbox{and} \ \left\{ H(\cdot,y) \, : \, y\in \Rd\right\} \ \mbox{is bounded in} \ C^{0,1}(B_K)
\end{equation}
and the coercivity condition
\begin{equation}\label{coerAP}
\lim_{|p| \to \infty} \inf_{y\in \Rd} H(p,y) = +\infty.
\end{equation}
The assumption of almost periodicity is that the family of translations of $H$ in the $y$-variable is precompact in the uniform topology of $B_K\times \Rd$. Precisely, we assume that, for all $K>0$,
\begin{equation}\label{AP}
\left\{ H(\cdot, \cdot + y) \, : \, y\in \Rd \right\} \quad \mbox{is precompact in} \ C(B_K\times \Rd). 
\end{equation}
We remark that, in this section, we make no convexity assumption on $H$, nor do we assume any analogue of~\eqref{cntl} or~\eqref{plushyp}. 

The homogenization of coercive Hamilton-Jacobi equations in almost periodic environments was proved in~\cite{I}. The key observation was an elegant proof of the following fact.

\begin{prop}[{Ishii~\cite{I}}] \label{APcorr}
Assume that $H\in C(\Rd\times\Rd)$ satisfies~\eqref{regAP}, \eqref{coerAP} and~\eqref{AP}. Then there exists $\overline H \in C(\Rd)$ such that, for every $p\in \Rd$, 
\begin{equation}\label{dvdlimAP}
\lim_{\delta\to 0} \sup_{y\in \Rd} \left| \delta v^\delta(y\,;p) + \overline H(p) \right| = 0.
\end{equation}
\end{prop}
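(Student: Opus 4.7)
The plan is to combine the uniform bounds and Lipschitz estimates from Proposition~\ref{vdeltas} (whose proofs use neither convexity nor randomness and so apply verbatim here) with the almost periodicity of $H$: first to prove that $\osc_{\Rd}(\delta v^\delta(\cdot\,;p)) \to 0$ as $\delta \to 0$, and then to identify the unique asymptotic value $-\overline H(p)$. By Bochner's criterion, the compactness assumption~\eqref{AP} is equivalent to the relative density of the set of $\eta$-almost periods
\begin{equation*}
T(\eta) := \set{z \in \Rd \,:\, \sup_{q \in B_{K_p + |p|},\; y \in \Rd} \abs{H(q, y + z) - H(q, y)} \leq \eta}
\end{equation*}
for each $\eta > 0$; that is, there exists $L(\eta) > 0$ such that every ball of radius $L(\eta)$ in $\Rd$ meets $T(\eta)$.

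First I would prove the oscillation bound. For $z \in T(\eta)$, the shift $\tilde v(y) := v^\delta(y + z\,;p)$ satisfies $\abs{\delta \tilde v + H(p + D\tilde v, y)} \leq \eta$ in the viscosity sense, since the equation for $v^\delta$ holds at $y + z$ and the Hamiltonians at $y$ and $y + z$ differ by at most $\eta$ on the relevant range $\abs{D\tilde v} \leq K_p$. Hence $\tilde v \pm \eta/\delta$ are respectively sub- and supersolutions of~\eqref{amp}, and Proposition~\ref{appcmp} gives $\sup_{y\in \Rd} \abs{v^\delta(y+z\,;p) - v^\delta(y\,;p)} \leq \eta/\delta$. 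For any $y_1, y_2 \in \Rd$, I choose $z \in T(\eta)$ with $\abs{y_2 - y_1 - z} \leq L(\eta)$ and combine the shift estimate with the uniform Lipschitz bound to obtain
\begin{equation*}
\delta \abs{v^\delta(y_2\,;p) - v^\delta(y_1\,;p)} \leq \delta K_p L(\eta) + \eta.
\end{equation*}
Taking $\limsup$ as $\delta \to 0$ and then sending $\eta \to 0$ proves $\osc_{\Rd}(\delta v^\delta(\cdot\,;p)) \to 0$.

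Finally I would identify the asymptotic value and establish its uniqueness. By uniform boundedness of $\delta v^\delta(0\,;p)$, any sequence $\delta_n \to 0$ admits a subsequence along which $\delta_n v^{\delta_n}(0\,;p) \to -c$ for some $c \in \R$; by the vanishing oscillation, the convergence $\delta_n v^{\delta_n}(\cdot\,;p) \to -c$ is uniform on $\Rd$. The normalized functions $w_n := v^{\delta_n}(\cdot\,;p) - v^{\delta_n}(0\,;p)$ are uniformly Lipschitz with $w_n(0) = 0$ and, by Arzel\`a-Ascoli, subconverge locally uniformly to a Lipschitz $w$ solving $H(p + Dw, y) = c$ on $\Rd$ in the viscosity sense. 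The main obstacle is to show that $c$ does not depend on the subsequence; I expect this to follow by upgrading $w$, via a diagonal extraction refining the shift estimate with $\eta = \eta(\delta_n) \to 0$ slower than $\delta_n$, to an almost periodic (hence bounded) corrector. Once $w$ is bounded, the shifted function $y \mapsto w(y) - c/\delta$ is an $O(\delta \|w\|_\infty)$-approximate solution of~\eqref{amp}, and Proposition~\ref{appcmp} therefore yields $\abs{\delta v^\delta(y\,;p) + c} \leq 2\delta \|w\|_\infty$ for every $y \in \Rd$ and $\delta > 0$. This forces the full sequence $\delta v^\delta(\cdot\,;p)$ to converge uniformly to $-c$, identifying $\overline H(p) := c$ and establishing~\eqref{dvdlimAP}.
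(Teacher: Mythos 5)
Your oscillation bound in the first half is correct and is essentially the same as Step~1 of the paper's proof of Proposition~\ref{alpfundy}: for $z$ an $\eta$-almost period one compares $v^\delta(\cdot + z\,;p) \pm \eta/\delta$ to $v^\delta(\cdot\,;p)$ via Proposition~\ref{appcmp}, then interpolates using the Lipschitz bound $K_p$ and optimizes in $\eta$; the paper phrases the almost periodicity through the modulus $\rho_K$ rather than Bochner's criterion, but the content is identical.

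The second half contains a genuine gap. Vanishing oscillation ensures only that any convergent subsequence of $\delta v^\delta(0\,;p)$ produces a uniform limit on $\Rd$; it does not by itself identify the limit. You propose to pin down the limit by promoting the subsequential limit $w$ of $w_n = v^{\delta_n}(\cdot\,;p) - v^{\delta_n}(0\,;p)$ to a \emph{bounded} (``almost periodic, hence bounded'') corrector solving $H(p+Dw,y)=c$, and then feeding $w - c/\delta$ back into the comparison principle. But $w$ is only known to be $K_p$-Lipschitz with $w(0)=0$, hence may grow linearly at infinity, and nothing in your diagonal-extraction sketch establishes boundedness; you would in effect be assuming what you want to prove. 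Indeed, the nonexistence of bounded correctors is precisely the obstruction that makes the almost periodic case nontrivial (and is why Ishii's proof is indirect), so this step cannot be filled in as stated.

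The paper avoids correctors entirely by pairing the oscillation bound with a second elementary inequality (inequality~\eqref{numero-deux}): for every $\delta,\gamma\in(0,1]$,
\begin{equation*}
\inf_{\Rd}\gamma v^\gamma(\cdot\,;p) \leq \sup_{\Rd}\delta v^\delta(\cdot\,;p).
\end{equation*}
This is proved by the Lions--Papanicolaou--Varadhan device: supposing the contrary, one subtracts $\alpha(1+|y|^2)^{1/2}$ from $v^\delta$ for small $\alpha$, checks that the result is a strict subsolution of the $\gamma$-problem using~\eqref{regAP}, and lets the comparison on $\partial B_R$ with $R\to\infty$ produce a contradiction. Combined with your oscillation estimate, this inequality forces $\liminf_{\delta\to0}(-\delta v^\delta(0\,;p)) = \limsup_{\delta\to0}(-\delta v^\delta(0\,;p))$, and one simply \emph{defines} $\overline H(p)$ as this common value. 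Replacing your corrector argument with~\eqref{numero-deux} closes the gap.
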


The rate of convergence in almost periodic environments follows from Lemma~\ref{incluHH} once a rate for the limit~\eqref{dvdlimAP} is obtained. For the latter, it is necessary to quantify the almost periodicity of $H$, which leads us to introduce, for each $K > 0$ and $R> 0$,
\begin{equation*}\label{}
\rho_K(R): =  \sup_{y\in \Rd} \inf_{z\in B_R} \sup_{(p,x) \in B_K \times \Rd} \left| H(p,x+y) - H(p,x+z) \right|.
\end{equation*}
It is immediate from~\eqref{regAP} that, for each $K> 0$, $\rho_K$ is continuous and we see from its definition that it is decreasing. The assumption~\eqref{AP} is equivalent to the statement that, for each $K> 0$,
\begin{equation*}\label{mod1}
\lim_{R\to\infty} \rho_K(R) = 0. 
\end{equation*}
We next define, for each $K> 0$ and $0 < \delta < 1$, 
\begin{equation}\label{etaK}
\eta_K(\delta):=  4 \inf\left\{ s> 0\, : \, \rho_K\left(\frac{s}{K\delta}\right) \leq s \right\}.
\end{equation}
The properties of $\rho_K$ yield that $\eta_K$ is a modulus, i.e., for each $K> 0$,
\begin{equation}\label{}
\eta_K: (0,1) \to [0,\infty) \quad \mbox{is continuous, increasing and} \   \lim_{\delta \to 0} \eta_K(\delta) = 0.
\end{equation}
Observe that if $y\mapsto H(p,y)$ is $1$-periodic, then $\rho_K\!\left(\frac12\right) = 0$ for all $K> 0$ and, hence, $\eta_K(\delta) \leq \frac12\delta$. 

We also define, for each $K> 0$, the quantity
\begin{equation}\label{LK}
L=L(K):= \sup\left\{ |q| \, : \, \inf_{y\in \Rd} H(q,y) \leq \sup_{(p,y) \in B_R \times \Rd} H(p,y) \right\},
\end{equation}
which has the property (this is not difficult to check using similar arguments as in Appendix~\ref{appmappp}) that, for every $p\in \Rd$,
\begin{equation}\label{explipbnd}
|Dv^\delta(\cdot\,;p)| \leq L(|p|).
\end{equation}

We prove next a quantitative version of Proposition~\ref{APcorr}. The argument is inspired from~\cite{LPV,I}. Here we simply reorganize and quantify it.

\begin{prop} \label{alpfundy}
Assume that $H\in C(\Rd\times\Rd)$ satisfies~\eqref{regAP}, \eqref{coerAP} and~\eqref{AP}. Then there exists $\overline H \in C(\Rd)$ such that, for every $K > 0$, $\delta,\gamma \in (0,1]$ and $p\in B_K$,
\begin{equation}\label{alpfunest}
\sup_{y \Rd} \left| \delta v^\delta(y\,;p) + \overline H(p) \right| \leq \eta_{L}(\delta),
\end{equation}
where $L=L(K)$ is given by~\eqref{LK}.
\end{prop}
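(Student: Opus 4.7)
The plan is to derive~\eqref{alpfunest} via three steps: first, transfer the quantitative almost periodicity of $H$ (encoded in the modulus $\rho_L$) to the solutions $v^\delta(\cdot\,;p)$ of~\eqref{amp}; second, identify $\overline H(p)$ as the limit of $c_\delta := \inf_{y\in\Rd} \delta v^\delta(y\,;p)$ using a direct comparison between $v^\delta$ and $v^{\delta'}$; and third, combine the oscillation and Cauchy estimates to obtain the uniform rate~\eqref{alpfunest}. The Lipschitz-type bound $|p+Dv^\delta|\leq L = L(K)$ from~\eqref{LK}--\eqref{explipbnd} is used throughout to restrict attention to the slab $B_L\times \Rd$, where $\rho_L$ is the relevant modulus of almost periodicity.

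For the almost periodicity transfer, I fix $y\in\Rd$ and set $s:=\eta_L(\delta)/4$, so that by~\eqref{etaK} there exists $z\in B_{s/(L\delta)}$ with $\sup_{(q,x)\in B_L\times \Rd}|H(q,x+y)-H(q,x+z)|\leq s$. The translated functions $w(x):=v^\delta(x+y\,;p)$ and $\tilde w(x):=v^\delta(x+z\,;p)$ are bounded solutions of shifted versions of~\eqref{amp} whose Hamiltonians differ by at most $s$ on the relevant slab, so that $w-s/\delta$ is a subsolution of the equation for $\tilde w$ and an application of Proposition~\ref{appcmp} yields $\sup_{x\in\Rd}|w(x)-\tilde w(x)|\leq s/\delta$. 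Combined with the Lipschitz bound $\sup_{x\in\Rd}|v^\delta(x+z\,;p)-v^\delta(x\,;p)|\leq L|z|\leq s/\delta$ from~\eqref{explipbnd} and the triangle inequality,
\[
\sup_{x\in\Rd}\big|v^\delta(x+y\,;p)-v^\delta(x\,;p)\big|\leq \frac{2s}{\delta}=\frac{\eta_L(\delta)}{2\delta}.
\]
Evaluating at $x=0$ and multiplying by $\delta$ produces the oscillation estimate $\osc_{y\in\Rd}\delta v^\delta(y\,;p)\leq \tfrac12\eta_L(\delta)$.

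For the identification of $\overline H(p)$, I take $0<\delta\leq \delta'\leq 1$ and rewrite the equation for $v^{\delta'}$ as $\delta v^{\delta'}+H(p+Dv^{\delta'},y)=(\delta-\delta')v^{\delta'}$. Using the oscillation bound just proved at the level $\delta'$, the right-hand side equals $(\delta/\delta'-1)c_{\delta'}+g(y)$ with $\|g\|_\infty\leq \tfrac12\eta_L(\delta')$, so the constant shift $\tilde w:= v^{\delta'}-(\delta/\delta'-1)c_{\delta'}/\delta$ is a $\tfrac12\eta_L(\delta')$-approximate solution of~\eqref{amp}. Applying Proposition~\ref{appcmp} to $\tilde w\pm \eta_L(\delta')/(2\delta)$ gives $\sup_{x\in\Rd}|v^\delta(x\,;p)-\tilde w(x)|\leq \eta_L(\delta')/(2\delta)$, which after multiplying by $\delta$ becomes
\[
\sup_{y\in\Rd}\big| \delta v^\delta(y\,;p) - \delta v^{\delta'}(y\,;p) + (\delta/\delta'-1)c_{\delta'}\big|\leq \tfrac12 \eta_L(\delta').
\]
Taking the infimum over $y$ of each side and using that $\inf_y \delta v^{\delta'}(y\,;p) = (\delta/\delta')c_{\delta'}$ causes the non-Cauchy term $(\delta/\delta'-1)c_{\delta'}$ to cancel exactly, leaving $|c_\delta - c_{\delta'}|\leq \tfrac12 \eta_L(\delta')$. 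Hence $\{c_\delta\}_{\delta\in(0,1]}$ is Cauchy, and defining $\overline H(p):=-\lim_{\delta\to 0}c_\delta$ and sending $\delta\to 0$ at fixed $\delta'$ produces $|c_{\delta'}+\overline H(p)|\leq \tfrac12\eta_L(\delta')$.

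Combining this bound with the oscillation estimate yields $|\delta v^\delta(y\,;p) + \overline H(p)| \leq \tfrac12\eta_L(\delta) + \tfrac12\eta_L(\delta) = \eta_L(\delta)$ uniformly in $y$, which is exactly~\eqref{alpfunest}; continuity of $\overline H$ on $\Rd$ then follows from the uniform-in-$\delta$ continuity of $p\mapsto \delta v^\delta(0\,;p)$ provided by~\eqref{vdpdepp} together with the identification $\overline H(p)=-\lim_{\delta\to 0}\delta v^\delta(0\,;p)$. The main obstacle I anticipate is the careful bookkeeping in the Cauchy step: the cancellation of the $(\delta/\delta'-1)c_{\delta'}$ term (which would otherwise be of order $|c_{\delta'}|$ and hence $O(1)$, not $o(1)$, as $\delta \to 0$) depends on the precise alignment of the infima in the definitions of $c_\delta$ and $c_{\delta'}$, and it is this alignment—together with the factor of $4$ built into the definition~\eqref{etaK}—that is responsible for the appearance of $\eta_L(\delta)$ on the right-hand side of~\eqref{alpfunest} with no additional multiplicative constant.
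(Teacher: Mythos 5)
Your proof is correct, and your Step 2 takes a genuinely different route than the paper. The oscillation bound (your Step 1) coincides with the paper's. Where you diverge is in identifying $\overline H(p)$: you rewrite the equation for $v^{\delta'}$ as a $\tfrac12\eta_L(\delta')$-approximate version of the $\delta$-equation and compare against $v^\delta$, so that the infima $c_\delta:=\inf_{\Rd}\delta v^\delta(\cdot\,;p)$ form a Cauchy net with explicit rate $|c_\delta-c_{\delta'}|\leq\tfrac12\eta_L(\delta')$, the key point being the exact cancellation of the $(\delta/\delta'-1)c_{\delta'}$ term upon taking infima. The paper instead establishes only the qualitative two-sided inequality $\inf_{\Rd}\gamma v^\gamma\leq\sup_{\Rd}\delta v^\delta$ for all $\gamma,\delta\in(0,1]$, by comparing against the sublinear perturbation $v^\delta-\alpha(1+|y|^2)^{1/2}$ on large balls and letting the radius go to infinity (the classical argument from~\cite{LPV}); it then sets $\overline H(p):=\liminf_{\delta\to0}(-\delta v^\delta(0\,;p))$ and combines with the oscillation bound. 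Your argument is more constructive and hands you a Cauchy rate directly; the paper's Step 2 is shorter and does not need the oscillation bound as an input, only in the final assembly.

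One phrase in your Step 1 should be corrected because the factor bookkeeping is tight. The estimate you actually derive, namely $\sup_{x\in\Rd}\left|v^\delta(x+y\,;p)-v^\delta(x\,;p)\right|\leq\eta_L(\delta)/(2\delta)$ for every $y\in\Rd$, is \emph{itself} (after multiplying by $\delta$) the oscillation bound $\osc_{\Rd}\delta v^\delta(\cdot\,;p)\leq\tfrac12\eta_L(\delta)$, since any pair of points can be written as $(x,x+y)$. But the conclusion as you wrote it --- ``evaluating at $x=0$'' --- only yields $\left|\delta v^\delta(y\,;p)-\delta v^\delta(0\,;p)\right|\leq\tfrac12\eta_L(\delta)$ for all $y$, which bounds $\osc_{\Rd}\delta v^\delta(\cdot\,;p)$ by $\eta_L(\delta)$, a factor of two worse. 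That factor would propagate through your Cauchy step (you would get $|c_\delta-c_{\delta'}|\leq\eta_L(\delta')$) and through the final assembly (giving $2\eta_L(\delta)$ in place of $\eta_L(\delta)$). Simply drop the specialization to $x=0$ and conclude the oscillation bound from the full estimate; everything then closes as you claim.
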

\begin{proof}
The result follows from two facts. The first is that, for every $\delta > 0$,
\begin{equation}\label{numero-un}
\osc_{\Rd} \delta v^\delta(\cdot\,;p) \leq \eta_L(\delta),
\end{equation}
and the second is that, for every $\delta,\gamma \in (0,1]$,
\begin{equation}\label{numero-deux}
\inf_{\Rd}  \gamma v^\gamma(\cdot\,;p) \leq \sup_{\Rd} \delta v^\delta(\cdot\,;p).
\end{equation}
Indeed, it is immediate from~\eqref{numero-un} and~\eqref{numero-deux} that, if we define 
\begin{equation*}\label{}
\overline H(p) := \liminf_{\delta \to 0} -\delta v^\delta(0\,;p),
\end{equation*}
then~\eqref{alpfunest} holds. The continuity of $\overline H$ is then immediate from Proposition~\ref{vdpdepp}.

\emph{Step 1.}
We prove \eqref{numero-un}. Fix $\hat y \in \Rd$. Let $R> 0$ be selected below, and choose $z\in \overline B_R$ such that
\begin{equation}\label{flagdown}
\sup_{(q,y)\in B_L\times \Rd} \left| H(q,y+\hat y) - H(q,y+z) \right| \leq \rho_L(R). 
\end{equation}
It follows from Proposition~\ref{appcmp}, by comparing $v^\delta(\cdot+\hat y\,;p)$ to $v^\delta(\cdot+z\,;p) \pm \rho_L(R)/\delta$, that
\begin{equation*}\label{}
\sup_{y\in\Rd} \left| \delta v^\delta (y+\hat y\,;p) - \delta v^\delta (y+z\,;p) \right| \leq \rho_L(R).
\end{equation*}
In particular, and in view of~\eqref{explipbnd}, we have
\begin{equation*}\label{}
\sup_{y\in\Rd} \left| \delta v^\delta (\hat y\,;p) - \delta v^\delta (0\,;p) \right| \leq \rho_L(R) + \delta L |z| \leq \rho_L(R) + \delta LR.
\end{equation*}
Optimizing over $R$ leads to the choice $R := \eta_L(\delta)/4\delta L$, which yields, in light of~\eqref{etaK},
\begin{equation*}\label{}
 \left| \delta v^\delta (\hat y\,;p) - \delta v^\delta (0\,;p) \right| \leq \frac12 \eta_L(\delta).
\end{equation*}
Since $\hat y \in \Rd$ was arbitrary, we obtain~\eqref{numero-un}.

\emph{Step 2.}
We give the proof of~\eqref{numero-deux}, which is essentially taken from~\cite{LPV}. Suppose, for some $\delta,\gamma\in (0,1]$, that~\eqref{numero-deux} is false. For $0 < \alpha \leq 1$ to be selected, consider the function
\begin{equation*}\label{}
w(y):= v^\delta(y\,;p) - \alpha \left( 1 + |y|^2 \right)^{\frac12}. 
\end{equation*}
Using~\eqref{regAP}, we see that, if $\alpha > 0$ is chosen suitable small, then
\begin{equation*}\label{}
H(p+Dw,y) \leq -\sup_{\Rd} \delta v^\delta(\cdot\,;p) + C\alpha < -\inf_{\Rd} \gamma v^\gamma(\cdot \,;p) \leq H(p+Dv^\gamma,y) \quad \mbox{in} \ \Rd.
\end{equation*}
The comparison principle (Proposition~\ref{comp}),~\eqref{dvdsup} and~\eqref{regAP} imply that, for every $R > 0$,
\begin{equation*}\label{}
w(0) - v^\gamma(0) \leq \max_{\partial B_R} (w-v^\gamma) \leq C \left( \frac1\gamma + \frac1\delta \right) - \alpha R.
\end{equation*}
Send $R\to +\infty$ to obtain the desired contradiction.
\end{proof}

The combination of Proposition~\ref{alpfundy} and Lemma~\ref{incluHH} yields the following convergence rate for the homogenization of~\eqref{HJq} in almost periodic media. In order to apply Lemma~\ref{incluHH}, we note that its proof did not depend in any way on the random environment or the structural assumptions, such as level-set convexity or~\eqref{cntl}, which are not in force in this section. 

\begin{prop}\label{almper}
Assume that $H\in C(\Rd\times\Rd)$ satisfies~\eqref{regAP}, \eqref{coerAP} and~\eqref{AP}. Consider the unique solutions $u^\ep,u\in C^{0,1}(\Rd\times [0,T])$ of
\begin{equation*}\label{}
u^\ep_t + H\left(Du^\ep,\frac x\ep\right) = 0 \quad \mbox{and} \quad u_t + \overline H(Du) = 0 \quad  \mbox{in} \ \Rd \times (0,T)
\end{equation*}
subject to the initial condition $u^\ep(0,t) = u(0,t) = u_0(x) \in C^{0,1}(\Rd)$, and let $K> 0$ be such that
\begin{equation*}\label{}
|u^\ep(x,t) - u^\ep(y,s)| \vee |u(x,t) - u(y,s)| \leq K\!\left( |x-y| + |t-s|\right).
\end{equation*}
Then there exists a constant $\Cl[C]{C-almper}>0$ such that, for all $T\geq 1$ and $\ep > 0$,
\begin{equation}\label{alperees}
\sup_{(x,t) \in \Rd \times [0,T] } \left| u^\ep(x,t) - u(x,t) \right| \leq \Cr{C-almper}T \left( \ep^{\frac13} + \eta_L\big(\ep^{\frac13} \big) \right),
\end{equation}
where $L=L(K)$ is given by~\eqref{LK} and the modulus $\eta_L(\cdot)$ by~\eqref{etaK}.
\end{prop}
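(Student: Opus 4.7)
The plan is to combine Lemma~\ref{incluHH} with Proposition~\ref{alpfundy} in a purely deterministic sandwich. The first observation is that the proof of Lemma~\ref{incluHH} never invokes the probability measure, the level-set convexity~\eqref{sqc}, the controllability~\eqref{cntl}, or the finite range dependence~\eqref{indy}: it is entirely a deterministic viscosity-solution argument built on doubled-variable maxima, comparison, and the Lipschitz and stability estimates~\eqref{vdlip2}--\eqref{vdpdepp} for $v^\delta$. Consequently, the inclusions~\eqref{incluHH-1}--\eqref{incluHH-2} apply verbatim in the almost periodic setting, with $\Omega$ collapsed to a singleton. The strategy is the trivial contrapositive: if we can choose $(\delta,\lambda)$ so that the consequent events in~\eqref{incluHH-1}--\eqref{incluHH-2} are empty, then so are their antecedents.

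Concretely, let $C_0$ denote the constant $C$ from Lemma~\ref{incluHH} and write $L=L(K)$. I would set
\begin{equation*}
\delta := \ep^{1/3} \qquad \mbox{and} \qquad \lambda := \sqrt{C_0}\,\ep^{1/3}T^{-1/2} + 2\eta_L\!\left(\ep^{1/3}\right).
\end{equation*}
For $\ep$ sufficiently small (in a sense depending only on $K$ and $H$), the four hypotheses of Lemma~\ref{incluHH}---$\lambda\leq 1$, $T\geq 1$, $\ep\leq \lambda T$, and $\delta\geq C_0\ep/(T\lambda^2)$---are verified; the last reduces to $T\lambda^2\geq C_0\ep^{2/3}$, which is immediate from the first summand in $\lambda$. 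Proposition~\ref{alpfundy} provides simultaneously
\begin{equation*}
\sup_{y\in\Rd}\,\sup_{|p|\leq L}\left|{-}\delta v^\delta(y\,;p) - \overline H(p)\right| \leq \eta_L(\delta) = \eta_L\!\left(\ep^{1/3}\right) < \lambda,
\end{equation*}
with strict inequality ensured by the second summand. The consequents of~\eqref{incluHH-1}--\eqref{incluHH-2} are therefore empty, forcing
\begin{equation*}
\sup_{(x,t)\in B_T\times[0,T]}\left|u^\ep(x,t)-u(x,t)\right|\leq C_0 T\lambda \leq C T\left(\ep^{1/3}+\eta_L\!\left(\ep^{1/3}\right)\right).
\end{equation*}

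To promote the supremum over $B_T$ to one over all of $\Rd$, I would exploit the near-translation invariance encoded in~\eqref{AP}: for each $x_0\in\Rd$, the family $\{H(\cdot,\cdot+v):v\in\Rd\}$ is precompact, so one can find $\tau\in \Rd$ with $\sup_{p,y}|H(p,y+\tau)-H(p,y+x_0/\ep)|\leq \eta$, the price paid in $|\tau|$ being quantified by $\rho_L$ and hence by $\eta_L$. A routine comparison then shows that $u^\ep(\cdot+x_0,\cdot)$ differs from the solution of the same PDE with initial datum $u_0(\cdot+x_0)$ (whose homogenized limit is $u(\cdot+x_0,\cdot)$, since $\overline H$ is constant-coefficient) by $O(T\eta)$. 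Choosing $\eta$ of the order $\eta_L(\ep^{1/3})$ absorbs this error into the already-present $\eta_L$ term and transfers the $B_T$ estimate to $B_T(x_0)$, hence to all of $\Rd\times[0,T]$. The edge case of $\ep$ bounded below is handled by the trivial bound $\|u^\ep\|_{L^\infty}+\|u\|_{L^\infty}\leq C(1+T)\leq CT$.

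The main step, and the only real obstacle, is the verification that Lemma~\ref{incluHH} is genuinely deterministic and applies to any $H$ satisfying merely~\eqref{regAP}--\eqref{coerAP}: this requires reading through its long proof and confirming that only viscosity comparison, Lipschitz estimates, and the bounds~\eqref{vdlip2}--\eqref{vdpdepp} are used. Once this is granted, the parameter balancing above is elementary, and in the periodic case $\eta_L(\delta)\lesssim\delta$ one immediately recovers the classical $O(T\ep^{1/3})$ rate of Capuzzo-Dolcetta--Ishii~\cite{ICD}.
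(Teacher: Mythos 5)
Your overall strategy coincides with the paper's proof: apply Lemma~\ref{incluHH} deterministically, choose $\delta=\ep^{1/3}$, and take $\lambda$ just larger than both $\sqrt{C/T}\,\delta$ and $\eta_L(\delta)$ so that the conclusion of Proposition~\ref{alpfundy} makes the consequent events in~\eqref{incluHH-1}--\eqref{incluHH-2} empty. The paper's $\lambda(\ep,\alpha)=((C/T)^{1/2}\delta)\vee\eta_L(\delta)+\alpha$, $\alpha\to 0$, is interchangeable with your additive variant $\sqrt{C_0}\ep^{1/3}T^{-1/2}+2\eta_L(\ep^{1/3})$; your observation that Lemma~\ref{incluHH} is purely deterministic, using only the comparison principle and the Lipschitz and stability bounds~\eqref{vdlip2}--\eqref{dvddepd}, is exactly the justification the paper leaves implicit.

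Where you diverge, and where the argument becomes shakier, is the upgrade from $\sup_{B_T}$ to $\sup_{\Rd}$. The clean way to see this, which is what the paper implicitly uses, is simply that the proof of Lemma~\ref{incluHH} can be re-centered at an arbitrary $x_0\in\Rd$: replace the localization penalties $\gamma(1+|x|^2)^{1/2}$ and $\gamma(1+|x-x_0|^2)^{1/2}$ in~\eqref{auxfun} and~\eqref{auxfun2} by penalties centered at $x_0$, and note that the only spatially nonuniform input needed is the bound on $-\delta v^\delta(y\,;p)-\overline H(p)$ for $y$ near $x_0/\ep$. Proposition~\ref{alpfundy} gives precisely this bound uniformly over all $y\in\Rd$, so the inclusion holds with $B_T$ replaced by $B_T(x_0)$ for each $x_0$, and taking the union over $x_0$ finishes. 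Your route through almost-periodicity is a detour: you look for $\tau$ with $\sup_{p,y}|H(p,y+\tau)-H(p,y+x_0/\ep)|\leq\eta$ in order to compare $u^\ep(\cdot+x_0,\cdot)$ with a solution of the PDE driven by a translate of $H$ living in a bounded region, then invoke translation invariance of $\overline H$. This can be made to work (one must also shift back by $\ep\tau$, use the Lipschitz bound on $u^\ep$, and track the $O(T\eta)$ comparison error and the $O(T\lambda)$ error from Lemma~\ref{incluHH} applied to the re-indexed initial datum), but as written the key sentence is hard to parse: $u^\ep(\cdot+x_0,\cdot)$ literally \emph{is} the solution of the shifted-Hamiltonian PDE with initial datum $u_0(\cdot+x_0)$, so the claimed comparison needs to be stated against the solution of the \emph{unshifted} equation, and the role of $\tau$ must be spelled out. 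Since the simpler re-centering argument uses only that Proposition~\ref{alpfundy} is a $\sup$ over all of $\Rd$, I would recommend dropping the translation detour.

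Two small loose ends worth tightening: (a) for $\ep$ bounded below (say $\ep\geq c$), the estimate~\eqref{alperees} follows from the trivial Lipschitz-in-time bound $|u^\ep-u|\leq 2KT$ together with $\ep^{1/3}\geq c^{1/3}$, as you indicate; and (b) one should note that $L=L(K)$ bounds the slopes $|p|$ that arise in the doubled maximum (through~\eqref{liplam} and the Lipschitz constant of $u$), so that Proposition~\ref{alpfundy} is being applied with the correct range of $p$.
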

\begin{proof}
For $\ep,\alpha > 0$, we define $\delta(\ep):= \ep^{\frac13}$ and 
\begin{equation*}\label{}
\lambda(\ep,\alpha) := \left( (C/T)^\frac12 \delta(\ep)\right) \vee \eta_L(\delta)+\alpha
\end{equation*}
and observe that~\eqref{HHconds} holds for $\alpha,\ep > 0$ small enough. An application of Lemma~\ref{incluHH} yields
\begin{equation*}\label{}
\sup_{(x,t) \in \Rd \times [0,T] } \left| u^\ep(x,t) - u(x,t) \right| \leq CT \lambda(\ep,\alpha).
\end{equation*}
Let $\alpha \to 0$ to get~\eqref{alperees}. 
\end{proof}

Observe that for a periodic Hamiltonian satisfying~\eqref{regAP} and~\eqref{coerAP}, Proposition~\ref{almper} gives a rate of convergence of $O\big(\ep^{\frac13}\big)$ for  homogenization.

\appendix

\section{Sketches of the proofs of Propositions~\ref{existMP},~\ref{e.weakner} and~\ref{vdeltas}} \label{appmappp}

Throughout this section, we assume that $H$ satisfies~\eqref{assum}. 

We begin with the following helpful lemma, which is due to the level set convexity of $H$ and is useful for checking whether~$u\in\Lip$ is a subsolution of the equation $H(Du,y,\omega) \leq \mu$ for $\mu \in \R$. A simple proof can be found in~\cite{ASo3}.
\begin{lem} \label{convtrick}
Let $\mu\in \R$, $\omega\in\Omega$ and $U\subseteq \Rd$ be open. Then $u\in \USC(U)$ is a viscosity solution of 
\begin{equation}\label{convteq}
H(Du,y,\omega) \leq \mu \quad \mbox{in} \ U
\end{equation}
if and only if $u$ is locally Lipschitz in $U$ and satisfies \eqref{convteq} almost everywhere in $U$. 
\end{lem}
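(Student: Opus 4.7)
The plan is to prove the two implications separately. For the forward direction, assume $u \in \USC(U)$ is a viscosity subsolution of~\eqref{convteq}. I would first use the coercivity~\eqref{coer} together with~\eqref{reg} to observe that on every compact $V \Subset U$, the $\mu$-sublevel set of $H(\cdot,y,\omega)$ is contained in a fixed ball $B_R$ independent of $y \in V$. By a standard viscosity argument (touching $u$ from above by smooth functions with gradients of large modulus and using coercivity to obtain a contradiction, or equivalently the well-known fact that coercive subsolutions are locally Lipschitz), this yields that $u$ is Lipschitz on $V$ with constant $R$. Then Rademacher's theorem gives differentiability almost everywhere, and at any point $y_0$ of differentiability the linear function $\varphi(y) = u(y_0) + Du(y_0)\cdot(y-y_0)$ touches $u$ from above up to $o(|y-y_0|)$, so after a standard perturbation by $\eta|y-y_0|^2$ one concludes $H(Du(y_0), y_0, \omega) \leq \mu$. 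Hence~\eqref{convteq} holds almost everywhere.

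For the converse, assume $u$ is locally Lipschitz and $H(Du,y,\omega) \leq \mu$ a.e. in $U$. The key point is that the midpoint inequality~\eqref{sqc}, together with the Lipschitz continuity of $H(\cdot,y,\omega)$ from~\eqref{reg}, implies that for every $(y,\omega)$ the sublevel set
\begin{equation*}
S_\mu(y,\omega) := \{ p \in \Rd : H(p,y,\omega) \leq \mu\}
\end{equation*}
is convex: midpoint convexity from~\eqref{sqc} (since $\Lambda(\mu,\mu) \leq \mu$) together with continuity in $p$ yields full convexity by a dyadic density argument. Hence $H(\cdot,y,\omega)$ is quasiconvex, and therefore if $H(p_i,y,\omega) \leq \mu$ for a family of points $p_i$ and $\nu$ is any probability measure on $\Rd$ supported on those points, then $H\bigl(\int p \, d\nu(p), y, \omega\bigr) \leq \mu$.

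The rest is a mollification argument. Let $\rho_\ep$ be a standard smooth mollifier and set $u_\ep := u * \rho_\ep$, which is smooth on $U_\ep := \{y \in U : \dist(y,\partial U) > \ep\}$. For fixed $y \in U_\ep$, we have
\begin{equation*}
Du_\ep(y) = \int_{B_\ep} Du(y-z) \rho_\ep(z)\, dz,
\end{equation*}
and for a.e.\ $z \in B_\ep$, $H(Du(y-z),y-z,\omega) \leq \mu$. Using the uniform continuity of $H$ in the spatial variable on compact sets of $p$ (provided by~\eqref{regpx} combined with the Lipschitz bound on $Du$), we get $H(Du(y-z), y, \omega) \leq \mu + \omega_H(\ep)$ for a.e.\ $z \in B_\ep$, where $\omega_H(\ep) \to 0$ as $\ep\to 0$. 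By the quasiconvexity of $H(\cdot,y,\omega)$ applied to the probability measure with density $\rho_\ep$, we deduce
\begin{equation*}
H(Du_\ep(y), y, \omega) \leq \mu + \omega_H(\ep) \quad \mbox{for every} \ y \in U_\ep.
\end{equation*}
This is a classical (hence viscosity) subsolution inequality. Since $u_\ep \to u$ locally uniformly (as $u$ is continuous) and $\omega_H(\ep) \to 0$, the stability of viscosity subsolutions under uniform limits (see, e.g.,~\cite{CIL,Ba}) yields that $u$ is a viscosity subsolution of~\eqref{convteq} on $U$.

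The main obstacle is the converse direction, and specifically the justification that the mollified gradient still lies in a sublevel set of $H$: this relies crucially on deducing genuine quasiconvexity of $H(\cdot,y,\omega)$ from the midpoint inequality~\eqref{sqc}, which is where the level-set convexity hypothesis does its essential work. Once quasiconvexity is established, the rest is a routine mollification plus stability argument.
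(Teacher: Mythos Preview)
Your proof is correct. The paper does not provide its own argument for this lemma, instead citing~\cite{ASo3}; your approach---coercivity to obtain local Lipschitz continuity and then Rademacher for the forward direction, mollification combined with the quasiconvexity of $p\mapsto H(p,y,\omega)$ (derived from~\eqref{sqc}) and stability of viscosity subsolutions for the converse---is precisely the standard one found there.
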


Obvious analogues of Lemma~\ref{convtrick} hold for equations with zero order terms, and so forth. We leave these to the reader.

A commonly used fact in the theory of viscosity solutions is that the supremum (infimum) of a family of subsolutions (supersolutions) is a subsolution (supersolution), see~\cite{CIL}. Observe that, in light of Lemma~\ref{convtrick}, the infimum of a family of subsolutions of~\eqref{convteq} is a subsolution and, in particular, the infimum of a family of solutions of~\eqref{convtrick} is a solution. 

We next give details for some elementary facts concerning the functions $m_\mu$ defined in~\eqref{defmmu}. Most of what follows is well-known and can be found for example in~\cite{Li} or~\cite{ASo3}, but we give sketches of the arguments for completeness and the convenience of the reader. Here $\mu > \overline H_*$, where $\overline H_*$ is a critical parameter defined as the infimum of all $\mu$ for which the equation
\begin{equation} \label{eikp}
H(Du,y,\omega) = \mu
\end{equation}
admits a global subsolution $u\in C(\Rd)$ in $\Rd$. It turns out (see~\cite{ASo3}) that $\overline H_* = \min \overline H$ and the assumption \eqref{cntl} implies that $\overline H_* = \overline H(0) = 0$.

We begin by stating a comparison principle, which makes minimal assumptions on the growth of the subsolution and supersolution at infinity.
\begin{prop}[{\cite[Proposition 3.1]{ASo3}}] \label{compMP}
Let $\mu > 0$, $K\subseteq \Rd$ be compact and $u,-v\in \USC(\Rd\setminus K)$ satisfy
\begin{equation}\label{eikext}
H(Du,y,\omega) \leq \mu \leq H(Dv,y,\omega) \quad \mbox{in} \ \Rd \setminus K \quad
\mbox{and} \quad \limsup_{y\to K} \left( u(y) - v(y) \right) \leq 0
\end{equation}
and
\begin{equation*}\label{}
\liminf_{|y|\to \infty} \frac{v(y)}{|y|} \geq 0.
\end{equation*}
Then $u\leq v$ in $\Rd \setminus K$. 
\end{prop}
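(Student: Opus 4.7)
The plan is to run a viscosity doubling-of-variables comparison argument, adapted to the exterior domain $\Rd\setminus K$ and to the weak sublinear-from-below growth of $v$, with a strict subsolution manufactured from $u$ via the level-set convexity of $H$.

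The first step is regularity: by Lemma~\ref{convtrick} together with the uniform coercivity~\eqref{coer}, $u$ is Lipschitz continuous on $\Rd\setminus K$ with a constant $L_\mu$ depending only on $\mu$ and $H$. Next, since~\eqref{cntl} and stationarity give $H(0,y,\omega) \leq 0 < \mu$, the zero function is a strict subsolution. Iterating the midpoint construction in~\eqref{sqc} together with the strict inequality in~\eqref{Lambda} then produces a sequence $u_k := (1 - 2^{-k}) u$ of subsolutions of $H(Du_k, y, \omega) \leq \mu^{(k)}$ with $\mu^{(k)} < \mu$ for each $k$: indeed $u_k = \tfrac{1}{2}(u + u_{k-1})$, and inductively $\mu^{(k)} := \Lambda(\mu, \mu^{(k-1)}) < \mu$ by~\eqref{Lambda} starting from $\mu^{(0)} := 0$. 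Each $u_k$ is Lipschitz, converges pointwise to $u$ as $k\to\infty$, and, after a harmless preliminary common shift of $u$ and $v$ to arrange $v \geq 0$ in a neighborhood of $\partial K$ (permitted since $v$ is bounded below near $\partial K$ by the boundary hypothesis on $u-v$ and the Lipschitz bound on $u$), inherits the boundary condition $\limsup_{y\to K}(u_k - v) \leq 0$ from $u$.

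Assume for contradiction that $u(y_0) > v(y_0)$ at some $y_0 \in \Rd\setminus K$. Choose $k$ large enough that $u_k(y_0) > v(y_0)$, and truncate from above to the bounded function $\hat u := u_k \wedge M$ with $M > u_k(y_0)$; as the infimum of two subsolutions, $\hat u$ is still a strict subsolution of $H \leq \mu^\star$ for some $\mu^\star < \mu$. Since $\hat u$ is bounded above and $\liminf_{|y|\to\infty} v(y)/|y| \geq 0$, for any $\sigma > 0$ the doubling function
\[
\Phi_\epsilon(x,y) := \hat u(x) - v(y) - \tfrac{1}{2\epsilon}|x-y|^2 - \sigma(1+|y|^2)^{1/2}
\]
tends to $-\infty$ as $|x|+|y|\to\infty$ and therefore attains its supremum at some $(x_\epsilon, y_\epsilon) \in (\Rd\setminus K)^2$. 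For $\sigma$ small enough, $\Phi_\epsilon(y_0, y_0) > 0$, so the supremum is positive and the maximizers stay away from $\partial K$ by the boundary condition. Writing $p_\epsilon := (x_\epsilon - y_\epsilon)/\epsilon$ and $\phi(y) := (1+|y|^2)^{1/2}$ with $|\nabla\phi| \leq 1$, the standard viscosity inequalities yield
\[
H(p_\epsilon, x_\epsilon, \omega) \leq \mu^\star < \mu \leq H\!\left(p_\epsilon - \sigma\nabla\phi(y_\epsilon), y_\epsilon, \omega\right).
\]
By coercivity~\eqref{coer}, $\{p_\epsilon\}$ is bounded; passing to a subsequential limit $\epsilon \to 0$ (using continuity of $H$ in $y$ from~\eqref{regpx} and~\eqref{reg}) and then sending $\sigma \to 0$ (using continuity of $H$ in $p$) produces $\mu \leq \mu^\star$, contradicting $\mu^\star < \mu$. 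Hence $\hat u \leq v$, and letting $M \to \infty$ and then $k \to \infty$ yields $u \leq v$.

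The main obstacle is the simultaneous management of $\epsilon$, $\sigma$, the strictness parameter $k$, and the truncation level $M$. The penalty $\sigma(1+|y|^2)^{1/2}$ must be strong enough to force $\Phi_\epsilon$ to attain its supremum on the unbounded domain, yet $\sigma$ must ultimately be sent to zero for the final contradiction to close. Without the upper truncation, $\hat u = u_k$ grows linearly at infinity, which forces $\sigma$ to exceed its Lipschitz constant and prevents $\sigma \to 0$. The decisive step is therefore the upper truncation $u_k \wedge M$, which makes the subsolution bounded from above and lets $\sigma$ be sent freely to $0$ after $\epsilon$; the strict inequality $\mu^\star < \mu$ then delivers the contradiction, and only at the very end do we send $M \to \infty$ and $k \to \infty$ to recover $u \leq v$ itself. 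The subsidiary matter of preserving the boundary inequality under $u \mapsto u_k$ is handled by the preliminary common shift arranging $v \geq 0$ near $\partial K$, which ensures $(1-2^{-k}) u \leq v$ on $\partial K$ whenever $u \leq v$ there.
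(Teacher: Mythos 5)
The paper itself does not prove Proposition~\ref{compMP}---it is cited from~\cite{ASo3}---so there is no internal proof to measure your attempt against. Your argument is correct and assembles the standard ingredients in the right way: the iterated midpoint construction based on~\eqref{sqc} and~\eqref{Lambda} (with the zero function available as a strict subsolution thanks to~\eqref{cntl}) produces subsolutions $u_k$ of $H\leq\mu^{(k)}<\mu$; the upper truncation $u_k\wedge M$ remains a subsolution by the min-of-subsolutions consequence of Lemma~\ref{convtrick}; the sublinear penalty is calibrated to the growth hypothesis $\liminf_{|y|\to\infty}v(y)/|y|\geq 0$; and the doubling of variables closes the contradiction. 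You have correctly identified the decisive point: the upper truncation is what permits $\sigma\to 0$ after $\epsilon\to 0$, since the unbounded Lipschitz subsolution $u_k$ would otherwise force $\sigma$ above its Lipschitz constant, and $\sigma\to 0$ is indispensable for the final contradiction. You have also handled the boundary-condition inheritance for $u_k$ properly via the common shift, which is the right fix. Two small remarks. First, when invoking the min of two subsolutions, take $\mu^\star:=\mu^{(k)}\vee 0$ so that the constant $M$ (a subsolution of $H\leq 0$ by~\eqref{cntl}) is also a subsolution at level $\mu^\star$; this is harmless since $\mu^\star<\mu$ still holds. Second, a slightly shorter equivalent route uses the paper's own Proposition~\ref{comp}: since $\hat u-\sigma(1+|y|^2)^{1/2}$ is, by~\eqref{reg}, a strict subsolution of $H\leq\mu^\star+C\sigma<\mu$ for $\sigma$ small, one can compare it with $v$ on $B_R\setminus K$ and then send $R\to\infty$, $\sigma\to 0$, $M\to\infty$, $k\to\infty$; the boundedness of $\hat u$ together with the growth hypothesis makes the boundary contribution at $\partial B_R$ vanish, avoiding a fresh doubling argument. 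Either way the conclusion is the same.
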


\begin{proof}[{\bf Proof of Proposition~\ref{existMP}}]
(i) Perron's method yields that $m_\mu(\cdot,x,\omega)$ is a solution of~\eqref{mpagan} in $\Rd\setminus \{ x \}$. Recall from~\eqref{globsub} that it is also a subsolution in $\Rd$. The uniqueness for $\mu > 0$ follows from~Proposition~\ref{compMP}. 

(ii) Assume that ~\eqref{maxm} fails and, by adding a constant to $u$, that the right side of \eqref{maxm} is negative while the left side is positive at some point $y_0\in U$. Define 
\begin{equation*}\label{}
v(y):= \begin{cases} m_\mu(y,x,\omega) & \mbox{if} \ y\in \Rd\setminus U, \\  u(y) \vee m_\mu(y,x,\omega) & \mbox{if} \ y \in U,
\end{cases}
\end{equation*}
and observe that $v(x) = 0$, $m_\mu(\cdot,x,\omega) \leq v$ in $\Rd$ and $m_\mu(y_0,x,\omega) < v(y_0)$. Moreover, it is clear from its definition that $v$ is a subsolution of~\eqref{eikp} in $\Rd$. This contradicts \eqref{defmmu}.

(iii) follows from (ii). Indeed, by Lemma~\ref{convtrick}, the map $y\mapsto m_\mu(y,x,\omega) - m_\mu(z,x,\omega)$ is a subsolution of \eqref{eikp} in $\Rd$. The maximality of $m_\mu(\cdot,z,\omega)$ (i.e., property (ii) with $U=\Rd \setminus \{ z \}$) then implies \eqref{subadd}. 

(iv) The lower bound of~\eqref{control2} follows from the observation that, if $\mu \leq K$, then~\eqref{reg} and~\eqref{cntl} imply the existence of some $c > 0$, depending on $K$, such that, for every $x\in \Rd$, the function $y\mapsto c\mu |y-x|$ is a subsolution of~\eqref{eikp} in $\Rd$. The upper bound is immediate from Proposition~\eqref{compMP} and the fact that, for large enough $C> 0$ and any $x\in \Rd$, the map $y\mapsto C|y-x|$ is a supersolution in $\Rd\setminus \{ x\}$. 

(v) The Lipschitz estimate~\eqref{lips} is immediate from~\eqref{subadd} and~\eqref{control2}.

(vi) One direction of~\eqref{dynprog2} is obvious from \eqref{subadd} and holds without restriction on $x,y\in \Rd$ and $\omega\in \Omega$. For the other, we first assume that $U$ is bounded and note that, due to Lemma~\ref{convtrick}, 
\begin{equation*}
\phi(y):= \min_{z\in \partial U} \left( m_\mu(y,z,\omega) + m_\mu(z,x,\omega)\right)
\end{equation*}
is a solution of $H(D\phi,y,\omega) \leq \mu$ in $\Rd \setminus \overline U$. For $y\in \partial U$ we may take $z=y$ in the minimum to obtain $\phi(y) \leq m_\mu(y,x,\omega)$. The maximality of $m_\mu(\cdot,x,\omega)$ yields $\phi\leq m_\mu(\cdot,x,\omega)$ in $\Rd\setminus U$, which is the other side of~\eqref{dynprog2}. If $U$ is not bounded, then we approximate $U$ by bounded sets and use~\eqref{control2}, which implies that points $z\in \partial U$ which are very far away from $x$ and $y$ are irrelevant.

(vii) follows from Lemma~\ref{convtrick} and \eqref{defmmu}. Indeed, it is immediate from the definitions that $m_\mu(y,x,\omega) = n_\mu(x,y,\omega)$, where $n_\mu$ is the function corresponding to $m_\mu$ for the Hamiltonian $H(-p,y,\omega)$. Thus~\eqref{flipx2} holds and we can apply Lemma~\ref{convtrick} to obtain~\eqref{flipx}. 

(viii) follows from~\eqref{reg} and (ii). We observe that, as a function of $y$, the left side of~\eqref{strinc} is a subsolution of~\eqref{eikp}, provided that we take $c> 0$ small enough. Both sides of~\eqref{strinc} vanish at $y=x$, and so the result follows from the maximality of $m_\mu(\cdot,x,\omega)$. 
\end{proof}

\begin{proof}[{\bf Proof of Proposition~\ref{e.weakner}}]
That $m_\mu(\cdot,K,\omega)$ and $m_\mu(K,\cdot,\omega)$ satisfy~\eqref{mmuKeqs} is a fact which is immediate from~\eqref{mpagan} and~\eqref{flipx2} and the fact that, in light of Lemma~\ref{convtrick}, the infimum of a family of solutions is a solution. Uniqueness follows from Proposition~\ref{compMP}.

It is easy to check from the standard Perron argument, using again the fact that a minimum of solutions is a solution, that the right side of~\eqref{Krep1}, as a function of $z$, is a solution of the first equation in~\eqref{mmuKeqs}. Clearly it vanishes on $K$ by taking $x=z$ in the minimum and the zero function in the supremum. Therefore it must be equal to $m_\mu(\cdot,K,\omega)$ by the uniqueness of the latter. This proves~\eqref{Krep1}, and~\eqref{Krep2} is obtained similarly.
\end{proof}

We conclude with a sketch the proof of~Proposition~\ref{vdeltas}. Recall that~$v^\delta$ is defined in~\eqref{dvdform}.

\begin{proof}[{\bf Proof of Proposition~\ref{vdeltas}}]
(i) The boundedness of $v^\delta$ was proved in~\eqref{dvdsup2}. That \eqref{dvdform} gives a solution of \eqref{amp} in $\BUC(\Rd)$ follows from the classical Perron argument adapted to viscosity solutions (see~\cite{CIL}). The uniqueness of $v^\delta$ is immediate from Proposition~\ref{appcmp}.
 
(ii) According to \eqref{dvdsup2}, $v^\delta(\cdot,\omega\,;p)$ is a solution of the inequality
\begin{equation} \label{bleechers}
H(p+Dv^\delta,y,\omega) \leq \esssup_{z\in \Rd} H(p,z,\omega).
\end{equation}
According to Lemma~\ref{convtrick}, $v^\delta(\cdot,\omega\,;p)$ is locally Lipschitz, hence differentiable almost everywhere with $|Dv^\delta|\in L^\infty_{\mathrm{loc}}(\Rd)$ and satisfies~\eqref{bleechers} in the almost everywhere sense. This implies that, for Lebesgue-almost every~$x\in\Rd$,
\begin{align*} \label{}
Dv^\delta(x,\omega\,;p) & \in \left\{ q\in \Rd\,:\,  \inf_{y\in\Rd} H(p+q,y,\omega) \leq \sup_{y\in \Rd} H(p,y,\omega) \right\} \\
& = \left\{ q-p\,:\, q\in \Rd \ \ \mbox{and}\ \ \inf_{y\in\Rd} H(q,y,\omega) \leq \sup_{y\in \Rd} H(p,y,\omega) \right\}.
\end{align*}
Hence for Lebesgue-almost every $x\in \Rd$,
\begin{equation*} \label{}
\left| Dv^\delta(x,\omega\,;p) \right| \leq \sup\left\{ |q-p| \,:\, q\in \Rd \ \ \mbox{and} \  \ \inf_{y\in\Rd} H(q,y,\omega) \leq \sup_{y\in \Rd} H(p,y,\omega) \right\} \leq K_p. 
\end{equation*}
It follows that $v^\delta(\cdot,\omega\,;p)$ is Lipschitz with constant $K_p$. 

(iii) The dependence of $\delta v^\delta$ on $p$ can be controlled using the comparison principle together with~\eqref{reg} and \eqref{vdlip2}. The argument is routine, so we merely sketch it. One inserts $v^\delta(\cdot,\omega\,;q)$ into \eqref{amp}, adds or subtracts a constant until the resulting function is a supersolution or subsolution, and applies Proposition~\ref{appcmp}. The estimate produced by this argument is~\eqref{vdlip2}. We remark that, due to \eqref{reg} and~\eqref{coer}, the right side of \eqref{vdpdepp} is  controlled by $C|p- q|$ for a constant $C> 0$ depending on an upper bound for $|p| \vee |q|$. 

(iv) The dependence of $\delta v^\delta$ on $\delta$ is also controlled with a simple comparison argument and the help of \eqref{reg} and \eqref{vdlip2}. Set $\lambda := \delta/\eta$ and $w(y):= \lambda v^\delta(y,\omega\,;p)$ and check that
\begin{equation*}\label{}
\eta w + H(p+Dw,y,\omega) \leq \delta v^\delta + H(p+\lambda Dv^\delta,y,\omega) \leq C(1-\lambda) \quad\mbox{in} \ \Rd,
\end{equation*}
where $C=\Pi_p$ defined in~\eqref{pip}. An application of Proposition~\ref{appcmp} yields
\begin{equation}\label{}
\delta v^\delta(\cdot,\omega\,;p) = \eta w \leq \eta v^\eta(\cdot,\omega\,;p) - C(1-\lambda),
\end{equation}
which is half of~\eqref{dvddepd}. The other inequality is obtained via a similar argument.
\end{proof}

\subsection*{Acknowledgements}
The first author was partially supported by NSF Grant DMS-1004645, the second author by the French National Research Agency ANR-12-BS01-0008-01 and the third author by NSF Grant DMS-0901802. 

\bibliographystyle{plain}
\bibliography{HJrates}

\end{document}